\documentclass[english]{amsart}

\usepackage{esint}
\usepackage[svgnames]{xcolor} 
\usepackage[colorlinks,citecolor=red,pagebackref,hypertexnames=false,breaklinks]{hyperref}
\usepackage{pgf,tikz}
\usepackage{pdfsync}

\usepackage{dsfont}
\usepackage{url}
\usepackage[utf8]{inputenc}
\usepackage[T1]{fontenc}
\usepackage{lmodern}
\usepackage{babel}
\usepackage{mathtools}  
\usepackage{amssymb}
\usepackage{lipsum}
\usepackage{mathrsfs}
\usepackage{color}
\usepackage{stmaryrd}
\usepackage{soul}

\newtheorem{theorem}{Theorem}[section]
\newtheorem{proposition}{Proposition}[section]
\newtheorem{lemma}{Lemma}[section]

\newtheorem{corollary}{Corollary}[section]

\numberwithin{equation}{section}

\setcounter{tocdepth}{1}

\title[Quantitative uniqueness of continuation]{Quantitative uniqueness of continuation for the Schr\"odinger equation : explicit dependence on the potential}

\author[Mourad Choulli]{Mourad Choulli}
\address{Universit\'e de Lorraine}
\email{mourad.choulli@univ-lorraine.fr}

\author{Hiroshi Takase}
\address{Institute of Mathematics for Industry, Kyushu University, 744 Motooka, Nishi-ku, Fukuoka 819-0395, Japan}
\email{htakase@imi.kyushu-u.ac.jp}

\begin{document}

\begin{abstract}
We demonstrate a quantitative version of the usual properties related to unique continuation from an interior datum for the Schr\"odinger equation with bounded or unbounded potential. The inequalities we establish have constants that explicitly depend on the potential. We also indicate how the above-mentioned inequalities can be extended to elliptic equations with bounded or unbounded first-order derivatives. The case of unique continuation from Cauchy data is also considered.
\end{abstract}

\subjclass[2020]{35B60, 35J15, 35R25.}

\keywords{Schr\"odinger equation, Carleman-type inequality, Caccioppoli-type inequality, three-ball inequality, global quantitative uniqueness of continuation, doubling inequality, vanishing order.}

\maketitle

\tableofcontents

\section{Introduction}\label{S1}

We establish a quantitative version of some properties related to the unique continuation for the Schrödinger operator $-\Delta+V$ in the case where $V$ is unbounded. More precisely, we obtain inequalities whose constants depend explicitly on $V$. We find only a few references in the literature dealing with this type of results \cite{CET} and \cite{Da,DZ1,DZ2}. These latter references contain the quantitative vanishing order of zeroes of solutions of elliptic equations. We will comment in more detail on \cite{Da,DZ1,DZ2} in Section \ref{S8}.

The results we prove are based on a new Carleman inequality established in \cite{CET,DET} for a potential in $L^s$ with $s\in (n/2,\infty]$. While the case of a potential in $L^{n/2}$ uses a well-known Carleman inequality proven in \cite{JK}. The main ingredients of our analysis rely on three-ball inequalities that we obtain by using the above-mentioned Carleman inequalities and Cacciopoli-type inequalities that we establish in Section \ref{S2} and Section \ref{S3}, respectively. Using these inequalities, we adapt existing methods to prove quantitative unique continuation, global unique continuation from an interior datum. This is done in Sections \ref{S4}, \ref{S5} and \ref{S6}. The proofs we present here improve and clarify some previous ones. The so-called doubling inequalities are established in Section \ref{S7}  then used in Section \ref{S8} to obtain a quantitative vanishing order of zeroes of solutions of the Schr\"ondiger equation $(-\Delta +V)u=0$. In Section \ref{S9} we give an alternative proof to the one used in Section \ref{S8} for the case of continuous potentials. In Section \ref{S10} we give the   modifications necessary to extend the results for the equation  $(-\Delta +V)u=0$ with $V$ in $L^s$ with $s\in [n/2,\infty]$ to the equation $(-\Delta +W\cdot \nabla +V)u=0$ with $V$ in $L^s$ with $s\in [n/2,\infty]$ and $W\in L^m$ with $m\in [n,\infty]$. Finally, in Section \ref{S11} we treat the case of unique continuation from Cauchy data for the equation $(-\Delta +W\cdot \nabla +V)u=0$ with $V$ in $L^s$ with $s\in [n,\infty]$ and $W\in L^\infty$.

Throughout this text, $n\ge 3$ is an integer and $\Omega$ is a bounded Lipschitz domain of $\mathbb{R}^n$ with boundary $\Gamma$. For further use, we recall some properties of the Lipschitz bounded domain $\Omega$. To this end, let $p:=2n/(n+2)$ and $p':=2n/(n-2)$. Note that $p'$ is the conjugate of $p$,  which means that $1/p+1/p'=1$. In this case, the embeddings  $W^{2,p}(\Omega) \hookrightarrow H^1(\Omega)$ and $H^1(\Omega)\hookrightarrow L^{p'}(\Omega)$ are continuous (e.g. \cite[Theorem 5.4]{Ad}). Furthermore, there exists $0<\mathfrak{r}=\mathfrak{r}(\Omega)\le 1$ such that for all $0<r<\mathfrak{r}$
\[
\Omega^r:=\{x\in \Omega;\; \mathrm{dist}(x,\mathbb{R}^n\setminus \Omega)>r\}
\]
is nonempty and connected (see \cite[Corollary 4.3]{ER}). It is worth observing that if $\Omega$ is convex that so is $\Omega^r$ provided $r>0$ is sufficiently small (e.g. \cite[Lemma 13.38]{Le}).

In the following, for $s\in [n/2,\infty]$ and  $V\in L^s(\Omega)$, we use the notation
\[
\kappa_V:=\|V\|_{L^s(\Omega)}.
\]

Also, in the rest of this text, $B_r:=B(0,r)$ $r>0$, the ball of radius $r$ centered at the origin, and
\[
[\![a,b]\!]:=\{x\in \mathbb{R}^n;\; a<|x|<b\},\quad 0\le a<b.
\]

Let
\begin{equation}\label{sigma}
\sigma:=\sup \{ \|w\|_{L^{p'}(\Omega)};\; w\in H_0^1(\Omega),\;  \|\nabla w\|_{L^2(\Omega)}=1\}.
\end{equation}
We verify that we have
\[
 \|w\|_{L^{p'}(\Omega)}\le \sigma \|\nabla w\|_{L^2(\Omega)},\quad w\in H_0^1(\Omega).
\] 

In the following, we will often use the fact that if $V\in L^s(\Omega)$ with $s\in [n/2,\infty]$  and $u\in L^{p'}(\Omega)$ then by H\"older's inequality $Vu\in L^p(\Omega)$ and
\[
\|Vu\|_{L^p(\Omega)}\le \|V\|_{L^{n/2}(\Omega)}\|u\|_{L^{p'}(\Omega)}.
\]
In particular, if $V\in L^s(\Omega)$ with $s\in [n/2,\infty]$  and $u\in W^{2,p}(\Omega)$ then $(-\Delta+V)u\in L^p(\Omega)$.

\section{Caccioppoli-type inequalities}\label{S2}

The following technical lemma will be useful below.

\begin{lemma}\label{lemqu1}
Let $s\in (n/2,\infty)$ and $V\in L^s(\Omega)$. For all $t>0$, $V$ can be decomposed into two terms $V=V_t^1+V_t^2$ so that $V_t^1\in L^{n/2}(\Omega)$, $V_t^2\in L^\infty (\Omega)$ and
\begin{equation}\label{qu4}
\|V_t^1\|_{L^{n/2}(\Omega)}\le t^{-(2s/n-1)}\kappa_V^{2s/n} ,\quad \|V_t^2\|_{L^\infty(\Omega)}\le t.
\end{equation}
\end{lemma}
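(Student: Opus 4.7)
The natural strategy is the standard truncation of $V$ at level $t$. I would define
\[
V_t^2(x) := \begin{cases} V(x) & \text{if } |V(x)|\le t, \\ t\,\mathrm{sgn}(V(x)) & \text{if } |V(x)| > t, \end{cases}
\]
and set $V_t^1 := V - V_t^2$. Then $|V_t^2(x)| \le t$ pointwise a.e., which immediately yields the bound $\|V_t^2\|_{L^\infty(\Omega)} \le t$, and $V_t^1$ vanishes on $\{|V|\le t\}$ while on $\{|V|>t\}$ it satisfies $|V_t^1(x)| \le |V(x)|$.

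The only nontrivial estimate is the bound on $\|V_t^1\|_{L^{n/2}(\Omega)}$. Using the pointwise inequality above,
\[
\|V_t^1\|_{L^{n/2}(\Omega)}^{n/2} \le \int_{\{|V|>t\}} |V|^{n/2}\,dx.
\]
Here I would use the fact that $s > n/2$, so that $n/2 - s < 0$, and on the set $\{|V|>t\}$ one has $|V|^{n/2-s} \le t^{n/2-s}$. Factoring $|V|^{n/2} = |V|^s \cdot |V|^{n/2-s}$ gives
\[
\int_{\{|V|>t\}} |V|^{n/2}\,dx \le t^{n/2-s}\int_{\{|V|>t\}} |V|^s\,dx \le t^{n/2-s}\,\kappa_V^s.
\]
Raising to the power $2/n$ yields $\|V_t^1\|_{L^{n/2}(\Omega)} \le t^{1-2s/n}\,\kappa_V^{2s/n} = t^{-(2s/n-1)}\kappa_V^{2s/n}$, which is exactly \eqref{qu4}.

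There is really no hard step; the main thing to get right is the algebra of exponents arising from the interplay between $s$ and $n/2$, together with the sign condition $s > n/2$ that makes the substitution $|V|^{n/2-s}\le t^{n/2-s}$ legitimate on $\{|V|>t\}$. The hypothesis $s<\infty$ is only used implicitly, since for $s=\infty$ the decomposition is trivial ($V_t^2 = V$, $V_t^1 = 0$ as soon as $t\ge \|V\|_{L^\infty}$).
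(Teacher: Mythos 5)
Your proof is correct and follows essentially the same route as the paper: a truncation of $V$ at level $t$, with the $L^{n/2}$ bound obtained by writing $|V|^{n/2}=|V|^s|V|^{n/2-s}$ and using $|V|^{n/2-s}\le t^{n/2-s}$ on $\{|V|>t\}$. The only (immaterial) difference is that the paper takes the sharp split $V_t^1=V\chi_{\{|V|>t\}}$, $V_t^2=V\chi_{\{|V|\le t\}}$ rather than your clipping $V_t^2=\min(|V|,t)\,\mathrm{sgn}(V)$; since your $V_t^1$ still satisfies $|V_t^1|\le|V|\chi_{\{|V|>t\}}$, the same estimate goes through.
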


\begin{proof}
Define for $t>0$
\[
V_t^1:=V\chi_{\{|V|>t\}},\quad V_t^2:=V\chi_{\{|V|\le t\}}.
\]
We have
\[
\int_\Omega |V_t^1|^{n/2}dx=\int_\Omega |V|^s |V\chi_{\{|V|>t\}}|^{n/2-s}dx\le t^{n/2-s}\int_\Omega |V|^sdx.
\]
Whence $V_t^1\in L^{n/2}(\Omega)$ and
\begin{equation}\label{qu5}
\|V_t^1\|_{L^{n/2}(\Omega)}\le t^{-(2s/n-1)}\|V\|_{L^s(\Omega)}^{2s/n}=t^{-(2s/n-1)}\kappa_V^{2s/n}.
\end{equation}
On the other hand, we have clearly $V_2^t\in L^\infty (\Omega)$ and
\begin{equation}\label{qu6}
\|V_t^2\|_{L^\infty(\Omega)}\le t.
\end{equation}
Putting together \eqref{qu5} and \eqref{qu6}, we obtain \eqref{qu4}. The proof is then complete.
\end{proof}

\begin{proposition}\label{proqu1}
Let $s\in (n/2,\infty)$, $\omega_0\Subset \omega_1\Subset \Omega$ and $d=\mathrm{dist}(\omega_0,\partial \omega_1)$. There exists a constant $\mathfrak{c}=\mathfrak{c}(n,\Omega,s)>0$ such that for all $u\in W^{2,p}(\Omega)$ and $V\in L^s(\Omega)$ we have
\begin{equation}\label{ca1}
\mathfrak{c}\|\nabla u\|_{L^2(\omega_0)}\le \|(-\Delta+V) u\|_{L^p(\Omega)} 
+\left(\kappa_V^{s/(2s-n)}+d^{-1}\right)\|u\|_{L^2(\omega_1)}.\nonumber
\end{equation}
\end{proposition}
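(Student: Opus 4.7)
The natural approach is a classical Caccioppoli calculation, but with the potential term handled via the splitting from Lemma~\ref{lemqu1}. I would introduce a cutoff function $\chi\in C_c^\infty(\omega_1)$ satisfying $\chi\equiv 1$ on $\omega_0$, $0\le\chi\le 1$, and $|\nabla\chi|\le c_0/d$ for some dimensional constant $c_0$. Writing $f:=(-\Delta+V)u\in L^p(\Omega)$ and testing the equation against $\chi^2 u$ (which lies in $W_0^{1,p'}(\Omega)$ since $u\in W^{2,p}(\Omega)\hookrightarrow H^1(\Omega)$), integration by parts gives
\begin{equation*}
\int_\Omega \chi^2|\nabla u|^2\,dx \;=\; \int_\Omega f\,\chi^2 u\,dx \;-\; 2\int_\Omega \chi\,u\,\nabla\chi\cdot\nabla u\,dx \;-\; \int_\Omega V\,\chi^2 u^2\,dx.
\end{equation*}

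The next step is to control the last integral using the splitting $V=V_t^1+V_t^2$ from Lemma~\ref{lemqu1}, with $t>0$ to be chosen. For the bounded part, one has trivially $|\int V_t^2 \chi^2 u^2|\le t\,\|u\|_{L^2(\omega_1)}^2$. For the $L^{n/2}$ part, H\"older combined with the Sobolev-type inequality \eqref{sigma} yields
\begin{equation*}
\Bigl|\int_\Omega V_t^1 \chi^2 u^2\,dx\Bigr| \le \|V_t^1\|_{L^{n/2}(\Omega)}\|\chi u\|_{L^{p'}(\Omega)}^2 \le \sigma^2\|V_t^1\|_{L^{n/2}(\Omega)}\,\|\nabla(\chi u)\|_{L^2(\Omega)}^2,
\end{equation*}
and the triangle inequality together with $|\nabla\chi|\le c_0/d$ gives $\|\nabla(\chi u)\|_{L^2}^2\le 2\int \chi^2|\nabla u|^2 + 2c_0^2 d^{-2}\|u\|_{L^2(\omega_1)}^2$. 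Similarly, for the source term one writes $\|\chi^2 u\|_{L^{p'}}\le\sigma\|\nabla(\chi u)\|_{L^2}$, so that $|\int f\chi^2 u|\le\sigma\|f\|_{L^p(\Omega)}\|\nabla(\chi u)\|_{L^2(\Omega)}$, which by Young absorbs a fraction of $\int\chi^2|\nabla u|^2$ and contributes $\|f\|_{L^p}^2$ plus an $O(d^{-2})\|u\|_{L^2(\omega_1)}^2$ remainder. The cross term $2\int\chi u\nabla\chi\cdot\nabla u$ is treated identically via Young.

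Now comes the key choice: $t$ is fixed so that $\sigma^2\|V_t^1\|_{L^{n/2}(\Omega)}\le 1/16$, which by \eqref{qu4} is ensured by taking $t=c_1\kappa_V^{2s/(2s-n)}$ for a suitable constant $c_1=c_1(n,\Omega,s)$. With this choice, all gradient terms on the right-hand side can be absorbed into the left-hand side and one arrives at an inequality of the form
\begin{equation*}
\|\nabla u\|_{L^2(\omega_0)}^2 \le C\Bigl(\|f\|_{L^p(\Omega)}^2 + \bigl(\kappa_V^{2s/(2s-n)}+d^{-2}\bigr)\|u\|_{L^2(\omega_1)}^2\Bigr).
\end{equation*}
Taking square roots and using $\sqrt{a+b}\le\sqrt{a}+\sqrt{b}$ (together with $\sqrt{\kappa_V^{2s/(2s-n)}}=\kappa_V^{s/(2s-n)}$) delivers \eqref{ca1}.

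The main obstacle is keeping track of the exponent of $\kappa_V$: the final $\kappa_V^{s/(2s-n)}$ arises only after the square root is taken, and relies on choosing $t$ so that the $L^{n/2}$ part of $V$ is small in the Sobolev sense \eqref{sigma} rather than merely bounded. A secondary issue is that $\chi^2 u$ is not a priori smooth, but since $u\in W^{2,p}(\Omega)$ and $W^{2,p}\hookrightarrow H^1$, the integration by parts is fully justified; alternatively, one smooths $u$ and passes to the limit.
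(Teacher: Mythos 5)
Your proof is correct and follows the same broad strategy as the paper: choose a cutoff $\chi$, pair the equation against $\chi^2 u$, split $V=V_t^1+V_t^2$ via Lemma~\ref{lemqu1}, bound the $V_t^1$ term by Sobolev $(\sigma)$ and absorb it by choosing $t\propto\kappa_V^{2s/(2s-n)}$, and close with Young's inequality. The one genuine difference lies in how $V_t^2$ enters. You keep $V_t^2$ inside the weak-form pairing, so the contribution is $\bigl|\int V_t^2\chi^2 u^2\bigr|\le t\,\|u\|_{L^2(\omega_1)}^2$, linear in $t$, and after taking square roots you get exactly $\kappa_V^{s/(2s-n)}$. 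The paper instead works with the operator $\Delta-V_t^1$ in the Young inequality $\|\chi(\Delta-V_t^1)u\|_{L^p}\|\chi u\|_{L^{p'}}$ and then expands $\|\chi(\Delta-V_t^1)u\|_{L^p}^2\le 2\|\chi(\Delta-V)u\|_{L^p}^2+2\|\chi V_t^2 u\|_{L^p}^2$; this routes $V_t^2 u$ through a squared $L^p$ norm, which should produce $\|V_t^2\|_{L^\infty}^2\le t^2$ rather than $t$ (the paper's display has $\|V_t^2\|_{L^\infty}$ unsquared, apparently a slip). Corrected, the paper's bookkeeping would deliver $\kappa_V^{2s/(2s-n)}$, not the $\kappa_V^{s/(2s-n)}$ stated in the proposition. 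Your organization, which treats $V_t^2$ directly as part of the potential rather than as part of the source term, is therefore not merely equivalent but is the cleaner route to the claimed exponent.
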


\begin{proof}
Pick $\chi \in C_0^\infty (\omega_1)$ satisfying  $0\le \chi \le 1$, $\chi =1$ in a neighborhood of $\omega_0$ and $|\partial^\alpha\chi |\le \mathbf{k}d^{-1}$ for each $|\alpha|=1$, where $\mathbf{k}>0$ is a universal constant.

Let $V\in L^s(\Omega)$, $u\in W^{2,p}(\Omega)$ and $\epsilon>0$. For all $t>0$, we write $V=V_t^1+V_t^2$, where $V_t^1$ and $V_t^2$ are given by Lemma \ref{lemqu1}. In light of H\"older's inequality, we obtain
\[
\left| \int_\Omega \chi^2 u(\Delta-V_t^1) udx\right| \le \|\chi(\Delta-V_t^1) u\|_{L^p(\Omega)}\|\chi u\|_{L^{p'}(\Omega)}
\]
and hence
\begin{align*}
\left| \int_\Omega \chi^2 u(\Delta-V_t^1) udx\right| &\le (2\epsilon)^{-1}\|\chi(\Delta-V_t^1) u\|_{L^p(\Omega)}^2+2^{-1}\epsilon\|\chi u\|_{L^{p'}(\Omega)}^2
\\
&\le (2\epsilon)^{-1}\|\chi(\Delta-V_t^1) u\|_{L^p(\Omega)}^2+2^{-1}\epsilon\sigma^2\|\nabla (\chi u)\|_{L^2(\Omega)}^2.
\end{align*}
That is we have
\begin{align}
\left| \int_\Omega\chi^2 u(\Delta-V_t^1) udx\right| \le (2\epsilon)^{-1}&\|\chi(\Delta-V_t^1) u\|_{L^p(\Omega)}^2\label{ca2}
\\
&+\epsilon\sigma^2\left(\|u\nabla \chi\|_{L^2(\Omega)}^2+\|\chi \nabla u\|_{L^2(\Omega)}^2\right).\nonumber
\end{align}

On the other hand,  an integration by parts gives
\[
\int_\Omega\chi^2 u\Delta udx=-\int_\Omega\chi^2|\nabla u|^2dx-\int_\Omega u\nabla \chi^2 \cdot \nabla udx.
\]
Thus, 
\[
\|\chi \nabla u\|_{L^2(\Omega)}^2\le \left|\int_\Omega\chi^2 u\Delta udx\right|+2\left|\int_\Omega u\chi \nabla \chi \cdot \nabla udx\right|.
\]
Applying Cauchy-Schwarz's inequality and then a convexity inequality to the second term of the right hand side of inequality above, we obtain
\[
\|\chi \nabla u\|_{L^2(\Omega)}^2\le \left|\int_\Omega \chi^2 u\Delta udx\right|+\epsilon^{-1}\|u\nabla \chi\|_{L^2(\Omega)}^2+\epsilon\|\chi \nabla u\|_{L^2(\Omega)}^2.
\]
Whence, 
\[
(1-\epsilon)\|\chi \nabla u\|_{L^2(\Omega)}^2\le \left|\int_\Omega\chi^2 u\Delta udx\right|+\epsilon^{-1}\|u\nabla \chi\|_{L^2(\Omega)}^2,
\]
from which we obtain
\begin{align}
&(1-\epsilon)\|\chi \nabla u\|_{L^2(\Omega)}^2\le \left|\int_\Omega \chi^2 u(\Delta-V_t^1) udx\right| \label{ca3}
\\
&\hskip 4cm +\left|\int_\Omega V_t^1\chi^2 u^2dx\right|+\epsilon^{-1}\|u\nabla \chi\|_{L^2(\Omega)}^2.\nonumber
\end{align}
Next, applying twice H\"older's inequality, we obtain
\[
\left|\int_\Omega V_t^1\chi^2 u^2dx\right|\le \|V_t^1\chi u\|_{L^p(\Omega)}\|\chi u\|_{L^{p'}(\Omega)}\le \|V_t^1\|_{L^{n/2}(\Omega)}\|\chi u\|_{L^{p'}(\Omega)}^2.
\]
Using \eqref{qu4}, we obtain
\begin{align*}
\left|\int_\Omega V_t^1\chi^2 u^2dx\right|&\le  \|V_t^1\|_{L^{n/2}(\Omega)}\sigma^2\|\nabla (\chi u)\|_{L^2(\Omega)}^2
\\
& \le  2t^{-(2s/n-1)}\kappa_V^{2s/n}\sigma^2\left(\|u\nabla \chi \|_{L^2(\Omega)}^2+\|\chi \nabla u\|_{L^2(\Omega)}^2\right).
\end{align*}
This inequality in \eqref{ca3} gives
\begin{align}
&(1-\epsilon-2t^{-(2s/n-1)}\kappa_V^{2s/n}\sigma^2)\|\chi \nabla u\|_{L^2(\Omega)}^2\le \left|\int_\Omega\chi^2 u(\Delta-V_t^1) udx\right| \label{ca4}
\\
&\hskip 4.5cm +(\epsilon^{-1}+2t^{-(2s/n-1)}\kappa_V^{2s/n}\sigma^2)\|u\nabla \chi\|_{L^2(\Omega)}^2.\nonumber
\end{align}
By taking $t=[2\kappa_V^{s/n}\sigma]^{2n/(2s-n)}$ in \eqref{ca4}, we find 
\[
(1/2-\epsilon)\|\chi \nabla u\|_{L^2(\Omega)}^2\le \left|\int_\Omega\chi^2 u(\Delta-V_t^1) udx\right|+(\epsilon^{-1}+1/2)\|u\nabla \chi\|_{L^2(\Omega)}^2,
\]
which, combined with \eqref{ca2}, yields
\begin{align}
&(1/2-\epsilon(1+\sigma^2))\|\chi \nabla u\|_{L^2(\Omega)}^2\le (2\epsilon)^{-1}\|\chi(\Delta-V_t^1) u\|_{L^p(\Omega)}^2 \label{ca5}
\\
&\hskip 5.5cm +(\epsilon^{-1}+1/2+\epsilon \sigma^2)\|u\nabla \chi\|_{L^2(\Omega)}^2.\nonumber
\end{align}
By choosing $\epsilon=[4(1+\sigma^2)]^{-1}$ in \eqref{ca5}, we obtain
\begin{equation}\label{ca6}
c_\ast\|\nabla u\|_{L^2(\omega_0)}^2\le \|\chi(\Delta-V_t^1) u\|_{L^p(\Omega)}^2+d^{-2}\|u\|_{L^2(\omega_1)}^2.
\end{equation}
Here and henceforth, $c_\ast=c_\ast(n,\Omega)>0$ is a generic constant.

Now, since
\begin{align*}
\|\chi(\Delta-V_t^1) u\|_{L^p(\Omega)}^2&\le 2\|\chi(\Delta-V) u\|_{L^p(\Omega)}^2+2\|\chi V_t^2 u\|_{L^p(\Omega)}^2
\\
&\le 2\|\chi(\Delta-V) u\|_{L^p(\Omega)}^2+c_\ast\|V_t^2\|_{L^\infty(\Omega)}\|\chi u\|_{L^2(\Omega)}^2
\end{align*}
and $\|V_t^2\|_{L^\infty(\Omega)}\le t$, we get
\[
\|\chi(\Delta-V_t^1) u\|_{L^p(\Omega)}^2\le 2\|\chi(\Delta-V) u\|_{L^p(\Omega)}^2
 +c_\ast[2\kappa_V^{s/n} \sigma]^{2n/(2s-n)}\|u\|_{L^2(\omega_1)}^2.
\]
This last inequality in \eqref{ca6} gives the expected one.
\end{proof}

Let
\[
\mathscr{V}_0:=\{V\in L^{n/2}(\Omega);\; 2\sigma^2\kappa_V<1\}
\]
and define for $V\in \mathscr{V}_0$
\begin{align*}
&\mathbf{I}_V:=(1-2\sigma^2\kappa_V)^{-1/2},
\\
&\kappa_V^0:=\sqrt{2}(1+\sigma)\mathbf{I}_V^2,
\\
&\kappa_V^1:=2(1+\sigma)\mathbf{I}_V^2+2\sigma\sqrt{\kappa_V}\, \mathbf{I}_V+\sigma/\sqrt{1+\sigma^2}.
\end{align*}

The following Caccioppoli-type inequalities is obtained by modifying slightly the proof of Proposition \ref{proqu1}. 

\begin{proposition}\label{prosc1}
Let $\omega_0\Subset \omega_1\Subset \Omega$ and $d=\mathrm{dist}(\omega_0,\partial \omega_1)$. For all $u\in W^{2,p}(\Omega)$ and $V\in \mathscr{V}_0$ we have
\begin{equation}\label{sc0}
\|\nabla u\|_{L^2(\omega_0)}\le \kappa_V^0\|(-\Delta+V) u\|_{L^p(\Omega)} 
+d^{-1}\kappa_V^1\|u\|_{L^2(\omega_1)}.
\end{equation}
\end{proposition}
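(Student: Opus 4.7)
The plan is to run through the proof of Proposition \ref{proqu1} verbatim, but with the trivial decomposition $V_t^1 = V$, $V_t^2 = 0$; the splitting via Lemma \ref{lemqu1} was only needed because in Proposition \ref{proqu1} we had $V\in L^s$ with $s>n/2$ and needed to manufacture an $L^{n/2}$ piece that could be absorbed via the Sobolev embedding constant $\sigma$. Here $V\in L^{n/2}(\Omega)$ already, with $\|V\|_{L^{n/2}(\Omega)}=\kappa_V$, so no splitting is needed and the smallness condition $2\sigma^2\kappa_V<1$ plays the role that the choice of $t$ played before.

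Concretely, I would pick the same cut-off $\chi\in C_0^\infty(\omega_1)$ as in Proposition \ref{proqu1}, reproduce \eqref{ca2} and \eqref{ca3} with $V$ in place of $V_t^1$, and estimate $|\int_\Omega V\chi^2 u^2 dx|\le \kappa_V\sigma^2\|\nabla(\chi u)\|_{L^2(\Omega)}^2\le 2\sigma^2\kappa_V(\|u\nabla\chi\|_{L^2(\Omega)}^2+\|\chi\nabla u\|_{L^2(\Omega)}^2)$ exactly as in the step leading to \eqref{ca4}. Combining with \eqref{ca2} (again with $V$ instead of $V_t^1$) yields
\[
(1-\epsilon(1+\sigma^2)-2\sigma^2\kappa_V)\|\chi\nabla u\|_{L^2(\Omega)}^2\le (2\epsilon)^{-1}\|\chi(\Delta-V)u\|_{L^p(\Omega)}^2+(\epsilon^{-1}+2\sigma^2\kappa_V+\epsilon\sigma^2)\|u\nabla\chi\|_{L^2(\Omega)}^2.
\]
The condition $2\sigma^2\kappa_V<1$ lets me choose $\epsilon=(1-2\sigma^2\kappa_V)/[2(1+\sigma^2)]=\mathbf{I}_V^{-2}/[2(1+\sigma^2)]$, which makes the left-hand coefficient equal to $(1-2\sigma^2\kappa_V)/2=(2\mathbf{I}_V^2)^{-1}$.

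After multiplying through by $2\mathbf{I}_V^2$ and reading off coefficients, the $\|\chi(\Delta-V)u\|_{L^p(\Omega)}^2$ term picks up the factor $2(1+\sigma^2)\mathbf{I}_V^4$ and the $\|u\nabla\chi\|_{L^2(\Omega)}^2$ term picks up $4(1+\sigma^2)\mathbf{I}_V^4+4\sigma^2\kappa_V\mathbf{I}_V^2+\sigma^2/(1+\sigma^2)$. Taking square roots, using $\sqrt{a+b+c}\le\sqrt{a}+\sqrt{b}+\sqrt{c}$ and $\sqrt{1+\sigma^2}\le 1+\sigma$, produces precisely the constants $\kappa_V^0=\sqrt{2}(1+\sigma)\mathbf{I}_V^2$ and $\kappa_V^1=2(1+\sigma)\mathbf{I}_V^2+2\sigma\sqrt{\kappa_V}\,\mathbf{I}_V+\sigma/\sqrt{1+\sigma^2}$ advertised in the statement. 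Finishing with $\|\nabla u\|_{L^2(\omega_0)}\le\|\chi\nabla u\|_{L^2(\Omega)}$, $\|\chi(\Delta-V)u\|_{L^p(\Omega)}\le\|(-\Delta+V)u\|_{L^p(\Omega)}$, and $\|u\nabla\chi\|_{L^2(\Omega)}\le d^{-1}\|u\|_{L^2(\omega_1)}$ gives \eqref{sc0}.

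The only real obstacle is bookkeeping: one has to pick $\epsilon$ so that the residual $-\epsilon(1+\sigma^2)$ on the left does not eat into the factor $1-2\sigma^2\kappa_V$ by more than half, and then distribute the square root across three terms so that the $\sqrt{\kappa_V}$ factor appears only in the middle summand of $\kappa_V^1$. There is no new analytic ingredient beyond what is already in the proof of Proposition \ref{proqu1}; the entire argument is a careful re-parametrization that replaces the $t$-optimization by the use of the smallness hypothesis $V\in\mathscr{V}_0$.
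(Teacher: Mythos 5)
Your proposal is correct and is exactly the "slight modification" of the proof of Proposition \ref{proqu1} that the paper invokes without writing out: drop the splitting of Lemma \ref{lemqu1}, absorb the potential term using $2\sigma^2\kappa_V<1$, and your choice $\epsilon=\mathbf{I}_V^{-2}/[2(1+\sigma^2)]$ together with the termwise square root reproduces the stated constants $\kappa_V^0$ and $\kappa_V^1$. The only cosmetic point is that the bound $\|u\nabla\chi\|_{L^2(\Omega)}\le d^{-1}\|u\|_{L^2(\omega_1)}$ hides the universal constant $\mathbf{k}$ from the cutoff's gradient bound, but the paper glosses over the same point.
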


Henceforth, $\mathbf{k}>0$ will denote a generic universal constant.

\begin{proposition}\label{proinfty1}
Let $\omega_0\Subset \omega_1\Subset \Omega$ and $d=\mathrm{dist}(\omega_0,\partial \omega_1)$. For all $u\in H^2(\Omega)$ and $V\in L^\infty(\Omega)$ we have
\begin{equation}\label{sc0}
\mathbf{k}\|\nabla u\|_{L^2(\omega_0)}\le \|(-\Delta+V) u\|_{L^2(\Omega)} 
+(d^{-1}+\kappa_V)\|u\|_{L^2(\omega_1)}.
\end{equation}
\end{proposition}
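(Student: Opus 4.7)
The plan is to parallel the proof of Proposition \ref{proqu1} but in the simpler $L^2$ framework, so that neither the splitting of $V$ from Lemma \ref{lemqu1} nor the Sobolev embedding $H^1(\Omega)\hookrightarrow L^{p'}(\Omega)$ is needed: because $V\in L^\infty(\Omega)$, the product $Vu$ belongs to $L^2(\Omega)$ with $\|Vu\|_{L^2(\Omega)}\le \kappa_V\|u\|_{L^2(\Omega)}$, so there is no need to interpolate between $L^p$ and $L^{p'}$.

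First I would pick a cutoff $\chi\in C_0^\infty(\omega_1)$ with $0\le\chi\le 1$, $\chi\equiv 1$ in a neighborhood of $\omega_0$, and $|\nabla\chi|\le\mathbf{k}d^{-1}$. Multiplying $(-\Delta+V)u=f$ by $\chi^2 u$ and integrating by parts gives the identity
\[
\int_\Omega\chi^2|\nabla u|^2\,dx=\int_\Omega\chi^2 u f\,dx-\int_\Omega\chi^2 Vu^2\,dx-2\int_\Omega\chi u\nabla\chi\cdot\nabla u\,dx.
\]
Each of the three right-hand terms is then estimated directly in $L^2$: Cauchy--Schwarz yields $|\int\chi^2uf|\le\|f\|_{L^2(\Omega)}\|u\|_{L^2(\omega_1)}$; Hölder with $V\in L^\infty$ gives $|\int\chi^2Vu^2|\le\kappa_V\|u\|_{L^2(\omega_1)}^2$; and Young's inequality with parameter $\epsilon$ bounds the cross term by $\epsilon\|\chi\nabla u\|_{L^2(\Omega)}^2+\epsilon^{-1}\mathbf{k}^2d^{-2}\|u\|_{L^2(\omega_1)}^2$.

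Choosing $\epsilon=1/2$ and absorbing the gradient term on the left, together with $\chi\equiv 1$ on $\omega_0$, produces a quadratic estimate of the shape
\[
\|\nabla u\|_{L^2(\omega_0)}^2\le 2\|f\|_{L^2(\Omega)}\|u\|_{L^2(\omega_1)}+2\bigl(\kappa_V+2\mathbf{k}^2d^{-2}\bigr)\|u\|_{L^2(\omega_1)}^2.
\]
The final step is to linearize this: a further application of Young on the mixed term $\|f\|_{L^2}\|u\|_{L^2(\omega_1)}$, combined with $\sqrt{a+b}\le\sqrt a+\sqrt b$ and the elementary bound $\sqrt{\kappa_V}\le 1+\kappa_V$, converts the square-root of the right-hand side into a sum of the form $C\|f\|_{L^2(\Omega)}+C(d^{-1}+\kappa_V)\|u\|_{L^2(\omega_1)}$, with all numerical factors absorbed into the universal constant $\mathbf{k}$ on the left.

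I do not anticipate any serious obstacle: the $L^\infty$ hypothesis completely removes the $L^p$--$L^{p'}$ interplay that was essential in Proposition \ref{proqu1}, and what remains is essentially the standard $L^2$ Caccioppoli estimate for $-\Delta+V$. The only mildly technical point is the linearization at the end, but it is routine once the quadratic form of the estimate is in hand.
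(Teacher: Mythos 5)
Your overall approach — testing the equation against $\chi^2 u$, integrating by parts, and estimating the three resulting terms in $L^2$ — is exactly the natural adaptation of the proof of Proposition \ref{proqu1}, and it is the right one. The identity you write down is correct, as are the term-by-term estimates. The gap is in the final linearization. After Young and absorption you have, in your notation, roughly $X^2\le 2FU+2(\kappa_V+\mathbf{k}^2 d^{-2})U^2$ with $X=\|\nabla u\|_{L^2(\omega_0)}$, $F=\|(-\Delta+V)u\|_{L^2(\Omega)}$, $U=\|u\|_{L^2(\omega_1)}$. Applying Young once more to $2FU$ and taking square roots produces $X\lesssim F+(1+\sqrt{\kappa_V}+d^{-1})U$, and neither the floating $1$ nor the $\sqrt{\kappa_V}$ (after $\sqrt{\kappa_V}\le 1+\kappa_V$, which reintroduces a $+1$) can be absorbed into $(d^{-1}+\kappa_V)$ without a lower bound on $d^{-1}+\kappa_V$. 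That lower bound is not universal: $\kappa_V$ can vanish and $d^{-1}$ can be small when $\Omega$ is large, and indeed a dilation $x\mapsto\lambda x$ shows that the inequality cannot hold with a constant $\mathbf{k}$ depending only on $n$.

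To close the gap you should do one of two things, both of which make the dependence on $\Omega$ explicit (as it already is for the constant $\mathfrak{c}(n,\Omega,s)$ in Proposition \ref{proqu1}). Either observe that $d\le\operatorname{diam}(\Omega)$, so $1\le\operatorname{diam}(\Omega)\,d^{-1}\le\operatorname{diam}(\Omega)(d^{-1}+\kappa_V)$, which absorbs the floating $1$ into the constant at the price of $\Omega$-dependence. Or, more in the spirit of the paper's proof of Proposition \ref{proqu1}, do not estimate $\bigl|\int\chi^2uf\bigr|\le\|f\|_{L^2}\|u\|_{L^2(\omega_1)}$ and defer Young to the end; instead write $\bigl|\int\chi^2uf\bigr|\le\|\chi u\|_{L^2(\Omega)}\|f\|_{L^2(\Omega)}$, apply Young right there, and use the Poincar\'e inequality $\|\chi u\|_{L^2(\Omega)}\le C_P(\Omega)\|\nabla(\chi u)\|_{L^2(\Omega)}$ (the $L^2$ analogue of the Sobolev bound $\|\chi u\|_{L^{p'}}\le\sigma\|\nabla(\chi u)\|_{L^2}$ that the paper uses in \eqref{ca2}). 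Then $\|\nabla(\chi u)\|_{L^2}^2\le 2\|\chi\nabla u\|_{L^2}^2+2\|u\nabla\chi\|_{L^2}^2$: the first piece is absorbed into the left-hand side, the second automatically carries the $d^{-2}$ prefactor from $|\nabla\chi|\le\mathbf{k}d^{-1}$, and no floating constant appears. Similarly, $\kappa_V\le\operatorname{diam}(\Omega)\cdot 2d^{-1}\kappa_V\le\operatorname{diam}(\Omega)(d^{-1}+\kappa_V)^2$ handles the $\kappa_V$ versus $\kappa_V^2$ mismatch. Either route yields exactly \eqref{sc0}; you just need to state the mechanism rather than asserting that the numerical factors "absorb."
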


\section{Three-ball inequalities}\label{S3}

In the remainder of this text, unless otherwise stated, $\mathbf{c}=\mathbf{c}(n,s)\ge 1$, $\mathbf{c}_1=\mathbf{c}_1(n,s,r_0)>0$ in the case $s\in (n/2,\infty)$ and $\mathbf{c}=\mathbf{c}(n)\ge 1$, $\mathbf{c}_1=\mathbf{c}_1(n,r_0)>0$ in the case $s=\infty$ will denote generic constants. Also, $0<\alpha <1$ will denote a generic universal constant.

\subsection{Potential in $L^s$, $s\in (n/2,\infty)$}

In the following, if $V\in L^s(\Omega)$, then $\varphi_s(V)$ will denote a generic constant of the form
 \[
\varphi_s(V):= e^{\mathbf{c}_1\kappa_V^\gamma},
 \]
 where
\[
\gamma=\gamma (n,s):=\frac{8ns}{(3n+2)(2s-n)}. 
\]

We establish in this subsection the following three-ball inequality. 

\begin{theorem}\label{mthm1}
Let $r_0>0$ chosen so that $\Omega^{r_0}$ is nonempty and $0<r<r_0/3$. For all $x_0\in \Omega^{3r}$, $V\in L^s(\Omega)$ and $u\in W^{2,p}(\Omega)$ satisfying $(-\Delta +V)u=0$ we have
\begin{equation}\label{qu3}
\|u\|_{L^2(B(x_0,2r))}\le 
\mathbf{c}\varphi_s(V)\|u\|_{L^2(B(x_0,r))}^\alpha \|u\|_{L^2(B(x_0,3r))}^{1-\alpha}.
\end{equation}
\end{theorem}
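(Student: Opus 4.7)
The plan is to follow the classical Carleman-estimate route to three-ball inequalities, while carefully book-keeping the dependence on $\kappa_V$ in order to recover the explicit factor $\varphi_s(V)=e^{\mathbf{c}_1\kappa_V^\gamma}$. The three key ingredients will be the $L^p$--$L^{p'}$ Carleman inequality of \cite{CET,DET}, applied with a suitable radial weight $\phi=\phi(|x-x_0|)$ strictly monotone in $|x-x_0|$ and valid for test functions supported away from $x_0$ once $\tau$ exceeds a threshold $\tau_0$; the decomposition $V=V_t^1+V_t^2$ from Lemma \ref{lemqu1}; and the Caccioppoli inequality of Proposition \ref{proqu1}, used to dispose of the gradient terms generated by the cut-off.

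First I would fix a cut-off $\chi\in C_c^\infty(\mathbb{R}^n\setminus\{x_0\})$ with $\chi\equiv 1$ on a slightly shrunk middle annulus $A_m\subset B(x_0,2r)\setminus B(x_0,r)$, supported in $B(x_0,5r/2)\setminus B(x_0,3r/4)$, and with $|\nabla\chi|\le Cr^{-1}$, $|\Delta\chi|\le Cr^{-2}$. Applied to $w=\chi u$, the Carleman inequality together with $(-\Delta+V)u=0$ produces on the right-hand side a weighted $L^p$ norm of $\chi V u - 2\nabla\chi\cdot\nabla u - u\Delta\chi$. The potential term I would split as $V=V_t^1+V_t^2$: the $L^{n/2}$ piece is absorbed via
\[
\|V_t^1 e^{\tau\phi}\chi u\|_{L^p(\Omega)}\le \|V_t^1\|_{L^{n/2}(\Omega)}\,\|e^{\tau\phi}\chi u\|_{L^{p'}(\Omega)},
\]
which demands $\|V_t^1\|_{L^{n/2}}\ll 1$ and hence, by Lemma \ref{lemqu1}, $t\ge c\kappa_V^{2s/(2s-n)}$; the bounded piece is controlled by $t$ times a H\"older loss $\le Cr^2$ in passing from $L^p$ to $L^{p'}$ on a ball of radius $3r$, and is absorbed provided $\tau$ beats a polynomial threshold in $t$. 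The joint absorption constraint fixes $\tau_0\sim\kappa_V^\gamma$, where $\gamma=8ns/((3n+2)(2s-n))$ emerges from matching the $\tau$-power available on the Carleman left-hand side against the $t$-power produced by Lemma \ref{lemqu1}.

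Next, the two cut-off error terms are supported in the thin annuli $A_{\mathrm{in}}\subset B(x_0,r)\setminus B(x_0,3r/4)$ and $A_{\mathrm{out}}\subset B(x_0,5r/2)\setminus B(x_0,2r)$; I would bound them via Proposition \ref{proqu1} applied on slightly enlarged nested annuli, replacing $\|\nabla u\|_{L^2}$ by $\|u\|_{L^2}$ on $B(x_0,r)$ and on $B(x_0,3r)$ respectively, at the cost of a factor $\kappa_V^{s/(2s-n)}+r^{-1}$ that is absorbed into $\varphi_s(V)$. Restricting the Carleman left-hand side to $A_m$ and exploiting the strict monotonicity of $\phi$, I arrive at an inequality of the form
\[
e^{\tau\phi_*}\|u\|_{L^2(A_m)}\le C\varphi_s(V)\bigl(e^{\tau\phi_{\mathrm{in}}}\|u\|_{L^2(B(x_0,r))}+e^{\tau\phi_{\mathrm{out}}}\|u\|_{L^2(B(x_0,3r))}\bigr),
\]
with a strict ordering $\phi_{\mathrm{out}}<\phi_*<\phi_{\mathrm{in}}$ ensured by the choice of weight and of nested annuli. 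Adding $\|u\|_{L^2(B(x_0,r))}$ on the left to recover $\|u\|_{L^2(B(x_0,2r))}$ and optimizing in $\tau\ge\tau_0$ by the usual dichotomy on the ratio $\|u\|_{L^2(B(x_0,3r))}/\|u\|_{L^2(B(x_0,r))}$ yields \eqref{qu3} with exponent $\alpha=(\phi_{\mathrm{in}}-\phi_*)/(\phi_{\mathrm{in}}-\phi_{\mathrm{out}})\in(0,1)$; in the low-ratio regime the forced factor $e^{\tau_0\cdot\text{gap}}$ is absorbed into $\varphi_s(V)$.

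The principal obstacle is precisely the book-keeping of $\kappa_V$ through these successive absorption and Caccioppoli steps, each of which contributes a different $\kappa_V$-dependent factor: the final exponent $\gamma=8ns/((3n+2)(2s-n))$ is obtained only after a simultaneous balance of the parameters $t$, $\tau$ and the $\tau$-power supplied on the Carleman left-hand side, and any slightly different form of the underlying Carleman inequality would shift $\gamma$ accordingly.
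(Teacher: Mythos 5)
Your proposal follows essentially the same route as the paper's proof: the $L^p$--$L^{p'}$ Carleman inequality of \cite{DET}, the splitting $V=V_t^1+V_t^2$ of Lemma \ref{lemqu1} with $V_t^1$ absorbed through the $L^{p'}$ bound and $V_t^2$ absorbed by the $\tau^{3/4+1/(2n)}$ gain (which is exactly how $\gamma=8ns/((3n+2)(2s-n))$ arises), the Caccioppoli inequality of Proposition \ref{proqu1} to eliminate the gradient in the commutator terms, and optimization in $\tau$. The paper's only organizational differences are that it first rescales to the unit ball with weight $1-|y|^2$, so that the Carleman constants are independent of $r$ and the rescaled potential satisfies $\varkappa_{\mathcal V}\le r_0^{2-n/s}\kappa_V$, and that it replaces $\tau$ by $\tau+\bar c\varkappa_V^\gamma+\tau_0$ once and for all in Lemma \ref{lemci}, so the weighted estimate holds for every $\tau>0$ and no threshold dichotomy is needed at the optimization step.

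One slip to correct in your geometry: you take $\chi\equiv1$ on an annulus $A_m$ strictly contained in $B(x_0,2r)\setminus B(x_0,r)$ and then propose to recover $\|u\|_{L^2(B(x_0,2r))}$ by adding $\|u\|_{L^2(B(x_0,r))}$ to the left-hand side; but $A_m\cup B(x_0,r)$ then omits two thin annuli of $B(x_0,2r)$. The plateau of $\chi$ must extend strictly inside $B(x_0,r)$ and strictly beyond radius $2r$ (after rescaling by $3r$ the paper uses the radii $5/18<1/3$ and $13/18>2/3$), while the left-hand side is restricted only up to radius $2r$, so that the weight there still strictly exceeds the weight on the outer transition annulus.
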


Before we prove Theorem \ref{mthm1}, we need a consequence of the Carleman inequality proved in \cite{DET}. To this end, let $D$ be a bounded domain of class $C^3$ and $\omega_0\Subset \omega \Subset D$. The unit normal vector field on $\partial D$ will be denoted by $\nu$, and $\partial_\nu$ stands for the derivative along $\nu$. Pick $\varphi \in C^3(\bar{D})$ admitting the following properties: $\varphi_{|\partial D}=0$, $\partial_\nu \varphi<0$,
\[
\varrho:=\min_{\bar{D}\setminus \omega_0}|\nabla \varphi|>0,
\]
and there exists $\beta >0$ such that for all $x\in \bar{D}\setminus \omega_0$ and $\xi \in \mathbb{R}^n$ satisfying $|\nabla \varphi(x)|=|\xi|$ and $\nabla \varphi(x)\bot \xi$ we have
\begin{equation}\label{varphi}
\nabla^2\varphi (x)\nabla \varphi (x)\cdot \nabla \varphi(x)+ \nabla^2\varphi (x) \xi\cdot \xi \ge \beta |\nabla \varphi (x)|^2.
\end{equation}

For further use, we give an example of $\varphi$ when $D=B(0,R)$, $R>0$. Pick $0<\rho_0<\rho<R$. Define $\omega_0=B(0,\rho_0)$ and $\omega=B(0,\rho)$. Then consider $\varphi(x)=R^2-|x|^2$. In this case $\nabla \varphi (x)=-2x$ and as $\nu=x/|x|$ on $\partial D$, we get $\partial_\nu \varphi=-2R$. Also, $|\nabla \varphi|\ge 2\rho_0$ in $\bar{\Omega}\setminus \omega_0$. On the other hand, as $\nabla^2\varphi (x)=2\mathbf{I}$, where $\mathbf{I}$ is the identity matrix, \eqref{varphi} holds. 

Define
\[
W^{2,p}_{\omega}(D):=\{ u\in W^{2,p}(\Omega);\; \mathrm{supp}(u)\subset D \setminus \bar{\omega}\}.
\]

The following Carleman inequality is a special case of \cite[Theorem 1.1]{DET}. 

\begin{proposition}\label{prodd1}
Set $\zeta:=\left(D, \omega_0,\omega, \varrho,\beta, \|\varphi\|_{C^3(\bar{D})}\right)$. There exist $\bar{c}=\bar{c}(\zeta)>0$ and $\tau_0=\tau_0(\zeta)\ge 1$ such that for all $\tau \ge \tau_0$ and $u\in W^{2,p}_{\omega}(D)$ we have
\begin{equation}\label{dd1}
\tau^{3/4+1/(2n)}\|u\|_{L^2_\tau (D)}\le \bar{c}\|\Delta u\|_{L_\tau ^p(D)},
\end{equation}
and
\begin{equation}\label{dd1.1}
\|u\|_{L^{p'}_\tau (D)}\le \bar{c}\|\Delta u\|_{L_\tau ^p(D)}.
\end{equation}
Here $L^r_\tau (\Omega):=L^r (\Omega,e^{r\tau \varphi}dx)$, $r=p,2,p'$.
\end{proposition}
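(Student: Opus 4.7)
The plan is the classical Carleman-based three-ball argument combining the $L^p$-Carleman estimates \eqref{dd1}--\eqref{dd1.1}, the $L^s$-splitting of Lemma \ref{lemqu1}, and the Caccioppoli inequality of Proposition \ref{proqu1}. To make all constants universal I first rescale: put $\tilde u(y):=u(x_0+ry)$ and $\tilde V(y):=r^2V(x_0+ry)$; then $(-\Delta+\tilde V)\tilde u=0$ on $B_3$ with $\tilde\kappa:=\|\tilde V\|_{L^s(B_3)}=r^{2-n/s}\kappa_V$. It suffices to prove the three-ball estimate for $\tilde u$ on $B_1,B_2,B_3$ with prefactor $\exp(\mathbf{c}_1\tilde\kappa^\gamma)$; when the rescaling is undone the normalisation factors $r^{-n/2}$ cancel across the inequality, and the identity
\[
\tilde\kappa^\gamma=r^{(2-n/s)\gamma}\kappa_V^\gamma=r^{8n/(3n+2)}\kappa_V^\gamma\le r_0^{8n/(3n+2)}\kappa_V^\gamma
\]
absorbs the $r$-dependence into $\mathbf{c}_1=\mathbf{c}_1(n,s,r_0)$. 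I use the weight $\varphi(x)=9-|x|^2$, which satisfies \eqref{varphi} with universal $\beta$, and a cutoff $\chi\in C_0^\infty(B_3)$ with $\chi\equiv 1$ on $B_{5/2}\setminus B_{1/4}$, $\mathrm{supp}\,\chi\subset B_{11/4}\setminus B_{1/8}$, and $|\partial^\alpha \chi|\le \mathbf{k}$ for $|\alpha|\le 2$. Then $\chi\tilde u\in W^{2,p}_{B_{1/8}}(B_3)$ and
\[
\Delta(\chi\tilde u)=\chi\tilde V\tilde u+\mathcal{R},\qquad \mathcal{R}:=2\nabla\chi\cdot\nabla\tilde u+\tilde u\Delta\chi,
\]
with $\mathcal{R}$ supported in $A_1:=B_{1/4}\setminus B_{1/8}$ and $A_2:=B_{11/4}\setminus B_{5/2}$.

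Apply Proposition \ref{prodd1} to $\chi\tilde u$ and split $\tilde V=\tilde V_t^1+\tilde V_t^2$ as in Lemma \ref{lemqu1}. The $\tilde V_t^1$-term of $\chi\tilde V\tilde u$ is bounded by $\|\tilde V_t^1\|_{L^{n/2}}\|\chi\tilde u\|_{L^{p'}_\tau}$ (H\"older) and absorbed via \eqref{dd1.1} once $\|\tilde V_t^1\|_{L^{n/2}}$ is small enough, i.e.\ by \eqref{qu4} once $t\gtrsim\tilde\kappa^{2s/(2s-n)}$. The $\tilde V_t^2$-term is bounded by $Ct\,\|\chi\tilde u\|_{L^2_\tau}$ and absorbed by the $\tau^{3/4+1/(2n)}$ gain in \eqref{dd1} once $\tau\gtrsim t^{4n/(3n+2)}$. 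Combining, for $\tau\ge\tau_V:=\mathbf{c}_1\tilde\kappa^\gamma$ with
\[
\gamma=\frac{2s}{2s-n}\cdot\frac{4n}{3n+2}=\frac{8ns}{(3n+2)(2s-n)},
\]
one has $\|\chi\tilde u\|_{L^2_\tau(B_3)}\le C\|\mathcal{R}\|_{L^p_\tau(B_3)}$.

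The geometry of $\varphi$ produces the strict separation $\varphi|_{B_2\setminus B_{1/4}}\ge 5>11/4\ge\varphi|_{A_2}$, together with $\varphi|_{A_1}\le 575/64$. Hence the left side is at least $e^{5\tau}\|\tilde u\|_{L^2(B_2\setminus B_{1/4})}$, while the right side is at most $Ce^{(575/64)\tau}\|\mathcal{R}\|_{L^p(A_1)}+Ce^{(11/4)\tau}\|\mathcal{R}\|_{L^p(A_2)}$. Proposition \ref{proqu1}, applied on $A_1\Subset B_{1/2}\Subset B_1$ and on $A_2\Subset (B_{23/8}\setminus B_{19/8})\Subset B_3$ (with separation $d\sim 1$), combined with H\"older on the bounded annuli to pass from $L^2$ to $L^p$ and with $(-\Delta+\tilde V)\tilde u=0$, yields
\[
\|\mathcal{R}\|_{L^p(A_1)}\le CK_V\|\tilde u\|_{L^2(B_1)},\qquad \|\mathcal{R}\|_{L^p(A_2)}\le CK_V\|\tilde u\|_{L^2(B_3)},
\]
with $K_V:=\tilde\kappa^{s/(2s-n)}+1$. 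Dividing by $e^{5\tau}$ and adding $\|\tilde u\|_{L^2(B_{1/4})}\le\|\tilde u\|_{L^2(B_1)}$ to the left gives, for all $\tau\ge\tau_V$,
\[
\|\tilde u\|_{L^2(B_2)}\le CK_V\bigl(e^{(255/64)\tau}\|\tilde u\|_{L^2(B_1)}+e^{-(9/4)\tau}\|\tilde u\|_{L^2(B_3)}\bigr).
\]
The standard $\tau$-balancing (equate the two summands when the norm ratio is large enough, else take $\tau=\tau_V$) produces the three-ball estimate for $\tilde u$ with universal $\alpha=\tfrac{9/4}{255/64+9/4}\in(0,1)$ and prefactor $CK_V e^{(255/64)\tau_V}\le C'\exp(\mathbf{c}_1\tilde\kappa^\gamma)$, using that $K_V\le C\exp(\epsilon\tilde\kappa^\gamma)$. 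Undoing the rescaling yields \eqref{qu3}.

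I expect the main obstacle to be the simultaneous bookkeeping of the two absorption thresholds that together pin down $\gamma$---so that the product $(2s/(2s-n))\cdot(4n/(3n+2))$ comes out exactly to $8ns/((3n+2)(2s-n))$---together with the need to arrange the cutoff so that $\min_{B_2\setminus B_{1/4}}\varphi$ is strictly greater than $\max_{A_2}\varphi$, since without this strict gap no decay survives in the outer summand and the $\tau$-balancing fails; this forces the design $\chi\equiv 1$ on a region strictly larger than the information region $B_2$.
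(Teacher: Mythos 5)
Your proposal does not address the statement you were asked to prove. Proposition \ref{prodd1} is the Carleman estimate itself: the weighted inequalities \eqref{dd1} and \eqref{dd1.1}, with the $\tau^{3/4+1/(2n)}$ gain from $L^p_\tau$ to $L^2_\tau$ and the gain from $L^p_\tau$ to $L^{p'}_\tau$, for functions supported away from $\bar\omega$. What you have written is instead a proof of the three-ball inequality \eqref{qu3} (Theorem \ref{mthm1}), and your argument explicitly invokes Proposition \ref{prodd1} ("Apply Proposition \ref{prodd1} to $\chi\tilde u$\dots") as one of its ingredients. As a proof of Proposition \ref{prodd1} this is circular; as a proof of Theorem \ref{mthm1} it is a reasonable sketch of essentially the argument the paper gives later, but that is not the task here.

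For the record, the paper does not prove Proposition \ref{prodd1} either: it states that it is a special case of Theorem 1.1 of the cited work of Dehman, Ervedoza and Thabouti. The substance of that result is the $L^p\to L^{p'}$ (and $L^p\to L^2$ with the fractional power of $\tau$) Carleman estimate for the Laplacian with a convexified weight, which rests on uniform Sobolev/resolvent-type estimates in the spirit of Kenig--Ruiz--Sogge combined with the classical $L^2$ Carleman machinery under the pseudoconvexity condition \eqref{varphi}. None of that analysis appears in your proposal, so there is a genuine and complete gap with respect to the stated proposition: no argument is offered for either \eqref{dd1} or \eqref{dd1.1}. If you intend to prove the proposition rather than cite it, you would need to reproduce (at least in outline) the estimates of the reference; the cutoff, splitting and $\tau$-balancing steps you describe belong to the subsequent application, not to the Carleman inequality itself.
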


We use this proposition to prove the following lemma that will be used in the proof of Theorem \ref{mthm1}.

\begin{lemma}\label{lemci}
Let $\zeta$ be as in Proposition \ref{prodd1}. There exists $\bar{c}=\bar{c}(\zeta,n,s)>0$  such that for all  $\tau >0$, $V\in L^s(D)$ and $u\in W^{2,p}_{\omega}(D)$ we have
\begin{equation}\label{dd6}
\|u\|_{L^2_\tau (D)}\le \bar{c}e^{\bar{c}\varkappa_V^\gamma}\|(-\Delta +V)u\|_{L_\tau ^p(D)}.
\end{equation}
Here $\varkappa_V:=\|V\|_{L^s(D)}$.
\end{lemma}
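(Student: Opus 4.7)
The natural plan is to apply Proposition \ref{prodd1} to the equation rewritten as $\Delta u=Vu-f$, where $f:=(-\Delta+V)u$, and to use Lemma \ref{lemqu1} to split the potential as $V=V_t^1+V_t^2$ with a threshold $t>0$ to be chosen. The triangle inequality together with H\"older's inequality (using $1/p=2/n+1/p'$) gives
\[
\|\Delta u\|_{L^p_\tau(D)}\le \|V_t^1\|_{L^{n/2}(D)}\|u\|_{L^{p'}_\tau(D)}+\|V_t^2\|_{L^\infty(D)}\|u\|_{L^p_\tau(D)}+\|f\|_{L^p_\tau(D)},
\]
and one more H\"older step yields $\|u\|_{L^p_\tau(D)}\le |D|^{1/p-1/2}\|u\|_{L^2_\tau(D)}$. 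Combined with \eqref{qu4} this becomes
\[
\|\Delta u\|_{L^p_\tau(D)}\le C\bigl(t^{-(2s/n-1)}\varkappa_V^{2s/n}\|u\|_{L^{p'}_\tau(D)}+t\|u\|_{L^2_\tau(D)}+\|f\|_{L^p_\tau(D)}\bigr).
\]

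I would then perform two absorptions. First, plugging \eqref{dd1.1} into the $\|u\|_{L^{p'}_\tau}$-term and choosing $t$ so that $C\bar c\,t^{-(2s/n-1)}\varkappa_V^{2s/n}\le 1/2$, i.e.\ $t=C_0\varkappa_V^{2s/(2s-n)}$, absorbs that term into the left-hand side, yielding
\[
\|\Delta u\|_{L^p_\tau(D)}\le C'\bigl(t\|u\|_{L^2_\tau(D)}+\|f\|_{L^p_\tau(D)}\bigr).
\]
Feeding this into \eqref{dd1} gives
\[
\bigl(\tau^{3/4+1/(2n)}-C_2 t\bigr)\|u\|_{L^2_\tau(D)}\le C_2\|f\|_{L^p_\tau(D)}.
\]
The exponent $3/4+1/(2n)=(3n+2)/(4n)$ together with $t\sim \varkappa_V^{2s/(2s-n)}$ dictates the choice $\tau^\ast:=\max(\tau_0,C_1\varkappa_V^\gamma)$, where
\[
\gamma=\frac{2s}{2s-n}\cdot\frac{4n}{3n+2}=\frac{8ns}{(3n+2)(2s-n)},
\]
matching the paper's definition. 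At $\tau=\tau^\ast$ the prefactor $\tau^{3/4+1/(2n)}-C_2 t$ is $\ge \tfrac12\tau_0^{3/4+1/(2n)}$, producing
\[
\|u\|_{L^2_{\tau^\ast}(D)}\le \bar c\,\|f\|_{L^p_{\tau^\ast}(D)}.
\]

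Finally, since the lemma is stated for every $\tau>0$ but \eqref{dd1} requires $\tau\ge\tau_0$, I would bridge from $\tau^\ast$ to general $\tau$ using that $\varphi\ge 0$ on $\bar D$, which can be arranged by replacing $\varphi$ with $\varphi-\min_{\bar D}\varphi$ (this leaves every hypothesis of Proposition \ref{prodd1} intact and only shifts $\bar c$). For $\tau\ge\tau^\ast$ the absorption argument applies verbatim at $\tau$. For $0<\tau\le\tau^\ast$, the monotonicity $\|u\|_{L^2_\tau(D)}\le \|u\|_{L^2_{\tau^\ast}(D)}$ together with $\|f\|_{L^p_{\tau^\ast}(D)}\le e^{\tau^\ast\|\varphi\|_\infty}\|f\|_{L^p_\tau(D)}$ introduces a multiplicative factor $e^{\tau^\ast\|\varphi\|_\infty}\le e^{\bar c\,\varkappa_V^\gamma}$, which is exactly the shape $\bar c\,e^{\bar c\varkappa_V^\gamma}$ claimed. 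I expect the main technical hurdle to be the simultaneous bookkeeping of the two absorption conditions $C\bar c\,t^{-(2s/n-1)}\varkappa_V^{2s/n}\le 1/2$ and $\tau^{3/4+1/(2n)}\ge 2C_2 t$ while respecting the dependence of every constant on $\zeta,n,s$ alone: it is this interplay that pins down the exponent $\gamma$ exactly as above.
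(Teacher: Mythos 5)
Your proposal is correct and follows essentially the same route as the paper: the same splitting $V=V_t^1+V_t^2$ from Lemma \ref{lemqu1}, absorption of the $L^{n/2}$ part via \eqref{dd1.1}, absorption of the $L^\infty$ part via \eqref{dd1} under the threshold $\tau\gtrsim\varkappa_V^\gamma$, and a final shift in $\tau$ (the paper replaces $\tau$ by $\tau+\bar c\varkappa_V^\gamma+\tau_0$, which is the same boundedness-of-$\varphi$ argument you phrase via monotonicity of the weighted norms) to cover all $\tau>0$ at the cost of the factor $e^{\bar c\varkappa_V^\gamma}$. The only cosmetic difference is that the paper absorbs into $\|Vu\|_{L^p_\tau}$ rather than into $\|\Delta u\|_{L^p_\tau}$, which changes nothing substantive.
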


\begin{proof}
Let $V\in L^s (D)$ and $u\in W^{2,p}_{\omega}(D)$. From Lemma \ref{lemqu1}, for all $t>0$, $V$ can be decomposed into two terms $V=V_t^1+V_t^2$ so that $V_t^1\in L^{n/2}(D)$, $V_t^2\in L^\infty (D)$ and
\begin{equation}\label{dd2}
\|V_t^1\|_{L^{n/2}(D)}\le t^{-(2s/n-1)}\varkappa_V^{2s/n} ,\quad \|V_t^2\|_{L^\infty(D)}\le t.
\end{equation}

Let $\tau_0$ as in Proposition \ref{prodd1} and $\tau \ge \tau_0$. Then, we have
\begin{align*}
\|Vu\|_{L^p_\tau (D)}&\le \|V_t^1u\|_{L^p_\tau (D)}+\|V_t^2u\|_{L^p_\tau (D)}
\\
&\le \|V_t^1\|_{L^{n/2}(D)}\|u\|_{L^{p'}_\tau (D)}+\bar{c}_0\|V_t^2\|_{L^\infty(\Omega)}\|u\|_{L^2_\tau (D)},
\end{align*}
where $\bar{c}_0=\bar{c}_0(n,D)>0$ is a constant. This and \eqref{dd1.1} imply
\[
\|Vu\|_{L^p_\tau (D)}\le \bar{c}\|V_t^1\|_{L^{n/2}(D)}\|\Delta u\|_{L^p_\tau (D)}+\bar{c}_0\|V_t^2\|_{L^\infty(D)}\|u\|_{L^2_\tau (D)}.
\]
Here and in the sequel, $\bar{c}=\bar{c}(\zeta,n,s)>0$ is a generic constant. Hence 
\begin{align*}
&\|Vu\|_{L^p_\tau (D)}\le \bar{c}\|V_t^1\|_{L^{n/2}(D)}\|Vu\|_{L^p_\tau (D)}+ \bar{c}\|V_t^1\|_{L^{n/2}(D)}\|(-\Delta+V) u\|_{L^p_\tau (D)}
\\
&\hskip 8cm +\bar{c}_0\|V_t^2\|_{L^\infty(D)}\|u\|_{L^2_\tau (D)}.
\end{align*}
Combined with \eqref{dd2}, this inequality yields
\begin{align}
&\|Vu\|_{L^p_\tau (D)}\le \bar{c}t^{-(2s/n-1)}\varkappa_V^{2s/n}\|Vu\|_{L^p_\tau (D)}\label{dd3}
\\
&\hskip 3cm + \bar{c}t^{-(2s/n-1)}\varkappa_V^{2s/n}\|(-\Delta+V) u\|_{L^p_\tau (D)}
+\bar{c}_0t\|u\|_{L^2_\tau (D)}.\nonumber
\end{align}

Assume for the moment that $t>0$ can be chosen in such a way that
\begin{equation}\label{t1}
\bar{c}t^{-(2s/n-1)}\varkappa_V^{2s/n}\le 1/2.
\end{equation}
In this case \eqref{dd3} gives
\begin{equation}\label{dd4}
\|Vu\|_{L^p_\tau (D)}\le \|(-\Delta+V) u\|_{L^p_\tau (D)}+2\bar{c}_0t\|u\|_{L^2_\tau (D)}.
\end{equation}
Combining \eqref{dd1} and \eqref{dd4}, we obtain
\begin{equation}\label{dd5}
\tau^{3/4+1/(2n)}\|u\|_{L^2_\tau (D)}\le \bar{c}\|(-\Delta +V)u\|_{L_\tau ^p(D)}+\bar{c}t\|u\|_{L^2_\tau (D)}.
\end{equation}
Let us now choose $t$ so that $\bar{c}t=\tau^{3/4+1/(2n)}/2$. In light of \eqref{t1}, this choice is possible provided that
\begin{equation}\label{tau1}
\tau \ge \bar{c}\varkappa_V^\gamma.
\end{equation}
Under this condition, we have
\begin{equation}\label{dd5.0}
\tau^{3/4+1/(2n)}\|u\|_{L^2_\tau (D)}\le \bar{c}\|(-\Delta +V)u\|_{L_\tau ^p(D)}.
\end{equation}
Then replacing in \eqref{dd5.0} $\tau$ by $\tau+\bar{c}\varkappa_V^\gamma+\tau_0$ with $\tau >0$, we obtain
\[
\|u\|_{L^2_\tau (D)}\le\bar{c}e^{\bar{c}\varkappa_V^\gamma}\|(-\Delta +V)u\|_{L_\tau ^p(D)}.
\]
This is the expected inequality.
\end{proof}

Recall that $B_r:=B(0,r)$, $r>0$, and
\[
[\![a,b]\!]:=\{x\in \mathbb{R}^n;\; a<|x|<b\},\quad 0\le a<b.
\]

\begin{proof}[Proof of Theorem \ref{mthm1}]
Let $\tau >0$. First, we  use \eqref{dd6} with $D=B_1=:B$, $\omega_0=B_{1/8}$, $\omega=B_{1/7}$ and $\varphi(x)=1-|x|^2$. In the present case \eqref{dd6} holds with constant $\bar{c}=\bar{c}(n,s)>0$. That is, for all $\mathcal{V}\in L^s(B)$ and $w\in W^{2,p}_{B_{1/7}} (B)$ we have 
\begin{equation}\label{dd7}
\|w\|_{L^2_\tau (B)}\le \mathbf{c}e^{\mathbf{c}\varkappa_{\mathcal{V}}^\gamma}\|(-\Delta +\mathcal{V})w\|_{L_\tau ^p(B)}.
\end{equation}

Let $(r_j)_{1\le j\le 8}$ be an increasing sequence of $(1/7,1)$ and $\chi\in C_0^\infty (B)$ satisfying $0\le \chi \le 1$,
\[
\chi=\left\{
\begin{array}{lll}
0\quad &\mbox{in}\; [\![0,r_2]\!]\cup [\![r_7,1]\!],
\\
1 &\mbox{in}\; [\![r_3,r_6]\!],
\end{array}
\right.
\]
and
\begin{align*}
&|\Delta \chi|+|\nabla \chi|^2\le \mathbf{k} d_1^{-2}\quad \mbox{in}\; [\![r_2,r_3]\!],
\\
&|\Delta \chi|+|\nabla \chi|^2\le \mathbf{k} d_2^{-2}\quad \mbox{in}\; [\![r_6,r_7]\!],
\end{align*}
where $\mathbf{k}>0$ is a generic universal constant, and  $d_1=r_3-r_2$ and $d_2=r_7-r_6$.

Let $\mathcal{V}\in L^s(B)$ and $w\in W^{2,p}(B)$ satisfying $(-\Delta +\mathcal{V})w=0$. As $\chi w\in W^{2,p}_{B_{1/7}} (B)$, applying \eqref{dd7}, we obtain
\begin{equation}\label{dd8}
\|\chi w\|_{L^2_\tau (B)}\le \mathbf{c}e^{\mathbf{c}\varkappa_{\mathcal{V}}^\gamma}\|(-\Delta +\mathcal{V})(\chi w)\|_{L_\tau ^p(B)}.
\end{equation}
This and the inequality $\|h\|_{L^p(B)}\le c_0\|h\|_{L^2(B)}$ for all $h\in L^2(B)$, imply
\begin{align*}
&c_0\|e^{\tau \varphi}(-\Delta +\mathcal{V})(\chi w)\|_{L^p(B)}
\\
&\hskip 2cm\le  d_1^{-2}\|e^{\tau \varphi}w\|_{L^2([\![r_2,r_3]\!])}+d_1^{-1}\|e^{\tau \varphi}\nabla w\|_{L^2([\![r_2,r_3]\!])}
\\
&\hskip 3cm +d_2^{-2}\|e^{\tau \varphi}w\|_{L^2([\![r_6,r_7]\!])}+d_2^{-1}\|e^{\tau \varphi}\nabla w\|_{L^2([\![r_6,r_7]\!])}.
\end{align*}
Here and henceforth $c_0=c_0(n)>0$ is a generic constant. Whence
\begin{align*}
&c_0\|e^{\tau \varphi}(-\Delta +\mathcal{V})(\chi w)\|_{L^p(B)}
\\
&\hskip 2cm\le  d_1^{-2}e^{\tau(1-r_2^2)}\|w\|_{L^2([\![r_2,r_3]\!])}+d_1^{-1}e^{\tau(1-r_2^2)}\|\nabla w\|_{L^2([\![r_2,r_3]\!])}
\\
&\hskip 3cm +d_2^{-2}e^{\tau(1-r_6^2)}\|e^{\tau \varphi}w\|_{L^2([\![r_6,r_7]\!])}+d_2^{-1}e^{\tau(1-r_6^2)}\|\nabla w\|_{L^2([\![r_6,r_7]\!])},
\end{align*}
from which we derive
\begin{align*}
&c_0\|e^{-\tau |x|^2}(-\Delta +\mathcal{V})(\chi w)\|_{L^p(B)}
\\
&\hskip 2cm\le  d_1^{-2}e^{-\tau r_2^2}\|w\|_{L^2([\![r_2,r_3]\!])}+d_1^{-1}e^{-\tau r_2^2}\|\nabla w\|_{L^2([\![r_2,r_3]\!])}
\\
&\hskip 3cm +d_2^{-2}e^{-\tau r_6^2}\|e^{\tau \varphi}w\|_{L^2([\![r_6,r_7]\!])}+d_2^{-1}e^{-\tau r_6^2}\|\nabla w\|_{L^2([\![r_6,r_7]\!])}.
\end{align*}
Assume that $d_1=d_2=d$, $r_2-r_1=r_4-r_3=d$ and $r_7-r_6=r_8-r_7=d$. Applying Caccioppoli's inequality \eqref{ca1}, we get
\begin{align*}
&c_0\|e^{-\tau |x|^2}(-\Delta +\mathcal{V})(\chi w)\|_{L^p(B)}
\\
&\hskip 2cm\le  d^{-2}(\varkappa_{\mathcal{V}}^{s/(2s-n)}+1)\left[ e^{-\tau r_2^2}\|w\|_{L^2([\![r_1,r_4]\!])} +e^{-\tau r_6^2}\|w\|_{L^2([\![r_5,r_8]\!])}\right].
\end{align*}

In light of \eqref{dd8}, this inequality yields
\[
c_0e^{-\tau r_5^2}\|w\|_{L^2([\![r_3,r_5]\!])}\le d^{-2}e^{\mathbf{c}\varkappa_{\mathcal{V}}^\gamma}\left[ e^{-\tau r_2^2}\|w\|_{L^2([\![r_1,r_4]\!])} +e^{-\tau r_6^2}\|w\|_{L^2([\![r_5,r_8]\!])}\right].
\]
That is we have 
\[
c_0\|w\|_{L^2([\![r_3,r_5]\!])}\le d^{-2}e^{\mathbf{c}\varkappa_{\mathcal{V}}^\gamma}\left[ e^{\tau a }\|w\|_{L^2([\![r_1,r_4]\!])} +e^{-\tau b}\|w\|_{L^2([\![r_5,r_8]\!])}\right],
\]
where $a:=r_5^2-r_2^2$ and $b:=r_6^2-r_5^2$.

Next, taking $r_1=1/6$, $r_2=2/9$, $r_3=5/18$, $r_4=1/3$, $r_5=2/3$, $r_6=13/18$, $r_7=7/9$ and $r_8=5/6$, we find $d=1/18$, $a=32/81$, $b=25/324$ and 
\[
c_0\|w\|_{L^2([\![5/18,2/3]\!])}\le e^{\mathbf{c}\varkappa_{\mathcal{V}}^\gamma}\left[ e^{\tau a  }\|w\|_{L^2([\![1/6,1/3]\!])} +e^{-\tau b}\|w\|_{L^2([\![2/3,5/6]\!])}\right]
\]
and thus
\[
c_0\|w\|_{L^2(B_{2/3})}\le e^{\mathbf{c}\varkappa_{\mathcal{V}}^\gamma}\left[ e^{\tau a }\|w\|_{L^2(B_{1/3})} +e^{-\tau b}\|w\|_{L^2(B)}\right].
\]
Choosing $\tau= 1/(a+b)\ln \left(\|w\|_{L^2(B)}/\|w\|_{L^2(B_{1/3})}\right)$ in this inequality, we obtain
\begin{equation}\label{dd9}
c_0\|w\|_{L^2(B_{2/3})}\le e^{\mathbf{c}\varkappa_{\mathcal{V}}^\gamma}\|w\|_{L^2(B_{1/3})}^\alpha\|w\|_{L^2(B)}^{1-\alpha},
\end{equation}
where, $\alpha:=b/(a+b)$.

Next, let $x_0\in \Omega^{3r}$ with $0<r<r_0/3$,  $V\in L^s(\Omega)$ and $u\in W^{2,p}(\Omega)$ satisfying $(-\Delta +V)u=0$, and set
\[
w(y):=u(x_0+3ry),\quad  \mathcal{V}(y):=(3r)^2V(x_0+3ry),\quad y\in B.
\]
Then $(-\Delta +\mathcal{V})w=0$ and
\[
\varkappa_\mathcal{V}=(3r)^{2-n/s}\|V\|_{L^s(B(x_0,3r))}\le (3r)^{2-n/s}\kappa_V\le r_0^{2-n/s}\kappa_V.
\]
Applying \eqref{dd9}, we obtain
\[
\|u\|_{L^2(B(x_0,2r))}\le \mathbf{c} e^{\mathbf{c}_1\kappa_V^\gamma}\|u\|_{L^2(B(x_0,r))}^\alpha\|u\|_{L^2(B(x_0,3r))}^{1-\alpha}.
\]
This is the expected inequality.
\end{proof}

\subsection{Potential in $L^\infty$}

The proof of the three-ball inequality in the previous subsection can be adapted to cover the case $s=\infty$. The advantage of the proof we provide in the current subsection is that it remains valid whenever $\Delta$ is substituted by the Laplace-Beltrami operator $\Delta_g$, where $g$ is a metric with coefficients belonging to $C^{0,1}(\overline{\Omega})$.

Pick $D$ a bounded domain of $\mathbb{R}^n$. Let $\psi \in C^2(\bar{D})$ chosen so that there exists $\varrho>0$ such that
\[
\psi \ge \varrho, \quad |\nabla \psi |\ge \varrho .
\]
For each $\lambda >0$, set $\phi_\lambda:=e^{\lambda \psi}$ and recall that the closure of $C_0^\infty (D)$ in $H^2(D)$ is usually denoted by $H_0^2(D)$. If $\zeta:=\left(\varrho ,\|\psi\|_{C^2(\bar{D})}\right)$, then \cite[Theorem 2.8]{ChLN} shows that there exist $\lambda=\lambda(\zeta)>0$, $\tau_0=\tau_0(\zeta)>0$ and $\bar{c}=\bar{c}(\zeta)>0$ so that for all $\tau \ge \tau_0$ and $u\in H_0^2(D)$ we have
\begin{equation}\label{C1}
\tau^{3/2}\|u\|_{L_\tau^2(D)}\le \bar{c}\|\Delta u\|_{L_\tau^2(D)}.
\end{equation}
Here $L_\tau^2(D):=L^2(D, \Phi_\tau (x)dx)$ with $\Phi_\tau=e^{2\tau \phi_\lambda }$.

Let $V\in L^\infty(D)$, $\varkappa_V:=\|V\|_{L^\infty(D)}$ and $u\in H_0^2(D)$. Using \eqref{C1} and 
\[
\|\Delta u\|_{L_\tau^2(D)}\le \|(-\Delta+V) u\|_{L_\tau^2(D)}+\varkappa_V\|u\|_{L_\tau^2(D)},
\]
we obtain
\begin{equation}\label{C2}
\tau^{3/2}\|u\|_{L_\tau^2(D)}\le \bar{c}\|(-\Delta +V)u\|_{L_\tau^2(D)}+\bar{c}\varkappa_V\|u\|_{L_\tau^2(D)}.
\end{equation}

In light of \eqref{C2}, we can proceed as in the proof of Lemma \ref{lemci} to obtain the following result.
\begin{lemma}\label{lemC1}
Let $\zeta$ be as above. There exists $\bar{c}=\bar{c}(\zeta)>0$  such that for all  $\tau >0$, $V\in L^\infty(D)$ and $u\in H_0^2(D)$ we have
\begin{equation}\label{C3}
\|u\|_{L^2_\tau (D)}\le \bar{c}e^{\bar{c}\varkappa_V}\|(-\Delta +V)u\|_{L_\tau ^2(D)}.
\end{equation}
\end{lemma}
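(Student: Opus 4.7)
The plan is to mimic Lemma~\ref{lemci} with \eqref{C1}--\eqref{C2} replacing the mixed-norm Carleman pair \eqref{dd1}--\eqref{dd1.1}. Since $V\in L^\infty(D)$ the potential enters the Carleman estimate directly through $\|Vu\|_{L^2_\tau(D)}\le\varkappa_V\|u\|_{L^2_\tau(D)}$, which is already built into \eqref{C2}; in particular no splitting $V=V_t^1+V_t^2$ is needed, so the argument is strictly simpler than that of Lemma~\ref{lemci}.

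First I would absorb the zeroth-order term on the right of \eqref{C2} into the left. Whenever $\tau\ge\tau_0$ satisfies $\tau^{3/2}\ge 2\bar c\varkappa_V$, that is as soon as $\tau\ge\tau_V:=\max\bigl(\tau_0,(2\bar c\varkappa_V)^{2/3}\bigr)$, the inequality \eqref{C2} becomes
\[
\tfrac12\tau^{3/2}\|u\|_{L^2_\tau(D)}\le \bar c\|(-\Delta+V)u\|_{L^2_\tau(D)}.
\]
Enlarging $\tau_0$ once and for all so that $\tau_0\ge 1$, I may drop the factor $\tau^{3/2}/2\ge 1/2$ on the left and conclude
\[
\|u\|_{L^2_\tau(D)}\le 2\bar c\,\|(-\Delta+V)u\|_{L^2_\tau(D)},\qquad \tau\ge\tau_V.
\]

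To extend this to arbitrary $\tau>0$, I apply the standard translation trick used at the end of the proof of Lemma~\ref{lemci}. Replacing $\tau$ by $\tau+\tau_V$ in the previous display, and using the two elementary weight comparisons
\[
\|u\|_{L^2_\tau(D)}\le\|u\|_{L^2_{\tau+\tau_V}(D)},\qquad \|(-\Delta+V)u\|_{L^2_{\tau+\tau_V}(D)}\le e^{\tau_V\|\phi_\lambda\|_{L^\infty(D)}}\|(-\Delta+V)u\|_{L^2_\tau(D)},
\]
both immediate from $\phi_\lambda\ge e^{\lambda\varrho}>0$ being bounded on $\bar D$, I obtain for all $\tau>0$
\[
\|u\|_{L^2_\tau(D)}\le 2\bar c\,e^{\tau_V\|\phi_\lambda\|_{L^\infty(D)}}\|(-\Delta+V)u\|_{L^2_\tau(D)}.
\]
Finally, the bound $\tau_V\le\tau_0+(2\bar c\varkappa_V)^{2/3}\le C(1+\varkappa_V)$, with $C$ depending only on $\zeta$ (using $x^{2/3}\le 1+x$ for $x\ge 0$), upgrades the prefactor to $\bar c\,e^{\bar c\varkappa_V}$ and yields \eqref{C3}. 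I do not anticipate any real obstacle; the only point worth noting is that the $\varkappa_V^{2/3}$ loss produced by the absorption step is painlessly absorbed into an exponential in $\varkappa_V$.
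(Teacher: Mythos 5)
Your proof is correct and follows essentially the same route the paper has in mind: the paper simply says ``proceed as in the proof of Lemma~\ref{lemci},'' and you do exactly that---absorb the $\bar c\varkappa_V\|u\|_{L^2_\tau}$ term by requiring $\tau\ge\tau_V$, drop the superfluous power of $\tau$ using $\tau_0\ge 1$, and extend to all $\tau>0$ via the shift $\tau\mapsto\tau+\tau_V$ together with the monotonicity and boundedness of the weight $\Phi_\tau=e^{2\tau\phi_\lambda}$ (the latter coming from $0<e^{\lambda\varrho}\le\phi_\lambda\le\|\phi_\lambda\|_{L^\infty(\bar D)}$). Your observation that no decomposition $V=V_t^1+V_t^2$ is needed here, so the argument is strictly simpler than that of Lemma~\ref{lemci}, is accurate, and the bound $\tau_V\le C(1+\varkappa_V)$ correctly packages the resulting loss into the exponential $e^{\bar c\varkappa_V}$ of \eqref{C3}.
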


In view of this lemma, we can modify the proof of Theorem \ref{mthm1} to establish the following result where, for $V\in L^\infty(\Omega)$, $\varphi_\infty(V)$ is a generic constant of the form

\[
\varphi_\infty (V):=e^{\mathbf{c}_1\kappa_V}.
\]

\begin{theorem}\label{thmC1}
Let $r_0>0$ chosen so that $\Omega^{r_0}$ is nonempty and let $0<r<r_0/3$. For all $x_0\in \Omega^{r}$, $V\in L^\infty(\Omega)$ and $u\in H^2(\Omega)$ satisfying $(-\Delta +V)u=0$ we have
\begin{equation}\label{C4}
\|u\|_{L^2(B(x_0,2r))}\le 
\mathbf{c} \varphi_\infty(V)\|u\|_{L^2(B(x_0,r))}^{\alpha} \|u\|_{L^2(B(x_0,3r))}^{1-\alpha}.
\end{equation}
\end{theorem}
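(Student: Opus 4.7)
The plan is to mimic the proof of Theorem \ref{mthm1} almost verbatim, replacing the $L^p$ Carleman estimate of Lemma \ref{lemci} by the $L^2$ Carleman estimate \eqref{C3} of Lemma \ref{lemC1}, and Proposition \ref{proqu1} by Proposition \ref{proinfty1}. The first choice to make is the pair $(D,\psi)$ for Lemma \ref{lemC1}: the weight must satisfy $\psi\ge \varrho$ and $|\nabla \psi|\ge \varrho$ on all of $\bar D$, which excludes a radial weight vanishing at the origin. I would therefore take $D$ to be the annulus $[\![1/8,1]\!]$ and $\psi(x)=2-|x|^2$, so that $\psi\ge 1$ and $|\nabla \psi|=2|x|\ge 1/4$ on $\bar D$. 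With this data, Lemma \ref{lemC1} applies with constants depending only on $n$.

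Next, I would pick an increasing sequence $1/7<r_1<\ldots<r_8<1$ and a cutoff $\chi\in C_0^\infty(D)$ with $\chi=1$ on $[\![r_3,r_6]\!]$, supported in $[\![r_2,r_7]\!]$, and satisfying the usual derivative estimates of order $d^{-1}$ and $d^{-2}$ in the transition annuli of common width $d$. Given a solution $u\in H^2(\Omega)$ of $(-\Delta+V)u=0$ and $x_0\in \Omega^{3r}$ with $0<r<r_0/3$, I rescale by $w(y):=u(x_0+3ry)$, $\mathcal{V}(y):=(3r)^2V(x_0+3ry)$, so that $(-\Delta+\mathcal{V})w=0$ in $B_1$ and $\varkappa_\mathcal{V}\le r_0^2\kappa_V\le \kappa_V$. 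Since $\chi w\in H_0^2(D)$, applying \eqref{C3} to $\chi w$ and using the identity $(-\Delta+\mathcal{V})(\chi w)=-2\nabla\chi\cdot\nabla w-(\Delta\chi)w$ (which rests on $(-\Delta+\mathcal{V})w=0$) reduces the right-hand side to two boundary-layer integrals on $[\![r_2,r_3]\!]$ and $[\![r_6,r_7]\!]$.

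I would then extract the extremal values of $e^{\tau\psi}$ on these two annuli (using that $\psi$ decreases in $|x|$), absorb the $\nabla w$ contributions via Proposition \ref{proinfty1} on slightly enlarged annuli $[\![r_1,r_4]\!]$ and $[\![r_5,r_8]\!]$ at the cost of a factor $d^{-1}+\varkappa_\mathcal{V}$, and bound the left-hand side from below by $e^{\tau\psi(r_5)}\|w\|_{L^2([\![r_3,r_5]\!])}$. This yields an inequality of the form
\[
c_0\|w\|_{L^2([\![r_3,r_5]\!])}\le e^{\mathbf{c}\varkappa_\mathcal{V}}\left[e^{\tau a}\|w\|_{L^2([\![r_1,r_4]\!])}+e^{-\tau b}\|w\|_{L^2([\![r_5,r_8]\!])}\right]
\]
with $a=\psi(r_2)-\psi(r_5)>0$ and $b=\psi(r_5)-\psi(r_6)>0$. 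Fixing numerical values of $r_1,\ldots,r_8$ and optimizing in $\tau$ by the standard convexity choice $\tau=(a+b)^{-1}\ln(\|w\|_{L^2(B_1)}/\|w\|_{L^2(B_{1/3})})$ converts this into the rescaled three-ball inequality on $B_{1/3}\subset B_{2/3}\subset B_1$, and unrescaling back to $u$ delivers \eqref{C4}.

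The main obstacle is not the algebra but the geometric compatibility with Lemma \ref{lemC1}: unlike Proposition \ref{prodd1} (which tolerates $|\nabla \varphi|=0$ away from a compact set), Lemma \ref{lemC1} demands $\psi,|\nabla\psi|\ge \varrho$ everywhere on $\bar D$, and this is what forces the annular choice of $D$. Once this is settled, the exponential $\varphi_\infty(V)=e^{\mathbf{c}_1\kappa_V}$ in the conclusion arises directly from the linear dependence on $\varkappa_V$ in \eqref{C3}, with no interpolation of the kind that produced the exponent $\gamma$ in Theorem \ref{mthm1}.
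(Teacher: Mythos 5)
Your proposal is correct and follows the route the paper itself indicates (the paper only sketches this proof as a modification of Theorem \ref{mthm1} via Lemma \ref{lemC1}), and your annular choice of $D$ to satisfy the global nondegeneracy $\psi\ge\varrho$, $|\nabla\psi|\ge\varrho$ is exactly the right adaptation. The one imprecision is that the weight in Lemma \ref{lemC1} is $\Phi_\tau=e^{2\tau\phi_\lambda}$ with $\phi_\lambda=e^{\lambda\psi}$ and $\lambda$ a fixed constant, so the exponents $a$ and $b$ should be differences of $\phi_\lambda$-values rather than of $\psi$-values; since $\phi_\lambda$ is a fixed increasing function of $\psi$, this is a relabeling that changes nothing in the argument.
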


\subsection{Potential in $L^{n/2}$}

Unlike the case $s\in (n/2,\infty]$, we will use in the current case, $s=n/2$, a singular weight Carleman inequality whose parameter is subject to a constraint. Let
\[
\Lambda:=\{ \lambda >0;\; \mathrm{dist}(\lambda ,\mathbb{N}+(n-2)/2)= 1/2\}.
\]
Let $r>0$ so that $\Omega^{4r}$ is nonempty and set $B:=B_{4r}$ and $\dot{B}:=B\setminus\{0\}$. We recall the following Carleman inequality appearing in \cite[(3.4)]{JK} : there exists $\vartheta=\vartheta(n)>0$ such that for any $\lambda\in \Lambda$ and $u\in W^{2,p}(B)$ with $\mathrm{supp}(u)\subset \dot{B}$ we have
\begin{equation}\label{car}
\||x|^{-\lambda}u\|_{L^{p'}(B)}\le \vartheta\||x|^{-\lambda}\Delta u\|_{L^p(B)}.
\end{equation}

Next, define
\[
\mathscr{V}:=\{V\in L^{n/2}(\Omega);\; 2\sigma^2\kappa_V<1\; \mathrm{and}\; \vartheta \kappa_V<1\}\; (\subset \mathscr{V}_0).
\]

Let $V\in \mathscr{V}$, $x_0\in \Omega^{4r}$, $\lambda\in \Lambda$ and $u\in W^{2,p}(\Omega)$ with $\mathrm{supp}(u(x_0+\cdot))\subset \dot{B}$. Applying H\"older's inequality, we obtain
\begin{align*}
\|V(x_0+\cdot )|x|^{-\lambda}u(x_0+\cdot)\|_{L^p(B)}&\le \|V(x_0+\cdot)\|_{L^{n/2}(B)}\||x|^{-\lambda}u(x_0+\cdot)\|_{L^{p'}(B)}
\\
&\le \kappa_V\||x|^{-\lambda }u(x_0+\cdot)\|_{L^{p'}(B)},
\end{align*}
which, in combination with \eqref{car}, yields
\begin{align*}
\||x|^{-\lambda}u(x_0+\cdot)\|_{L^{p'}(B)}&\le \vartheta\||x|^{-\lambda}(-\Delta+V(x_0+\cdot)) u(x_0+\cdot)\|_{L^p(B)}
\\
&\hskip 2cm + \vartheta \|V(x_0+\cdot )|x|^{-\lambda}u(x_0+\cdot)\|_{L^p(B)}
\\
&\le \vartheta\||x|^{-\lambda}(-\Delta+V(x_0+\cdot)) u(x_0+\cdot)\|_{L^p(B)}
\\
&\hskip 3cm + \vartheta \kappa_V\||x|^{-\lambda}u(x_0+\cdot)\|_{L^{p'}(B)}.
\end{align*}
Thus
\begin{equation}\label{sc1}
\||x|^{-\lambda}u(x_0+\cdot)\|_{L^{p'}(B)}\le \vartheta(1-\vartheta\kappa_V)^{-1}\||x|^{-\lambda}(-\Delta+V(x_0+\cdot)) u(x_0+\cdot)\|_{L^p(B)}.
\end{equation}
We will use hereinafter the notation
\[
\vartheta_V:=\vartheta(1-\vartheta\kappa_V)^{-1},\quad V\in \mathscr{V}.
\]
In this case, \eqref{sc1} can be rewritten in the form
\begin{equation}\label{sc2}
\||x|^{-\lambda}u(x_0+\cdot)\|_{L^{p'}(B)}\le \vartheta_V\||x|^{-\lambda}(-\Delta+V(x_0+\cdot) u(x_0+\cdot)\|_{L^p(B)}.
\end{equation}

From now on, for $V\in \mathscr{V}$, $q_V$ denotes a generic constant of the form
\[
q_V:=\mathbf{k}\vartheta_V(1+\kappa_V^1),
\]
that is,
\begin{equation}\label{qV}
q_V=\mathbf{k}\vartheta_V\left[2(1+\sigma)\mathbf{I}_V^2+2\sigma\sqrt{\kappa_V}\, \mathbf{I}_V+\sigma/\sqrt{1+\sigma^2}+1\right],
\end{equation}
where we recall that $\mathbf{I}_V:=(1-2\sigma^2\kappa_V)^{-1/2}$ and $\mathbf{k}>0$ denotes a generic universal constant.

Replacing $\mathbf{k}$ by $\max(\mathbf{k},1)$ and $\vartheta$ by $\max(\vartheta,1)$, we assume that $q_V\ge 1$.

\begin{theorem}\label{thmsc2}
Fix $0<r_0\le 1$ so that $\Omega^{r_0}$ is nonempty and let $0<r<r_0/4$. For all $x_0\in \Omega^{4r}$, $V\in \mathscr{V}$ and $u\in W^{2,p}(\Omega)$ satisfying $(-\Delta +V)u=0$ we have
\begin{equation}\label{a1.1}
\|u\|_{L^2(B(x_0,2r))}\le q_Vr^{-1}\|u\|_{L^2(B(x_0,3r))}^{1-\alpha}\|u\|_{L^2(B(x_0,r))}^{\alpha}.
\end{equation}
\end{theorem}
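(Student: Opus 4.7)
The plan is to mirror the structure of the proof of Theorem~\ref{mthm1}, with the singular-weight Carleman estimate \eqref{sc2} taking the place of \eqref{dd6} and the Caccioppoli inequality of Proposition~\ref{prosc1} taking the place of Proposition~\ref{proqu1}. Set $\tilde v(y):=u(x_0+y)$ and $\tilde V(y):=V(x_0+y)$, so that $(-\Delta+\tilde V)\tilde v=0$ on $B_{4r}$ (recall $x_0\in\Omega^{4r}$); all estimates will be written in the $y$ variable and then translated back.

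First I would fix a radial cutoff $\chi\in C_0^\infty(\dot B_{4r})$ which vanishes on $B_{\rho_0}$ and outside $B_{\rho_3}$, equals $1$ on $[\![\rho_1,\rho_2]\!]$, and satisfies $|\nabla\chi|\lesssim 1/d$, $|\Delta\chi|\lesssim 1/d^{2}$ on the two transition annuli of width $d\sim r$. For concreteness one can take $\rho_0=r/2$, $\rho_1=3r/4$, $\rho_2=5r/2$, $\rho_3=11r/4$, so that a mild enlargement of the inner transition stays inside $B_r$ and a mild enlargement of the outer transition stays inside $B_{3r}$. Since $\chi\tilde v$ has compact support in $\dot B_{4r}$, inequality \eqref{sc2} applies to $\chi\tilde v$ for every $\lambda\in\Lambda$, and the commutator identity $(-\Delta+\tilde V)(\chi\tilde v)=-2\nabla\chi\cdot\nabla\tilde v-(\Delta\chi)\tilde v$ (which uses $(-\Delta+\tilde V)\tilde v=0$) confines the right-hand side of \eqref{sc2} to $[\![\rho_0,\rho_1]\!]\cup[\![\rho_2,\rho_3]\!]$.

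The heart of the argument is to exploit the monotonicity of the weight $|y|^{-\lambda}$. On the left-hand side, I would restrict to a subannulus $[\![\rho_1,\rho_2']\!]$ with $\rho_2':=2r$, where $\chi=1$ and $|y|^{-\lambda}\ge\rho_2'^{-\lambda}$, and convert $L^{p'}$ into $L^{2}$ through $\|w\|_{L^2(A)}\le |A|^{1/n}\|w\|_{L^{p'}(A)}$; this Hölder step is the origin of the $r^{-1}$ prefactor in \eqref{a1.1}. On the right-hand side, use $|y|^{-\lambda}\le\rho_0^{-\lambda}$ on the inner transition and $|y|^{-\lambda}\le\rho_2^{-\lambda}$ on the outer one, pass the $L^p$ norms to $L^2$ norms, and invoke Proposition~\ref{prosc1} to trade $\|\nabla\tilde v\|_{L^2}$ for $d^{-1}\kappa_V^1\|\tilde v\|_{L^2}$ on slightly enlarged annuli (the source term disappears because $(-\Delta+\tilde V)\tilde v=0$). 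Collecting yields an estimate of the form
\[
\|\tilde v\|_{L^2([\![\rho_1,\rho_2']\!])}\le C\,\vartheta_V(1+\kappa_V^1)\,r^{-1}\bigl[(\rho_2'/\rho_0)^{\lambda}\|\tilde v\|_{L^2(B_r)}+(\rho_2'/\rho_2)^{\lambda}\|\tilde v\|_{L^2(B_{3r})}\bigr],
\]
whose prefactor matches the definition \eqref{qV} of $q_V$ up to universal constants.

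Finally, adding $\|\tilde v\|_{L^2(B_{\rho_1})}\le\|\tilde v\|_{L^2(B_r)}$ promotes the left-hand side to $\|\tilde v\|_{L^2(B_{2r})}$, and I balance the two terms on the right by taking $\lambda\in\Lambda$ closest to the optimum $\lambda^{\ast}=\ln\!\bigl(\|\tilde v\|_{L^2(B_{3r})}/\|\tilde v\|_{L^2(B_r)}\bigr)/\ln(\rho_2/\rho_0)$. This produces a Hadamard-type inequality of the desired form with $\alpha=\ln(\rho_2/\rho_2')/\ln(\rho_2/\rho_0)\in (0,1)$ depending only on $n$. The principal technical obstacle is the discreteness of the admissible set $\Lambda$: the continuous optimiser $\lambda^{\ast}$ must be rounded to a neighbouring point of $\Lambda$, which costs only a dimension-dependent multiplicative constant since consecutive elements of $\Lambda$ differ by $1$; the degenerate regime where $\lambda^{\ast}$ falls below the smallest usable value of $\Lambda$ is disposed of by the trivial bound $\|\tilde v\|_{L^2(B_{2r})}\le\|\tilde v\|_{L^2(B_{3r})}$ together with $\|\tilde v\|_{L^2(B_{3r})}\lesssim\|\tilde v\|_{L^2(B_r)}$ in that regime. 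Careful bookkeeping of the constants $\vartheta_V$, $\kappa_V^1$, $\sigma$ and universal factors through the preceding steps reproduces the exact shape of $q_V$ given in \eqref{qV}.
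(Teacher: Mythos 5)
Your proposal follows essentially the same route as the paper's proof of Theorem~\ref{thmsc2}: translate to $w:=u(x_0+\cdot)$, apply the singular-weight Carleman estimate \eqref{sc2} to a punctured cutoff $\chi w$, bound the commutator on the two transition annuli via the Caccioppoli inequality of Proposition~\ref{prosc1}, convert between $L^{p}$, $L^{2}$ and $L^{p'}$ norms by H\"older on $B_{4r}$, and balance the two monotone-in-$\lambda$ terms by rounding the continuous optimiser to the discrete set $\Lambda$, handling the degenerate regime by a trivial bound. The specific radii and the explicit numerical value of $\alpha$ differ slightly from the paper's, but every key ingredient and the overall logical structure coincide.
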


\begin{proof}
Let $x_0\in \Omega^{4r}$, $0<r<r_0/4$, $V\in \mathscr{V}$ and $u\in W^{2,p}(\Omega)\setminus\{0\}$ (note that \eqref{a1.1} is trivially satisfied when $u=0$) satisfying $(-\Delta +V)u=0$, and define $w$ and $\mathcal{V}$ on $B:=B_{4r}$ by $w:=u(x_0+\cdot)$ and $\mathcal{V}:=V(x_0+\cdot)$. Recall the notation
\[
[\![a,b]\!]:=\{x\in \mathbb{R}^n;\, a<|x|<b\},\quad 0\le a<b.
\]
Let $(r_j)_{1\le j\le 8}$ be an increasing sequence of $(0,4r)$ and $\chi\in C_0^\infty (B)$ satisfying $0\le \chi \le 1$,
\[
\chi=\left\{
\begin{array}{lll}
0\quad &\mbox{in}\; [\![0,r_2]\!]\cup [\![r_7,4r]\!],
\\
1 &\mbox{in}\; [\![r_3,r_6]\!],
\end{array}
\right.
\]
and
\begin{align*}
&|\Delta \chi|+|\nabla \chi|^2\le \mathbf{k} d_1^{-2}\quad \mbox{in}\; [\![r_2,r_3]\!],
\\
&|\Delta \chi|+|\nabla \chi|^2\le \mathbf{k} d_2^{-2}\quad \mbox{in}\; [\![r_6,r_7]\!],
\end{align*}
where $d_1:=r_3-r_2$ and $d_2:=r_7-r_6$.

Using the inequality $\|h\|_{L^p(B)}\le \mathbf{k} r\|h\|_{L^2(B)}$ for $h\in L^2(B)$, we obtain
\begin{align*}
\||x|^{-\lambda}(-\Delta +\mathcal{V})(\chi w)\|_{L^p(B)} &= \||x|^{-\lambda}(\Delta \chi w+2\nabla \chi\cdot \nabla w)\|_{L^p(B)}
\\
&\le \mathbf{k} r \||x|^{-\lambda}(\Delta \chi w+2\nabla \chi\cdot \nabla w)\|_{L^2(B)}
\end{align*}
and therefore 
\begin{align*}
&\mathbf{k} r^{-1}\||x|^{-\lambda}(-\Delta +\mathcal{V})(\chi w)\|_{L^2(B)}\le  d_1^{-2}\||x|^{-\lambda}w\|_{L^2([\![r_2,r_3]\!])}+d_1^{-1}\||x|^{-\lambda}\nabla w\|_{L^2([\![r_2,r_3]\!])}
\\
&\hskip 4.7cm +d_2^{-2}\||x|^{-\lambda}w\|_{L^2([\![r_6,r_7]\!])}+d_2^{-1}\||x|^{-\lambda}\nabla w\|_{L^2([\![r_6,r_7]\!])}.
\end{align*}
In consequence, we obtain
\begin{align}
&\mathbf{k} r^{-1}\||x|^{-\lambda}(-\Delta +\mathcal{V})(\chi w)\|_{L^p(B)}\label{a2}
\\
&\hskip 2cm\le  d_1^{-2}r_2^{-\lambda}\|w\|_{L^2([\![r_2,r_3]\!])}+d_1^{-1}r_2^{-\lambda}\|\nabla w\|_{L^2([\![r_2,r_3]\!])}\nonumber
\\
&\hskip 3.5cm +d_2^{-2}r_6^{-\lambda}\|w\|_{L^2([\![r_6,r_7]\!])}+d_2^{-1}r_6^{-\lambda}\|\nabla w\|_{L^2([\![r_6,r_7]\!])}.\nonumber
\end{align}
Assume that $r_2-r_1=r_4-r_3=d_1$ and $r_6-r_5=r_8-r_7=d_2$. In light of \eqref{sc0}, we obtain from \eqref{a2}
\begin{align}
&\||x|^{-\lambda}(-\Delta +\mathcal{V})(\chi w)\|_{L^p(B)}\label{a3}
\\
&\hskip 1cm \le \mathbf{k} (1+\kappa_V^1)r\left(  d_1^{-2}r_2^{-\lambda}\|w\|_{L^2([\![r_1,r_4]\!])}+ d_2^{-2}r_6^{-\lambda}\|w\|_{L^2([\![r_5,r_8]\!])}\right).\nonumber
\end{align}

On the other hand, we have from \eqref{sc2}
\[
\||x|^{-\lambda}\chi w\|_{L^{p'}(B)}\le \vartheta_V \||x|^{-\lambda}(-\Delta +\mathcal{V})(\chi w)\|_{L^p(B)}.
\]
This and \eqref{a3} imply
\[
r_5^{-\lambda}\|w\|_{L^2([\![r_3,r_5]\!])} \le q_Vr\left(  d_1^{-2}r_2^{-\lambda}\|w\|_{L^2([\![r_1,r_4]\!])}+ d_2^{-2}r_6^{-\lambda}\|w\|_{L^2([\![r_5,r_8]\!])}\right).
\]
Thus,
\begin{equation}\label{a4}
\|w\|_{L^2([\![r_3,r_5]\!])}
\le q_Vrr_5^\lambda\left(  d_1^{-2}r_2^{-\lambda}\|w\|_{L^2([\![r_1,r_4]\!])}+ d_2^{-2}r_6^{-\lambda}\|w\|_{L^2([\![r_5,r_8]\!])}\right).
\end{equation}

Next, let us specify the sequence $(r_j)$. We choose $r_1=3r/8$, $r_2=5r/8$, $r_3=3r/4$, $r_4=r$, $r_5=2r$, $r_6=9r/4$,  $r_7=11r/4$ and $r_8=3r$. In this case $d_1=d_2=r/4$. This choice in \eqref{a4} gives
\[
q_V^{-1}r\|w\|_{L^2([\![3r/4,2r]\!])}\le  (5/16)^{-\lambda}\|w\|_{L^2([\![3r/8,r]\!])}+ (8/9)^\lambda\|w\|_{L^2([\![2r,3r]\!])}
\]
and therefore
\[
q_V^{-1}r\|w\|_{L^2(B_{2r})}\le  (5/16)^{-\lambda}\|w\|_{L^2(B_r)}+q_V^{-1}r \|w\|_{L^2(B_{3r/4})}+ (8/9)^\lambda\|w\|_{L^2(B_{3r})}.
\]
Let
\[
\lambda_V:=\ln(q_V^{-1}r_0)/\ln(16/5).
\]
If $\lambda\ge \lambda_V$, the preceding inequality gives
\begin{equation}\label{a5}
q_V^{-1}r\|w\|_{L^2(B_{2r})}\le  (5/16)^{-\lambda}\|w\|_{L^2(B_r)}+ (8/9)^\lambda\|w\|_{L^2(B_{3r})}.
\end{equation}

We are going to prove that \eqref{a5} yields the following inequality
\begin{equation}\label{a6}
q_V^{-1}r\|w\|_{L^2(B_{2r})}\le \|w\|_{L^2(B_{3r})}^{1-\alpha}\|w\|_{L^2(B_r)}^{\alpha},
\end{equation}
where $\alpha:= (\ln 18-\ln 16)/(\ln 18 -\ln 5)$.

Clearly, \eqref{a6} holds trivially if $\|w\|_{L^2(B_{3r})}=0$ or $\|w\|_{L^2(B_r)}=\|w\|_{L^2(B_{3r})}$. In the case $\|w\|_{L^2(B_r)}=0$, \eqref{a5} implies
\begin{equation}\label{a5.bis}
q_V^{-1}r\|w\|_{L^2(B_{2r})}\le  (8/9)^\lambda\|w\|_{L^2(B_{3r})}.
\end{equation}
Let $(\lambda_j)$ be a sequence in $\Lambda$ converging to $\infty$. Taking in \eqref{a5.bis} $\lambda=\lambda_j$ with $j$ sufficiently large and making the limit as $j$ tends to $\infty$, we get  $\|w\|_{L^2(B_{2r})}=0$ and hence \eqref{a6} holds again in the present case. It remains to verify the case
\[\|w\|_{L^2(B_{3r})}/\|w\|_{L^2(B_r)}>1.\]  
Set $\tilde{\lambda}=\ln(18/5)^{-1}\ln (\|w\|_{L^2(B_{3r})}/\|w\|_{L^2(B_r)})$. When $\tilde{\lambda}\le \lambda_V+1$, we have
\begin{align*}
\|w\|_{L^2(B_{3r})}&\le e^{\ln(18/5)(\lambda_V+1)}\|w\|_{L^2(B_r)}
\\
&\le e^{\ln(18/5)} (q_V^{-1}r_0)^{\ln(18/5)/\ln(16/5)}\|w\|_{L^2(B_r)}
\\
&\le (18/5) \|w\|_{L^2(B_r)}.
\end{align*}

Next, assume that $\tilde{\lambda}>\lambda_V+1$. If $\tilde{\lambda}\in \Lambda$, then $\lambda=\tilde{\lambda}$ in \eqref{a5} yields \eqref{a6}.
In the case $\tilde{\lambda}\not\in \Lambda$, we find a positive integer $j$ so that $\tilde{\lambda}\in [j+(n-2)/2, j+1/2+(n-2)/2)$ or $\tilde{\lambda}\in (j-1/2+(n-2)/2,j+(n-2)/2)]$. If $\tilde{\lambda}\in j+(n-2)/2, j+1/2+(n-2)/2)$ we take $\lambda=j+1/2+(n-2)/2$ in \eqref{a5}. As $\tilde{\lambda}<\lambda<\tilde{\lambda}+1$, we verify that \eqref{a6} holds with the same $\alpha$. When $\tilde{\lambda}\in (j-1/2+(n-2)/2), j+(n-2)/2]$, we proceed similarly as in the preceding case. By taking $\lambda= j-1/2+(n-2)/2$, we show that \eqref{a6} holds again with the same $\alpha$.
\end{proof}

\section{Quantitative uniqueness of continuation}\label{S4}

In this section we establish the quantitative uniqueness of the continuation results that we obtain as a consequence of the three-ball inequalities of the previous section.  Recall that $\Omega^r$ is nonempty and connected for all $0<r<\mathfrak{r}$, where $\mathfrak{r}=\mathfrak{r}(\Omega)\le 1$ is a constant.

Before stating the first result precisely, we introduce new notations. Let $Q$ be the smallest closed cube containing $\bar{\Omega}$ and set $\mathbf{D}:=|Q|^{1/n}$. Henceforth, $0<\eta<1$ is a generic function of the form
\begin{equation}\label{eta}
\eta (r):=e^{-\mathfrak{h} r^{-n}},\quad 0<r <\mathfrak{r}/4,
\end{equation}
where
\begin{equation}\label{h}
\mathfrak{h}=\mathfrak{h}(n,\mathbf{D},\mathfrak{r}):=2^{n-1}|\ln \alpha| \left((\mathfrak{r}/4)^n+(\mathbf{D}\sqrt{n})^n\right) .
\end{equation}

Henceforth, $\mathbf{c}_1=\mathbf{c}_1(n,s,\mathfrak{r})>0$ will denote a generic constant and, for $V\in L^s(\Omega)$, $s\in (n/2,\infty]$, the generic constant $\varphi_s(V)$ is the same as  in the preceding section. That is
\begin{equation}\label{varphis}
\varphi_s(V):=e^{\mathbf{c}_1\kappa_V^\gamma},\quad \varphi_\infty (V):=e^{\mathbf{c}_1\kappa_V},
\end{equation}
where $\kappa_V:=\|V\|_{L^s(\Omega)}$ and $\gamma:= 8ns/[(3n+2)(2s-n)]$.

It is worth noting that, for fixed $n\ge 3$, the mapping $s\mapsto \gamma(s)=\gamma$ is decreasing and bijective from $(n/2,\infty)$ onto $(4n/(3n+2),\infty)$. Therefore, $\gamma >1$ for any $s\in (n/2,\infty)$. In consequence,  for all $s\in (n/2,\infty)$, $\varphi_s (V)>\varphi_\infty (V)$ whenever $\kappa_V$ for sufficiently large.

For convenience, we set $X_s:=W^{2,p}(\Omega)$ if $s\in (n/2,\infty)$ and $X_\infty:=H^2(\Omega)$.

\begin{theorem}\label{mthm2}
Let $s\in (n/2,\infty]$, $0<r<\mathfrak{r}/4$ and  $x,y\in \Omega^{4r}$. For all $V\in L^s(\Omega)$ and $u\in X_s$ satisfying $(-\Delta +V)u=0$ and $\|u\|_{L^2(\Omega)}=1$ we have
\begin{equation}\label{qu10}
\|u\|_{L^2(B(y,r))}\le \varphi_s(V)\|u\|_{L^2(B(x,r))}^{\eta(r)}.
\end{equation}
\end{theorem}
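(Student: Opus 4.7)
The strategy is to iterate the three-ball inequality (Theorem \ref{mthm1} for $s\in(n/2,\infty)$ or Theorem \ref{thmC1} for $s=\infty$) along a chain of overlapping balls of radius $r$ inside $\Omega^{3r}$ connecting $x$ to $y$. I may assume $\|u\|_{L^2(B(x,r))}>0$; the degenerate case follows from the iteration itself, since $0^\alpha=0$.

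\textbf{Step 1 (Chain construction).} Since $3r<\mathfrak{r}$, the set $\Omega^{3r}$ is nonempty and connected, and contains both $x$ and $y$ (as they lie in $\Omega^{4r}\subset\Omega^{3r}$). A maximal $r/2$-separated subset of $\Omega^{3r}$ together with the nearest-neighbour graph it induces is connected, hence provides a finite chain $x=x_0,x_1,\dots,x_N=y$ of points in $\Omega^{3r}$ with $|x_{i+1}-x_i|\le r$. The cardinality bound
\[
N\le 2^{n-1}\bigl((\mathfrak{r}/4)^n+(\mathbf{D}\sqrt{n})^n\bigr)r^{-n}
\]
comes from a volume-packing argument: the balls $B(x_i,r/4)$ are pairwise disjoint and lie in a fixed neighbourhood of the cube $Q$, and the displayed constant matches $\mathfrak{h}/|\ln\alpha|$ as defined in \eqref{h}.

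\textbf{Step 2 (Single-step iteration).} At each $x_i\in\Omega^{3r}$ the three-ball inequality gives
\[
\|u\|_{L^2(B(x_i,2r))}\le \mathbf{c}\varphi_s(V)\|u\|_{L^2(B(x_i,r))}^{\alpha}\|u\|_{L^2(B(x_i,3r))}^{1-\alpha}.
\]
Since $|x_{i+1}-x_i|\le r$ implies $B(x_{i+1},r)\subset B(x_i,2r)$, and since $\|u\|_{L^2(B(x_i,3r))}\le\|u\|_{L^2(\Omega)}=1$, setting $a_i:=\|u\|_{L^2(B(x_i,r))}$ yields
\[
a_{i+1}\le \mathbf{c}\varphi_s(V)\,a_i^{\alpha}.
\]

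\textbf{Step 3 (Telescoping and exponent reconciliation).} Iterating the previous inequality $N$ times gives
\[
a_N\le \bigl(\mathbf{c}\varphi_s(V)\bigr)^{1+\alpha+\dots+\alpha^{N-1}}a_0^{\alpha^N}\le \bigl(\mathbf{c}\varphi_s(V)\bigr)^{1/(1-\alpha)}\|u\|_{L^2(B(x,r))}^{\alpha^N}.
\]
The prefactor is absorbed into the generic $\varphi_s(V)$ by enlarging the exponent constant $\mathbf{c}_1$. Because $a_0\le\|u\|_{L^2(\Omega)}=1$, the function $t\mapsto a_0^{t}$ is nonincreasing, so it suffices to prove $\alpha^N\ge\eta(r)$, equivalently $N|\ln\alpha|\le\mathfrak{h}r^{-n}$, which is exactly the bound from Step 1 together with the definition of $\mathfrak{h}$ in \eqref{h}. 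Substituting gives \eqref{qu10}.

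\textbf{Main obstacle.} The technical heart is Step 1. Mere connectedness of $\Omega^{3r}$ gives a continuous path but no Euclidean length control, so the chain must be produced via a separated net and a disjoint-ball packing, and the constant in the cardinality bound has to be tracked carefully to match the prescribed $\mathfrak{h}$. Once this combinatorial-geometric lemma is in place, the analytic content reduces to a routine iteration of the previously established three-ball inequalities.
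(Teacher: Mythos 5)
Your overall strategy — iterate the three‑ball inequality along a chain of $O(r^{-n})$ overlapping balls, telescope, and match $\alpha^{N}$ against $\eta(r)$ — is the same as the paper's. The genuine divergence is in Step 1: the paper partitions the circumscribed cube $Q$ into $m_r=(\lfloor\mathbf{D}\sqrt{n}/r\rfloor+1)^n$ sub‑cubes, keeps those meeting $\bar\Omega^{4r}$ to form the connected polycube $Q^r$, and invokes Lemma~\ref{lemGeo} (broken line in a connected union of cubes) to obtain a rectifiable path of length $\le r\,m_r$, which is then discretized at spacing $r$. You instead build the chain from a maximal $r/2$‑separated net in $\Omega^{3r}$ and claim the induced nearest‑neighbour graph is connected. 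Both routes can work, but the paper's cube decomposition is deliberately engineered so that the chain count is bounded by $m_r$, which after the elementary estimate $(\lfloor\mathbf{D}\sqrt{n}/r\rfloor+1)^n\le 2^{n-1}((\mathfrak{r}/4)^n+(\mathbf{D}\sqrt{n})^n)r^{-n}$ reproduces exactly the constant $\mathfrak{h}/|\ln\alpha|$ appearing in \eqref{h}. That is the step you assert but do not prove.

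The gap is precisely there. A disjoint‑ball packing argument with balls $B(z_i,r/4)$ inside a fixed neighbourhood of $Q$ yields a bound of the form $N\le 4^n\omega_n^{-1}(\mathbf{D}+r/2)^n\,r^{-n}$, where $\omega_n$ is the volume of the unit ball. This constant is \emph{not} a priori dominated by $2^{n-1}\bigl((\mathfrak{r}/4)^n+(\mathbf{D}\sqrt{n})^n\bigr)$; for instance, with $n=3$, $\mathbf{D}=\mathfrak{r}=1$, one has $4^3\omega_3^{-1}(1+1/8)^3\approx 21.8$ against $2^2\bigl((1/4)^3+3^{3/2}\bigr)\approx 20.9$, so the packing count overshoots. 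Since $\mathfrak{h}$ (hence $\eta$) is a prescribed function in the statement of the theorem, an overshoot in the chain length produces a strictly smaller exponent $\alpha^{N+1}<\eta(r)$ and, because $\|u\|_{L^2(B(x,r))}\le 1$, a weaker conclusion than \eqref{qu10}. To close the gap you would either have to sharpen your counting so it actually lands below $\mathfrak{h}/|\ln\alpha|\,r^{-n}$, or adopt the paper's cube decomposition and path Lemma~\ref{lemGeo}, which were introduced precisely to make this accounting exact. A secondary, minor point: in Step 1 you should also record that $x,y$ are each within $r/2$ of the net (covering property of a maximal net), so that the first and last links of the chain have length $\le r$ and the chain truly runs from $x$ to $y$; you gesture at this but do not state it.
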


Note that without the condition $\|u\|_{L^2(\Omega)}=1$, \eqref{qu10} must be replaced by
\[
\|u\|_{L^2(B(y,r))}\le \varphi_s(V)\|u\|_{L^2(B(x,r))}^{\eta(r)}\|u\|_{L^2(\Omega)}^{1-\eta(r)}.
\]

Let us note that Theorem \ref{mthm2} quantifies the following uniqueness of continuation result.

\begin{corollary}\label{cors1}
Let $s\in (n/2,\infty]$ and $\omega \Subset \Omega$ and $V\in L^s(\Omega)$. If $u\in X_s$ satisfies $(-\Delta +V)u=0$ and $u=0$ in $\omega$, then $u=0$.
\end{corollary}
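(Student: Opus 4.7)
The statement is the qualitative shadow of Theorem \ref{mthm2}, so the plan is to derive it directly from that quantitative inequality, in the non-normalized form pointed out in the remark just after Theorem \ref{mthm2}:
\[
\|u\|_{L^2(B(y,r))}\le \varphi_s(V)\|u\|_{L^2(B(x,r))}^{\eta(r)}\|u\|_{L^2(\Omega)}^{1-\eta(r)},\qquad x,y\in \Omega^{4r},\ 0<r<\mathfrak{r}/4.
\]
(The case $u\equiv 0$ is trivial, so we may assume $\|u\|_{L^2(\Omega)}>0$.) Since $0<\eta(r)<1$, this inequality has the following key feature: the right-hand side vanishes as soon as $\|u\|_{L^2(B(x,r))}=0$ for a single admissible $x$, and it forces $\|u\|_{L^2(B(y,r))}=0$ simultaneously for every admissible $y$.

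First I would exploit the hypothesis $u=0$ in $\omega$. Since $\omega\Subset \Omega$ is open and $\mathrm{dist}(\omega,\mathbb{R}^n\setminus\Omega)>0$, one can fix $r_1>0$ so small that for every $0<r<r_1$ there exists $x_0=x_0(r)\in\omega$ with $B(x_0,r)\subset\omega$ and $x_0\in\Omega^{4r}$. Then $\|u\|_{L^2(B(x_0,r))}=0$, and applying the displayed inequality with $x=x_0$ and arbitrary $y\in\Omega^{4r}$ yields $\|u\|_{L^2(B(y,r))}=0$. Varying $y$ shows $u=0$ a.e.\ on $\Omega^{4r}$.

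Finally, since this is valid for every $0<r<\min(r_1,\mathfrak{r}/4)$ and $\bigcup_{r>0}\Omega^{4r}=\Omega$, I conclude $u=0$ on $\Omega$. There is essentially no obstacle: the only thing to be careful about is that Theorem \ref{mthm2} already bundles the connectedness of $\Omega^{4r}$ into its constant $\eta(r)$ via the chain-of-balls argument, so propagation of the zero set does not require a separate connectedness step here — it comes for free from the inequality.
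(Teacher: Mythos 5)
Your proof is correct and is essentially the argument the paper has in mind: the paper explicitly states that Theorem \ref{mthm2} quantifies this corollary and leaves the deduction implicit, and your deduction (vanishing in a small ball inside $\omega$ forces vanishing in every $B(y,r)$ with $y\in\Omega^{4r}$ via the homogeneous form of \eqref{qu10}, then let $r\to 0$) is the natural one.
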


\begin{proof}[Proof of Theorem \ref{mthm2}]
Let $0<r<\mathfrak{r}/4$. We first consider the case $s\in (n/2,\infty)$. Recall that $\mathbf{D}:=|Q|^{1/n}$, where $Q$ is the smallest cube containing $\bar{\Omega}$, and set
\[
m_r:=\left(\left\lfloor\mathbf{D}\sqrt{n}/r\right\rfloor+1\right)^n,
\]
where $\left\lfloor\mathbf{D}\sqrt{n}/r\right\rfloor$ is the largest integer less than or equal to $\mathbf{D}\sqrt{n}/r$.

We divide $Q$ into $m_r$ closed sub-cubes. Let $(Q_j)_{1\le j\le m_r}$ denotes the family of these cubes. Note that, for each $j$, $|Q_j|< (r/\sqrt{n})^n$ and therefore $Q_j$ is contained in a ball $B_j$ of radius $r/2$. Define
\[
I_r:=\{j \in \{1,\ldots,m_r\};\; Q_j\cap \bar{\Omega}^{4r}\ne \emptyset\}\quad \mathrm{and}\quad Q^r:=\bigcup_{j\in I_r}Q_j.
\]
In particular, $Q^r\subset \Omega^{3r}$ and since $\bar{\Omega}^{4r}$ is connected then so is $Q^r$.

It follows from Lemma \ref{lemGeo} in Appendix \ref{appB} that there exists  a continuous path $\psi:[0,1]\rightarrow Q^r$ joining $x$ to $y$ whose length, denoted hereinafter by $\ell(\psi)$, does not exceed $r m_r$.

Let $t_0=0$ and define the sequence $(t_k)$ as follows
\[
t_{k+1}:=\inf\{t\in [t_k,1];\; \psi(t)\not\in B(\psi(t_k),r)\},\quad k\ge 0.
\]
Then $|\psi(t_{k+1})-\psi(t_k)|=r$. Thus, there exists a positive integer $p_r$ so that $\psi(1)\in B(\psi(t_{p_r}),r)$. As $r (p_r+1) \le \ell(\psi)\le r m_r$, we have $p_r+1\le m_r$.

Set $x_j=\psi(t_j)$, $j=0, \ldots,p_r$ and $x_{p_r+1}=y$. We verify that $B(x_j,3r)\subset \Omega$, $j=0, \ldots,p_r+1$, and  $B(x_{j+1},r)\subset B(x_j,2r)$, $j=0, \ldots,p_r$.

 If $V\in L^s(\Omega)$  and $u\in W^{2,p}(\Omega)$ satisfying $(-\Delta +V)u=0$ and $\|u\|_{L^2(\Omega)}=1$, then we have from \eqref{qu3}
\begin{equation}\label{qu7}
\|u\|_{L^2(B(z,2r))}\le \mathcal{C}\|u\|_{L^2(B(z,r))}^\alpha ,\quad z\in \Omega^{3r}.
\end{equation}
Here and henceforth, $\mathcal{C}\ge 1$ is a generic constant of the form $\mathbf{c}\varphi_s(V)$.

By applying \eqref{qu7} with $z=x_j$, $j=0,\ldots ,p_r$, we obtain
\begin{equation}\label{qu8}
\|u\|_{L^2(B(x_{j+1},r))}\le \mathcal{C}\|u\|_{L^2(B(x_j,r))}^\alpha ,\quad j=0,\ldots ,p_r.
\end{equation}
Using an induction in $j$, we derive from \eqref{qu8} 
\begin{equation}\label{qu9}
\|u\|_{L^2(B(y,r))}\le \mathcal{C}^{1+\alpha+\ldots +\alpha^{p_r}}\|u\|_{L^2(B(x,r))}^{\alpha^{p_r+1}}.
\end{equation}
Since $\mathcal{C}\ge 1$ and $1+\alpha+\ldots +\alpha^{p_r}\le (1-\alpha)^{-1}$, \eqref{qu9} implies
\[
\|u\|_{L^2(B(y,r))}\le \mathcal{C}\|u\|_{L^2(B(x,r))}^{\alpha^{p_r+1}}.
\]
To complete the proof, since $\|u\|_{L^2(B(x,r))}\le 1$, we need only to verify that $\alpha^{p_r+1}\ge \eta(r)$. For this, we have
\[
\alpha^{p_r+1}= e^{-(p_r+1)|\ln \alpha|}\ge  e^{-|\ln \alpha|([\mathbf{D}\sqrt{n}/r]+1)^n}.
\]
Since 
\begin{align*}
([\mathbf{D}\sqrt{n}/r]+1)^n &\le 2^{n-1}([\mathbf{D}\sqrt{n}/r]^n+1)=2^{n-1}+2^{n-1}(\mathbf{D}\sqrt{n})^n r^{-n}
\\
&\le 2^{n-1}((\mathfrak{r}/4)^n+(\mathbf{D}\sqrt{n})^n) r^{-n},
\end{align*}
we deduce that $\alpha^{p_r+1}\ge \eta(r)$.

The proof of the case $s=\infty$ is exactly the same. We have only to apply Theorem \ref{thmC1} instead of Theorem \ref{mthm1}.
\end{proof}

Proceeding similarly as for  Theorem \ref{mthm2}, we prove

\begin{theorem}\label{mthm2.0}
Let  $0<r<\mathfrak{r}/4$ and $x,y\in \Omega^{4r}$. For all $V\in \mathscr{V}$ and $u\in W^{2,p}(\Omega)$ satisfying $(-\Delta +V)u=0$ and $\|u\|_{L^2(\Omega)}=1$ we have
\begin{equation}\label{a1.3}
\|u\|_{L^2(B(y,r))}\le [q_Vr^{-1}]^{1/(1-\alpha)}\|u\|_{L^2(B(x,r))}^{\eta(r)}.
\end{equation}
\end{theorem}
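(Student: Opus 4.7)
The plan is to mimic the proof of Theorem \ref{mthm2} verbatim, substituting the three-ball inequality of Theorem \ref{thmsc2} for the one of Theorem \ref{mthm1}. The only structural change is that the constant $\mathbf{c}\varphi_s(V)$ appearing in the chaining argument is now replaced by $q_V r^{-1}$, which is precisely what produces the prefactor $[q_V r^{-1}]^{1/(1-\alpha)}$ in the conclusion.

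First, applying Theorem \ref{thmsc2} and using the normalization $\|u\|_{L^2(\Omega)}=1$ to bound $\|u\|_{L^2(B(z,3r))}^{1-\alpha}\le 1$, I obtain the reduced three-ball inequality
\[
\|u\|_{L^2(B(z,2r))}\le q_V r^{-1}\|u\|_{L^2(B(z,r))}^{\alpha},\qquad z\in \Omega^{4r}.
\]
Next, as in the proof of Theorem \ref{mthm2}, I construct a finite chain $x=x_0,x_1,\ldots,x_{p_r+1}=y$ of points in $\Omega^{4r}$ with $|x_{j+1}-x_j|=r$ (with a possibly shorter last step) and $p_r+1\le m_r=(\lfloor \mathbf{D}\sqrt{n}/r\rfloor+1)^n$, via the cube-covering and Lemma \ref{lemGeo}. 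A mild geometric adjustment is needed since Theorem \ref{thmsc2} now requires $x_j\in \Omega^{4r}$ rather than $\Omega^{3r}$; this is handled by taking the sub-cubes $Q_j$ to be those intersecting $\bar{\Omega}^{5r}$ instead of $\bar{\Omega}^{4r}$, which guarantees $Q^r\subset \Omega^{4r}$ while preserving the bound on $m_r$ up to universal constants.

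Iterating the reduced three-ball inequality along the chain yields
\[
\|u\|_{L^2(B(y,r))}\le (q_V r^{-1})^{1+\alpha+\cdots+\alpha^{p_r}}\|u\|_{L^2(B(x,r))}^{\alpha^{p_r+1}}.
\]
Since $q_V\ge 1$ and $r\le \mathfrak{r}/4\le 1/4$ force $q_V r^{-1}\ge 1$, and since $\sum_{j=0}^{p_r}\alpha^j\le 1/(1-\alpha)$, the prefactor is majorized by $[q_V r^{-1}]^{1/(1-\alpha)}$. Finally, $\|u\|_{L^2(B(x,r))}\le \|u\|_{L^2(\Omega)}=1$ allows me to replace the exponent $\alpha^{p_r+1}$ by any smaller positive quantity, and the same elementary estimate as in Theorem \ref{mthm2} yields $\alpha^{p_r+1}\ge \eta(r)$, so \eqref{a1.3} follows.

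The proof involves no genuinely new idea beyond tracking the constant $q_V r^{-1}$ through the iteration; the main nuisance is just the minor geometric adjustment needed so that each $x_j$ lies in the region $\Omega^{4r}$ where Theorem \ref{thmsc2} applies. This amounts to increasing the offset in the cube-covering by one unit of $r$, which changes only the universal constant $\mathfrak{h}$ in \eqref{h} (up to absorbing it into the generic definition) and does not alter the form of $\eta(r)$.
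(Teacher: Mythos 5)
Your overall strategy mirrors what the paper intends: substitute the three-ball inequality of Theorem \ref{thmsc2} for that of Theorem \ref{mthm1}, reduce it using $\|u\|_{L^2(\Omega)}=1$, and chain along the same broken-line path while tracking the constant $q_Vr^{-1}$ in place of $\mathbf{c}\varphi_s(V)$. The bookkeeping (geometric series $\le 1/(1-\alpha)$, monotone raising of the exponent from $\alpha^{p_r+1}$ to $\eta(r)$ via $\|u\|_{L^2(B(x,r))}\le 1$, and $q_Vr^{-1}\ge 1$ since $q_V\ge 1$ and $r<\mathfrak{r}/4\le 1/4$) is all correct, and you are right to notice that the domain of applicability of Theorem \ref{thmsc2}, namely $\Omega^{4r}$, does not match the region $\Omega^{3r}$ that the cube covering of Theorem \ref{mthm2} produces.

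Your specific fix, however, introduces a gap. If the sub-cubes $Q_j$ are taken to be those intersecting $\bar{\Omega}^{5r}$, the resulting $Q^r$ covers $\bar{\Omega}^{5r}$ and lies in $\Omega^{4r}$, but it need not contain the endpoints $x,y$, which are only given in $\Omega^{4r}$: the cube containing $x$ has nonempty intersection with $\Omega^{4r}$ but may well miss $\bar{\Omega}^{5r}$ entirely. Since Lemma \ref{lemGeo} builds the broken line inside $Q^r$ joining two points that already belong to $Q^r$, your chain from $x$ to $y$ cannot be constructed. Worse, the hypothesis is only $0<r<\mathfrak{r}/4$, so one may have $5r\ge\mathfrak{r}$, in which case $\Omega^{5r}$ (hence $\bar{\Omega}^{5r}$) is not even guaranteed to be nonempty or connected, and both the covering and the connectivity argument collapse.

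The cleaner repair, which reduces the problem to exactly the geometry of Theorem \ref{mthm2}, is to observe that Theorem \ref{thmsc2} in fact holds for $x_0\in\Omega^{3r}$. In its proof, the cutoff $\chi$ is supported in $[\![r_2,r_7]\!]$ with $r_7=11r/4$, the Caccioppoli step only uses annuli contained in $B(x_0,r_8)=B(x_0,3r)$, and the Carleman inequality \eqref{car} is scale-invariant with constant $\vartheta=\vartheta(n)$, so it is equally valid on $B_{3r}$; thus only $\bar{B}(x_0,3r)\subset\Omega$, i.e.\ $x_0\in\Omega^{3r}$, is actually needed. With \eqref{a1.1} available for $x_0\in\Omega^{3r}$, the covering of $\bar{\Omega}^{4r}$ and the chain construction of Theorem \ref{mthm2} carry over verbatim, and your remaining computations then give \eqref{a1.3}.
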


\section{Global quantitative uniqueness of continuation}\label{S5}

We reuse the notation $X_s:=W^{2,p}(\Omega)$ if $s\in (n/2,\infty)$ and $X_\infty:=H^2(\Omega)$. Set
\[
\Omega_r:=\{x\in \Omega;\; \mathrm{dist}(x,\mathbb{R}^n\setminus \Omega)<r\},\quad 0<r<\mathfrak{r},
\]
where $\mathfrak{r}=\mathfrak{r}(\Omega)$  is as in Section \ref{S1}.

For $0<\mathfrak{t} <1/2$ , $\bar{\mathbf{c}}=\bar{\mathbf{c}}(n,\Omega,\mathfrak{t})>0$ will  denote a generic constant. 
Let $\mathfrak{h}$ given by \eqref{h} and
\[
\varrho(r):=e^{\bar{\mathfrak{h}}r^{-n}},\quad r>0,
\]
where
\begin{equation}\label{tildeh}
\bar{\mathfrak{h}}:=\mathfrak{h}+1.
\end{equation}

\begin{theorem}\label{thma2}
Let $s\in (n/2,\infty]$, $0<\mathfrak{t}<1/2$ and $\omega\Subset \Omega$. There exist $r_\ast=r_\ast(\Omega,\omega,\mathfrak{r})\le \mathfrak{r}/4$ such that for all $V\in L^s(\Omega)$, $0<r<r_\ast$ and $u\in X_s(\Omega)$ satisfying $(-\Delta +V)u=0$ we have
\begin{equation}\label{ap5}
\|u\|_{L^2(\Omega)}\le \mathbf{c}\varphi_s(V)\left( e^{\bar{\mathbf{c}}\varrho(r)}\|u\|_{L^2(\omega)}+r^\mathfrak{t} \|u\|_{H^1(\Omega)}\right).
\end{equation}
\end{theorem}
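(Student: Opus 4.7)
The strategy is the classical decomposition $\Omega=\Omega^{4r}\cup\Omega_{4r}$: the interior piece $\Omega^{4r}$ is controlled by propagating smallness from $\omega$ through a finite chain of balls via Theorem \ref{mthm2}, while the thin boundary layer $\Omega_{4r}$ is handled using the fact that $\Omega$ is Lipschitz (so $|\Omega_{4r}|\le Cr$) together with a Sobolev embedding. Combining the two pieces and absorbing the arising self-term via Young's inequality should then yield the claim, with the double-exponential factor $e^{\bar{\mathbf{c}}\varrho(r)}$ precisely absorbing the polynomial-in-$r^{-1}$ losses through the $1/\eta(r)$ powers coming from interpolation.

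\textbf{Interior propagation.} First I would choose $r_\ast>0$ small enough that (i) $\Omega^{4r}$ is nonempty and connected for all $r<r_\ast$ (this is possible by the properties of $\Omega^r$ recalled in Section \ref{S1}), (ii) $\omega\subset\Omega^{4r}$, and (iii) some fixed ball $B(x_0,r)\subset\omega$ with $x_0$ an interior point of $\omega$. Fixing such an $x_0$, the unnormalized form of Theorem \ref{mthm2} gives
\[
\|u\|_{L^2(B(y,r))}\le \mathbf{c}\,\varphi_s(V)\|u\|_{L^2(\omega)}^{\eta(r)}\|u\|_{L^2(\Omega)}^{1-\eta(r)}
\]
for every $y\in\Omega^{4r}$, where I have used $\|u\|_{L^2(B(x_0,r))}\le\|u\|_{L^2(\omega)}$. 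A Vitali-type covering of $\Omega^{4r}$ by at most $N(r)\le\mathbf{c}r^{-n}$ balls of radius $r$ with bounded overlap, followed by summation, then yields
\[
\|u\|_{L^2(\Omega^{4r})}\le \mathbf{c}\,\varphi_s(V)\,r^{-n/2}\,\|u\|_{L^2(\omega)}^{\eta(r)}\|u\|_{L^2(\Omega)}^{1-\eta(r)}.
\]

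\textbf{Boundary layer and assembly.} For the tubular part, since $\Omega$ is Lipschitz one has $|\Omega_{4r}|\le\mathbf{c}r$; combined with H\"older's inequality and the continuous embedding $H^1(\Omega)\hookrightarrow L^{p'}(\Omega)$ recorded in Section \ref{S1}, this gives
\[
\|u\|_{L^2(\Omega_{4r})}\le \bar{\mathbf{c}}\,r^{\mathfrak{t}}\,\|u\|_{H^1(\Omega)}
\]
for any $\mathfrak{t}\in(0,1/2)$ (a slightly sharper coarea-trace argument in the Lipschitz collar could push $\mathfrak{t}$ up to $1/2$, but we do not need that refinement). Writing $X=\|u\|_{L^2(\Omega)}$, $Y=\|u\|_{L^2(\omega)}$, $Z=\|u\|_{H^1(\Omega)}$, the triangle inequality $X\le\|u\|_{L^2(\Omega^{4r})}+\|u\|_{L^2(\Omega_{4r})}$ thus yields
\[
X\le A(r)\,Y^{\eta(r)}X^{1-\eta(r)}+\bar{\mathbf{c}}\,r^{\mathfrak{t}}Z,\qquad A(r):=\mathbf{c}\,\varphi_s(V)\,r^{-n/2}.
\]
Applying Young's inequality in the form $ab\le\tfrac12 b^{1/(1-\eta)}+C_\eta a^{1/\eta}$ with $a=A(r)Y^{\eta(r)}$ and $b=X^{1-\eta(r)}$ lets us absorb a multiple of $X$ on the left, producing
\[
X\le \mathbf{c}\,A(r)^{1/\eta(r)}\,Y+\bar{\mathbf{c}}\,r^{\mathfrak{t}}Z.
\]

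\textbf{Matching the constants.} The main obstacle is checking that $A(r)^{1/\eta(r)}$ collapses into the prescribed shape $\mathbf{c}\,\varphi_s(V)\,e^{\bar{\mathbf{c}}\varrho(r)}$. Taking logarithms,
\[
\log A(r)^{1/\eta(r)}=e^{\mathfrak{h}r^{-n}}\Bigl(\log\mathbf{c}+\tfrac n2\log(1/r)+\mathbf{c}_1\kappa_V^{\gamma}\Bigr),
\]
and the key observation is that the polynomial factor $\log(1/r)\cdot e^{\mathfrak{h}r^{-n}}$ is bounded by $e^{(\mathfrak{h}+1)r^{-n}}=\varrho(r)$ for $r$ small (independently of $V$), thanks to the choice $\bar{\mathfrak{h}}=\mathfrak{h}+1$ in \eqref{tildeh}. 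The $\kappa_V^\gamma$ piece similarly contributes at most $\mathbf{c}_1\kappa_V^\gamma+\bar{\mathbf{c}}\varrho(r)$ after pulling out one copy of $\varphi_s(V)$ and enlarging $\bar{\mathbf{c}}$. Choosing $r_\ast$ small enough that these asymptotic dominations kick in (uniformly in $V$ through $\varrho(r)$) then yields the claimed inequality \eqref{ap5}. The case $s=\infty$ goes through verbatim by replacing Theorem \ref{mthm2} with its $s=\infty$ variant and $\varphi_s(V)$ with $\varphi_\infty(V)$.
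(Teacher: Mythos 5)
Your overall strategy -- propagate smallness from $\omega$ through $\Omega^{4r}$ via Theorem \ref{mthm2}, cover $\Omega^{4r}$ by $O(r^{-n})$ balls, control the boundary layer $\Omega_{4r}$, combine and absorb -- is exactly the paper's route. However, the step where you apply Young's inequality with $a = A(r)Y^{\eta(r)}$ and $A(r) = \mathbf{c}\varphi_s(V) r^{-n/2}$ is a genuine mistake: it produces a factor $\varphi_s(V)^{1/\eta(r)} = e^{\mathbf{c}_1\kappa_V^\gamma e^{\mathfrak{h}r^{-n}}}$, i.e.\ a \emph{product} of $\kappa_V^\gamma$ and $e^{\mathfrak{h}r^{-n}}$ in the exponent, whereas the claimed bound $\mathbf{c}\varphi_s(V)e^{\bar{\mathbf{c}}\varrho(r)}$ has only their \emph{sum}. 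Your suggestion to ``pull out one copy of $\varphi_s(V)$ and enlarge $\bar{\mathbf{c}}$'' would require $\kappa_V^\gamma e^{\mathfrak{h}r^{-n}} \lesssim \kappa_V^\gamma + e^{\bar{\mathfrak{h}}r^{-n}}$ uniformly in both $\kappa_V$ and $r$, which fails (take $\kappa_V^\gamma \sim e^{\mathfrak{h}r^{-n}}$). The paper sidesteps this entirely by first dividing the covered inequality by $\mathcal{C} = \mathbf{c}\varphi_s(V)$, so that Young's inequality is applied to $\hat{c}r^{-n}Y^{\eta}X^{1-\eta}$ with no $V$-dependence, and $\mathcal{C}$ reappears only as a single prefactor at the end. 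You need to restructure the absorption step this way.

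A second, lesser gap: your boundary layer bound via H\"older and $H^1\hookrightarrow L^{p'}$ only produces the exponent $1/2 - 1/p' = 1/n$, i.e.\ $\|u\|_{L^2(\Omega_{4r})} \lesssim r^{1/n}\|u\|_{H^1(\Omega)}$. Since $1/n \le 1/3 < 1/2$ for $n \ge 3$, this does not cover the full range $\mathfrak{t}\in(0,1/2)$ claimed in the theorem, contrary to what you assert. The paper instead invokes a Hardy-type inequality $\int_\Omega |u|^2\mathrm{dist}(x,\Gamma)^{-2\mathfrak{t}}dx \lesssim \|u\|^2_{H^1(\Omega)}$ valid for all $\mathfrak{t}<1/2$ without boundary conditions (cited from \cite{Ch20}), which directly yields the $r^{\mathfrak{t}}$ factor for the stated range.
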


\begin{proof}
Assume that $s\in (n/2,\infty)$. Let $x\in \omega$ and $d_0>0$ so that $B(x,d_0)\subset \omega$ and $B(x,4d_0)\subset \Omega$. Let $0<r<r_\ast:=\min(d_0,\mathfrak{r}/4)$, $V\in L^s(\Omega)$ and $u\in W^{2,p}(\Omega)$ satisfying $(-\Delta +V)u=0$. Applying \eqref{qu10} in Theorem \ref{mthm2}, we obtain
\begin{equation}\label{ap1}
\|u\|_{L^2(B(y,r))}\le \mathcal{C}\|u\|_{L^2(\omega)}^{\eta(r)}\|u\|_{L^2(\Omega)}^{1-\eta(r)},\quad y\in \Omega^{4r}.
\end{equation}
Here and in the sequel, $\mathcal{C}\ge 1$ is a generic constant of the form $\mathbf{c}\varphi_s(V)$. 

As $\Omega^{4r}$ can be covered by $n_r$ balls with center in $\Omega$ with $n_r\le \hat{c}\, r^{-n}$ for some constant $\hat{c}=\hat{c}\, (n,\mathbf{D})>0$ (see Lemma \ref{lemA1}), we deduce from \eqref{ap1}
\[
\|u\|_{L^2(\Omega^{4r})}\le \hat{c}\, \mathcal{C}r^{-n}\|u\|_{L^2(\omega)}^{\eta(r)}\|u\|_{L^2(\Omega)}^{1-\eta(r)}.
\]
For all $\epsilon >0$, Young's inequality then implies
\begin{equation}\label{ap2}
\mathcal{C}^{-1}\|u\|_{L^2(\Omega^{4r})}\le \left[\hat{c}\, r^{-n}\epsilon^{-1}\right]^{1/\eta(r)}\|u\|_{L^2(\omega)}+\epsilon^{1/(1-\eta(r))}\|u\|_{L^2(\Omega)}.
\end{equation}

On the other hand, we have from Hardy's inequality (e.g. \cite[(8)]{Ch20})
\begin{equation}\label{ap3}
\|u\|_{L^2(\Omega_{4r})}\le c_\ast r^\mathfrak{t} \|u\|_{H^1(\Omega)},
\end{equation}
where $c_\ast=c_\ast\, (n,\Omega,\mathfrak{t})>0$ is a constant.

Putting together \eqref{ap2} and \eqref{ap3}, we obtain
\begin{equation}\label{ap4}
\mathcal{C}^{-1}\|u\|_{L^2(\Omega)}\le \left[\hat{c}\, r^{-n}\epsilon^{-1}\right]^{1/\eta(r)}\|u\|_{L^2(\omega)}+(c_\ast r^\mathfrak{t}+\epsilon^{1/(1-\eta(r))})\|u\|_{H^1(\Omega)}.
\end{equation}
Taking in \eqref{ap4} $\epsilon=(c_\ast r^\mathfrak{t})^{1-\eta(r)}$, we get
\begin{align*}
\mathcal{C}^{-1}\|u\|_{L^2(\Omega)}&\le \left[\hat{c}\, r^{-n}(c_\ast r^\mathfrak{t})^{-(1-\eta(r))}\right]^{1/\eta(r)}\|u\|_{L^2(\omega)}+2c_\ast r^\mathfrak{t} \|u\|_{H^1(\Omega)}
\\
&\le c_\ast r_\ast ^\mathfrak{t} \left[\hat{c}\, c_\ast^{-1}r^{-(n+\mathfrak{t})}\right]^{1/\eta(r)}\|u\|_{L^2(\omega)}+2c_\ast r^\mathfrak{t} \|u\|_{H^1(\Omega)}
\\
&\le  2c_\ast\left(\left[\hat{c}\, c_\ast ^{-1}r^{-(n+\mathfrak{t})}\right]^{1/\eta(r)}\|u\|_{L^2(\omega)}+ r^\mathfrak{t}\|u\|_{H^1(\Omega)} \right).
\end{align*}
The expected inequality then follows by using
\begin{align*}
\left[\hat{c}\, c_\ast ^{-1}r^{-(n+\mathfrak{t})}\right]^{1/\eta(r)}&\le \left[\hat{c}\, c_\ast ^{-1}\right]^{e^{\mathfrak{h}r^{-n}}}e^{(1+\mathfrak{t}/n)e^{r^{-n}}e^{\mathfrak{h}r^{-n}}}
\\
&\le e^{\hat{c}\, c_\ast ^{-1}e^{\mathfrak{h}r^{-n}}}e^{(1+\mathfrak{t}/n)e^{(\mathfrak{h}+1)r^{-n}}}
\\
&\le e^{(\hat{c}\, c_\ast ^{-1}+1+\mathfrak{t}/n)e^{(\mathfrak{h}+1)r^{-n}}}.
\end{align*}

The case $s=\infty$ is proved similarly.
\end{proof}

For $V\in L^s(\Omega)$, $s\in (n/2,\infty]$, let 
\[
f(r)= r^{-\mathfrak{t}}e^{\bar{\mathbf{c}}\varrho(r)}=r^{-\mathfrak{t}}e^{\bar{\mathbf{c}}e^{\bar{\mathfrak{h}}r^{-n}}},\quad r>0.
\]
Clearly, $f$ is bijective and decreasing from $(0,r_\ast)$ onto $(f(r_\ast),\infty)$, where $r_\ast$ is as in Theorem \ref{thma2}.

Let $\hat{\mathfrak{h}}:=\bar{\mathfrak{h}}+1$. Since $\bar{\mathbf{c}}=\bar{\mathbf{c}}(n,\Omega,\mathfrak{t})>0$ denotes a generic constant, we verify that
\[
f(r)\le e^{\bar{\mathbf{c}}e^{\hat{\mathfrak{h}} r^{-n}}}=:g(r),\quad r>0.
\]
Set
\[
\mathbf{a}:=g(r_\ast).
\]
Let $s\in (n/2,\infty]$,  $V\in L^s(\Omega)$ and $u\in X_s(\Omega)\setminus\{0\}$ satisfying $(-\Delta +V)u=0$. Assume  first that $\|u\|_{H^1(\Omega)}/\|u\|_{L^2(\omega)}>\mathbf{a}$. In this case there exists $\bar{r}\in (0,r_\ast)$ so that 
\[
\|u\|_{H^1(\Omega)}/\|u\|_{L^2(\omega)}=f(\bar{r})\le g(\bar{r}).
\] 
Hence 
\[
\bar{r}\le  \hat{\mathfrak{h}}^{1/n}\left\{ \ln \ln \left( \|u\|_{H^1(\Omega)}/\|u\|_{L^2(\omega)}\right)^{\bar{\mathbf{c}}} \right\}^{-1/n}.
\]
Here we used  $\ln \left( \|u\|_{H^1(\Omega)}/\|u\|_{L^2(\omega)}\right)^{\bar{\mathbf{c}}}\ge e^{\hat{\mathfrak{h}}r_\ast^{-n}}>1$. Taking $r=\bar{r}$ in \eqref{ap5}, we obtain
\begin{equation}\label{int2}
\|u\|_{L^2(\Omega)}\le \mathbf{c}\varphi_s(V)\|u\|_{H^1(\Omega)} \hat{\mathfrak{h}}^{\mathfrak{t}/n}\left\{  \ln \ln \left( \|u\|_{H^1(\Omega)}/\|u\|_{L^2(\omega)}\right)^{\bar{\mathbf{c}}} \right\}^{-\mathfrak{t}/n}.
\end{equation}

In the case $0<\|u\|_{H^1(\Omega)}/\|u\|_{L^2(\omega)}\le \mathbf{a}$, we have
\begin{align}\label{int3}
&\|u\|_{L^2(\Omega)}\le\|u\|_{H^1(\Omega)}\le \mathbf{a}\|u\|_{L^2(\omega)}
\\
&\hskip 4cm =\mathbf{a}\|u\|_{H^1(\Omega)} \left(\|u\|_{H^1(\Omega)}/\|u\|_{L^2(\omega)}\right)^{-1}.\nonumber
\end{align}

Define the generic function
\begin{equation}\label{psi}
\Psi (x):=\mathbf{a}\chi_{(0,\mathbf{a}]}(x)x^{-1}+  \chi_{(\mathbf{a},\infty)}(x) \hat{\mathfrak{h}}^{\mathfrak{t}/n}\left\{ \ln  \ln  x^{\bar{\mathbf{c}}} \right\}^{-\mathfrak{t}/n}.
\end{equation}
Here $\chi_I$ denotes the characteristic function of the interval $I$.

Putting together \eqref{int2} and \eqref{int3}, we end up getting the following result.

\begin{corollary}\label{cora1}
Let $s\in (n/2,\infty]$, $0<\mathfrak{t}<1/2$, $\omega\Subset \Omega$ and let $r_\ast$ be as in Theorem \ref{thma2}. For all $V\in L^s(\Omega)$, $0<r<r_\ast$ and $u\in X_s\setminus\{0\}$ satisfying $(-\Delta +V)u=0$ we have
\[
\|u\|_{L^2(\Omega)}\le \mathbf{c}\varphi_s(V) \|u\|_{H^1(\Omega)}\Psi(\|u\|_{H^1(\Omega)}/\|u\|_{L^2(\omega)}).
\]
\end{corollary}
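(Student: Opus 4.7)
The plan is to combine the two estimates \eqref{int2} and \eqref{int3} established in the paragraph immediately preceding the statement and observe that they glue together into a single inequality by means of the generic function $\Psi$ defined in \eqref{psi}. No additional analytic input is required beyond Theorem \ref{thma2} and the auxiliary bound $f\le g$; the corollary is really a packaging statement that trades the parameter $r$ in \eqref{ap5} for an explicit modulus of continuity in the ratio $\rho:=\|u\|_{H^1(\Omega)}/\|u\|_{L^2(\omega)}$.

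Concretely, I would split on the value of $\rho$ against the threshold $\mathbf{a}=g(r_\ast)$. Since $f$ is a continuous decreasing bijection from $(0,r_\ast)$ onto $(f(r_\ast),\infty)$ and $f(r_\ast)\le g(r_\ast)=\mathbf{a}$, whenever $\rho>\mathbf{a}$ there is a unique $\bar r\in(0,r_\ast)$ with $f(\bar r)=\rho$. For this calibration of $r$, the two terms on the right-hand side of \eqref{ap5} coincide, both being equal to $\bar r^{\mathfrak{t}}\|u\|_{H^1(\Omega)}$, so Theorem \ref{thma2} yields $\|u\|_{L^2(\Omega)}\le 2\mathbf{c}\varphi_s(V)\bar r^{\mathfrak{t}}\|u\|_{H^1(\Omega)}$. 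Then I would apply $f\le g$ to invert and bound $\bar r\le \hat{\mathfrak{h}}^{1/n}\{\ln\ln\rho^{\bar{\mathbf{c}}}\}^{-1/n}$, which reproduces \eqref{int2} after absorbing the factor $2$ into the generic constant $\mathbf{c}$.

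In the complementary regime $0<\rho\le \mathbf{a}$, the trivial chain $\|u\|_{L^2(\Omega)}\le \|u\|_{H^1(\Omega)}$, rewritten as $\|u\|_{L^2(\Omega)}\le \mathbf{a}\rho^{-1}\|u\|_{H^1(\Omega)}$, is exactly \eqref{int3}. Matching the two cases against the definition
\[
\Psi(x)=\mathbf{a}\chi_{(0,\mathbf{a}]}(x)x^{-1}+\chi_{(\mathbf{a},\infty)}(x)\hat{\mathfrak{h}}^{\mathfrak{t}/n}\left\{\ln\ln x^{\bar{\mathbf{c}}}\right\}^{-\mathfrak{t}/n}
\]
gives $\|u\|_{L^2(\Omega)}\le \mathbf{c}\varphi_s(V)\|u\|_{H^1(\Omega)}\Psi(\rho)$, which is the claim.

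I do not anticipate a genuine obstacle: the Carleman-based three-ball inequalities and the interpolation of Theorem \ref{thma2} carry all the analytic weight. The only step requiring a modicum of care is the calibration of the generic constant $\bar{\mathbf{c}}$ hidden in $\Psi$ when one passes from the $f$-level equation $f(\bar r)=\rho$ to its majorant $g$; this is a bookkeeping matter, not a substantive difficulty, once one notes that $\bar{\mathbf{c}}$ is allowed to depend on $n$, $\Omega$ and $\mathfrak{t}$ only.
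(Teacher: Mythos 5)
Your proposal is correct and follows the same route as the paper: the corollary is, in the paper's own words, obtained by ``putting together \eqref{int2} and \eqref{int3},'' and your reconstruction of those two inequalities from Theorem \ref{thma2}---choosing $\bar r$ with $f(\bar r)=\|u\|_{H^1(\Omega)}/\|u\|_{L^2(\omega)}$ so the two terms of \eqref{ap5} coincide when the ratio exceeds $\mathbf{a}$, and using the trivial chain $\|u\|_{L^2(\Omega)}\le\|u\|_{H^1(\Omega)}\le\mathbf{a}\|u\|_{L^2(\omega)}$ otherwise---matches the paragraph preceding the corollary. The only cosmetic remark is that the factor $2$ and the passage from $f$ to $g$ are indeed just absorbed into the generic constants $\mathbf{c}$ and $\bar{\mathbf{c}}$, exactly as you note.
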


We now consider the case of a potential in $L^{n/2}(\Omega)$. Let $0<\mathfrak{t}<1/2$, $V\in \mathscr{V}$ and $u\in W^{2,p}(\Omega)$ satisfying $(-\Delta+V)u=0$. Keeping the same notations as in the proof of Theorem \ref{thma2}, we mimic the proof of this later to prove
\begin{align}\label{a1.6}
&q_V^{-1/(1-\alpha)}\|u\|_{L^2(\Omega)}\le \left[\hat{c}\, r^{-(n+1/(1-\alpha))}(c_\ast r^\mathfrak{t})^{-(1-\eta(r))}\right]^{1/\eta(r)}\|u\|_{L^2(\omega)}
\\
&\hskip 8cm +2c_\ast r^\mathfrak{t} \|u\|_{H^1(\Omega)},\nonumber
\end{align}
where $q_V$ is given by \eqref{qV}.

Let
\[
\mathfrak{q}_V:=q_V^{1/(1-\alpha)}.
\]
In that case \eqref{a1.6} gives
\[
\|u\|_{L^2(\Omega)}\le \mathfrak{q}_V\left(e^{\bar{\mathbf{c}}e^{\bar{\mathfrak{h}}r^{-n}}}\|u\|_{L^2(\omega)}+ r^\mathfrak{t} \|u\|_{H^1(\Omega)}\right),
\]
where $\bar{\mathfrak{h}}$ is given by \eqref{tildeh}.

In other words, we proved the following result.
 
\begin{theorem}\label{thmsc1}
Let $0<\mathfrak{t}<1/2$ and $\omega\Subset \Omega$. Let $r_\ast$ be as in Theorem \ref{thma2}. For all $V\in \mathscr{V}$, $0<r<r_\ast$ and $u\in W^{2,p}(\Omega)$ satisfying $(-\Delta +V)u=0$ we have
\begin{equation}\label{a1.7}
\|u\|_{L^2(\Omega)}\le \mathfrak{q}_V\left(e^{\bar{\mathbf{c}}e^{\bar{\mathfrak{h}}r^{-n}}}\|u\|_{L^2(\omega)}+ r^\mathfrak{t} \|u\|_{H^1(\Omega)}\right).
\end{equation}
\end{theorem}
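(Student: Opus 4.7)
The plan is to mimic the proof of Theorem \ref{thma2}, substituting the three-ball inequality \eqref{a1.3} of Theorem \ref{mthm2.0} for the inequality \eqref{qu10} of Theorem \ref{mthm2}. The text already hints that this works: the only novelty is that the prefactor in \eqref{a1.3} contains an $r$-dependent term $[q_V r^{-1}]^{1/(1-\alpha)}$, which must be carried through the argument and eventually absorbed into a generic constant.

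First I would fix $x \in \omega$ and $d_0 > 0$ with $B(x,d_0) \subset \omega$ and $B(x,4d_0) \subset \Omega$, set $r_\ast := \min(d_0,\mathfrak{r}/4)$, and let $u \neq 0$ satisfy $(-\Delta + V)u = 0$ with $V \in \mathscr{V}$. Rescaling Theorem \ref{mthm2.0} to drop the normalization $\|u\|_{L^2(\Omega)}=1$ gives, for every $y \in \Omega^{4r}$,
\[
\|u\|_{L^2(B(y,r))} \le [q_V r^{-1}]^{1/(1-\alpha)}\,\|u\|_{L^2(\omega)}^{\eta(r)}\,\|u\|_{L^2(\Omega)}^{1-\eta(r)}.
\]
Covering $\Omega^{4r}$ by $n_r \le \hat{c}\,r^{-n}$ balls of radius $r$ centered in $\Omega^{4r}$ (Lemma \ref{lemA1}) and summing yields
\[
q_V^{-1/(1-\alpha)}\|u\|_{L^2(\Omega^{4r})} \le \hat{c}\,r^{-(n+1/(1-\alpha))}\|u\|_{L^2(\omega)}^{\eta(r)}\|u\|_{L^2(\Omega)}^{1-\eta(r)}.
\]

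Next I would apply the usual Young-type splitting $X A^{\eta}B^{1-\eta} \le (X\epsilon^{-1})^{1/\eta}A + \epsilon^{1/(1-\eta)}B$ with $X=\hat{c}\,r^{-(n+1/(1-\alpha))}$, combined with Hardy's inequality $\|u\|_{L^2(\Omega_{4r})} \le c_\ast r^{\mathfrak{t}}\|u\|_{H^1(\Omega)}$ and the splitting $\|u\|_{L^2(\Omega)} \le \|u\|_{L^2(\Omega^{4r})} + \|u\|_{L^2(\Omega_{4r})}$. Choosing $\epsilon = (c_\ast r^{\mathfrak{t}})^{1-\eta(r)}$ and using $q_V \ge 1$ to absorb the factor $q_V^{-1/(1-\alpha)}$ on the Hardy term, this produces exactly the intermediate inequality \eqref{a1.6} announced in the statement:
\[
q_V^{-1/(1-\alpha)}\|u\|_{L^2(\Omega)} \le \bigl[\hat{c}\,r^{-(n+1/(1-\alpha))}(c_\ast r^{\mathfrak{t}})^{-(1-\eta(r))}\bigr]^{1/\eta(r)}\|u\|_{L^2(\omega)} + 2c_\ast r^{\mathfrak{t}}\|u\|_{H^1(\Omega)}.
\]

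Multiplying by $\mathfrak{q}_V = q_V^{1/(1-\alpha)}$ reduces the proof to bounding the bracketed factor by $e^{\bar{\mathbf{c}}\varrho(r)} = e^{\bar{\mathbf{c}}e^{\bar{\mathfrak{h}}r^{-n}}}$. This is the identical chain of estimates used at the end of the proof of Theorem \ref{thma2}, the only change being the replacement of the exponent $n$ in $r^{-n}$ by $n + 1/(1-\alpha) + \mathfrak{t}$; since $\alpha$ is a universal constant, this extra contribution is absorbed into the generic constant $\bar{\mathbf{c}} = \bar{\mathbf{c}}(n,\Omega,\mathfrak{t})$ and into the upgrade of $\mathfrak{h}$ to $\bar{\mathfrak{h}} = \mathfrak{h}+1$ (see \eqref{tildeh}). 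This yields \eqref{a1.7}.

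The only real obstacle is bookkeeping: unlike the case $s>n/2$ where the prefactor $\varphi_s(V)$ in the three-ball inequality is $r$-independent, here one must track the $r$-dependent factor $[q_V r^{-1}]^{1/(1-\alpha)}$ through the covering and Young-type splitting so that it manifests as the enlarged exponent $n+1/(1-\alpha)$ on $r^{-1}$, and then verify that exponentiating against $\eta(r)^{-1}=e^{\mathfrak{h}r^{-n}}$ still produces a double-exponential of the form $e^{\bar{\mathbf{c}}e^{\bar{\mathfrak{h}}r^{-n}}}$ with $\bar{\mathfrak{h}}=\mathfrak{h}+1$.
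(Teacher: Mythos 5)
Your proposal is correct and follows essentially the same route as the paper, which itself only sketches the argument as a mimicry of Theorem~\ref{thma2} arriving at the intermediate inequality \eqref{a1.6}; you supply the same covering step, the same Young-type splitting with $\epsilon=(c_\ast r^{\mathfrak{t}})^{1-\eta(r)}$, the same use of $q_V\ge 1$ to absorb the prefactor on the Hardy term, and the same final double-exponential bound. (One tiny slip in your closing remarks: in the proof of Theorem~\ref{thma2} the exponent on $r^{-1}$ is already $n+\mathfrak{t}$, not $n$, and the upgrade $\mathfrak{h}\to\bar{\mathfrak{h}}=\mathfrak{h}+1$ is likewise already present there, so the only change here is the extra $1/(1-\alpha)$ in the exponent, which is indeed absorbed into $\bar{\mathbf{c}}$.)
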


Similarly to Corollary \ref{cora1}, we prove the following result
\begin{corollary}\label{corsc1}
Let $0<\mathfrak{t}<1/2$ and $\omega\Subset \Omega$. Let $r_\ast$ be as in Theorem \ref{thma2}. For all $V\in \mathscr{V}$, $0<r<r_\ast$ and $u\in W^{2,p}(\Omega)\setminus\{0\}$ satisfying $(-\Delta +V)u=0$ we have
\[
\|u\|_{L^2(\Omega)}\le \mathfrak{q}_V\|u\|_{H^1(\Omega)}\Psi\left(\|u\|_{H^1(\Omega)}/\|u\|_{L^2(\omega)}\right),
\]
where $\Psi$ is as in \eqref{psi}.
\end{corollary}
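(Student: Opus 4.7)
The plan is to mimic verbatim the optimization argument that turns Theorem \ref{thma2} into Corollary \ref{cora1}, now applied to the inequality \eqref{a1.7} supplied by Theorem \ref{thmsc1}. Since \eqref{a1.7} has exactly the same functional form as \eqref{ap5} (with the prefactor $\mathbf{c}\varphi_s(V)$ replaced by $\mathfrak{q}_V$), the argument carries over without change.

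First I would introduce the balancing function
\[
f(r):=r^{-\mathfrak{t}}e^{\bar{\mathbf{c}}e^{\bar{\mathfrak{h}}r^{-n}}},\qquad 0<r<r_\ast,
\]
which is continuous, strictly decreasing, and bijective from $(0,r_\ast)$ onto $(f(r_\ast),\infty)$. Setting $\hat{\mathfrak{h}}:=\bar{\mathfrak{h}}+1$, one has the crude majorant $f(r)\le g(r):=e^{\bar{\mathbf{c}}e^{\hat{\mathfrak{h}}r^{-n}}}$, so the threshold $\mathbf{a}:=g(r_\ast)$ controls when the balancing is feasible.

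Next I would split into two cases according to the size of the ratio $R:=\|u\|_{H^1(\Omega)}/\|u\|_{L^2(\omega)}$. If $R>\mathbf{a}$, then by bijectivity there exists a unique $\bar{r}\in(0,r_\ast)$ with $f(\bar{r})=R$; inserting $r=\bar{r}$ in \eqref{a1.7} the two summands on the right become comparable and yield, after using $R\le g(\bar{r})$ to invert $g$ and recover $\bar{r}\le \hat{\mathfrak{h}}^{1/n}\{\ln\ln R^{\bar{\mathbf{c}}}\}^{-1/n}$, the bound
\[
\|u\|_{L^2(\Omega)}\le \mathfrak{q}_V\,\|u\|_{H^1(\Omega)}\,\hat{\mathfrak{h}}^{\mathfrak{t}/n}\bigl\{\ln\ln R^{\bar{\mathbf{c}}}\bigr\}^{-\mathfrak{t}/n}.
\]
If instead $0<R\le \mathbf{a}$, one uses the trivial chain
\[
\|u\|_{L^2(\Omega)}\le \|u\|_{H^1(\Omega)}\le \mathbf{a}\|u\|_{L^2(\omega)}=\mathbf{a}\,\|u\|_{H^1(\Omega)}R^{-1}.
\]

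Recognising the right-hand sides of these two estimates as precisely the two branches of the generic function $\Psi$ defined in \eqref{psi} (replacing the generic constants by $\mathfrak{q}_V$ instead of $\mathbf{c}\varphi_s(V)$), the two cases combine into the single inequality
\[
\|u\|_{L^2(\Omega)}\le \mathfrak{q}_V\,\|u\|_{H^1(\Omega)}\,\Psi\bigl(\|u\|_{H^1(\Omega)}/\|u\|_{L^2(\omega)}\bigr),
\]
which is the asserted statement. No genuine obstacle is expected: the whole derivation is a mechanical transcription of the passage from Theorem \ref{thma2} to Corollary \ref{cora1}. The only point requiring a little care is to verify that the explicit bound on $\bar{r}$ obtained by inverting $g$ still dominates the one given by inverting $f$ directly, so that the estimate may be written purely in terms of the iterated logarithm of $R$; this follows from the monotonicity of $f$ and the inequality $f\le g$ used in the argument.
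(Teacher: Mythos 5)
Your proposal is correct and is exactly the argument the paper has in mind: the paper simply says ``Similarly to Corollary \ref{cora1}, we prove the following result,'' and your transcription of the optimization in $r$ — starting from \eqref{a1.7}, choosing $\bar r$ with $f(\bar r)=\|u\|_{H^1(\Omega)}/\|u\|_{L^2(\omega)}$ when that ratio exceeds $\mathbf a=g(r_\ast)$, and falling back to the trivial chain when it does not — reproduces precisely the passage from Theorem \ref{thma2} to Corollary \ref{cora1} with $\mathbf c\varphi_s(V)$ replaced by $\mathfrak q_V$ (the case $R\le\mathbf a$ using $\mathfrak q_V\ge 1$).
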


\section{Global quantitative uniqueness of continuation for $C^{1,1}$ domains}\label{S6}

In this section, $\Omega$ is a bounded domain of class $C^{1,1}$. The unit normal vector field on $\Gamma:=\partial \Omega$ is denoted by $\nu$.

The following two auxiliary results will be useful in the sequel.

\begin{proposition}\label{propositiona2}
{\rm (\cite[Proposition 2.1]{Ch19})} There exists $\dot{\varrho}=\dot{\varrho}(n,\Omega)>0$  such that :
\\
$\mathrm{(i)}$ For any $x\in \Omega_{\dot{\varrho}}$, there exists a unique $\mathfrak{p}(x)\in \Gamma$ such that
\[
|x-\mathfrak{p}(x)|=\mathrm{dist}(x,\Gamma),\quad x= \mathfrak{p}(x)-|x-\mathfrak{p}(x)|\nu(\mathfrak{p}(x)).
\]
$\mathrm{(ii)}$ If $x\in \Omega_{\dot{\varrho}}$ then $x_t=\mathfrak{p}(x)-t|x-\mathfrak{p}(x)|\nu(\mathfrak{p}(x))\in \Omega_{\dot{\varrho}}$, $t\in ]0,1]$, and
\[
\mathfrak{p}(x_t)=\mathfrak{p}(x),\; |x_t-\mathfrak{p}(x)|=t\mathrm{dist}(x,\Gamma).
\]
\end{proposition}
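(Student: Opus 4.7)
The plan is to establish Proposition \ref{propositiona2} via the standard tubular neighborhood construction, exploiting the $C^{1,1}$ regularity of $\Gamma$ to apply a Lipschitz version of the inverse function theorem to the normal exponential map.

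First, I would work locally. Fix $y_0\in\Gamma$ and use a rigid motion to assume $y_0=0$ with tangent plane $\{x_n=0\}$ and inner normal $-e_n$. By the $C^{1,1}$ regularity, there exist a radius $r_0>0$ and a function $\phi\in C^{1,1}(B_{r_0}'(0))$ (with $B_{r_0}'(0)\subset\mathbb{R}^{n-1}$) such that $\phi(0)=0$, $\nabla\phi(0)=0$, and $\Gamma$ near $y_0$ is the graph $\{(y',\phi(y'))\}$ with $\Omega$ lying locally in $\{x_n<\phi(x')\}$. The map $\nu$ is then Lipschitz on this patch. Consider the normal map
\[
\Phi(y',t):=\bigl(y',\phi(y')\bigr)-t\,\nu\bigl(y',\phi(y')\bigr),\qquad (y',t)\in B_{r_0}'(0)\times(-\delta,\delta).
\]
Since $\phi\in C^{1,1}$, $\Phi$ is Lipschitz and its differential at $(0,0)$ is the identity. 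By Clarke's Lipschitz inverse function theorem (or by noting that $D\Phi$ is invertible almost everywhere with uniformly bounded inverse near $(0,0)$), $\Phi$ is a bi-Lipschitz homeomorphism from some $B_{r_1}'(0)\times(-\delta_0,\delta_0)$ onto a neighborhood of $y_0$ in $\mathbb{R}^n$.

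Next, I would use compactness of $\Gamma$ to extract a uniform constant $\dot\varrho>0$ such that $\Phi$ is globally bi-Lipschitz from $\{(y,t): y\in\Gamma,\ t\in[0,\dot\varrho)\}$ (viewed via local charts) onto a one-sided tubular neighborhood of $\Gamma$ containing $\Omega_{\dot\varrho}$. Given $x\in\Omega_{\dot\varrho}$, I would then argue that any $\mathfrak p\in\Gamma$ realizing $\mathrm{dist}(x,\Gamma)$ must satisfy the first-order condition $x-\mathfrak p\parallel\nu(\mathfrak p)$, so $x=\mathfrak p-|x-\mathfrak p|\nu(\mathfrak p)=\Phi(\mathfrak p,|x-\mathfrak p|)$. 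Injectivity of $\Phi$ on the chosen range forces $\mathfrak p$ to be unique, proving (i). Existence of a minimizer is automatic from compactness of $\Gamma$.

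For (ii), given $x\in\Omega_{\dot\varrho}$ and $t\in(0,1]$, set $x_t:=\mathfrak p(x)-t|x-\mathfrak p(x)|\,\nu(\mathfrak p(x))=\Phi(\mathfrak p(x),t|x-\mathfrak p(x)|)$. Since $t|x-\mathfrak p(x)|\le|x-\mathfrak p(x)|<\dot\varrho$, the point $x_t$ lies in the tubular neighborhood, hence $x_t\in\Omega_{\dot\varrho}$. Moreover, $\mathrm{dist}(x_t,\Gamma)\le|x_t-\mathfrak p(x)|=t|x-\mathfrak p(x)|$; if this inequality were strict there would exist $q\in\Gamma$ with $|x_t-q|<t|x-\mathfrak p(x)|$, and then $\Phi^{-1}(x_t)$ would admit two distinct preimages in the injectivity domain, contradicting the bi-Lipschitz property. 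Therefore $\mathfrak p(x_t)=\mathfrak p(x)$ and $|x_t-\mathfrak p(x)|=t\,\mathrm{dist}(x,\Gamma)$, giving (ii).

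The main obstacle is the first step: showing that the Lipschitz normal map $\Phi$ is genuinely a bi-Lipschitz local homeomorphism on a \emph{uniform} neighborhood. The $C^{1,1}$ hypothesis is exactly what makes $\nu$ Lipschitz and lets the inverse function argument go through with constants depending only on the Lipschitz norm of $\nu$; any less regularity (e.g.\ merely $C^1$) would fail to yield a uniform $\dot\varrho$. Once this bi-Lipschitz property is in hand, both (i) and (ii) reduce to injectivity arguments, and the passage from local to global is a routine compactness reduction.
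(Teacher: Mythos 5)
The paper does not actually prove Proposition~\ref{propositiona2}: it is quoted verbatim from \cite[Proposition 2.1]{Ch19}, so there is no internal proof to compare against. Your normal-exponential-map argument is the standard one-sided tubular neighborhood construction for a $C^{1,1}$ hypersurface and is, in outline, correct. One caveat on a key step: the alternative you offer to Clarke's theorem, that ``$D\Phi$ is invertible almost everywhere with uniformly bounded inverse,'' is not by itself sufficient for a Lipschitz map to be a local homeomorphism — $x\mapsto|x|$ already furnishes a one-dimensional counterexample. What actually makes the argument work, and what the $C^{1,1}$ hypothesis provides, is that on a sufficiently small chart the generalized derivative of $\Phi$ stays uniformly close to the single invertible matrix $\mathrm{diag}(1,\dots,1,-1)$ (not the identity, as the last column is $-\nu$): this follows from $\nabla\phi(0)=0$ with $\nabla\phi$ continuous, $\nu$ Lipschitz, and the nonsmooth $\nu$-term being multiplied by small $t$. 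Equivalently, $\Phi$ is an invertible linear map plus a Lipschitz perturbation of arbitrarily small constant, hence bi-Lipschitz. With that repair, the passage to a uniform $\dot{\varrho}$ by compactness, the first-order optimality condition $x-\mathfrak{p}\parallel\nu(\mathfrak{p})$ (valid since $\Gamma$ is at least $C^1$), and the injectivity of $\Phi$ forcing uniqueness in (i) and $\mathfrak{p}(x_t)=\mathfrak{p}(x)$ in (ii) all go through as you describe, provided $\dot{\varrho}$ is also taken smaller than the uniform chart size so that competing minimizers necessarily lie in a common chart.
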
 

Furthermore, using that a $C^{1,1}$ domain admits the uniform interior sphere property (e.g. \cite{Ba}),  we obtain from Proposition \ref{propositiona2} the following lemma.

\begin{lemma}\label{lemmaa1}
If $\dot{\varrho}$ is as in Proposition \ref{propositiona2} then there exists $0<\bar{\varrho}\le \dot{\varrho}$ and $\theta \in (0,\pi/2)$ so that for any $x\in \Omega_{\bar{\varrho}}$ we have 
\[
\mathscr{C}(\mathfrak{p}(x)):=\left\{y\in \mathbb{R}^n;\; 0<|y-\mathfrak{p}(x)|<\bar{\varrho},\; (y-\mathfrak{p}(x))\cdot \xi >|y-\mathfrak{p}(x)|\cos \theta \right\}\subset \Omega ,
\]
where $\mathfrak{p}(x)\in \Gamma$ is as in Proposition \ref{propositiona2} and  $\xi=-\nu (\mathfrak{p}(x))$.
\end{lemma}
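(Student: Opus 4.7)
The plan is to combine the uniform interior sphere property (which holds for $C^{1,1}$ domains, as stated in the hint) with a purely geometric comparison showing that a sufficiently narrow, sufficiently short truncated cone fits inside a tangent interior ball.

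First, I invoke the uniform interior sphere property: there exists a radius $R=R(\Omega)>0$ such that for every boundary point $p\in\Gamma$ one has $B(p-R\nu(p),R)\subset\Omega$, with $\Gamma$ tangent to this ball at $p$. Next, for $x\in\Omega_{\dot\varrho}$ Proposition \ref{propositiona2} delivers $\mathfrak{p}(x)\in\Gamma$ with $x-\mathfrak{p}(x)=-|x-\mathfrak{p}(x)|\nu(\mathfrak{p}(x))$, so $\mathfrak{p}(x)$ is intrinsically attached to $x$. I will show that if $\bar\varrho$ is chosen small enough (and at most $\dot\varrho$) and $\theta$ is chosen in $(0,\pi/2)$, then for every $x\in\Omega_{\bar\varrho}$ the truncated cone $\mathscr{C}(\mathfrak{p}(x))$ sits inside the interior ball $B(\mathfrak{p}(x)-R\nu(\mathfrak{p}(x)),R)$, hence inside $\Omega$.

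The key geometric computation goes as follows. Fix $p=\mathfrak{p}(x)$, set $\xi=-\nu(p)$ and $c=p+R\xi$. For $y\in\mathscr{C}(p)$ one has $0<|y-p|<\bar\varrho$ and $(y-p)\cdot\xi>|y-p|\cos\theta$. Expanding,
\begin{equation*}
|y-c|^{2}=|y-p|^{2}-2R(y-p)\cdot\xi+R^{2}<|y-p|^{2}-2R|y-p|\cos\theta+R^{2}.
\end{equation*}
Hence $|y-c|<R$ as soon as $|y-p|\le 2R\cos\theta$, which is guaranteed by imposing
\begin{equation*}
\bar\varrho:=\min\bigl(\dot\varrho,\;2R\cos\theta\bigr).
\end{equation*}
Any fixed $\theta\in(0,\pi/2)$ then produces the desired $\bar\varrho$, and $\mathscr{C}(p)\subset B(c,R)\subset\Omega$ follows.

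The argument is essentially elementary once the uniform interior sphere property is in hand, so there is no real obstacle. The only point one must be careful about is that $\mathfrak{p}(x)$ is only defined for $x$ sufficiently close to $\Gamma$, which is why we restrict to $x\in\Omega_{\bar\varrho}$ with $\bar\varrho\le\dot\varrho$; but this is exactly how the lemma is stated. The constants $\bar\varrho$ and $\theta$ depend only on $n$ and $\Omega$ (through $\dot\varrho$ and $R$), as required.
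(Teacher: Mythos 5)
Your proposal is correct and follows exactly the route the paper intends: the paper gives no detailed proof, merely asserting that the lemma follows from the uniform interior sphere property of $C^{1,1}$ domains together with Proposition \ref{propositiona2}, and your computation showing $|y-c|^{2}<|y-p|^{2}-2R|y-p|\cos\theta+R^{2}\le R^{2}$ for $|y-p|<\bar\varrho\le 2R\cos\theta$ is the standard way to make that assertion precise. The choice $\bar\varrho=\min(\dot\varrho,2R\cos\theta)$ correctly handles both the domain of definition of $\mathfrak{p}$ and the truncation of the cone, so nothing is missing.
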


For convenience, we consider the following convention. If a constant depends on $(a_1,s,a_2)$ with $s\in (n/2,\infty)$ we include the case $s=\infty$ with the sense that the constant depends on $(a_1,a_2)$. Also, we recall that $0<\mathfrak{r}=\mathfrak{r}(\Omega)\le 1$ is as in Section \ref{S1} and for which $\Omega^r:=\{x\in \Omega;\; \mathrm{dist}(x,\Gamma)>r\}$ is nonempty and connected for all $0<r<\mathfrak{r}$.

As in the preceding section, $X_s:=W^{2,p}(\Omega)$ if $s\in (n/2,\infty)$ and $X_\infty:=H^2(\Omega)$.

\begin{theorem}\label{thmaa1}
Let $\omega\Subset \Omega$, $s\in (n/2,\infty]$, $0<\mathfrak{t} <1/2$. Let $\zeta:=(n,\Omega,\omega,\mathfrak{r},\mathfrak{t})$. There exist three constants $\upsilon =\upsilon(\zeta)>0$, $\tilde{\varsigma}=\tilde{\varsigma}(\zeta)>0$ and $\tilde{\mathbf{c}}=\tilde{\mathbf{c}}(\zeta,s)>0$ such that for all $V\in L^s(\Omega)$, $u\in X_s$ satisfying $(-\Delta+V)u=0$ and $0<r<1$ we have
\begin{equation}\label{aa1}
\|u\|_{L^2(\Omega)}\le \tilde{\mathbf{c}}\varphi_s(V)\left(e^{\upsilon r^{-\tilde{\varsigma}}}\|u\|_{L^2(\omega)}+r^\mathfrak{t} \|u\|_{H^1(\Omega)}\right),
\end{equation}
where the constant $\varphi_s(V)$ is as \eqref{varphis}.
\end{theorem}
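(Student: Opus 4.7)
The proof follows the same split-and-propagate architecture as the proof of Theorem \ref{thma2} --- split $\Omega=\Omega^{r}\cup\Omega_{r}$, propagate smallness from $\omega$ into the interior $\Omega^{r}$ by a chain of three-ball inequalities, and absorb the boundary collar $\Omega_{r}$ via the Hardy estimate \eqref{ap3} to produce the term $r^\mathfrak{t}\|u\|_{H^1(\Omega)}$. The decisive new ingredient is that the constant-radius chain used in Section \ref{S4} is replaced, for points close to the boundary, by a \emph{shrinking-radius} chain that runs along the axis of the interior cone $\mathscr{C}(\mathfrak{p}(y))$ supplied by Lemma \ref{lemmaa1}. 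This chain has length only $\mathbf{c}\ln(1/r)$, so its telescoped exponent is polynomially small ($\alpha^N\sim r^\beta$) rather than exponentially small ($e^{-cr^{-n}}$), and the subsequent Young's-inequality step therefore produces a single exponential $e^{\upsilon r^{-\tilde\varsigma}}$ in place of the double exponential $e^{\bar{\mathbf{c}}e^{\bar{\mathfrak{h}}r^{-n}}}$ of Theorem \ref{thma2}.

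\textbf{Steps.} Fix a ball $B_0\Subset\omega$ and a radius $r_0=r_0(\Omega,\omega,\mathfrak{r})\in(0,\min(\bar\varrho,\mathfrak{r})/4]$. First, applying Theorem \ref{mthm2} at scale $r_0$ yields, for every $y\in\Omega^{4r_0}$ and every normalized solution $u$ (i.e., $\|u\|_{L^2(\Omega)}=1$), the fixed-scale interior bound $\|u\|_{L^2(B(y,r_0))}\le \mathbf{c}\varphi_s(V)\|u\|_{L^2(B_0)}^{\eta_0}$ with $\eta_0:=\eta(r_0)\in(0,1)$ depending only on $\zeta$. Next, for $y\in\Omega_{r_0}$, with $\mathfrak{p}=\mathfrak{p}(y)$ and $\xi=-\nu(\mathfrak{p})$, introduce the geometric sequence $y_k:=\mathfrak{p}+\lambda^k\bar\varrho\,\xi$ and $\rho_k:=\mu\lambda^k\bar\varrho$, where $\lambda\in(0,1)$ and $\mu>0$ are calibrated to the aperture $\theta$ of Lemma \ref{lemmaa1} so that each triple $B(y_k,j\rho_k)$, $j=1,2,3$, lies inside $\mathscr{C}(\mathfrak{p})\subset\Omega$ and $B(y_{k+1},\rho_{k+1})\subset B(y_k,2\rho_k)$. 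Iterating Theorem \ref{mthm1} (or Theorem \ref{thmC1} when $s=\infty$) along this chain up to the least $N=N(y)\le\mathbf{c}\ln(1/r)$ with $\lambda^N\bar\varrho\le\max(\mathrm{dist}(y,\Gamma),r)$, and telescoping the series $1+\alpha+\cdots+\alpha^{N-1}\le 1/(1-\alpha)$, gives
\[
\|u\|_{L^2(B(y_N,\rho_N))}\le \mathbf{c}\,\varphi_s(V)^{1/(1-\alpha)}\|u\|_{L^2(\omega)}^{\eta_0 r^\beta},\qquad \beta:=|\ln\alpha|/\ln(1/\lambda).
\]
A bounded-overlap Whitney cover of $\Omega^r$ by at most $\mathbf{c}r^{-(n-1)}$ such balls then produces
\[
\|u\|_{L^2(\Omega^r)}\le \mathbf{c}r^{-(n-1)/2}\varphi_s(V)^{1/(1-\alpha)}\|u\|_{L^2(\omega)}^{\eta_0 r^\beta}\|u\|_{L^2(\Omega)}^{1-\eta_0 r^\beta},
\]
and Young's inequality $X^\gamma Y^{1-\gamma}\le\gamma\tau^{1/\gamma}X+(1-\gamma)\tau^{-1/(1-\gamma)}Y$ with $\gamma=\eta_0 r^\beta$ and $\tau$ chosen so that the coefficient of $Y=\|u\|_{L^2(\Omega)}$ equals $r^\mathfrak{t}$, combined with the Hardy bound \eqref{ap3} on the collar $\Omega_r$, delivers \eqref{aa1}. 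The crucial point is that $\gamma=\eta_0 r^\beta$ is now only polynomially small, so $\tau^{1/\gamma}\le e^{\mathbf{c}r^{-\tilde\varsigma}}$ is a single (not double) exponential for any $\tilde\varsigma>\beta$.

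\textbf{Main obstacle.} The delicate part is the geometric calibration of the cone chain: the parameters $\lambda$, $\mu$ and the aperture $\theta$ from Lemma \ref{lemmaa1} must be tuned simultaneously so that every full triple $B(y_k,j\rho_k)$, $j=1,2,3$, stays inside $\mathscr{C}(\mathfrak{p}(y))\cap\Omega$ and each successive triple nests inside the previous one; this is a plane-trigonometry exercise in $\theta$, but it is the step that makes everything work. A secondary technicality is that the prefactor $\varphi_s(V)^{1/(1-\alpha)}$ raised to the power $1/\gamma=r^{-\beta}$ generates the coupled exponent $r^{-\beta}\kappa_V^\gamma$, which must be split --- for instance via the elementary Young inequality $r^{-\beta}\kappa_V^\gamma\le\tfrac12 r^{-2\beta}+\tfrac12\kappa_V^{2\gamma}$ --- so that the $V$-part can be absorbed into the generic constant $\varphi_s(V)$ and the $r$-part into $e^{\upsilon r^{-\tilde\varsigma}}$ with $\tilde\varsigma$ slightly larger than $\beta$.
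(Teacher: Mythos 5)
Your proposal follows essentially the same architecture as the paper's proof: for near-boundary points, a geometrically shrinking chain of balls along the cone axis given by Lemma \ref{lemmaa1}, of length only $\mathbf{c}\ln(1/r)$, so that the telescoped exponent $\alpha^N$ is polynomial in $r$ rather than exponentially small, and then Young's inequality together with the Hardy estimate \eqref{ap3} closes the argument.

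One correction to your ``secondary technicality'': the issue you flag does not actually arise. After telescoping, the factor $\varphi_s(V)^{1/(1-\alpha)}$ sits in front of the product $\|u\|_{L^2(\omega)}^{\gamma}\|u\|_{L^2(\Omega)}^{1-\gamma}$ as a plain multiplicative constant, and Young's inequality is applied only to the two norm factors (separating $X^\gamma Y^{1-\gamma}$ into $A X + B Y$); the prefactor is never raised to the power $1/\gamma = r^{-\beta}$. Your proposed remedy $r^{-\beta}\kappa_V^\gamma \le \frac12 r^{-2\beta} + \frac12 \kappa_V^{2\gamma}$ is therefore unnecessary, and would in any case replace $\kappa_V^\gamma$ by $\kappa_V^{2\gamma}$, which no longer matches the form of $\varphi_s(V)$ fixed in \eqref{varphis} and thus would not prove the stated inequality. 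If you just keep $\varphi_s(V)$ outside the Young step, as the paper does, the constant has the correct form.
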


\begin{proof}
Let $\bar{\varrho}$ be as in Lemma \ref{lemmaa1}. Without loss of generality, we assume that $\bar{\varrho}\le 3$. Let $0<r<\bar{\varrho}/3\; (\le 1)$ and $x\in \Omega^{r}\setminus \Omega^{\bar{\varrho}/3}$. Set $d:=\mathrm{dist}(x,\Gamma)$, $\tilde{x}:=\mathfrak{p}(x)$ and
\[
x_0:=\tilde{x}-(d+\tilde{d})\nu (\tilde{x}).
\]
Choose $\tilde{d}>0$  in such a way that $2\bar{\varrho}/3<d+\tilde{d}<\bar{\varrho}$. In this case, $x_0\in \Omega_{\bar{\varrho}}$ and $B(x_0,\bar{\varrho}/3)\subset \Omega^{\bar{\varrho}/3}$. Since $x_0-\tilde{x}$ is colinear to $\nu(\tilde{x})$ we derive from Proposition \ref{propositiona2} that $\tilde{x}_0=\tilde{x}$, $\nu(\tilde{x}_0)=\nu(\tilde{x})$ and $d_0:=\mathrm{dist}(x_0,\Gamma)=d+\tilde{d}$. 

Let $\mathscr{C}(\tilde{x})$ be the cone given by Lemma \ref{lemmaa1} and $\rho_0:=\min [(d_0/3) \sin \theta,\bar{\varrho}/9]$. This choice of $\rho_0$ guarantees  that $B(x_0,3\rho_0)\subset \mathscr{C}(\tilde{x})\cap B(x_0,\bar{\varrho}/3)$. 

By reducing $\theta$ if necessary, we can assume that $\sin \theta \le 1/3$. Hence $\rho_0=(d_0/3) \sin \theta$. Define then the sequence of balls $(B(x_k, 3\rho _k))$ as follows
\begin{eqnarray*}
\left\{
\begin{array}{ll}
x_{k+1}=x_k-\alpha _k \xi ,
\\
\rho_{k+1}=\mu \rho_k ,
\\
d_{k+1}=\mu d_k,
\end{array}
\right.
\end{eqnarray*}
where
\[
d_k:=|x_k-\tilde{x}|,\quad \rho _k:=\varpi d_k,\quad \alpha _k:=(1-\mu)d_k ,
\]
with
\[
\varpi :=\frac{\sin \theta}{3},\quad \mu :=\frac{3-2\sin \theta}{3-\sin \theta}.
\]
This definition guarantees that 
\[
B(x_k,3\rho _k)\subset \mathscr{C}(\tilde{x}) \quad {\rm and}\quad  B(x_{k+1},\rho _{k+1})\subset B(x_k,2\rho _k),\quad k\ge 1.
\]
Furthermore, by induction in $k$, we have
\begin{equation}\label{u1}
x_k=x_0-(1-\mu^k)d_0\xi=\tilde{x}+d_0\xi-(1-\mu^k)d_0\xi,\quad k\ge 0.
\end{equation}
and since $x=\tilde{x}+d\xi$, we obtain
\[
x-x_k=-\tilde{d}\xi+(1-\mu^k)d_0\xi,\quad k\ge 0 .
\]
Hence
\begin{equation}\label{k1}
|x-x_k|=|-\tilde{d}+(1-\mu^k)d_0|,\quad k\ge 0.
\end{equation}

Let $V\in L^s(\Omega)$, $u\in X_s$ satisfying $(-\Delta+V)u=0$ and  $\epsilon>0$. Proceeding as in the proof of Theorem \ref{mthm2} and using Young's inequality, we obtain for all $k\ge 1$
\begin{equation}\label{a10}
[\mathbf{c}\varphi_s(V)]^{-1}\|u\|_{L^2(B(x_k,\rho _k))}\le \epsilon^{-\frac{1}{\alpha^k}} \|u\|_{L^2(B(x_0,\rho _0))} +\epsilon^{\frac{1}{1-\alpha ^k}}\|u\|_{L^2(\Omega)}.
\end{equation}

By \eqref{u1}, the sequence $(x_k)$ converge to $\tilde{x}$ and since
\begin{equation}\label{u2}
\rho_{k-1}+\rho_{k}-|x_{k}-x_{k-1}|=\sin \theta \mu d_{k-1},\quad k\ge1,
\end{equation}
we deduce that the union of the balls $B(x_k,\rho _k)$ contains the open segment connecting $\tilde{x}$ to $x_0$. We denote by $k_x$ the greatest positive integer so that $x\in B(x_k,\rho_k)$ (note that by \eqref{u2}, $B(x_k,\rho_k)\cap B(x_{k-1},\rho _{k-1})$ is nonempty for each $k\ge 1$). In light of \eqref{k1}, we obtain
\[
|x-x_{k_x}|=|-\tilde{d}+(1-\mu^{k_x})d_0|\le \rho _{k_x}=\mu^{k_x}\varpi d_0.
\]
Whence
\[
(\varpi +1)^{-1}dd_0^{-1} \le \mu^{k_x}=e^{-|\ln \mu|k_x}.
\]
Furthermore, as $x\not\in B(x_{k_x+1},\rho _{k_x+1})$, we have similarly as above
\[
(\varpi +1)^{-1}dd_0^{-1} \ge e^{-|\ln \mu|(k_x+1)}.
\]
Then, using  
\[
1\ge dd_0^{-1}=\frac{d}{d+\tilde{d}}\ge \frac{r}{\bar{\varrho}},
\]
we get
\begin{align*}
&e^{|\ln \mu|k_x}\le (\varpi +1)\bar{\varrho}/r,
\\
&e^{|\ln \mu| (k_x+1)}\ge \varpi +1.
\end{align*}
That is we have
\begin{align}
&k_x \le |\ln \mu|^{-1}\ln (\bar{b}/r)=:h(r), \label{a10.1}
\\
& k_x \ge  k_+:=\max(1, |\ln \mu|^{-1}\ln b -1),\label{a10.1.1}
\end{align}
where we set $b:=\varpi +1$ and $\bar{b}:=b \bar{\varrho}$.

Now, \eqref{a10} with $k=k_x$ yields
\begin{equation}\label{a13}
[\mathbf{c}\varphi_s(V)]^{-1}\|u\|_{L^2(B(x_{k_x},\rho _{k_x}))}\le \epsilon^{-\frac{1}{\alpha^{k_x}}} \|u\|_{L^2(B(x_0,\bar{\varrho}/3))}
+\epsilon^{\frac{1}{1-\alpha ^{k_x}}}\|u\|_{L^2(\Omega)}.
\end{equation}
Replacing in \eqref{a13} $\epsilon$ by $\epsilon^{\alpha^{k_x}}$, we find
\begin{equation}\label{a13.1}
[\mathbf{c}\varphi_s(V)]^{-1}\|u\|_{L^2(B(x_{k_x},\rho _{k_x}))}\le \epsilon^{-1} \|u\|_{L^2(B(x_0,\bar{\varrho}/3))}
+\epsilon^{\frac{\alpha^{k_x}}{1-\alpha ^{k_x}}}\|u\|_{L^2(\Omega)}.
\end{equation}

Let $r_\ast:=r_\ast (\Omega,\omega,\mathfrak{r})>0$ be as in the proof of Theorem \ref{thma2}. Reducing again $\bar{\varrho}$, we assume that $\bar{\varrho}/3<r_\ast$. Then similarly to \eqref{ap1}, we have
\[
\|u\|_{L^2(B(x_0,\bar{\varrho}/3))}\le \mathbf{c}\varphi_s(V)\|u\|_{L^2(\omega)}^{\mathfrak{s}}\|u\|_{L^2(\Omega)}^{1-\mathfrak{s}},\
\]
where $\mathfrak{s}:=\eta(\bar{\varrho}/3)$. Young's inequality then yields 
\begin{equation}\label{a13.2}
[\mathbf{c}\varphi_s(V)]^{-1}\|u\|_{L^2(B(x_0,\bar{\varrho}/3))}\le \epsilon^{-1}\|u\|_{L^2(\omega)}+\epsilon^{\mathfrak{s}/(1-\mathfrak{s})}\|u\|_{L^2(\Omega)}.
\end{equation}

Putting together \eqref{a13.1} and \eqref{a13.2}, we obtain
\begin{equation}\label{a13.4}
[\mathbf{c}\varphi_s(V)]^{-1}\|u\|_{L^2(B(x_{k_x},\rho _{k_x}))}\le \epsilon^{-1}\|u\|_{L^2(\omega)}
 +(\epsilon^{\mathfrak{s}/(1-\mathfrak{s})}+\epsilon^{\frac{\alpha^{k_x}}{1-\alpha ^{k_x}}})\|u\|_{L^2(\Omega)}.
\end{equation}
Let $\tau >0$ to be specified later and choose $\epsilon$ so that $\epsilon^{\frac{\alpha^{k_x}}{1-\alpha ^{k_x}}}=r^{\tau}$, that is, $\epsilon=r^{\tau(1-\alpha ^{k_x})/\alpha ^{k_x}}=r^{\tau(1/\alpha ^{k_x}-1)}$.  We have
\begin{align*}
e^{h(r)|\ln \alpha |}&=e^{|\ln \mu|^{-1}\ln (\bar{b}/r) |\ln \alpha |}
\\
&=e^{|\ln \mu|^{-1}\ln \bar{b} |\ln \alpha |}e^{-|\ln \mu|^{-1}\ln r |\ln \alpha |}
\\
&=\upsilon r^{-\varsigma},
\end{align*}
where we set $\upsilon:=e^{|\ln \mu|^{-1}\ln \bar{ b} |\ln \alpha |}$ and $\varsigma:=|\ln \alpha|/|\ln \mu|$. Since
\[
1/\alpha ^{k_x}=e^{k_x|\ln \alpha|}\le e^{h(r)|\ln \alpha|},
\]
we obtain
\[
1/\alpha ^{k_x}\le \upsilon r^{-\varsigma}.
\]
Therefore
\begin{equation}\label{eps1}
\epsilon^{-1}=r^{\tau(1-1/\alpha ^{k_x})}=r^{\tau}e^{1/\alpha ^{k_x}\ln r^{-\tau}}\le e^{\upsilon r^{-\varsigma}\ln r^{-\tau}}\le e^{\upsilon r^{-(\varsigma+\tau)}}.
\end{equation}
Also, we have
\begin{equation}\label{eps2}
\epsilon=r^{\tau (1/\alpha ^{k_x}-1)}=r^{-\tau}r^{\tau/\alpha ^{k_x}}\le r^{-\tau}r^{\tau (1/\alpha)^{k_+}}=r^{\tau \mathfrak{a}},
\end{equation}
where $\mathfrak{a}:=(1/\alpha)^{k_+}-1>0$.

Let $\mathfrak{b}:=\min (\mathfrak{a}\mathfrak{s}/(1-\mathfrak{s}),1)$. Then \eqref{eps1} and \eqref{eps2} in \eqref{a13.4} gives
\begin{equation}\label{a13.5}
[\mathbf{c}\varphi_s(V)]^{-1}\|u\|_{L^2(B(x_{k_x},\rho _{k_x}))}\le e^{\upsilon r^{-(\varsigma+\tau)}}\|u\|_{L^2(\omega)}
 +r^{\tau \mathfrak{b}}\|u\|_{L^2(\Omega)}.
\end{equation}

Next, as
\[
\rho_{k_x+1}+\rho_{k_x}-|x_{k_x}-x_{k_x+1}|=\sin \theta \mu d_{k_x}=:r_x,
\]
and $x\in B(x_{k_x},\rho _{k_x})\setminus B(x_{k_x+1},\rho _{k_x+1})$, we get $B(x,r_x)\subset B(x_{k_x},\rho _{k_x})$.
But
\[
r_x=\sin\theta  d_0\mu^{k_x+1}\ge \mu(2\sin\theta \bar{\varrho}/3)e^{-\ln (\bar{\varrho}/r)}=2\mu\varpi r.
\]
Therefore $B(x,2\mu\varpi r)\subset B(x_{k_x},\rho _{k_x})$.
Applying Lemma \ref{lemA1} to derive that $\Omega^r\setminus \Omega^{\bar{\varrho}/3}$ can be covered by $\ell$ balls $B(x,2\mu\varpi r)$ centered at $x\in\Omega^r\setminus \Omega^{\bar{\varrho}/3}$ with $\ell\le \hat{c}r^{-n}$, where $\hat{c}=\hat{c}(n,\Omega,\bar{\varrho})>0$. In consequence, \eqref{a13.5} gives
\begin{equation}\label{a13.6}
\hat{c}^{-1}[\mathbf{c}\varphi_s(V)]^{-1}\|u\|_{L^2(\Omega^r\setminus \Omega^{\bar{\varrho}/3})}\le r^{-n}e^{\upsilon r^{-(\varsigma+\tau)}}\|u\|_{L^2(\omega)}
 +r^{-n+\tau \mathfrak{b}}\|u\|_{L^2(\Omega)}.
\end{equation}
In \eqref{a13.6}, by choosing $\tau$ in such a way that $-n+\tau\mathfrak{b}=\mathfrak{t}$, that is $\tau=(n+\mathfrak{t})\mathfrak{b}^{-1}$, we get
\begin{equation}\label{a13.7}
\hat{c}^{-1}[\mathbf{c}\varphi_s(V)]^{-1}\|u\|_{L^2(\Omega^r\setminus \Omega^{\bar{\varrho}/3})}\le r^{-n}e^{\upsilon r^{-\tilde{\varsigma}}}\|u\|_{L^2(\omega)}
 +r^\mathfrak{t}\|u\|_{L^2(\Omega)},
\end{equation}
where $\tilde{\varsigma}=\tilde{\varsigma}(\zeta):=\varsigma +(n+\mathfrak{t})\mathfrak{b}^{-1}$.

Since $r<\bar{\varrho}/3$, we obtain from \eqref{ap2} with $r:=\bar{\rho}/12$ and $\epsilon:=r^{\mathfrak{t}(1-\eta(\bar{\rho}/12))}$
\begin{equation}\label{ap2.0}
\hat{c}^{-1}[\mathbf{c}\varphi_s(V)]^{-1}\|u\|_{L^2(\Omega^{\bar{\rho}/3})}\le 4^{n/\mathfrak{s}'}r^{-\varsigma'}\|u\|_{L^2(\omega)}+r^{\mathfrak{t}}\|u\|_{L^2(\Omega)},
\end{equation}
where $\mathfrak{s}'=\mathfrak{s}'(n,\Omega,\omega,\mathfrak{r}):=\eta(\bar{\varrho}/12)$ and $\varsigma'=\varsigma'(\zeta):=(n+\mathfrak{t}(1-\mathfrak{s}'))/\mathfrak{s}'$.

Henceforth, $\tilde{\mathbf{c}}=\tilde{\mathbf{c}}(\zeta,s)>0$ and $\upsilon=\upsilon(\zeta)>0$ will denote generic constants. Then, \eqref{a13.7} and \eqref{ap2.0} give
\begin{equation}\label{ap2.1}
[\tilde{\mathbf{c}}\varphi_s(V)]^{-1}\|u\|_{L^2(\Omega^r)}\le e^{\upsilon r^{-\tilde{\varsigma}}}\|u\|_{L^2(\omega)}+r^{\mathfrak{t}}\|u\|_{L^2(\Omega)}.
\end{equation}

On the other hand, \eqref{ap3} in which we replaced $4r$ by $r$ yields
\begin{equation}\label{a13.8}
\|u\|_{L^2(\Omega_{r})}\le c_\ast r^\mathfrak{t} \|u\|_{H^1(\Omega)},
\end{equation}
where we recall that $c_\ast=c_\ast\, (n,\Omega,\mathfrak{t})>0$ is a constant. The expected inequality follows by combining \eqref{ap2.1} and \eqref{a13.8} and replacing $r$ by $3r\bar{\varrho}$.
\end{proof}

The proof of Theorem \ref{thmaa1} is an adaptation of that of \cite[Theorem 1.1]{Ch24_2} with additional details.

Using the same notations as in the preceding proof, we set $\upsilon':=\upsilon+\mathfrak{t}/\tilde{\varsigma}$, $\upsilon'':=(\upsilon')^{\mathfrak{t}/\tilde{\varsigma}}$ and
\[
\mathbf{F}(x):=e^{\upsilon'}x^{-1}\chi_{(0,e^{\upsilon'}]}(x)+\upsilon''(\ln x)^{-\mathfrak{t}/\tilde{\varsigma}}\chi_{(e^{\upsilon'},\infty)}(x).
\]

We proceed similarly to the proof of Corollary \ref{cora1} to obtain the following result.

\begin{corollary}\label{coraa1}
Let $\omega\Subset \Omega$, $s\in (n/2,\infty]$, $0<\mathfrak{t} <1/2$. There exists a constant $\tilde{\mathbf{c}}=\tilde{\mathbf{c}}(n,\Omega,\omega, \mathfrak{r},s,\mathfrak{t})>0$ such that for all $V\in L^s(\Omega)$, $u\in X_s\setminus\{0\}$ satisfying $(-\Delta+V)u=0$ we have
\[
\|u\|_{L^2(\Omega)}\le \tilde{\mathbf{c}}\varphi_s(V)\mathbf{F}\left(\|u\|_{H^1(\Omega)}/\|u\|_{L^2(\omega)}\right)\|u\|_{H^1(\Omega)}.
\]
\end{corollary}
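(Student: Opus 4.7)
The plan is to optimize over $r\in(0,1)$ in the bound \eqref{aa1} of Theorem \ref{thmaa1}, following exactly the template used to deduce Corollary \ref{cora1} from Theorem \ref{thma2}. First I would define the decreasing function $f(r):=r^{-\mathfrak{t}}e^{\upsilon r^{-\tilde{\varsigma}}}$ on $(0,1)$; since $r^{-\mathfrak{t}}=e^{-\mathfrak{t}\ln r}\le e^{(\mathfrak{t}/\tilde{\varsigma})r^{-\tilde{\varsigma}}}$ for $r\in(0,1)$, this satisfies the convenient bound $f(r)\le e^{\upsilon' r^{-\tilde{\varsigma}}}$ with $\upsilon':=\upsilon+\mathfrak{t}/\tilde{\varsigma}$, matching the exponent in $\mathbf{F}$. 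Next I would note that $f$ is a bijection from $(0,1)$ onto $(e^{\upsilon},\infty)$ and, setting $x:=\|u\|_{H^1(\Omega)}/\|u\|_{L^2(\omega)}$, split the discussion according to whether $x$ exceeds the threshold $e^{\upsilon'}$.

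In the subcritical range $0<x\le e^{\upsilon'}$, the trivial embedding would suffice: $\|u\|_{L^2(\Omega)}\le \|u\|_{H^1(\Omega)}=x\|u\|_{L^2(\omega)}\le e^{\upsilon'}\|u\|_{L^2(\omega)}$, which, rewritten as $(e^{\upsilon'}/x)\|u\|_{H^1(\Omega)}$, is precisely the first branch of $\mathbf{F}(x)\|u\|_{H^1(\Omega)}$ (after noting $\tilde{\mathbf{c}}\varphi_s(V)\ge 1$).

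In the main case $x>e^{\upsilon'}\,(>e^{\upsilon})$, the bijectivity of $f$ produces a unique $\bar{r}\in(0,1)$ with $f(\bar{r})=x$; from $f(\bar{r})\le e^{\upsilon'\bar{r}^{-\tilde{\varsigma}}}$ combined with $\ln x>\upsilon'>0$ one extracts $\bar{r}\le(\upsilon'/\ln x)^{1/\tilde{\varsigma}}$ and hence $\bar{r}^{\mathfrak{t}}\le \upsilon''(\ln x)^{-\mathfrak{t}/\tilde{\varsigma}}$. By the very definition of $\bar{r}$, the two terms on the right-hand side of \eqref{aa1} evaluated at $r=\bar{r}$ coincide and equal $\bar{r}^{\mathfrak{t}}\|u\|_{H^1(\Omega)}$, so Theorem \ref{thmaa1} yields
\[
\|u\|_{L^2(\Omega)}\le 2\tilde{\mathbf{c}}\varphi_s(V)\bar{r}^{\mathfrak{t}}\|u\|_{H^1(\Omega)}\le 2\tilde{\mathbf{c}}\varphi_s(V)\upsilon''(\ln x)^{-\mathfrak{t}/\tilde{\varsigma}}\|u\|_{H^1(\Omega)},
\]
which, after absorbing the constant $2$ into the generic $\tilde{\mathbf{c}}$, reproduces the second branch of $\mathbf{F}(x)\|u\|_{H^1(\Omega)}$. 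Concatenating the two cases via the piecewise formula defining $\mathbf{F}$ would close the argument.

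No serious obstacle is anticipated; the argument is essentially the same as in Corollary \ref{cora1}, with the double-exponential profile $\varrho(r)=e^{\bar{\mathfrak{h}}r^{-n}}$ replaced by the single-exponential profile $e^{\upsilon r^{-\tilde{\varsigma}}}$. The only mildly delicate point is verifying that the threshold $e^{\upsilon'}$ is compatible both with the range $f((0,1))=(e^{\upsilon},\infty)$ and with the positivity of $\ln x$ needed to make sense of the second branch of $\mathbf{F}$; both are automatic since $\upsilon'>\upsilon>0$.
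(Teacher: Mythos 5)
Your proposal is correct and follows essentially the same route as the paper, which merely instructs the reader to proceed as for Corollary \ref{cora1}: bound $f(r)=r^{-\mathfrak{t}}e^{\upsilon r^{-\tilde\varsigma}}$ by $e^{\upsilon' r^{-\tilde\varsigma}}$, handle $x=\|u\|_{H^1}/\|u\|_{L^2(\omega)}\le e^{\upsilon'}$ trivially, and otherwise choose $\bar r$ with $f(\bar r)=x$ (available since $e^{\upsilon'}>e^{\upsilon}=\inf f$ on $(0,1)$), extract $\bar r^{\mathfrak t}\le\upsilon''(\ln x)^{-\mathfrak t/\tilde\varsigma}$, and plug into \eqref{aa1}. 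The observation that the two terms of \eqref{aa1} balance exactly at $r=\bar r$ is the right reading of the construction, and absorbing the factor $2$ into the generic $\tilde{\mathbf{c}}$ is standard.
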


In the remainder of this section, we give the main steps to show how one can obtain a result similar to Theorem \ref{thmaa1} in the case $s=n/2$. We keep hereafter the notations of the proof of Theorem \ref{thmaa1}.

Let $V\in L^{n/2}(\Omega)$ and $u\in W^{2,p}(\Omega)$ satisfying $(-\Delta+V)u=0$. First, in light of three-ball inequality of Theorem \ref{thmsc2},  for all $k\ge 0$, we have instead of \eqref{a10}
\begin{equation}\label{Aa10}
[q_Vr^{-1}]^{-1/(1-\alpha)}\|u\|_{L^2(B(x_k,\rho _k))}\le \epsilon^{-\frac{1}{\alpha^k}} \|u\|_{L^2(B(x_0,\rho _0))} +\epsilon^{\frac{1}{1-\alpha ^k}}\|u\|_{L^2(\Omega)},
\end{equation}
where $q_V$ is given by \eqref{qV}. Using \eqref{a1.3}, we obtain
\begin{equation}\label{Aa13.2}
[q_Vr^{-1}]^{-1/(1-\alpha)}\|u\|_{L^2(B(x_0,\bar{\varrho}/3))}\le \epsilon^{-1}\|u\|_{L^2(\omega)}+\epsilon^{\mathfrak{s}/(1-\mathfrak{s})}\|u\|_{L^2(\Omega)}.
\end{equation}
Here, $\mathfrak{s}:=\eta(\bar{\varrho}/3)$. Putting together \eqref{Aa10} and \eqref{Aa13.2}, we get similarly as for \eqref{a13.5}
\begin{equation}\label{Aa13.5}
[q_Vr^{-1}]^{-2/(1-\alpha)}\|u\|_{L^2(B(x_{k_x},\rho _{k_x}))}\le e^{\upsilon r^{-(\varsigma+\tau)}}\|u\|_{L^2(\omega)}
 +r^{\tau \mathfrak{b}}\|u\|_{L^2(\Omega)}.
\end{equation}
Here we have kept the same notations as in \eqref{a13.5} by simply replacing $\alpha$ of \eqref{a13.5} with that of \eqref{Aa10}.

Pursuing as in the proof of Theorem \ref{thmaa1}, we obtain
\[
\hat{c}^{-1}[q_Vr^{-1}]^{-2/(1-\alpha)}\|u\|_{L^2(\Omega^r\setminus \Omega^{\bar{\varrho}/3})}\le r^{-n}e^{\upsilon r^{-(\varsigma+\tau)}}\|u\|_{L^2(\omega)}
 +r^{-n+\tau \mathfrak{b}}\|u\|_{L^2(\Omega)}.
\]
That is we have
\begin{align}
&\hat{c}^{-1}q_V^{-2/(1-\alpha)}\|u\|_{L^2(\Omega^r\setminus \Omega^{\bar{\varrho}/3})}\le r^{-2/(1-\alpha)-n}e^{\upsilon r^{-(\varsigma+\tau)}}\|u\|_{L^2(\omega)}\label{Aa13.6}
\\
&\hskip 6.5cm +r^{-2/(1-\alpha)-n+\tau \mathfrak{b}}\|u\|_{L^2(\Omega)}.\nonumber
\end{align}

Set $\tilde{q}_V:=q_V^{2/(1-\alpha)}$ and $\tilde{\varsigma}=\tilde{\varsigma}(\zeta):=\varsigma+\mathfrak{b}^{-1}(\mathfrak{t}+n+2(1-\alpha)^{-1})$, where $\zeta:=(n,\Omega,\omega,\mathfrak{r},\mathfrak{t})$.
By choosing $\tau=\mathfrak{b}^{-1}(\mathfrak{t}+n+2(1-\alpha)^{-1})$ in \eqref{Aa13.6}, we obtain
\[
\hat{c}^{-1}\tilde{q}_V^{-1}\|u\|_{L^2(\Omega^r\setminus \Omega^{\bar{\varrho}/3})}\le e^{\upsilon r^{-\tilde{\varsigma}}}\|u\|_{L^2(\omega)}+r^{\mathfrak{t}}\|u\|_{L^2(\Omega)},
\]
where $\upsilon=\upsilon(\zeta)>0$ denotes a generic constant. In the same way as when \eqref{ap2.1} is obtained, we have
\[
[\tilde{\mathbf{c}}\tilde{q}_V]^{-1}\|u\|_{L^2(\Omega^r)}\le e^{\upsilon r^{-\tilde{\varsigma}}}\|u\|_{L^2(\omega)}+r^{\mathfrak{t}}\|u\|_{L^2(\Omega)},
\]
where $\tilde{\mathbf{c}}=\tilde{\mathbf{c}}(\zeta)>0$ is a generic constant.
This and Hardy's inequality \eqref{a13.8} yields
\[
\|u\|_{L^2(\Omega)}\le \tilde{\mathbf{c}}\tilde{q}_V \left(e^{\upsilon r^{-\tilde{\varsigma}}}\|u\|_{L^2(\omega)}+r^\mathfrak{t}\|u\|_{H^1(\Omega)}\right).
\]

Summing up, we obtain the following result.

\begin{theorem}\label{thmaa1.1}
Let $\omega\Subset \Omega$, $0<\mathfrak{t} <1/2$  and $\zeta:=(n,\Omega,\omega, \mathfrak{r},\mathfrak{t})$. There exist three constants $\upsilon =\upsilon(\zeta)>0$, $\tilde{\varsigma}=\tilde{\varsigma}(\zeta)>0$ and $\tilde{\mathbf{c}}=\tilde{\mathbf{c}}(\zeta)>0$ such that for all $V\in \mathscr{V}$, $u\in W^{2,p}(\Omega)\setminus\{0\}$ satisfying $(-\Delta+V)u=0$  and $0<r<1$ we have
\begin{equation}\label{xx}
\|u\|_{L^2(\Omega)}\le \tilde{\mathbf{c}}\tilde{q}_V \left(e^{\upsilon r^{-\tilde{\varsigma}}}\|u\|_{L^2(\omega)}+r^\mathfrak{t}\|u\|_{H^1(\Omega)}\right),
\end{equation}
\end{theorem}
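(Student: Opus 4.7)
The plan is to mimic the proof of Theorem~\ref{thmaa1}, systematically replacing the three-ball inequality of Theorem~\ref{mthm1} with its $L^{n/2}$ analogue Theorem~\ref{thmsc2}, and to track carefully the extra prefactors $q_V r^{-1}$ that this substitution introduces. First, I would reuse verbatim the chain of balls $(B(x_k,\rho_k))_{k\ge 0}$ constructed in the proof of Theorem~\ref{thmaa1}, running from an interior anchor point $x_0\in \Omega^{\bar{\varrho}/3}$ toward the boundary point $\mathfrak{p}(x)$ and exploiting the uniform interior sphere property of $C^{1,1}$ domains (Lemma~\ref{lemmaa1}). The geometric identities on the $\rho_k$, $d_k$ and the two-sided bounds on the stopping index $k_x$ are purely geometric and thus carry over without change; in particular $x$ lies in $B(x_{k_x},\rho_{k_x})$ with $k_x \lesssim \ln(1/r)$.

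Next, I would iterate Theorem~\ref{thmsc2} along the chain. Applying it at each step together with Young's inequality yields, for all $k\ge 0$,
\[
[q_V r^{-1}]^{-1/(1-\alpha)}\|u\|_{L^2(B(x_k,\rho_k))}\le \epsilon^{-1/\alpha^k}\|u\|_{L^2(B(x_0,\rho_0))}+\epsilon^{1/(1-\alpha^k)}\|u\|_{L^2(\Omega)}.
\]
To transfer $\|u\|_{L^2(B(x_0,\bar{\varrho}/3))}$ to $\|u\|_{L^2(\omega)}$ on the anchor ball I would appeal to Theorem~\ref{mthm2.0}, incurring a second factor of the form $[q_V r^{-1}]^{1/(1-\alpha)}$ and producing, after another application of Young's inequality, the composite estimate with prefactor $[q_V r^{-1}]^{2/(1-\alpha)}$. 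This doubling is the origin of the definition $\tilde{q}_V := q_V^{2/(1-\alpha)}$.

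With $\epsilon$ balanced as in the proof of Theorem~\ref{thmaa1}, one then uses $1/\alpha^{k_x}\le \upsilon r^{-\varsigma}$ to control $\epsilon^{-1}$ by $e^{\upsilon r^{-(\varsigma+\tau)}}$ and to bound $\epsilon^{\mathfrak{s}/(1-\mathfrak{s})}+\epsilon^{\alpha^{k_x}/(1-\alpha^{k_x})}$ by $r^{\tau \mathfrak{b}}$. Covering $\Omega^r\setminus \Omega^{\bar{\varrho}/3}$ by $O(r^{-n})$ balls of radius $2\mu\varpi r$ (Lemma~\ref{lemA1}) and choosing $\tau = \mathfrak{b}^{-1}\bigl(\mathfrak{t}+n+2(1-\alpha)^{-1}\bigr)$ to absorb the extra $r^{-n-2/(1-\alpha)}$ factor, I would obtain
\[
[\tilde{\mathbf{c}}\tilde{q}_V]^{-1}\|u\|_{L^2(\Omega^r)}\le e^{\upsilon r^{-\tilde{\varsigma}}}\|u\|_{L^2(\omega)}+r^{\mathfrak{t}}\|u\|_{L^2(\Omega)}
\]
with $\tilde{\varsigma}=\varsigma+\mathfrak{b}^{-1}(\mathfrak{t}+n+2(1-\alpha)^{-1})$. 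Hardy's inequality \eqref{a13.8} then bounds the boundary layer $\|u\|_{L^2(\Omega_r)}$ by $c_\ast r^{\mathfrak{t}}\|u\|_{H^1(\Omega)}$, and summing the two contributions yields \eqref{xx}.

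The main technical obstacle is purely book-keeping: because the $L^{n/2}$ three-ball inequality carries the additional $r^{-1}$ factor (absent in the $s>n/2$ case), both the iteration along the chain and the passage through the anchor ball produce negative powers of $r$ that must be balanced against the $r^{\mathfrak{t}}$ target. This forces inflating $\tau$—and therefore $\tilde{\varsigma}$—by the contribution $2(\mathfrak{b}(1-\alpha))^{-1}$ compared to Theorem~\ref{thmaa1}, and is precisely what forces the power $2/(1-\alpha)$ in the definition of $\tilde{q}_V$. Once this is done, all remaining estimates are identical to those in the proof of Theorem~\ref{thmaa1}.
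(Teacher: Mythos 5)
Your proposal reproduces the paper's argument essentially verbatim: you iterate the $L^{n/2}$ three-ball inequality of Theorem~\ref{thmsc2} along the same chain of balls as in Theorem~\ref{thmaa1}, use Theorem~\ref{mthm2.0} on the anchor ball, track the doubled prefactor $[q_Vr^{-1}]^{2/(1-\alpha)}$ into the definition $\tilde{q}_V=q_V^{2/(1-\alpha)}$, absorb the extra negative powers of $r$ by enlarging $\tau$ to $\mathfrak{b}^{-1}(\mathfrak{t}+n+2(1-\alpha)^{-1})$, and finish with the covering lemma and Hardy's inequality. This is exactly the paper's route, including the correct identification of the origin of the exponent inflation in both $\tilde{q}_V$ and $\tilde{\varsigma}$.
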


We can then proceed as in the proof of Corollary \ref{coraa1} to derive from \eqref{xx} a logarithmic stability inequality.

\section{Doubling inequality}\label{S7}

We begin with the case $s=n/2$. Before stating precisely the doubling inequality in this case, we give some definitions.
For $V\in \mathscr{V}$, $u\in W^{2,p}(\Omega)\setminus\{0\}$ and $x_0\in \Omega^{4r}$, set
\begin{align*}
&\tilde{\lambda}=\tilde{\lambda}(V,u,x_0,r):= 
\\
&\hskip 1.5cm \frac{1}{\ln(9/4)}\ln \left(1+ 4r^{-1}\mathbf{k}\left[\vartheta_V(1+\kappa_V^1)\right]\frac{\|u(x_0+\cdot)\|_{L^2([\![r,13r/4]\!])}}{\|u(x_0+\cdot)\|_{L^2([\![r/4,r/2]\!])}}\right).
\end{align*}
Here and henceforth $\mathbf{k}>0$ is a generic universal constant, and we recall that for $0\le a<b$, 
\[
[\![a,b]\!]:=\{x\in \mathbb{R}^n;\; a<|x|<b\},
\]
\[
\kappa_V^1:=2(1+\sigma)\mathbf{I}_V^2+2\sigma\sqrt{\kappa_V}\, \mathbf{I}_V+\sigma/\sqrt{1+\sigma^2},
\]
where $\mathbf{I}_V:=(1-2\sigma^2\kappa_V)^{-1/2}$ and $\vartheta_V:=\vartheta(1-\vartheta\kappa_V)^{-1}$.

Let $\bar{\lambda}_{n/2}:=2+\max(\tilde{\lambda}+1,8+1/2+(n-2)/2)$ and 
\[
\mathbf{M}_{n/2}=\mathbf{M}_{n/2}(V,u,x_0,r):=\mathbf{k}\left[\vartheta_V(1+\kappa_V^1)\right]\bar{\lambda}_{n/2}^2. 
\]

\begin{theorem}\label{thmD1}
Fix $0<r_0<3/4$ so that $\Omega^{4r_0}$ is nonempty. Let $0<r<r_0$ and $x_0\in \Omega^{4r}$. For all  $V\in \mathscr{V}$ and $u\in W^{2,p}(\Omega)\setminus\{0\}$ satisfying $(-\Delta +V)u=0$ we have 
\begin{equation}\label{z8}
\|u\|_{L^2(B(x_0,2\rho))}\le \rho^{-\bar{\lambda}_{n/2}}\mathbf{M}_{n/2}\|u\|_{L^2(B(x_0,\rho))},\quad 0<\rho<r/8.
\end{equation}
\end{theorem}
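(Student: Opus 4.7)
The plan is to mimic the proof of Theorem \ref{thmsc2}. Setting $w(y):=u(x_0+y)$ and $\mathcal{V}(y):=V(x_0+y)$, so that $(-\Delta+\mathcal{V})w=0$ in $B:=B_{4r}$, I will apply the singular-weight Carleman inequality \eqref{sc2} to a cutoff $\chi w$, and then exploit the Carleman parameter $\lambda\in\Lambda$ to force a doubling bound: concretely, I will pick $\lambda$ large enough to absorb the outer-annular error term, and the resulting exponent of $1/\rho$ will be precisely $\bar{\lambda}_{n/2}$.

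First I introduce an increasing sequence of radii $r_1<\cdots<r_8$ in $(0,4r)$ and a cutoff $\chi\in C_0^\infty(B)$ as in the proof of Theorem \ref{thmsc2} (equal to $1$ on $[\![r_3,r_6]\!]$, supported in $[\![r_2,r_7]\!]$, with the usual gradient bounds $|\nabla\chi|\le \mathbf{k} d_1^{-1}$ on the inner transition $[\![r_2,r_3]\!]$ and $|\nabla\chi|\le \mathbf{k} d_2^{-1}$ on the outer transition $[\![r_6,r_7]\!]$). The key novelty is the choice of radii: I take $r_1,\ldots,r_4$ proportional to $\rho$ so that the middle region $[\![r_3,r_5]\!]$ covers $B(x_0,2\rho)\setminus B(x_0,\rho)$ and the inner Caccioppoli enlargement $[\![r_1,r_4]\!]$ lies inside $B(x_0,\rho)$; then $r_5,\ldots,r_8$ proportional to $r$, with the ratio $r_6/r_2=9/4$, so that $[\![r_5,r_8]\!]\subset [\![r,13r/4]\!]$ (the outer annulus appearing in the definition of $\tilde{\lambda}$) and the balancing factor $(r_6/r_2)^\lambda$ produced by the Carleman estimate is precisely $(9/4)^\lambda$, matching the $\ln(9/4)$ denominator in $\tilde{\lambda}$.

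Applying \eqref{sc2} to $\chi w$, combined with the $L^p$--$L^2$ inequality $\|h\|_{L^p(B)}\le\mathbf{k} r\|h\|_{L^2(B)}$ and the Caccioppoli inequality of Proposition \ref{prosc1} on the two transition annuli, yields, exactly as in the derivation of \eqref{a4}, an estimate which, after using $\|w\|_{L^2(B_{2\rho})}\le\|w\|_{L^2(B_\rho)}+\|w\|_{L^2([\![r_3,r_5]\!])}$, takes the schematic form
\begin{equation*}
\|w\|_{L^2(B(x_0,2\rho))}\le A(V)\bigl[P(\lambda)\rho^{-\lambda}\|w\|_{L^2(B(x_0,\rho))}+Q(\lambda)(4/9)^{\lambda}\|w\|_{L^2([\![r,13r/4]\!])}\bigr],
\end{equation*}
with $A(V)=\mathbf{k}\vartheta_V(1+\kappa_V^1)$ and with $P(\lambda),Q(\lambda)$ polynomial in $\lambda$. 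The threshold $\tilde{\lambda}$ is then exactly the Carleman parameter for which the two right-hand terms coincide, once $\|w\|_{L^2(B(x_0,\rho))}$ on the right is replaced by its lower bound $\|w\|_{L^2([\![r/4,r/2]\!])}$ appearing in the denominator of $\tilde{\lambda}$. Choosing $\lambda=\bar{\lambda}_{n/2}$, which is $\tilde{\lambda}$ shifted upward by at most $1$ to lie in $\Lambda$ and kept above the fixed threshold $8+1/2+(n-2)/2$ required by the implicit universal constants, absorbs the outer term and yields \eqref{z8} with $\mathbf{M}_{n/2}=\mathbf{k}\vartheta_V(1+\kappa_V^1)\bar{\lambda}_{n/2}^2$, the factor $\bar{\lambda}_{n/2}^2$ collecting the two polynomial-in-$\lambda$ prefactors $P(\lambda)$ and $Q(\lambda)$.

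The main obstacle is the bridging step in which $\|w\|_{L^2(B(x_0,\rho))}$ is replaced by the smaller $\|w\|_{L^2([\![r/4,r/2]\!])}$ in the balancing condition, since these norms live on two rather different scales ($\rho<r/8$). My plan is to handle this by invoking Theorem \ref{thmsc2} at scale $r/2$, which propagates the lower bound from the scale-$r$ annulus down to scale $\rho$ at the price of a factor of the form $[q_V/r]^{1/(1-\alpha)}$, absorbable into $\mathbf{M}_{n/2}$, and by tracking carefully the polynomial-in-$\lambda$ dependence of $P(\lambda)$ and $Q(\lambda)$ through the Caccioppoli step so that $\mathbf{M}_{n/2}$ remains polynomial, rather than exponential, in $\bar{\lambda}_{n/2}$.
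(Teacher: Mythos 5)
Your proposal correctly identifies the two-scale choice of radii ($r_1,\ldots,r_4$ near $\rho$, $r_5,\ldots,r_8$ near $r$) and the role of the constraint $\lambda\ge\tilde{\lambda}$ in absorbing the outer error. But your description of the balancing step is wrong in a way that then leads you to invent an unnecessary and damaging "bridging" detour. You claim the threshold $\tilde{\lambda}$ is found by equating $P(\lambda)\rho^{-\lambda}\|w\|_{L^2(B_\rho)}$ with $Q(\lambda)(4/9)^\lambda\|w\|_{L^2([\![r,13r/4]\!])}$ and then "replacing $\|w\|_{L^2(B_\rho)}$ by its lower bound $\|w\|_{L^2([\![r/4,r/2]\!])}$". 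This cannot be what happens: since $\rho<r/8$, the annulus $[\![r/4,r/2]\!]$ is \emph{disjoint} from $B_\rho$, so $\|w\|_{L^2([\![r/4,r/2]\!])}$ is not a lower bound for $\|w\|_{L^2(B_\rho)}$. The paper's actual mechanism is that the Carleman estimate produces on the \emph{left-hand side} the norm $\|w\|_{L^2([\![\rho(1-\lambda^{-1}),r]\!])}$, a single wide annulus stretching from scale $\rho$ up to scale $r$. Because $2\rho<r/4$, this annulus already contains both $[\![\rho,2\rho]\!]$ (used, together with $\|w\|_{L^2(B_\rho)}$ on the right whose coefficient is $\ge 1$, to reassemble $\|w\|_{L^2(B_{2\rho})}$) and $[\![r/4,r/2]\!]$. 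The condition \eqref{z1} then lets the outer error $4r^{-2}(4/9)^\lambda\|w\|_{L^2([\![r,13r/4]\!])}$ be absorbed \emph{into the left-hand side} against $\|w\|_{L^2([\![r/4,r/2]\!])}$, with no comparison to $B_\rho$ at all. You also misattribute the source of the $(4/9)^\lambda$: in the paper's radii $r_5=r$ and $r_6=9r/4$, so it is $(r_5/r_6)^\lambda$ that equals $(4/9)^\lambda$, not $(r_2/r_6)^\lambda$.

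Your proposed fix, to invoke Theorem \ref{thmsc2} at scale $r/2$ to "propagate the lower bound" to scale $\rho$, would therefore be addressing a nonexistent problem, and worse, it would wreck the stated constant. The three-ball inequality \eqref{a1.1} brings in a factor of the form $[q_V r^{-1}]^{1/(1-\alpha)}$, which carries the exponent $1/(1-\alpha)$ on $q_V=\mathbf{k}\vartheta_V(1+\kappa_V^1)$ as well as an extra negative power of $r$; neither of these fits into $\mathbf{M}_{n/2}=\mathbf{k}\vartheta_V(1+\kappa_V^1)\bar{\lambda}_{n/2}^2$, which has the first power of $\vartheta_V(1+\kappa_V^1)$ and no $r$-dependence. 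Finally, you never specify that $d_1=\rho/\lambda$ (i.e.\ $r_2=\rho(1-2\lambda^{-1})$, $r_3=\rho(1-\lambda^{-1})$), which is essential: this choice, together with the elementary bound valid for $\lambda\ge 8$, is what produces $d_1^{-2}r_2^{-\lambda}r_5^\lambda\le\rho^{-2}\lambda^2(4\rho^{-1}r/3)^\lambda$ and hence the explicit factor $\lambda^2\rho^{-(2+\lambda)}$ in \eqref{z2}. Without this specification the claimed polynomial-in-$\lambda$ prefactor is unjustified.
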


\begin{proof}
Let $V\in \mathscr{V}$ and $u\in W^{2,p}(\Omega)$ satisfying $(-\Delta +V)u=0$, and define $w$ and $\mathcal{V}$ on $B:=B_{4r}$ by $w:=u(x_0+\cdot)$ and $\mathcal{V}:=V(x_0+\cdot)$. The following inequality were already established in the proof of Theorem \ref{thmsc2}
\begin{align}
&\mathbf{k} r^{-1}\||x|^{-\lambda}(-\Delta +\mathcal{V})(\chi w)\|_{L^p(B)}\label{db1}
\\
&\hskip 2cm\le  d_1^{-2}r_2^{-\lambda}\|w\|_{L^2([\![r_2,r_3]\!])}+d_1^{-1}r_2^{-\lambda}\|\nabla w\|_{L^2([\![r_2,r_3]\!])}\nonumber
\\
&\hskip 3cm +d_2^{-2}r_6^{-\lambda}\|w\|_{L^2([\![r_6,r_7]\!])}+d_2^{-1}r_6^{-\lambda}\|\nabla w\|_{L^2([\![r_6,r_7]\!])}.\nonumber
\end{align}

Choose $r_1=\rho(1-3\lambda^{-1})$, $r_2=\rho(1-2\lambda^{-1})$, $r_3=\rho(1-\lambda^{-1})$, $r_4=\rho$, $r_5=r$, $r_6=9r/4$, $r_7=11r/4$, $r_8=13r/4$. Here $d_1:=r_3-r_2=\rho\lambda^{-1}=r_2-r_1=r_4-r_3$ and $d_2:=r_7-r_6=r/2=r_8-r_7$.

Caccioppoli's inequality then yields
\begin{align}
&\mathbf{k} r^{-1}\||x|^{-\lambda}(-\Delta +\mathcal{V})(\chi w)\|_{L^p(B)}\label{db2}
\le  d_1^{-2}r_2^{-\lambda}(1+\kappa_V^1)\|w\|_{L^2([\![r_1,r_4]\!])}
\\
&\hskip 5.5cm +d_2^{-2}r_6^{-\lambda}(1+\kappa_V^1)\|w\|_{L^2([\![r_5,r_8]\!])}.\nonumber
\end{align}
This and
\[
\||x|^{-\lambda}\chi w\|_{L^{p'}(B)}\le \vartheta_V \||x|^{-\lambda}(-\Delta +\mathcal{V})(\chi w)\|_{L^p(B)}
\]
imply
\begin{align*}
&\mathbf{k} r^{-1}\left[\vartheta_V(1+\kappa_V^1)\right]^{-1}r_5^{-\lambda}\|w\|_{L^2([\![r_3,r_5]\!])}
\\
&\hskip 4cm\le d_1^{-2}r_2^{-\lambda}\|w\|_{L^2([\![r_1,r_4]\!])}
+d_2^{-2}r_6^{-\lambda}\|w\|_{L^2([\![r_5,r_8]\!])}.
\end{align*}
If $\lambda \ge 8$, then we verify
\[
d_1^{-2}r_2^{-\lambda}r_5^\lambda\le \rho^{-2}\lambda^2(4\rho^{-1} r/3)^\lambda ,\quad d_2^{-2}r_6^{-\lambda}r_5^\lambda=4r^{-2}(4/9)^\lambda .
\]
These, combined with the preceding inequality, give
\begin{align*}
&\mathbf{k} r^{-1}\left[\vartheta_V(1+\kappa_V^1)\right]^{-1}\|w\|_{L^2([\![\rho(1-\lambda^{-1}),r]\!])}
\\
&\hskip 2cm \le  \rho^{-2}\lambda^2(4\rho^{-1} r/3)^\lambda\|w\|_{L^2([\![\rho(1-3\lambda^{-1}),\rho]\!])}
\\
&\hskip 6cm +4r^{-2}(4/9)^\lambda\|w\|_{L^2([\![r,13r/4]\!])}.
\end{align*}
In the case $4\rho^{-1} r/3\ge 1$, we derive
\begin{align*}
&\mathbf{k} r^{-1}\left[\vartheta_V(1+\kappa_V^1)\right]^{-1} \left[\|w\|_{L^2(B_{2\rho})}+\|w\|_{L^2([\![2\rho,r]\!])}\right]
\\
&\hskip 4cm\le  \rho^{-2}\lambda^2(4\rho^{-1} r/3)^\lambda\|w\|_{L^2(B_\rho)}
\\
&\hskip 6cm+4r^{-2}(4/9)^\lambda\|w\|_{L^2([\![r,13r/4]\!])}.
\end{align*}
In particular, if $\rho<r/8$, then
\begin{align*}
&\mathbf{k} r^{-1}\left[\vartheta_V(1+\kappa_V^1)\right]^{-1} \left[\|w\|_{L^2(B_{2\rho})}+\|w\|_{L^2([\![r/4,r/2]\!])}\right]
\\
&\hskip 4cm\le  \rho^{-2}\lambda^2(4\rho^{-1} r/3)^\lambda\|w\|_{L^2(B_\rho)}
\\
&\hskip 6cm+4r^{-2}(4/9)^\lambda\|w\|_{L^2([\![r,13r/4]\!])}.
\end{align*}
Assume that we can choose $\lambda$ in such a way that
\begin{equation}\label{z1} 
4r^{-1}(4/9)^\lambda\|w\|_{L^2([\![r,13r/4]\!])}\le \mathbf{k}\left[\vartheta_V(1+\kappa_V^1)\right]^{-1} \|w\|_{L^2([\![r/4,r/2]\!])}.
\end{equation}
In this case, we obtain
\begin{equation}\label{z2}
\mathbf{k} r^{-1}\left[\vartheta_V(1+\kappa_V^1)\right]^{-1} \|w\|_{L^2(B_{2\rho})}\le \lambda^2\rho^{-(2+\lambda)}\|w\|_{L^2(B_\rho)},
\end{equation}
where we used that $4r/3<1$

Note that \eqref{z1} holds whenever $\lambda$ is chosen so that
\[
1+4r^{-1}\mathbf{k}\left[\vartheta_V(1+\kappa_V^1)\right]\frac{\|w\|_{L^2([\![r,13r/4]\!])}}{\|w\|_{L^2([\![r/4,r/2]\!])}}\le (9/4)^{\lambda}=e^{\lambda\ln(9/4)}.
\]
That is
\begin{equation}\label{z3}
\lambda \ge \frac{1}{\ln(9/4)}\ln \left(1+ 4r^{-1}\mathbf{k}\left[\vartheta_V(1+\kappa_V^1)\right]\frac{\|w\|_{L^2([\![r,13r/4]\!])}}{\|w\|_{L^2([\![r/4,r/2]\!])}}\right)=\tilde{\lambda}.
\end{equation}

If $\tilde{\lambda}\ge 9$ and $\tilde{\lambda}\in \Lambda$, we get by taking $\lambda=\tilde{\lambda}$ in \eqref{z2}
\begin{equation}\label{z4}
\mathbf{k}\left[\vartheta_V(1+\kappa_V^1)\right]^{-1} \|w\|_{L^2(B_{2\rho})}\le \tilde{\lambda}^2\rho^{-(2+\tilde{\lambda})}\|w\|_{L^2(B_\rho)}.
\end{equation}

When $\tilde{\lambda}\ge 9$ and $\tilde{\lambda}\not\in \Lambda$,  we have already seen that there exists $\hat{\lambda}\in \Lambda$, $\hat{\lambda} \ge 8$, so that $\tilde{\lambda}<\hat{\lambda}<\tilde{\lambda}+1$ or $\hat{\lambda}<\tilde{\lambda}<\hat{\lambda}+1$. Taking $\lambda=\hat{\lambda}$ in \eqref{z2} we get
\begin{equation}\label{z5}
\mathbf{k}\left[\vartheta_V(1+\kappa_V^1)\right]^{-1} \|w\|_{L^2(B_{2\rho})}\le \tilde{\lambda}^2 \rho^{-(3+\tilde{\lambda})}\|w\|_{L^2(B_\rho)}
\end{equation}
if $\tilde{\lambda}<\hat{\lambda}<\tilde{\lambda}+1$ and
\begin{equation}\label{z6}
\mathbf{k}\left[\vartheta_V(1+\kappa_V^1)\right]^{-1} \|w\|_{L^2(B_{2\rho})}\le \tilde{\lambda}^2 \rho^{-(2+\tilde{\lambda})}\|w\|_{L^2(B_\rho)}
\end{equation}
if $\hat{\lambda}<\tilde{\lambda}<\hat{\lambda}+1$.

Finally, we consider the case $\tilde{\lambda}\le 9$. In this case,  $\tilde{\lambda}\le \lambda _\ast:=8+1/2+(n-2)/2\in \Lambda$ then \eqref{z2} with $\lambda=\lambda _\ast$ implies
\begin{equation}\label{z7}
\mathbf{k}\left[\vartheta_V(1+\kappa_V^1)\right]^{-1} \|w\|_{L^2(B_{2\rho})}\le \lambda_\ast^2\rho^{-(2+\lambda_\ast)}\|w\|_{L^2(B_{\rho})}.
\end{equation}

In view of inequalities \eqref{z4} to \eqref{z7}, we obtain 
\[
\|u\|_{L^2(B(x_0,2\rho))}\le \rho^{-\bar{\lambda}_{n/2}}\mathbf{M}_{n/2}\|u\|_{L^2(B(x_0,\rho))}.
\]
The proof is then complete.
\end{proof}

The principle of the proof of Theorem \ref{thmD1} is borrowed from \cite{Li}.

Before stating the doubling inequality for potentials in $L^s(\Omega)$ with $s\in (n/2,\infty]$, we need to introduce new definition. Set 
\[
\mathscr{V}_s:=\{V\in L^s(\Omega);\; \vartheta \tilde{\kappa}_V<1\},\quad s\in (n/2,\infty],
\]
where $\tilde{\kappa}_V:=\|V\|_{L^{n/2}(\Omega)}$. Also, for $V\in \mathscr{V}_s$, let $\tilde{\vartheta}_V:=\vartheta(1-\vartheta \tilde{\kappa}_V)^{-1}$ and $\iota:=s/(2s-n)$ if $s\in (n/2,\infty)$ and $\iota=1$ if $s=\infty$.

Let $r_0>0$ so that $\Omega^{4r_0}$ is nonempty and $0<r<r_0$. In the case of a potential in $L^s(\Omega)$, $s\in (n/2,\infty]$ we have the following doubling inequality where, for $x_0\in \Omega^{4r}$, $V\in L^s(\Omega)$ and $u\in X_s\setminus\{0\}$, $\tilde{\lambda}_s=\tilde{\lambda}_s(V,u,x_0,r)$ and  $\mathbf{M}_s=\mathbf{M}_s(V,u,x_0,r)$ are given as follows
\begin{align*}
&\tilde{\lambda}_s:=\frac{1}{\ln(9/4)}\ln \left(1+ 4r^{-1}\mathbf{k}\left[\tilde{\vartheta}_V(1+\kappa_V^\iota)\right]\frac{\|u(x_0+\cdot)\|_{L^2([\![r,13r/4]\!])}}{\|u(x_0+\cdot)\|_{L^2([\![r/4,r/2]\!])}}\right),
\\
&\mathbf{M}_s:=\mathbf{k}\left[1+\kappa_V^\iota \right]\bar{\lambda}_s^2,
\end{align*}
and let $\bar{\lambda}_s:=2+\max(\tilde{\lambda}_s+1,8+1/2+(n-2)/2)$.

Recall that $X_s:=W^{2,p}(\Omega)$ if $s\in (n/2,\infty]$ and $X_\infty:=H^2(\Omega)$.

\begin{theorem}\label{thmdin}
Fix $0<r_0<3/4$ so that $\Omega^{4r_0}$ is nonempty  and let $0<r<r_0$, $x_0\in \Omega^{4r}$ and $0<\rho <r/8$. For all $V\in \mathscr{V}_s$ and $u\in X_s$ satisfying $(-\Delta +V)u=0$ and $\|u\|_{L^2(\Omega)}=1$ we have
\begin{equation}\label{z8.0}
\|u\|_{L^2(B(x_0,2\rho))}\le \rho^{-\bar{\lambda}_s}\mathbf{M}_s\|u\|_{L^2(B(x_0,\rho))},\quad 0<\rho <r/8.
\end{equation}
\end{theorem}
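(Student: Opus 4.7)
The plan is to imitate the proof of Theorem \ref{thmD1} almost verbatim, substituting the $L^{n/2}$ Caccioppoli step (Proposition \ref{prosc1}) by the appropriate $L^s$ version (Proposition \ref{proqu1} if $s\in(n/2,\infty)$ and Proposition \ref{proinfty1} if $s=\infty$), while keeping the singular-weight Carleman inequality \eqref{car} unchanged, since it only ``sees'' $V$ through its $L^{n/2}$ norm via H\"older. Concretely, set $w:=u(x_0+\cdot)$ and $\mathcal{V}:=V(x_0+\cdot)$ on $B:=B_{4r}$; since $\Omega$ is bounded, $V\in L^s(\Omega)\subset L^{n/2}(\Omega)$, and writing $\tilde{\kappa}_V:=\|V\|_{L^{n/2}(\Omega)}$ the assumption $V\in\mathscr{V}_s$ (i.e.\ $\vartheta\tilde{\kappa}_V<1$) gives, exactly as in the derivation of \eqref{sc2},
\[
\||x|^{-\lambda}\chi w\|_{L^{p'}(B)}\le \tilde{\vartheta}_V\,\||x|^{-\lambda}(-\Delta+\mathcal{V})(\chi w)\|_{L^p(B)}
\]
for any cutoff $\chi$ compactly supported in $\dot B$ and any $\lambda\in\Lambda$.

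Next I would reuse the cutoff construction and the radii $r_1=\rho(1-3\lambda^{-1})$, $r_2=\rho(1-2\lambda^{-1})$, $r_3=\rho(1-\lambda^{-1})$, $r_4=\rho$, $r_5=r$, $r_6=9r/4$, $r_7=11r/4$, $r_8=13r/4$ used in the proof of Theorem \ref{thmD1}, together with the same estimate \eqref{db1}. At the Caccioppoli step, Proposition \ref{proqu1} (resp.\ Proposition \ref{proinfty1}) produces the factor $1+\kappa_V^\iota$ with $\iota=s/(2s-n)$ (resp.\ $\iota=1$) in place of the $1+\kappa_V^1$ appearing in the $s=n/2$ argument; when $s=\infty$ the Caccioppoli bound is in $L^2$, so one additionally uses $\|h\|_{L^p(B)}\le \mathbf{k} r^{2/n}\|h\|_{L^2(B)}$ to feed the output back into the Carleman chain. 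Combining these with the Carleman bound above, exactly as in the derivation of \eqref{z2}, gives
\[
\mathbf{k} r^{-1}\bigl[\tilde{\vartheta}_V(1+\kappa_V^\iota)\bigr]^{-1}\|w\|_{L^2(B_{2\rho})}\le \rho^{-(2+\lambda)}\lambda^2\|w\|_{L^2(B_\rho)}+r^{-2}(4/9)^\lambda\|w\|_{L^2([\![r,13r/4]\!])}
\]
for every $\lambda\ge 8$ and every $\rho<r/8$.

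From here the argument is a carbon copy of the one used in Theorem \ref{thmD1}: absorption of the outer-annulus term into the left-hand side is precisely the condition $\lambda\ge \tilde{\lambda}_s$, and then the four-case analysis ($\tilde{\lambda}_s\le 9$; $\tilde{\lambda}_s\ge 9$ with $\tilde{\lambda}_s\in\Lambda$; and the two sub-cases when $\tilde{\lambda}_s\notin\Lambda$) selects an admissible $\lambda\in\Lambda$ within distance $1$ of $\tilde{\lambda}_s$, giving the exponent $\bar{\lambda}_s=2+\max(\tilde{\lambda}_s+1, 17/2+(n-2)/2)$ on $\rho^{-1}$ and the constant $\mathbf{M}_s$, hence \eqref{z8.0}. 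The only genuine obstacle, as in the base case, is this admissibility constraint: one has to verify that shifting $\lambda$ from $\tilde{\lambda}_s$ to the nearest point of $\Lambda$ costs at most one extra unit in the exponent, so that it is safely absorbed by the ``$+2$'' slack built into the definition of $\bar{\lambda}_s$. The normalization $\|u\|_{L^2(\Omega)}=1$ plays no role in the proof itself; it is imposed only to make the statement homogeneous with the vanishing-order applications that follow.
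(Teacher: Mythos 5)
Your proposal is correct and follows essentially the same route as the paper: the paper's proof of Theorem \ref{thmdin} likewise keeps the singular-weight Carleman inequality \eqref{car} (which only sees $V$ through $\tilde{\kappa}_V=\|V\|_{L^{n/2}(\Omega)}$, whence the hypothesis $\vartheta\tilde{\kappa}_V<1$ in $\mathscr{V}_s$) and simply swaps the Caccioppoli factor $\vartheta_V(1+\kappa_V^1)$ for $\tilde{\vartheta}_V(1+\kappa_V^\iota)$, then repeats the absorption and $\Lambda$-admissibility case analysis of Theorem \ref{thmD1} verbatim. The only quibble is the exponent in your $L^2\to L^p$ embedding on $B_{4r}$ (H\"older gives $|B|^{1/p-1/2}=|B|^{1/n}\sim r$, not $r^{2/n}$), which is immaterial to the argument.
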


\begin{proof}
The proof is obtained by slightly modifying that of Theorem \ref{thmD1}. Let $s\in (n/2,\infty]$, $V\in L^s(\Omega)$, $x_0\in \Omega^{4r}$, $\lambda\in \Lambda$ and $u\in W^{2,p}(\Omega)$ with $\mathrm{supp}(u(x_0+\cdot))\subset \dot{B}$. Set $\mathcal{V}:=V(x_0+\cdot)$ and $w:=u(x_0+\cdot)$. 

From the preceding proof, we have
\[
\||x|^{-\lambda}\chi w\|_{L^{p'}(B)}\le \tilde{\vartheta}_V\||x|^{-\lambda}(-\Delta +\mathcal{V})(\chi w)\|_{L^p(B)}.
\]
In light of this inequality, we proceed similarly as in the proof of the case $V\in L^{n/2}(\Omega)$ by replacing $\vartheta_V(1+\kappa_V^1)$ by $\tilde{\vartheta}_V(1+\kappa_V^\iota)$. The expected doubling inequality then follows.
\end{proof}

\section{Quantitative vanishing order}\label{S8}

Here again $0<\mathfrak{r}=\mathfrak{r}(\Omega)\le 1$ is as in Section \ref{S1}.

We first consider the case of a potential in $L^s(\Omega)$ with $s\in (n/2,\infty]$. Let $\eta$ be as in \eqref{eta}, $\varsigma:=|\ln \alpha|/\ln 2$ and
\begin{equation}\label{tau}
\tau(r):=\alpha^{-1}r^{-\varsigma}\eta (r)^{-1}=\alpha^{-1}r^{-\varsigma}e^{\mathfrak{h}r^{-n}} \quad r>0.
\end{equation}

We will use the following growth lemma, where we recall that $X_s:=W^{2,p}(\Omega)$ if $s\in (n/2,\infty)$ and $X_\infty:=H^2(\Omega)$ and $\varphi_s(V)$ is given by \eqref{varphis}.

\begin{lemma}\label{lemmagr}
Let $s\in (n/2,\infty]$, $\Omega_0\Subset \Omega$ and $x_0\in \Omega$ be fixed. Set $\rho_0:=\mathrm{dist}(\Omega_0,\Gamma)/7$, $d_0:=\mathrm{dist}(x_0,\Gamma)/3$ and $r^\ast:=\min(\mathfrak{r}/4,\rho_0,d_0)$. There exists $0<\tilde{c}=\tilde{c}(n,\Omega_0,s)<1$ such that for all $V\in L^s(\Omega)$ and $u\in X_s$ satisfying $(-\Delta+V)u=0$ and $\|u\|_{L^2(\Omega)}=1$ we have for all $0<r<r^\ast$
\begin{equation}\label{qu17}
\mathfrak{M}_s(V,u,x_0,r):=e^{-\tau(r)\left|\ln \left(\tilde{c}\left[\varphi_s(V)\right]^{-1}\|u\|_{L^2(\Omega_0)}\right)^{\rho_0^\varsigma}\right|}\le \|u\|_{L^2(B(x_0,r))} .
\end{equation}
\end{lemma}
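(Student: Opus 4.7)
The plan is to combine a geometric propagation of smallness via Theorem~\ref{mthm2} applied at the small scale $r$ (responsible for the factor $1/\eta(r)=e^{\mathfrak{h}r^{-n}}$) with a dyadic iteration of the three-ball inequality \eqref{qu3} (responsible for the polynomial factor $\alpha^{-1}r^{-\varsigma}$). These two contributions will together reproduce the prescribed weight $\tau(r)=\alpha^{-1}r^{-\varsigma}e^{\mathfrak{h}r^{-n}}$. I treat the case $s\in(n/2,\infty)$; the case $s=\infty$ is identical after replacing Theorem~\ref{mthm1} by Theorem~\ref{thmC1}.

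First, since $\rho_0=\mathrm{dist}(\Omega_0,\Gamma)/7$, I cover $\bar{\Omega}_0$ by a finite family $\{B(y_j,\rho_0)\}_{j=1}^{N_0}$ of balls with $y_j\in\Omega_0$ and $N_0=N_0(n,\Omega_0)$, and a pigeonhole argument on the $L^2$-norms produces a centre $y^\ast\in\Omega_0$ with $\|u\|_{L^2(B(y^\ast,\rho_0))}\ge N_0^{-1/2}\|u\|_{L^2(\Omega_0)}$ and $\mathrm{dist}(y^\ast,\Gamma)\ge 7\rho_0$. Next, I iterate \eqref{qu3} at the centre $y^\ast$ along the dyadic radii $2^jr$ for $0\le j\le k$, where $k:=\lceil\log_2(\rho_0/r)\rceil$ so that $2^kr\in[\rho_0,2\rho_0]$ and $B(y^\ast,3\cdot 2^kr)\subset\Omega$. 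Using $\|u\|_{L^2(\Omega)}=1$ to bound the outer-ball factor by $1$, each step of \eqref{qu3} rearranges as
\[
\|u\|_{L^2(B(y^\ast,2^{j-1}r))}\ge\bigl((\mathbf{c}\varphi_s(V))^{-1}\|u\|_{L^2(B(y^\ast,2^jr))}\bigr)^{1/\alpha}.
\]
Telescoping over $j=1,\dots,k$, combined with $\|u\|_{L^2(B(y^\ast,2^kr))}\ge\|u\|_{L^2(B(y^\ast,\rho_0))}$, the bound $\alpha^{-k}\le\alpha^{-1}(\rho_0/r)^\varsigma$ (by definition of $\varsigma=|\ln\alpha|/\ln 2$), and $\sum_{j=1}^k\alpha^{-j}\le\alpha^{-k}/(1-\alpha)$, yields
\[
\|u\|_{L^2(B(y^\ast,r))}\ge\bigl(N_0^{-1/2}\|u\|_{L^2(\Omega_0)}\bigr)^{\alpha^{-1}(\rho_0/r)^\varsigma}\bigl(\mathbf{c}\varphi_s(V)\bigr)^{-\alpha^{-1}(\rho_0/r)^\varsigma/(1-\alpha)}.
\]

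Next, I apply Theorem~\ref{mthm2} at scale $r$ with centres $x_0$ and $y^\ast$ (both lying in $\Omega^{4r}$ since $r<r^\ast$), inverted as
\[
\|u\|_{L^2(B(x_0,r))}\ge\bigl(\varphi_s(V)^{-1}\|u\|_{L^2(B(y^\ast,r))}\bigr)^{1/\eta(r)}.
\]
Substituting the previous estimate and using the key identity $\alpha^{-1}(\rho_0/r)^\varsigma/\eta(r)=\rho_0^\varsigma\tau(r)$, every exponent on the right-hand side becomes a bounded multiple of $\rho_0^\varsigma\tau(r)$. The various factors $\varphi_s(V)^C$ (with $C$ depending only on $n,s,\alpha$) are again of the generic form $e^{\mathbf{c}_1'\kappa_V^\gamma}$ and can be reabsorbed into a single generic $\varphi_s(V)$; the fixed constants $N_0^{-1/2}$ and $\mathbf{c}^{-1/(1-\alpha)}$ collapse into a universal small prefactor $\tilde{c}=\tilde{c}(n,\Omega_0,s)\in(0,1)$. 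Rewriting the result as
\[
\|u\|_{L^2(B(x_0,r))}\ge\bigl(\tilde{c}[\varphi_s(V)]^{-1}\|u\|_{L^2(\Omega_0)}\bigr)^{\rho_0^\varsigma\tau(r)}
\]
and taking logarithms yields precisely \eqref{qu17}.

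The main obstacle is the multi-exponent bookkeeping: one must verify that every $V$-dependent factor $\varphi_s(V)^C$ produced along the dyadic chain and the single application of Theorem~\ref{mthm2} can indeed be reabsorbed into one generic $\varphi_s(V)$ on the right-hand side (permitted because $\varphi_s(V)^C=e^{C\mathbf{c}_1\kappa_V^\gamma}$ has the same functional form), and that the cumulative geometric constants collapse into a single $\tilde{c}<1$ uniform in $V$, $u$, $x_0$ and $r$. A secondary check is that the scale restriction $r<r^\ast=\min(\mathfrak{r}/4,\rho_0,d_0)$ guarantees the geometric inclusions $B(y^\ast,3\cdot 2^kr)\subset\Omega$ and $x_0,y^\ast\in\Omega^{4r}$ used above.
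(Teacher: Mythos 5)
Your proof is correct and follows essentially the same route as the paper: a dyadic iteration of the three-ball inequality from scale $\rho_0$ down to scale $r$ inside $\Omega_0$, combined with a single application of Theorem~\ref{mthm2} to transfer to $x_0$, with the exponents recombining via $\alpha^{-1}(\rho_0/r)^{\varsigma}/\eta(r)=\rho_0^{\varsigma}\tau(r)$. The only (cosmetic) difference is that you select one good ball of the cover of $\bar{\Omega}_0$ by pigeonhole, whereas the paper sums over all balls of the cover and absorbs the count $\ell$ into the constant $\tilde{c}$; both are fine.
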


\begin{proof}
Let $y\in \Omega$ and $\rho>0$ so that $B(y,6\rho)\subset \Omega$. If $0<r<\rho$, let $k\ge 1$ be the integer so that $2^{k-1}r<\rho \le 2^kr$. As $3\times 2^kr=6\times 2^{k-1}r<6\rho$, we conclude that $B(y,3\times 2^kr)\subset \Omega$.

Let $V\in L^s(\Omega)$ with $s\in (n/2,\infty)$ and, for simplicity,  $\mathcal{C}\ge 1$ will denote a generic constant of the form $\mathcal{C}=\mathbf{c}\varphi_s(V)$. Let $u\in W^{2,p}(\Omega)$ satisfying $(-\Delta +V)u=0$ and $\|u\|_{L^2(\Omega)}=1$. Similarly to the proof of Theorem \ref{mthm2}, it follows from \eqref{qu3} 
\[
\|u\|_{L^2(B(y,\rho))}\le \mathcal{C}^{1+\alpha+\ldots+\alpha^{k-1}}\|u\|_{L^2(B(y,r))}^{\alpha^k}.
\]
Since $k-1<\ln (\rho/r)^{1/\ln 2}\le k$, we obtain 
\[
\alpha (r/\rho)^{|\ln \alpha|/\ln 2}\le \alpha^k=e^{-k|\ln \alpha|}\le (r/\rho)^{|\ln \alpha|/\ln 2}. 
\]
In consequence, we have
\begin{equation}\label{qu12}
\|u\|_{L^2(B(y,\rho))}\le \mathcal{C}\|u\|_{L^2(B(y,r))}^{\alpha (r/\rho)^\varsigma}.
\end{equation}

Applying \eqref{qu12} with $\rho=\rho_0:=\mathrm{dist}(\Omega_0,\partial \Omega)/7$, we obtain
\begin{equation}\label{qu13}
\|u\|_{L^2(B(y,\rho_0))}\le \mathcal{C}\|u\|_{L^2(B(y,r))}^{\alpha (r/\rho_0)^\varsigma},\quad y\in \Omega_0,\; 0<r<\rho_0.
\end{equation}
As $\bar{\Omega}_0$ is compact, there exists a finite sequence $(y_j)_{1\le j\le \ell}$ in $\Omega_0$ so that
\[
\Omega_0\subset \bigcup_{j=1}^\ell B(y_j,\rho_0).
\]
Then \eqref{qu13} yields
\begin{equation}\label{qu14}
\|u\|_{L^2(\Omega_0)}\le \mathcal{C}\sum_{j=1}^\ell \|u\|_{L^2(B(y_j,r))}^{\alpha (r/\rho_0)^\varsigma},\quad 0<r<\rho_0.
\end{equation}
Let $0<r< r^\ast$. Then \eqref{qu10} gives
\[
\|u\|_{L^2(\Omega_0)}\le \ell \mathcal{C}\left(\mathcal{C}\|u\|_{L^2(B(x_0,r))}^{\eta(r)}\right)^{\alpha (r/\rho_0)^\varsigma}
\]
and therefore
\[
\|u\|_{L^2(\Omega_0)}\le \ell \mathcal{C}\|u\|_{L^2(B(x_0,r))}^{\alpha \eta(r)(r/\rho_0)^\varsigma}.
\]
That is we have
\begin{equation}\label{qu16}
\left((\mathbf{c}\ell)^{-1}\left[\varphi_s(V)\right]^{-1}\|u\|_{L^2(\Omega_0)}\right)^{\rho_0^\varsigma/(\alpha r^\varsigma \eta(r))}\le \|u\|_{L^2(B(x_0,r))} .
\end{equation}
As $\alpha r^\varsigma \eta(r)= \tau(r)^{-1}$, the expected inequality then follows. The case $s=\infty$ is proved similarly.
\end{proof}

In the following, we reuse the notations of the previous section. Let $s\in (n/2,\infty]$, $V\in \mathscr{V}_s$ and $u\in X_s$ satisfying $(-\Delta +V)u=0$ and $\|u\|_{L^2(\Omega)}=1$.
Let $r^\ast$ be as in Lemma \ref{lemmagr} and fix $\bar{r}$ so that $\Omega^{4\bar{r}}$ is nonempty. Let 
\begin{equation}\label{rhat}
\hat{r}:=\min(\bar{r}/16,r^\ast),
\end{equation}
$0<\rho <\hat{r}$, $0<r<\rho$ and $k\ge 1$ be the integer so that $2^{k-1}r<\rho\le 2^kr$. Applying \eqref{z8.0} with $\rho$ replaced by $2^jr$, with $j$ varying from $k$ to $1$ and $r$ replaced by $\bar{r}$, we obtain
\begin{equation}\label{z9.1}
\|u\|_{L^2(B(x_0,\rho))}\le \|u\|_{L^2(B(x_0,2^kr))} \le [r^{-\bar{\lambda}_s}\mathbf{M}_s]^k\|u\|_{L^2(B(x_0,r))},
\end{equation}
where we used that $2^kr<\bar{r}/8$. Here $\bar{\lambda}_s:=2+\max(\tilde{\lambda}_s+1,8+1/2+(n-2)/2)$ with 
\begin{align*}
&\tilde{\lambda}_s:=\frac{1}{\ln(9/4)}\ln \left(1+ 4\bar{r}^{-1}\mathbf{k}\left[\tilde{\vartheta}_V(1+\kappa_V^\iota)\right]\frac{\|u(x_0+\cdot)\|_{L^2([\![\bar{r},13\bar{r}/4]\!])}}{\|u(x_0+\cdot)\|_{L^2([\![\bar{r}/4,\bar{r}/2]\!])}}\right),
\\
&\mathbf{M}_s:=\mathbf{k}\left[1+\kappa_V^\iota \right]\bar{\lambda}_s^2,
\end{align*}
 
Since $k<1+\ln (\rho/r)/\ln 2$, \eqref{z9.1} implies
\begin{align*}
\|u\|_{L^2(B(x_0,\rho))}
&\le [r^{-\bar{\lambda}_s}\mathbf{M}_s][r^{-\bar{\lambda}_s}\mathbf{M}_s]^{\ln (\rho/r)/\ln 2}\|u\|_{L^2(B(x_0,r))}
\\
&\le r^{-\hat{\lambda}_s(\rho)}e^{(\bar{\lambda}_s/\ln 2)(\ln r)^2}\mathbf{M}_s^{1+\ln \rho/\ln 2}\|u\|_{L^2(B(x_0,r))},
\end{align*}
where
\[
\hat{\lambda}_s(\rho):=\bar{\lambda}_s(1+\ln \rho /\ln 2)+\ln \mathbf{M}_s/\ln 2.
\]
Fix $\rho=\hat{r}/2$, let $\hat{\lambda}_s:=\hat{\lambda}_s(\hat{r}/2)$, $\dot{\lambda}_s:=\bar{\lambda}_s/\ln 2$, $\hat{\mathbf{M}}_s(V,u,x_0):=\mathbf{M}_s^{-(1+\ln (\hat{r}/2)/\ln 2)}$ and set for $0<r<\hat{r}/2$
\[
\mathfrak{N}_s(V,u,x_0):=\hat{\mathbf{M}}_s(V,u,x_0)\mathfrak{M}_s(V,u,x_0,\hat{r}/2),
\]
where $\mathfrak{M}_s$ is as in \eqref{qu17}. Putting together \eqref{qu17}, with $r$ replaced by $\hat{r}/2$, and the previous inequality, we obtain
\begin{equation}\label{vo1}
\mathfrak{N}_s(V,u,x_0)r^{\hat{\lambda}_s}e^{-\dot{\lambda}_s(\ln r)^2}\le \|u\|_{L^2(B(x_0,r))},\quad 0<r<\hat{r}/2.
\end{equation}

We now consider the case $s=n/2$. Let $V\in L^{n/2}(\Omega)$ and $u\in W^{2,p}(\Omega)$ satisfying $(-\Delta +V)u=0$ and $\|u\|_{L^2(\Omega)}=1$. Set $\mathcal{C}(r):= [q_Vr^{-1}]^{1/(1-\alpha)}$, where $0<\alpha <1$ is a generic universal constant. Under the assumptions  of Lemma \ref{lemmagr} and the notations of its proof, we have
\[
\|u\|_{L^2(B(y,\rho))}\le \|u\|_{L^2(B(y,2^kr))}\le \mathcal{C}(2^{k-1}r)\|u\|_{L^2(B(y,2^{k-1}r))}
\]
and since $r\mapsto \mathcal{C}(r)$ is decreasing, we deduce 
\[
\|u\|_{L^2(B(y,\rho))}\le \|u\|_{L^2(B(y,2^kr))}\le \mathcal{C}(r)\|u\|_{L^2(B(y,2^{k-1}r))}.
\]
By proceeding as in the proof of Lemma \ref{lemmagr}, we obtain
\begin{equation}\label{a1.4}
\left[\ell^{-1}[q_Vr^{-1}]^{-1/(1-\alpha)}\|u\|_{L^2(\Omega_0)}\right]^{\rho_0^\varsigma/(\alpha r^\varsigma \eta(r))}\le \|u\|_{L^2(B(x_0,r))} ,
\end{equation}
where $\varsigma:=|\ln \alpha|/\ln 2$ and $\ell$ is as in the proof of Lemma \ref{lemmagr}.

Let
\[
\hat{\upsilon}:=\rho_0^{\varsigma}/\alpha ,\quad \tilde{\upsilon}:=\hat{\upsilon}/(1-\alpha),\quad \tilde{\mathfrak{h}}:=\mathfrak{h}+\varsigma/n.
\]
Then \eqref{a1.4} implies
\[
\left[\ell^{-1}q_V^{-1/(1-\alpha)}\|u\|_{L^2(\Omega_0)}\right]^{\hat{\upsilon}e^{\tilde{\mathfrak{h}}r^{-n}}}e^{-\tilde{\upsilon}e^{\tilde{\mathfrak{h}}r^{-n}}|\ln r|}\le \|u\|_{L^2(B(x_0,r))} .
\]
That is we have
\begin{equation}\label{a1.5}
\mathfrak{M}_{n/2}(V,u,x_0,r)\le \|u\|_{L^2(B(x_0,r))} ,
\end{equation}
where we set
\[
\mathfrak{M}_{n/2}(V,u,x_0,r):=\left[\ell^{-1}q_V^{-1/(1-\alpha)}\|u\|_{L^2(\Omega_0)}\right]^{\hat{\upsilon}e^{\tilde{\mathfrak{h}}r^{-n}}}e^{-\tilde{\upsilon}e^{\tilde{\mathfrak{h}}r^{-n}}|\ln r|}.
\]

Next, let $r^\ast$ be as in Lemma \ref{lemmagr} and fix $\bar{r}$ so that $\Omega^{4\bar{r}}$ is nonempty. Let $\hat{r}$ be as in \eqref{rhat}, $0<\rho <\hat{r}$, $0<r<\rho$ and $k\ge 1$ be the integer so that $2^{k-1}r<\rho\le 2^kr$. Let
\[
\mathbf{M}_{n/2}:=\mathbf{k}\left[\vartheta_V(1+\kappa_V^1)\right]\bar{\lambda}_{n/2}^2,
\]
where $\bar{\lambda}_{n/2}:=2+\max(\tilde{\lambda}+1,8+1/2+(n-2)/2$ with
\[
\tilde{\lambda} := \frac{1}{\ln(9/4)}\ln \left(1+ 4\bar{r}^{-1}\mathbf{k}\left[\vartheta_V(1+\kappa_V^1)\right]\frac{\|u(x_0+\cdot)\|_{L^2([\![\bar{r},13\bar{r}/4]\!])}}{\|u(x_0+\cdot)\|_{L^2([\![\bar{r}/4,\bar{r}/2]\!])}}\right).
\]
Applying \eqref{z8} with $\rho$ replaced by $2^jr$, with $j$ varying from $k$ to $1$, we obtain
\begin{equation}\label{z9}
\|u\|_{L^2(B(x_0,\rho))}\le \|u\|_{L^2(B(x_0,2^kr))} \|\le [r^{-\bar{\lambda}}\mathbf{M}]^k\|u\|_{L^2(B(x_0,r))}.
\end{equation}
Again as $k<1+\ln (\rho/r)/\ln 2$, we proceed similarly to the preceding case to obtain
\[
\|u\|_{L^2(B(x_0,\rho))}\le r^{-\hat{\lambda}_{n/2}(\rho)}e^{(\bar{\lambda}_{n/2}/\ln 2)(\ln r)^2}\mathbf{M}_{n/2}^{1+\ln \rho/\ln 2},
\]
where
\[
\hat{\lambda}_{n/2}(\rho):=\bar{\lambda}_{n/2}(1+\ln \rho /\ln 2)+\ln \mathbf{M}_{n/2}/\ln 2.
\]

Fix $\rho=\hat{r}/2$, let $\hat{\lambda}_{n/2}:=\hat{\lambda}_{n/2}(\hat{r}/2)$, $\dot{\lambda}_{n/2}:=\bar{\lambda}_{n/2}/\ln 2$, \[\hat{\mathbf{M}}_{n/2}(V,u,x_0):=\mathbf{M}_{n/2}^{-(1+\ln (\hat{r}/2)/\ln 2)}\] and set for $0<r<\hat{r}/2$
\[
\mathfrak{N}_{n/2}(V,u,x_0):=\mathfrak{M}_{n/2}(V,u,x_0,\hat{r}/2)\hat{\mathbf{M}}_{n/2}(V,u,x_0),
\]
where $\mathfrak{M}_{n/2}$ is as in \eqref{a1.5}. Putting together \eqref{a1.5}, with $r$ replaced by $\hat{r}/2$, and the previous inequality, we obtain
\begin{equation}\label{vo2}
\mathfrak{N}_{n/2}(V,u,x_0)r^{\hat{\lambda}_{n/2}}e^{-\dot{\lambda}_{n/2}(\ln r)^2}\le \|u\|_{L^2(B(x_0,r))},\quad 0<r<\hat{r}/2.
\end{equation}

In light of \eqref{vo1} and \eqref{vo2}, we can state the following theorem, where we have extended the definition of $X_s$ and $\mathscr{V}_s$ to $s=n/2$ by defining $X_{n/2}:=W^{2,p}(\Omega)$ and $\mathscr{V}_{n/2}:=\mathscr{V}$.

\begin{theorem}\label{thmvo}
Let $s\in [n/2,\infty]$, $V\in \mathscr{V}_s$ and $u\in X_s$ satisfying $(-\Delta +V)u=0$ and $\|u\|_{L^2(\Omega)}=1$. Then
\[
\mathfrak{N}_s(V,u,x_0)r^{\hat{\lambda}_s}e^{-\dot{\lambda}_s(\ln r)^2}\le \|u\|_{L^2(B(x_0,r))},\quad 0<r<\hat{r}/2.
\]
\end{theorem}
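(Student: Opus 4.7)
The statement to prove is essentially the combination of the two estimates \eqref{vo1} and \eqref{vo2} derived in the discussion preceding it, so my plan is to consolidate those derivations into a clean two-case proof.

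The plan is to split into two cases according to whether $s\in(n/2,\infty]$ or $s=n/2$, since the three-ball/Carleman machinery behind the doubling inequality differs in each regime. In both cases the strategy is the same: combine a lower bound on $\|u\|_{L^2(B(x_0,\hat{r}/2))}$ coming from the global quantitative uniqueness (growth) estimate with an iterated application of the doubling inequality from scale $\hat{r}/2$ down to scale $r$.

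For $s\in(n/2,\infty]$, I would first apply Lemma \ref{lemmagr} at the single scale $r=\hat{r}/2$ to obtain $\mathfrak{M}_s(V,u,x_0,\hat{r}/2)\le\|u\|_{L^2(B(x_0,\hat{r}/2))}$. Then I would iterate the doubling inequality \eqref{z8.0} of Theorem \ref{thmdin}: for $0<r<\rho:=\hat{r}/2$ pick the integer $k\ge 1$ with $2^{k-1}r<\rho\le 2^{k}r$ (so that $2^{k}r<\bar{r}/8$ and each intermediate radius remains admissible), and apply \eqref{z8.0} successively with inner radius $2^{j-1}r$ and outer radius $2^{j}r$ for $j=1,\dots,k$. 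This chain yields
\[
\|u\|_{L^2(B(x_0,\hat{r}/2))}\le[r^{-\bar{\lambda}_s}\mathbf{M}_s]^k\|u\|_{L^2(B(x_0,r))}.
\]
Using $k<1+\ln(\hat{r}/(2r))/\ln 2$, the factor $[r^{-\bar{\lambda}_s}\mathbf{M}_s]^k$ becomes $r^{-\hat{\lambda}_s}e^{\dot{\lambda}_s(\ln r)^2}\hat{\mathbf{M}}_s(V,u,x_0)^{-1}$ up to the notational substitutions defined in the paragraph preceding the theorem. Combining the two bounds gives \eqref{vo1}, which is the claim with the convention $\mathfrak{N}_s=\hat{\mathbf{M}}_s\,\mathfrak{M}_s(V,u,x_0,\hat{r}/2)$.

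For $s=n/2$, the same scheme applies but with Theorem \ref{thmD1} replacing Theorem \ref{thmdin} and with the $L^{n/2}$-adapted lower bound \eqref{a1.5} replacing Lemma \ref{lemmagr}. Since the derivation of \eqref{a1.5} is the content of the display equation chain leading to it (using the three-ball inequality \eqref{a1.3} and a dyadic covering of the Harnack chain as in the proof of Lemma \ref{lemmagr}, with the generic constant $\mathcal{C}(r)=[q_Vr^{-1}]^{1/(1-\alpha)}$), I would simply invoke it directly. Iterating Theorem \ref{thmD1} in the same dyadic fashion from $\hat{r}/2$ down to $r$ yields \eqref{vo2}.

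The only mildly delicate point, and what I would describe as the main obstacle, is bookkeeping: making sure the definitions $\hat{\lambda}_s$, $\dot{\lambda}_s$ and $\hat{\mathbf{M}}_s$ extracted from $[r^{-\bar{\lambda}_s}\mathbf{M}_s]^k$ with $k\le 1+\ln(\hat{r}/(2r))/\ln 2$ match exactly those declared in the text, and that the dyadic iteration never leaves the admissible range $2^{j}r<\bar{r}/8$ used in Theorems \ref{thmD1} and \ref{thmdin}. Once those constants are aligned, the theorem is just the juxtaposition of \eqref{vo1} and \eqref{vo2}, so no further computation is needed.
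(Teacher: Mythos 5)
Your proposal is correct and follows essentially the same route as the paper: in both cases ($s\in(n/2,\infty]$ via \eqref{vo1}, $s=n/2$ via \eqref{vo2}) the theorem is obtained by combining the lower bound from Lemma \ref{lemmagr} (resp.\ its $L^{n/2}$ analogue \eqref{a1.5}) at scale $\hat{r}/2$ with a dyadic iteration of the doubling inequality of Theorem \ref{thmdin} (resp.\ Theorem \ref{thmD1}) from $\hat{r}/2$ down to $r$, and then absorbing the factor $[r^{-\bar{\lambda}_s}\mathbf{M}_s]^{k}$ with $k\le 1+\ln(\hat{r}/(2r))/\ln 2$ into the declared constants $\hat{\lambda}_s$, $\dot{\lambda}_s$, $\hat{\mathbf{M}}_s$. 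Your bookkeeping of the admissibility range (all intermediate radii below $\bar{r}/8$ because $\hat{r}\le\bar{r}/16$) and of the definition $\mathfrak{N}_s=\hat{\mathbf{M}}_s\,\mathfrak{M}_s(V,u,x_0,\hat{r}/2)$ matches the paper exactly.
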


A quantitative vanishing order was obtained in \cite[Theorem 1]{DZ1} for the equation $(-\Delta +W\cdot \nabla +V)u=0$ in a ball of $\mathbb{R}^n$ in the case $n\ge 3$, $V\in L^s(\Omega)$ with $s\in (3n/2-1,\infty]$ and $W\in L^m(\Omega,\mathbb{C}^n)$ with $m\in ((3n^2-2n)/(5n-2),\infty]$. The result of \cite[Theorem 1]{DZ1} is expressed in terms of the norm $L^\infty$. This reference also contains other variants of the quantitative vanishing order given in \cite[Theorem 1]{DZ1}. \cite[Theorem 1]{DZ1} was extended in \cite[Theorem 1]{Da} to the case $W=0$ and $V\in L^s(\Omega)$ with $s\in (n/2,\infty)$. A similar result to the case $n\ge 3$ (\cite[Theorem 1]{DZ1}) was proved in \cite[Theorem 1]{DZ2} in dimension two when $V\in L^s(\Omega)$ with $s\in (1,\infty]$ and $W\in L^m(\Omega,\mathbb{C}^n)$ with $m\in (2,\infty]$. As in our case, the constants in the above-mentioned results depend explicitly on the norms of $V$ and $W$ in $L^s(\Omega)$ and $L^m(\Omega)$, respectively, and their proofs rely on Carleman inequalities.

\section{Quantitative vanishing order  for continuous potentials}\label{S9}

For simplicity, all the functions we consider in this section are real-valued. Let $\rho>0$ be arbitrarily fixed and $B:=B_\rho=B(0,\rho)$. In the following, $0<r\le \rho$ and we use the notations $B_r:=B(0,r)$ and $S_r:=\partial B_r$.

Let $\mathcal{V}\in C(\bar{B})\setminus \{0\}$  and $v\in H^2(B)\setminus\{0\}$ satisfying $(-\Delta +\mathcal{V})v=0$. Define
\begin{align*}
&H(r):=\int_{S_r}v^2ds ,
\\
&D(r) :=\int_{B_r}\left\{|\nabla v|^2+\mathcal{V}v^2\right\}dx.
\end{align*}
From the calculations in \cite[Exercise 5.3, Chapter 5]{Ch24_3}, we have
\begin{align}
&H'(r)=(n-1)r^{-1}H(r)+2D(r), \label{Aa1}
\\
&D'(r)=(n-2)r^{-1}D(r)+\hat{D}(r)+2\bar{H}(r)+\hat{H}(r), \label{Aa2}
\end{align}
where
\begin{align*}
&\bar{H}(r):=\int_{S_r}(\partial _\nu v)^2ds,
\\
& \hat{H}(r):=\int_{S_r} \mathcal{V} v^2ds ,
\\
&\hat{D}(r):=-\int_{B_r} \mathcal{V}\left\{2 v(x\cdot \nabla v) +(n-2)r^{-1}v^2\right\} dx.
\end{align*}
 
Let us fix $\kappa\ge \|\mathcal{V}\|_{L^\infty(B)}$ and we set
\[
r_\kappa=r_\kappa(n,\kappa,\rho):=\min \left(\rho ,\sqrt{(n-1)\kappa^{-1}}\right). 
\]
Then, again from the calculations in \cite[Exercise 5.3, Chapter 5]{Ch24_3}, we have 
\begin{equation}\label{K}
K(r):=\int_{B_r}v^2dx \le rH(r),\quad 0<r\le r_\kappa.
\end{equation}
Define the frequency function $N$ as follows
\[
N(r)=N(v,\mathcal{V},r):=\frac{rD(r)}{H(r)}.
\]
We verify
\begin{align*}
\frac{N'(r)}{N(r)}&=\frac{1}{r}+\frac{D'(r)}{D(r)}-\frac{H'(r)}{H(r)}
\\
&=\frac{\hat{D}(r)}{D(r)}+\frac{\hat{H}(r)}{D(r)}+2\frac{\bar{H}(r)H(r)-D(r)^2}{D(r)H(r)} .
\end{align*}
Using the known inequality $\bar{H}(r)H(r)\ge D(r)^2$, we obtain
\begin{equation}\label{N}
\frac{N'(r)}{N(r)}\ge \frac{\hat{D}(r)}{D(r)}+\frac{\hat{H}(r)}{D(r)}.
\end{equation}

Set
\[
\mathcal{I}:=\{ r\in (0,r_\kappa);\; N(r)>\max (N(r_\kappa),1)\}.
\]
According to the uniqueness of continuation, $H(r)\ne 0$. Therefore, $N$ is continuous and so $\mathcal{I}$ is an open set. Thus, there exists $J\subset \mathbb{N}$ such that  
\[
\mathcal{I}=\bigcup_{j\in J}(r_j,s_j).
\]
In each $(r_j,s_j)$, since $N(r)>1$, we have $H(r)<rD(r)$. Then
\begin{equation}\label{H}
\left|\frac{\hat{H}(r)}{D(r)}\right|\le \kappa \frac{H(r)}{D(r)} \le \kappa.
\end{equation}
On the other hand, we have
\begin{align*}
\left| \hat{D}(r)\right| &\le \kappa\left( 2r\int_{B_r}|v||\nabla v|dx+(n-2)r^{-1}K(r)\right)
\\
& \le \kappa\left( r\int_{B_r}v^2dx+r\int_{B_r}|\nabla v|^2dx+(n-2)r^{-1}K(r)\right)
\\
&\le \kappa\left(rD(r) - r\int_{B_r}\mathcal{V} v^2dx+[r+(n-2)r^{-1}]K(r)\right)
\\
&\le \kappa\left(rD(r) +\left[(\kappa+1)r+ (n-2)r^{-1}\right] K(r)\right).
\end{align*}
This and \eqref{K} implies
\[
\left| \hat{D}(r)\right| \le \kappa\left(rD(r) +\left(r^2(\kappa+1)+ n-2\right) H(r)\right),\quad 0<r\le r_\kappa,
\]
which, combined with $H(r)<rD(r)$, gives
\begin{equation}\label{D}
\left|\frac{\hat{D}(r)}{D(r)}\right| \le \rho\kappa\left(\rho^2(\rho+\kappa)+ n-1\right).
\end{equation}
Let
\[
\bar{\kappa}=\bar{\kappa}(n,\kappa,\rho):=\kappa+\rho\kappa\left(\rho^2(1+\kappa)+ n-1\right).
\]
Putting together \eqref{N}, \eqref{H} and \eqref{D}, we get
\[
\frac{N'(r)}{N(r)}\ge -\bar{\kappa}.
\]
In consequence, we have
\[
N(r)\le e^{\bar{\kappa}s_j}N(s_j)\le e^{\bar{\kappa}r_\kappa}\max (N(r_\kappa),1), \quad r\in (r_j,s_j).
\]
That is we have, where $\tilde{\kappa}:=\bar{\kappa}r_\kappa$,
\[
N(r)\le e^{\tilde{\kappa}}\max (N(r _\kappa),1), \quad r\in \mathcal{I},
\]
Noting  that $N\le  \max (N(r_\kappa),1)$ in $(0,r_\kappa)\setminus \mathcal{I}$, we deduce
\[
N(r)\le e^{\tilde{\kappa}}\max (N(r _\kappa),1), \quad 0<r<r_\kappa.
\]

In light of \eqref{Aa1} and the preceding inequality, we obtain
\[
\left(\ln\left(\frac{H(r)}{r^{n-1}} \right)\right)'=2\frac{N(r)}{r}\le \frac{M}{r},\quad 0<r<r_\kappa,
\]
where we set $M=M(\mathcal{V},v):=e^{\tilde{\kappa}}\max (N(r _\kappa),1)$. Then, for $0<r_1<r_2<r_\kappa$, we obtain
\[
\int_{r_1}^{r_2}\left(\ln\left(\frac{H(r)}{r^{n-1}} \right)\right)' dr=\ln \left(\frac{H(r_2)r_1^{n-1}}{H(r_1)r_2^{n-1}}\right)\le  \ln \left(\frac{r_2}{r_1}\right)^M.
\]
Whence
\[
H(r_2)\le (r_2/r_1)^{M+n-1}H(r_1),
\]
and then
\[
K(r_2)=r_2\int_0^1H(sr_2)dr\le r_2(r_2/r_1)^{M+n-1}\int_0^1H(sr_1)dr=(r_2/r_1)^{\bar{M}}K(r_1),
\]
where $\bar{M}:=M+n$. We obtain in particular the following doubling inequality
\begin{equation}\label{DD}
\|u\|_{L^2(B_{2r})}\le 2^{\bar{M}}\|u\|_{L^2(B_r)},\quad 0<r<r_\kappa/2.
\end{equation}

Let $x_0\in \Omega$, $V\in C(\overline{\Omega})\setminus \{0\}$ and $u\in H^2(\Omega)$ satisfying $(-\Delta +V)u=0$ and $\|u\|_{L^2(\Omega)}=1$. Applying \eqref{DD} with $\rho=\mathrm{dist}(x_0,\partial \Omega)/2$, and $\mathcal{V}=V(x_0+\cdot)$, $v=u(x_0+\cdot)$ and $\kappa=\kappa_V:=\|V\|_{L^\infty(\Omega)}$, we obtain
\begin{equation}\label{DD1}
\|u\|_{L^2(B(x_0,2r))}\le 2^{\bar{M}}\|u\|_{L^2(B(x_0,r))},\quad 0<r<r_{\kappa_V}/2.
\end{equation}
Note that in the present case
\[
N(r_{\kappa_V})=\frac{\displaystyle r_{\kappa_V}\int_{B(x_0,r_{\kappa_V})}(|\nabla u|^2+ Vu^2)dx}{\|u\|_{L^2(\partial B(x_0,r_{\kappa_V}))}^2}.
\]

On the other hand, it follows from \eqref{qu17}
\begin{equation}\label{DD2}
\mathfrak{M}_\infty(V,u,x_0,r)\le \|u\|_{L^2(B(x_0,r))} ,\quad 0<r<r^\ast,
\end{equation}
where $r^\ast$ and $\mathfrak{M}_\infty(V,u,x_0,r)$ are  as in Lemma \ref{lemmagr}.

Fix
\begin{equation}\label{rbar}
0<\bar{r}<\min (r_{\kappa_V}/2,r^\ast)/2.
\end{equation}
For $0<r<\bar{r}$, let $k\ge 1$ be the integer satisfying $2^{k-1}r<\bar{r} <2^kr$. As we have done before, by iterating \eqref{DD1}, we obtain
\[
\|u\|_{L^2(B(x_0,\bar{r}))}\le 2^{k\bar{M}}\|u\|_{L^2(B(x_0,r))}
\]
which, combined with \eqref{DD2}, yields
\begin{equation}\label{DD3}
\mathfrak{M}_\infty(V,u,x_0,\bar{r})\le 2^{k\bar{M}}\|u\|_{L^2(B(x_0,r))}.
\end{equation}
But $k<1+\ln (\bar{r}/r)/\ln 2$. In this case
\[
2^{k\bar{M}}\le 2^{(1+\ln (\bar{r}/r)/\ln 2)\bar{M}}=2^{\bar{M}}(\bar{r}/r)^{\bar{M}}.
\]
This and \eqref{DD3} yield the following result.

\begin{theorem}\label{thmivo}
Let $x_0\in \Omega$, $V\in C(\overline{\Omega})\setminus \{0\}$ and $u\in H^2(\Omega)$ satisfying $(-\Delta +V)u=0$ and $\|u\|_{L^2(\Omega)}=1$. Then we have
\[
\left[(2\bar{r})^{-\bar{M}}\mathfrak{M}_\infty(V,u,x_0,\bar{r})\right]r^{\bar{M}}\le \|u\|_{L^2(B(x_0,r))},\quad 0<r<\bar{r},
\]
where $\bar{r}$ is as in \eqref{rbar}.
\end{theorem}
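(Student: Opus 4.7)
The statement \ref{thmivo} is essentially a clean-up of the inequalities already derived in the paragraph immediately preceding it, so the plan is to assemble those pieces carefully rather than to introduce any new technique.

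The plan is to combine two ingredients: the iterated doubling inequality \eqref{DD1} (which propagates an $L^2$ mass bound from the small ball $B(x_0,r)$ up to the fixed scale $B(x_0,\bar r)$), and the growth estimate \eqref{DD2} from Lemma \ref{lemmagr} (which gives a concrete lower bound $\mathfrak{M}_\infty(V,u,x_0,\bar r)\le \|u\|_{L^2(B(x_0,\bar r))}$ at that fixed scale). Fix $0<r<\bar r$ and let $k\ge 1$ be the unique integer with $2^{k-1}r<\bar r\le 2^kr$. First I would verify that $2^kr<2\bar r<r_{\kappa_V}/2$, which holds thanks to the factor $1/2$ built into the definition \eqref{rbar} of $\bar r$; this ensures that each of the intermediate radii $r,2r,\ldots,2^{k-1}r$ is admissible in \eqref{DD1}.

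Second, I would iterate \eqref{DD1} exactly $k$ times to obtain
\[
\|u\|_{L^2(B(x_0,\bar r))}\le \|u\|_{L^2(B(x_0,2^kr))}\le 2^{k\bar M}\|u\|_{L^2(B(x_0,r))}.
\]
Then, since $\bar r<r^\ast$, \eqref{DD2} applied at radius $\bar r$ gives $\mathfrak{M}_\infty(V,u,x_0,\bar r)\le \|u\|_{L^2(B(x_0,\bar r))}$, which chains with the previous inequality to produce \eqref{DD3}. It only remains to convert the factor $2^{k\bar M}$ into the polynomial factor $r^{-\bar M}$ by using $k<1+\ln(\bar r/r)/\ln 2$, so that $2^{k\bar M}\le 2^{\bar M}(\bar r/r)^{\bar M}$. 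Substituting this and rearranging yields exactly
\[
\bigl[(2\bar r)^{-\bar M}\mathfrak{M}_\infty(V,u,x_0,\bar r)\bigr]r^{\bar M}\le \|u\|_{L^2(B(x_0,r))}.
\]

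There is no real obstacle here beyond bookkeeping. The conceptual work lies entirely upstream: the doubling constant $\bar M$ was produced via the frequency-function monotonicity argument (using $H'$, $D'$ and the bound $\bar H H\ge D^2$) to control $N(r)$ uniformly on $(0,r_{\kappa_V})$, and the lower bound $\mathfrak{M}_\infty$ came from the three-ball propagation in Lemma \ref{lemmagr}. The only subtle point worth double-checking is that $\bar M$ depends on $V$ and $u$ only through $\kappa_V$ and the single number $N(r_{\kappa_V})$ (hence the statement is genuinely quantitative), and that the choice of $\bar r$ in \eqref{rbar} is tight enough that the iteration never escapes the region where both \eqref{DD1} and \eqref{DD2} are valid.
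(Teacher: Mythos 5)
Your proposal is correct and matches the paper's argument step for step: iterate the doubling inequality \eqref{DD1} $k$ times (with $2^{k-1}r<\bar r\le 2^kr$), chain with the lower bound \eqref{DD2} at the fixed scale $\bar r$, then replace $2^{k\bar M}$ by $2^{\bar M}(\bar r/r)^{\bar M}$ using $k<1+\ln(\bar r/r)/\ln 2$. The admissibility check $2^{k-1}r<\bar r<r_{\kappa_V}/4<r_{\kappa_V}/2$ that you flag is exactly what the factor $1/2$ in \eqref{rbar} is designed to secure, and the paper uses it in the same implicit way.
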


The method we used in this section is taken from \cite{GL}.

\section{Extensions}\label{S10}

We discuss in this section a partial extension of the previous results to an operator of the form $-\Delta +W\cdot \nabla +V$ with $V\in L^s(\Omega)$ and $W\in L^m(\Omega,\mathbb{C}^n)$ with $s\in (n/2,\infty]$ and $m\in (n,\infty]$. By establishing a three-ball inequality for this operator using a Carleman inequality directly, we see that an additional condition on $m$ and $s$ is necessary. This is what we show in this section. At the end of this section, we explain how we can lift this condition using an interpolation inequality. 

The notations and assumptions are those appearing after Theorem \ref{mthm1}. In the present section we will use the following consequence of \cite[Theorem 1.1]{DET}.

\begin{proposition}\label{proW1}
Set $\zeta:=\left(D, \omega_0,\omega, \varrho,\beta, \|\varphi\|_{C^3(\bar{D})}\right)$. There exist $\bar{c}=\bar{c}(\zeta)>0$ and $\tau_0=\tau_0(\zeta)\ge 1$ such that for all $\tau \ge \tau_0$, $f\in L^2(D)$, $g\in L^p(D)$ and $u\in W^{2,p}_{\omega}(D)$ satisfying $-\Delta u=f+g$ we have
\begin{equation}\label{W1}
\tau^{3/2}\|u\|_{L^2_\tau (D)}+\tau^{1/2}\|\nabla u\|_{L^2_\tau (D)}\le \bar{c}\left(\|f\|_{L_\tau ^2(D)}+\tau^{3/4-1/(2n)}\|g\|_{L_\tau ^p(D)}\right).
\end{equation}
Here $L^r_\tau (\Omega):=L^r (\Omega,e^{r\tau \varphi}dx)$, $r=p,2$.
\end{proposition}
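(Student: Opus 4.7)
The plan is to derive \eqref{W1} as a refinement of the $L^p$ Carleman inequality from Proposition \ref{prodd1}, both of which descend from the master estimate in \cite[Theorem 1.1]{DET}. Compared with \eqref{dd1}, the target inequality has two new features: the gradient term $\tau^{1/2}\|\nabla u\|_{L^2_\tau(D)}$ on the left, and a mixed source $f+g \in L^2 + L^p$ with complementary $\tau$-weights. The scaling is very suggestive: $3/2 - (3/4+1/(2n)) = 3/4 - 1/(2n)$, so the $\tau^{3/4-1/(2n)}$ factor in front of $\|g\|_{L^p_\tau(D)}$ is exactly the gap between the $L^2$ Carleman scaling ($\tau^{3/2}$) and the $L^p$ Carleman scaling ($\tau^{3/4 + 1/(2n)}$) of Proposition \ref{prodd1}.

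First, I would establish (or invoke from \cite{DET}) the classical $L^2$-to-$L^2$ pseudo-convex Carleman estimate under the same assumptions on $\varphi$, namely
\[
\tau^{3/2}\|u\|_{L^2_\tau(D)} + \tau^{1/2}\|\nabla u\|_{L^2_\tau(D)} \le \bar{c}\|\Delta u\|_{L^2_\tau(D)}
\]
for $\tau \ge \tau_0$ and $u \in W^{2,p}_\omega(D)$. The pseudo-convexity condition \eqref{varphi} is precisely what makes the commutator of the symmetric and anti-symmetric parts of $e^{\tau\varphi}\Delta e^{-\tau\varphi}$ positive with the stated powers of $\tau$; the $\tau^{1/2}\|\nabla u\|_{L^2_\tau(D)}$ term comes out of the same commutator identity by keeping the gradient contribution rather than discarding it.

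Second, I would combine this with Proposition \ref{prodd1} by a splitting argument. Write $u = u_1 + u_2$, where $u_2$ solves $-\Delta u_2 = g$ with the Newton potential of $g$ (possibly cut off so that the support is controlled), so that elliptic $L^p$-regularity and \eqref{dd1.1} give $\|u_2\|_{L^{p'}(D)} \lesssim \|g\|_{L^p(D)}$. Then $u_1 := u - u_2$ satisfies $-\Delta u_1 = f$, and the $L^2$ Carleman above applied to $u_1$ yields the $\|f\|_{L^2_\tau(D)}$ term on the right. For the $u_2$ part, Proposition \ref{prodd1} multiplied by $\tau^{3/4 - 1/(2n)}$ converts $\tau^{3/4+1/(2n)}\|u_2\|_{L^2_\tau}$ into $\tau^{3/2}\|u_2\|_{L^2_\tau}$ against $\tau^{3/4-1/(2n)}\|g\|_{L^p_\tau}$, while a Caccioppoli-type estimate in the spirit of Section \ref{S2} produces the corresponding $\tau^{1/2}\|\nabla u_2\|_{L^2_\tau}$ control. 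Summing via the triangle inequality gives \eqref{W1}.

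The main obstacle is the bookkeeping of the support condition during the splitting: solving $-\Delta u_2 = g$ while keeping the support of $u_2$ in $D\setminus\bar\omega$ cannot be done exactly, so the cutoff used in constructing $u_2$ produces lower-order commutator terms that must be absorbed into the $\tau^{3/2}$ and $\tau^{1/2}$ factors on the left. This absorption is possible at the cost of enlarging $\tau_0$ (and $\bar{c}$) and relies on the fact that the cutoff errors are supported away from $\omega_0$ where $\varphi$ is controlled. An alternative, and cleaner, route is to go back to the conjugated operator $e^{\tau\varphi}\Delta e^{-\tau\varphi}$ and treat the two source pieces symmetrically in a single energy identity, estimating the $L^p$ source contribution directly through \eqref{dd1.1} and duality with $\|\cdot\|_{L^{p'}_\tau}$; this is the route implicit in \cite[Theorem 1.1]{DET}.
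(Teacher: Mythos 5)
The paper does not actually prove Proposition \ref{proW1}: like Proposition \ref{prodd1}, it is quoted verbatim as a special case of \cite[Theorem 1.1]{DET}, whose statement already allows a source split as $f+g\in L^2+L^p$ with exactly the weights $1$ and $\tau^{3/4-1/(2n)}$. So you are supplying a derivation the paper omits, and your observation that $3/2-(3/4+1/(2n))=3/4-1/(2n)$ correctly identifies why the two estimates are consistent.

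Your primary route, however, has a genuine gap. Splitting $u=u_1+u_2$ with $u_2$ a (cut-off) Newton potential of $g$ destroys the hypothesis under which both Carleman estimates hold: neither $u_1$ nor $u_2$ lies in $W^{2,p}_{\omega}(D)$, i.e.\ is supported in $D\setminus\bar{\omega}$, and the estimates \eqref{dd1}--\eqref{dd1.1} (and the $L^2$ analogue) are false without that support restriction, since the weight is not pseudoconvex on $\omega_0$. Reinstating the support by a cutoff $\chi$ produces the commutator $2\nabla\chi\cdot\nabla(N*g)+(N*g)\Delta\chi$, and this error is \emph{not} absorbable by enlarging $\tau_0$: absorption into the left-hand side only works for terms of the form (lower power of $\tau$) times the \emph{same} weighted norms of $u$ that appear on the left, whereas here the error carries the factor $e^{\tau\varphi}$ on the cutoff region (where $\varphi>0$) multiplied by unweighted norms of $N*g$; there is no reason for this to be dominated by $\tau^{3/4-1/(2n)}\|g\|_{L^p_\tau(D)}$, whose weight sits on $\mathrm{supp}(g)$, which may be located where $\varphi$ is small. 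A further uniformity problem: a cutoff equal to $1$ on $\mathrm{supp}(u)$ and supported in $D\setminus\bar{\omega}$ cannot be chosen independently of $u$, so the resulting constants would depend on $u$. The same objections apply to applying the $L^2$ estimate to $u_1=u-u_2$.

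Your closing alternative is the correct mechanism and is essentially the proof in \cite{DET}: work with $w=e^{\tau\varphi}u$ and the conjugated operator in a single energy identity, use the pseudoconvexity \eqref{varphi} to extract $\tau^3\|w\|_{L^2}^2+\tau\|\nabla w\|_{L^2}^2$ from the commutator of the symmetric and antisymmetric parts, estimate the $f$-contribution by Cauchy--Schwarz in $L^2$, and estimate the $g$-contribution by $L^p$--$L^{p'}$ duality against the Carleman--Sobolev bound underlying \eqref{dd1.1}. If you want a self-contained argument rather than a citation, that is the route to develop in detail; as written it remains a pointer, not a proof.
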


In the following, $\zeta$ is as in Proposition \ref{proW1} and $\bar{c}=\bar{c}(\zeta)$ will denote a generic constant.

Let $V=V^1+V^2$ with $V^1\in L^{n/2}(D)$ and $V^2\in L^\infty (D)$, and $W=W^1+W^2$ with $W^1\in L^n(D,\mathbb{C}^n)$ and $W^2\in L^\infty(D,\mathbb{C}^n)$. Let $h\in L^2(D)$ and $u\in W^{2,p}_{\omega}(D)$ satisfying 
\begin{equation}\label{W0}
(-\Delta +W\cdot\nabla +V)u=h.
\end{equation}
Applying \eqref{W1} to $u$ with $f=h-W^2\cdot \nabla u-V^2u$ and $g=-W^1\cdot \nabla u-V^1u$, we obtain
\begin{align*}
&\tau^{3/2}\|u\|_{L^2_\tau (D)}+\tau^{1/2}\|\nabla u\|_{L^2_\tau (D)}\le \bar{c}\|h\|_{L_\tau^2(D)}
\\
&\hskip 2cm +\bar{c}\left(\|W^2\|_{L^\infty(D)}\|\nabla u\|_{L_\tau ^2(D)}+\|V^2\|_{L^\infty(D)}\| u\|_{L^2_\tau (D)}\right)
\\
&\hskip 2.7cm+\bar{c}\tau^{3/4-1/(2n)}\left(\|W^1\|_{L^n(D)}\|\nabla u\|_{L_\tau ^2(D)}+\|V^1\|_{L^{n/2}(D)}\|u\|_{L_\tau^{p'}(D)}\right).
\end{align*}
This and
\begin{align*}
\|u\|_{L_\tau^{p'}(D)}=\|e^{\tau \varphi} u\|_{L^{p'}(D)}&\le \sigma\|\nabla (e^{\tau \varphi}u)\|_{L^2(D)}
\\
&\le \sigma\|\tau e^{\tau \varphi}u\nabla \varphi +e^{\tau \varphi}\nabla u\|_{L^2(D)}
\\
&\le \bar{c}(\tau \|u\|_{L^2_\tau (D)}+\|\nabla u\|_{L^2_\tau (D)})
\end{align*}
imply
\begin{align}
&\tau^{3/2}\|u\|_{L^2_\tau (D)}+\tau^{1/2}\|\nabla u\|_{L^2_\tau (D)}\le \bar{c}\|h\|_{L_\tau^2(D)}\label{W2}
\\
&\hskip .2cm+ \bar{c}\left(\|W^2\|_{L^\infty(D)}+\tau^{3/4-1/(2n)}\left[\|W^1\|_{L^n(D)}+\|V^1\|_{L^{n/2}(D)}\right]\right)\|\nabla u\|_{L_\tau ^2(D)}\nonumber
\\
&\hskip 3cm+\bar{c}\left(\tau^{7/4-1/(2n)}\|V^1\|_{L^{n/2}(D)}+\|V^2\|_{L^\infty(D)}\right)\|u\|_{L_\tau ^2(D)}.\nonumber
\end{align}

Next, let $W\in L^m(D,\mathbb{C}^n)$ with $m>n$ and $V\in L^s(D)$ with $s>n/2$. Set
\[
\varkappa_W:=\|W\|_{L^m(D)},\quad \varkappa_V:=\|V\|_{L^s(D)}.
\]
Let $t>0$. We already know that $V$ is decomposed in the form $V=V_t^1+V_t^2$, where $V_t^1\in L^{n/2}(D)$, $V_t^2\in L^\infty (D)$ and satisfy
\[
\|V_t^1\|_{L^{n/2}(D)}\le t^{-(2s/n-1)}\varkappa_V^{2s/n},\quad \|V_t^2\|_{L^\infty(D)}\le t.
\]
We can proceed similarly to decompose $W$ in the form $W=W_t^1+W_t^2$ so that $W_t^1\in L^n(D,\mathbb{C}^n)$, $W_t^2\in L^\infty(D,\mathbb{C}^n)$ and 
\[
\|W_t^1\|_{L^n(D)}\le t^{-(m/n-1)}\varkappa_W^{m/n},\quad \|W_t^2\|_{L^\infty(D)}\le t.
\]

Let $u\in W^{2,p}_{\omega}(D)$ satisfying \eqref{W0}. Then \eqref{W2} with $W^j=W_t ^j$ and $V^j=V_t^j$ for $j=1,2$ yields
\begin{align}
&\tau^{3/2}\|u\|_{L^2_\tau (D)}+\tau^{1/2}\|\nabla u\|_{L^2_\tau (D)}\le \bar{c}\|h\|_{L_\tau^2(D)}\label{W3}
\\
&\hskip .5cm+\bar{c}\left(t +\tau^{3/4-1/(2n)}\left[t^{-(m/n-1)}\varkappa_W^{m/n}+t^{-(2s/n-1)}\varkappa_V^{2s/n}\right]\right)\|\nabla u\|_{L_\tau ^2(D)}\nonumber
\\
&\hskip 4cm+\bar{c}\left(\tau^{7/4-1/(2n)}t^{-(2s/n-1)}\varkappa_V^{2s/n}+t\right)\|u\|_{L_\tau ^2(D)}.\nonumber
\end{align}

Let $\ell:= \min (m,2s)/n-1$. Assume in the following that $\tau \ge \tau_0\ge 1$ and $t=\tau^\delta$, where $\delta >0$ is to be determined. In this case, \eqref{W3} implies
\begin{align*}
&\tau^{3/2}\|u\|_{L^2_\tau (D)}+\tau^{1/2}\|\nabla u\|_{L^2_\tau (D)}\le \bar{c}\|h\|_{L_\tau^2(D)}
\\
&\hskip 1cm+\bar{c}\left(\tau^\delta+\tau^{3/4-1/(2n)-\delta\ell}\left[\varkappa_W^{m/n}+\varkappa_V^{2s/n}\right]\right)\|\nabla u\|_{L_\tau ^2(D)}
\\
&\hskip 4.2cm+\bar{c}\left(\tau^{7/4-1/(2n)-\delta\ell}\varkappa_V^{2s/n}+\tau^\delta\right)\|u\|_{L_\tau ^2(D)}.
\end{align*}

In the following, we make the assumption $\min (m,2s)>3n/2-1$. By choosing 
\[
0<\delta=\delta(n,m,s):=(3n-2)/[4n(\ell+1)]  <1/2,
\]
we get $\delta = 3/4-1/(2n)-\delta\ell$ and therefore
\begin{align}
&\tau^{3/2}\|u\|_{L^2_\tau (D)}+\tau^{1/2}\|\nabla u\|_{L^2_\tau (D)}\le \bar{c}\|h\|_{L_\tau^2(D)}\label{W4}
\\
&\hskip 2cm+\bar{c}\tau^\delta\left(1+\varkappa_W^{m/n}+\varkappa_V^{2s/n}\right)\|\nabla u\|_{L_\tau ^2(D)}\nonumber
\\
&\hskip 4.2cm+\bar{c}\tau^{1+\delta} \left(1+\varkappa_V^{2s/n}\right)\|u\|_{L_\tau ^2(D)}.\nonumber
\end{align}

With the temporary notation 
\[
\mathfrak{z} :=1+\varkappa_W^{m/n}+\varkappa_V^{2s/n},
\]
\eqref{W4} yields
\begin{equation}\label{W5}
\tau(\tau^{1/2}-\bar{c}\mathfrak{z} \tau^\delta )\|u\|_{L^2_\tau (D)}+(\tau^{1/2}-\bar{c}\mathfrak{z} \tau^\delta)\|\nabla u\|_{L^2_\tau (D)}\le \bar{c}\|h\|_{L_\tau^2(D)}.
\end{equation}
Let $\beta=\beta(n,m,s):= 1/(1/2-\delta)\, (>2)$. Under the condition
\begin{equation}\label{W6}
\tau \ge (2\bar{c}\mathfrak{z})^\beta+\tau_0,
\end{equation}
we proceed as in the case $W=0$ to obtain the following inequality
\begin{equation}\label{W7}
\|u\|_{L^2_\tau (D)}\le \bar{c}e^{\bar{c}(\varkappa_W^{\ell_m}+\varkappa_V^{k_s})}\|(-\Delta +W\cdot \nabla +V)u\|_{L^2_\tau (D)},\quad \tau >0,
\end{equation}
where $\ell_m:=m\beta/n$ and $k_s:=2s\beta/n$.

Next, if $W\in L^\infty(D,\mathbb{C}^n)$, then \eqref{W2} with $W^1=0$ and $W^2=W$, $V^1=V_t^1$ and $V^2=V_t^2$ yields
\begin{align}
&\tau^{3/2}\|u\|_{L^2_\tau (D)}+\tau^{1/2}\|\nabla u\|_{L^2_\tau (D)}\le \bar{c}\|h\|_{L_\tau^2(D)}\label{W8}
\\
&\hskip .5cm+ \bar{c}\left(\|W\|_{L^\infty(D)}+\tau^{3/4-1/(2n)}\|V_t^1\|_{L^{n/2}(D)}\right)\|\nabla u\|_{L_\tau ^2(D)}\nonumber
\\
&\hskip 3cm+\bar{c}\left(\tau^{7/4-1/(2n)}\|V_t^1\|_{L^{n/2}(D)}+\|V_t^2\|_{L^\infty(D)}\right)\|u\|_{L_\tau ^2(D)}.\nonumber
\end{align}
Set $\varkappa_W:=\|W\|_{L^\infty(D)}$. Then, under this notation and the condition
\[
\tau>\tau_0 +(2\bar{c}\varkappa_W)^2,
\]
\eqref{W8} implies
\begin{align*}
&\tau^{3/2}\|u\|_{L^2_\tau (D)}+\tau^{1/2}\|\nabla u\|_{L^2_\tau (D)}
\\
&\hskip 2cm\le \bar{c}\|h\|_{L_\tau^2(D)}
 +\bar{c}\tau^{3/4-1/(2n)}\|V_t^1\|_{L^{n/2}(D)}\|\nabla u\|_{L_\tau ^2(D)}
\\
&\hskip 3cm+\bar{c}\left(\tau^{7/4-1/(2n)}\|V_t^1\|_{L^{n/2}(D)}+\|V_t^2\|_{L^\infty(D)}\right)\|u\|_{L_\tau ^2(D)}.
\end{align*}
In consequence, we have
\begin{align*}
&\tau^{3/2}\|u\|_{L^2_\tau (D)}+\tau^{1/2}\|\nabla u\|_{L^2_\tau (D)}
\\
&\hskip 2cm\le \bar{c}\|h\|_{L_\tau^2(D)}
 +\bar{c}\tau^{3/4-1/(2n)}t^{-(2s/n-1)}\varkappa_V^{2s/n}\|\nabla u\|_{L_\tau ^2(D)}
\\
&\hskip 3cm+\bar{c}\left(\tau^{7/4-1/(2n)}t^{-(2s/n-1)}\varkappa_V^{2s/n}+t\right)\|u\|_{L_\tau ^2(D)}.
\end{align*}
As before, we derive from this inequality the following one
\begin{equation}\label{W9}
\|u\|_{L^2_\tau (D)}\le \bar{c}e^{\bar{c}(\varkappa_W^2+\varkappa_V^{k_s})}\|(-\Delta +W\cdot \nabla +V)u\|_{L^2_\tau (D)},\quad \tau >0.
\end{equation}
Interchanging the roles of $W$ and $V$, we get similarly as the preceding cases, where $\varkappa_V:=\|V\|_{L^\infty(D)}$,
\begin{equation}\label{W10}
\|u\|_{L^2_\tau (D)}\le \bar{c}e^{\bar{c}(\varkappa_W^{\ell_m}+\varkappa_V^{2/3})}\|(-\Delta +W\cdot \nabla +V)u\|_{L^2_\tau (D)},\quad \tau >0.
\end{equation}

Finally, if $W\in L^\infty(D,\mathbb{C}^n)$ and $V\in L^\infty (D)$ then \eqref{W2} becomes
\begin{align*}
&\tau^{3/2}\|u\|_{L^2_\tau (D)}+\tau^{1/2}\|\nabla u\|_{L^2_\tau (D)}\le \bar{c}\|h\|_{L_\tau^2(D)}
\\
&\hskip 2cm+ \bar{c}\left(\|W\|_{L^\infty(D)}\|\nabla u\|_{L_\tau ^2(D)}+\|V\|_{L^\infty(D)}\|u\|_{L_\tau ^2(D)}\right),
\end{align*}
from which we obtain
\begin{equation}\label{W11}
\|u\|_{L^2_\tau (D)}\le \bar{c}e^{\bar{c}(\varkappa_W^2+\varkappa_V^{2/3})}\|(-\Delta +W\cdot \nabla +V)u\|_{L^2_\tau (D)},\quad \tau >0.
\end{equation}

Let $\beta=\beta (n,m,s)>1$ defined as above and let $\ell_m:=m\beta/n$ if $m\in (n,\infty)$, $\ell_m:=2$ if $m=\infty$, $k_s:=2s\beta/n$ if $s\in (n/2,\infty)$ and $k_s:=2/3$ if $s=\infty$. 

Combining \eqref{W7},  \eqref{W9}, \eqref{W10} and \eqref{W11}, we obtain the following Carleman inequality, where we set
\[
\mathscr{N}:=\{(m,s)\in (n,\infty]\times (n/2,\infty];\; \min (m,2s)>3n/2-1\}.
\]

\begin{theorem}\label{thmW1}
Let $(m,s)\in \mathscr{N}$, $\omega\Subset D$. For all $W\in L^m(D,\mathbb{C}^n)$, $V\in L^s(D)$, $\tau >0$ and  $u\in W^{2,p}_\omega(D)$ we have
\begin{equation}\label{W12}
\|u\|_{L^2_\tau (D)}\le \bar{c}e^{\bar{c}(\varkappa_W^{\ell_m}+\varkappa_V^{k_s})}\|(-\Delta +W\cdot \nabla +V)u\|_{L^2_\tau (D)}.
\end{equation}
\end{theorem}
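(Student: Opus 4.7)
The plan is to recognize that the theorem is a consolidation of the four case-by-case Carleman estimates already derived in the discussion immediately preceding it: inequalities \eqref{W7}, \eqref{W9}, \eqref{W10}, and \eqref{W11} handle respectively the four combinations $\{m<\infty,\,s<\infty\}$, $\{m=\infty,\,s<\infty\}$, $\{m<\infty,\,s=\infty\}$, and $\{m=\infty,\,s=\infty\}$. The definitions of $\ell_m$ and $k_s$ stated just before the theorem are tailored precisely so that the exponents of $\varkappa_W$ and $\varkappa_V$ appearing in each of these four estimates match those in \eqref{W12}.

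Concretely, I would fix $(m,s)\in\mathscr{N}$, $W\in L^m(D,\mathbb{C}^n)$, $V\in L^s(D)$ and $u\in W^{2,p}_\omega(D)$, then split into four cases according to whether each of $m,s$ is finite. When $m<\infty$ and $s<\infty$, inequality \eqref{W7} gives \eqref{W12} directly since $\ell_m=m\beta/n$ and $k_s=2s\beta/n$ by definition. When $m=\infty$ and $s<\infty$, \eqref{W9} applies and the exponent $2=\ell_\infty$ of $\varkappa_W$ matches the convention, while $k_s=2s\beta/n$ remains unchanged. The mixed case $m<\infty$, $s=\infty$ is covered by \eqref{W10}, and the fully bounded case $m=s=\infty$ by \eqref{W11}, both yielding the required exponents via the conventions $\ell_\infty=2$ and $k_\infty=2/3$. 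Up to a harmless enlargement of the generic constant $\bar{c}$, these four specializations of \eqref{W12} are identical in form to the claimed inequality.

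I expect the only delicate point in this assembly to be understanding why the hypothesis $(m,s)\in\mathscr{N}$, namely $\min(m,2s)>3n/2-1$, appears. This strict inequality is exactly what guarantees that the parameter $\delta=(3n-2)/[4n(\ell+1)]$ used in the simultaneous splitting $V=V^1_t+V^2_t$, $W=W^1_t+W^2_t$ satisfies $\delta<1/2$, so that in \eqref{W5} the term $\bar{c}\mathfrak{z}\,\tau^\delta$ can be absorbed by the dominant factor $\tau^{1/2}$. The resulting finite exponent $\beta=1/(1/2-\delta)$ then determines the admissible absorption threshold \eqref{W6} and, together with the conventions $\ell_\infty=2$, $k_\infty=2/3$ introduced to cover the degenerate cases where decomposition is unnecessary, fixes the form of the exponents $\ell_m,k_s$. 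Once this structural point is noted, the theorem follows by the four-case summary above and requires no further computation.
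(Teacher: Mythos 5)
Your proposal is correct and matches the paper's own argument: the theorem is stated in the text precisely as the consolidation of the four case estimates \eqref{W7}, \eqref{W9}, \eqref{W10} and \eqref{W11}, with the conventions $\ell_\infty=2$ and $k_\infty=2/3$ making the exponents uniform. Your explanation of the role of the hypothesis $\min(m,2s)>3n/2-1$ (ensuring $\delta<1/2$ so that $\bar{c}\mathfrak{z}\tau^{\delta}$ is absorbed by $\tau^{1/2}$, yielding the finite $\beta$ that produces $\ell_m$ and $k_s$) is exactly the mechanism used in the paper.
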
 

A Caccioppoli-type inequality will be needed to establish three-ball inequality for the actual case. To this end we set
\[
\aleph(V,W):=
\left\{
\begin{array}{lll}
1+\kappa_W^{m/(m-n)}+\kappa_V^{s/(2s-n)},\quad &(m,s)\in (m,\infty)\times (n/2,\infty),
\\
1+\kappa_W+\kappa_V^{s/(2s-n)}, &(m,s)\in \{\infty\}\times (n/2,\infty),
\\
1+\kappa_W^{m/(m-n)}+\kappa_V^{1/2}, &(m,s)\in (n/2,\infty)\times \{\infty\},
\\
1+\kappa_W+\kappa_V^{1/2}, &m=s=\infty,
\end{array}
\right.
\]
where
\[
\kappa_V:=\|V\|_{L^s(\Omega)},\quad \kappa_W:=\|W\|_{L^m(\Omega)}.
\]

\begin{proposition}\label{proW2}
Let $m\in (m,\infty]$, $s\in (n/2,\infty]$ and $\omega_0\Subset \omega_1\Subset \Omega$. There exists a constant $\mathfrak{c}=\mathfrak{c}(n,\Omega,m,s)>0$  such that for all $W\in L^m(\Omega,\mathbb{C}^n)$, $V\in L^s(D)$ and $u\in W^{2,p}(\Omega)$ we have
\begin{equation}\label{Wci}
\mathfrak{c}\|\nabla u\|_{L^2(\omega_0)}\le \|(-\Delta +W\cdot\nabla+V)u\|_{L^2(\Omega)}+d^{-1}\aleph(V,W)\|u\|_{L^2(\omega_1)},
\end{equation}
where $d:=\mathrm{dist}(\omega_0,\omega_1)$.
\end{proposition}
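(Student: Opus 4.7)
The plan is to adapt the argument of Proposition \ref{proqu1}, handling the new drift term $W\!\cdot\!\nabla u$ by an analogous two-scale truncation.

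First I would fix $\chi\in C_0^\infty(\omega_1)$ with $0\le\chi\le 1$, $\chi\equiv 1$ on $\omega_0$ and $|\nabla\chi|\le \mathbf{k}d^{-1}$, set $f:=(-\Delta+W\!\cdot\!\nabla+V)u$, and start from the integration by parts identity
\[
\int_\Omega \chi^2|\nabla u|^2\,dx = -\int_\Omega \chi^2 u\,\Delta u\,dx - 2\int_\Omega u\chi\nabla\chi\!\cdot\!\nabla u\,dx.
\]
Substituting $\Delta u = W\!\cdot\!\nabla u + V u - f$ and applying Cauchy--Schwarz followed by Young's inequality to the last term reduces matters to estimating the three volume integrals involving $V$, $W$, and $f$.

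Next I would use Lemma \ref{lemqu1} to split $V=V_{t_V}^1+V_{t_V}^2$, handling $V_{t_V}^1$ through H\"older plus the Sobolev bound $\|\chi u\|_{L^{p'}}\le\sigma\|\nabla(\chi u)\|_{L^2}$ exactly as in Proposition \ref{proqu1}, and dominating $V_{t_V}^2$ by $t_V\|\chi u\|_{L^2}^2$. An entirely analogous truncation $W=W_{t_W}^1+W_{t_W}^2$ (proved just as in Lemma \ref{lemqu1}) yields $\|W_{t_W}^1\|_{L^n}\le t_W^{-(m/n-1)}\kappa_W^{m/n}$ and $\|W_{t_W}^2\|_{L^\infty}\le t_W$. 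The $W_{t_W}^1$-contribution is controlled using the triple H\"older relation $1/n+1/p'+1/2=1$ followed by Sobolev,
\[
\Bigl|\int_\Omega \chi^2 u\,W_{t_W}^1\!\cdot\!\nabla u\,dx\Bigr|\le \sigma\|W_{t_W}^1\|_{L^n}\|\nabla(\chi u)\|_{L^2}\|\chi\nabla u\|_{L^2},
\]
and the bounded piece by $t_W\|\chi u\|_{L^2}\|\chi\nabla u\|_{L^2}$; each is split via Young's inequality into a small multiple of $\|\chi\nabla u\|_{L^2}^2$ plus either $\|u\nabla\chi\|_{L^2}^2$ or $t_W^2\|\chi u\|_{L^2}^2$. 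The source term is handled by $|\int\chi^2 uf|\le\epsilon\|\chi u\|_{L^2}^2+(4\epsilon)^{-1}\|f\|_{L^2(\Omega)}^2$.

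Finally I would fix $\epsilon$ as a small universal constant and take $t_V$ proportional to $\kappa_V^{2s/(2s-n)}$ and $t_W$ proportional to $\kappa_W^{m/(m-n)}$, so that $2\sigma^2\|V_{t_V}^1\|_{L^{n/2}}$ and $\sigma\|W_{t_W}^1\|_{L^n}$ fall below the small threshold that permits absorbing the remaining $\|\chi\nabla u\|_{L^2}^2$ into the left-hand side. Taking square roots, the coefficient of $\|u\|_{L^2(\omega_1)}$ on the right comes out to a universal multiple of $d^{-1}+\sqrt{t_V}+t_W \sim d^{-1}+\kappa_V^{s/(2s-n)}+\kappa_W^{m/(m-n)}$, which is bounded by $d^{-1}\aleph(V,W)$ as stated. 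The endpoint cases $s=\infty$ or $m=\infty$ require no truncation for the corresponding factor and produce directly the exponents $\kappa_V^{1/2}$ and $\kappa_W$ appearing in the definition of $\aleph$.

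The main point to watch is the asymmetric way the two bounded remainders enter the squared inequality: $V_{t_V}^2$ linearly, contributing $\sqrt{t_V}\sim\kappa_V^{s/(2s-n)}$ after the square root, while the drift remainder $W_{t_W}^2$ appears quadratically after Young and contributes the stronger $t_W\sim\kappa_W^{m/(m-n)}$. These are exactly the two exponents that define $\aleph(V,W)$, so the final coefficient matches provided the accounting of powers of $\sigma$ and the absorption constants is done carefully.
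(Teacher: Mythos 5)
Your proposal is correct and follows essentially the same route as the paper's proof: the same cutoff and integration by parts, the same two-scale truncations $V=V_t^1+V_t^2$ and $W=W_\tau^1+W_\tau^2$ with the same choices $t\sim\kappa_V^{2s/(2s-n)}$ and $\tau\sim\kappa_W^{m/(m-n)}$, the triple H\"older/Sobolev bound for the $W^1$-term, and absorption of the gradient term; your closing remark on the asymmetric contributions ($\sqrt{t_V}$ versus $t_W$ after taking square roots) is precisely the bookkeeping that produces the exponents in $\aleph(V,W)$. The only cosmetic difference is that you pair the source term with $\|\chi u\|_{L^2}$ directly rather than going through $L^{p'}$--$L^p$ duality as the paper does, which is immaterial since the statement carries the $L^2$ norm of the source.
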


\begin{proof}
Let $(m,s)\in (n,\infty)\times (n/2,\infty)$. Pick $\chi \in C_0^\infty (\omega_1)$ satisfying $0\le \chi \le 1$, $\chi =1$ in a neighborhood of $\omega_0$ and $|\nabla \chi|\le \mathbf{k}d^{-1}$, where $\mathbf{k}$ is a generic universal constant. Let $W\in L^m(\Omega,\mathbb{C}^n)$, $V\in L^s(\Omega)$ and $u\in W^{2,p}(\Omega)$. Let $t>0$ and $\tau >0$. As we have done before we use the decompositions $V=V_t^1+V_t^2$ and $W=W_\tau^1+W_\tau ^2$ so that $V_t^1\in L^{n/2}(\Omega)$, $V_t^2\in L^\infty(D)$, $W_\tau ^1\in L^n(D,\mathbb{C}^n)$, $W_\tau ^2\in L^\infty(D,\mathbb{C}^n)$ and
\begin{align*}
&\|V_t^1\|_{L^{n/2}(\Omega)}\le t^{-(2s/n-1)}\kappa_V^{2s/n},\quad \|V_t^2\|_{L^\infty(\Omega)}\le t,
\\
&\|W_\tau^1\|_{L^m(\Omega)}\le \tau^{-(m/n-1)}\kappa_W^{m/n},\quad \|W_\tau^2\|_{L^\infty(\Omega)}\le \tau.
\end{align*}

Integrating by parts, we obtain
\begin{align*}
&\int_\Omega \chi^2 \bar{u}(-\Delta u+W\cdot \nabla u+Vu)dx=2\int_\Omega \chi \bar{u}\nabla \chi \cdot \nabla udx+\int_\Omega \chi^2|\nabla u|^2dx
\\
&\hskip 3cm +\int_\Omega \chi^2 \bar{u}W_\tau^1\cdot \nabla udx +\int_\Omega \chi^2 V_t^1|u|^2dx+\mathbf{J},
\end{align*}
where
\[
\mathbf{J}=\int_\Omega \chi^2 \bar{u}W_\tau^2\cdot \nabla udx +\int_\Omega \chi^2 V_t^2|u|^2dx .
\]
Thus,
\begin{align}
&\int_\Omega \chi^2 |\nabla u|^2dx \le  \|\chi u\|_{L^{p'}(\Omega)}\|(-\Delta +W\cdot \nabla +V)u\|_{L^p(\Omega)}+|\mathbf{J}|\label{W13}
\\
&\hskip 1cm +2\int_\Omega \chi |u||\nabla \chi \cdot \nabla u|dx+\int_\Omega \chi^2 |u||W_\tau^1\cdot \nabla u|dx +\int_\Omega \chi^2 |V_t^1||u|^2dx. \nonumber
\end{align}

Let $\epsilon >0$. Then
\begin{align*}
 &\|\chi u\|_{L^{p'}(\Omega)}\|(-\Delta +W\cdot \nabla +V)u\|_{L^p(\Omega)}\le \epsilon  \|\chi u\|_{L^{p'}(\Omega)}^2
 \\
 &\hskip 4cm +(4\epsilon)^{-1}\|(-\Delta +W\cdot \nabla +V)u\|_{L^p(\Omega)}^2.
 \end{align*}
 Since
 \begin{align*}
 \|\chi u\|_{L^{p'}(\Omega)}^2 &\le \sigma^2\|\nabla (\chi u)\|_{L^2(\Omega)}^2
 \\
 &\le 2\sigma^2 \left( \|\chi \nabla u\|_{L^2(\Omega)}^2+\|u \nabla \chi \|_{L^2(\Omega)}^2\right),
 \end{align*}
 we get
 \begin{align}
 &\|\chi u\|_{L^{p'}(\Omega)}\|(-\Delta +W\cdot \nabla +V)u\|_{L^p(\Omega)}\label{W14}
 \\
 &\hskip 2cm \le 2\epsilon \sigma^2 \left( \|\chi \nabla u\|_{L^2(\Omega)}^2+\|u \nabla \chi \|_{L^2(\Omega)}^2\right)\nonumber
 \\
 &\hskip 4cm +(4\epsilon)^{-1}\|(-\Delta +W\cdot \nabla +V)u\|_{L^p(\Omega)}^2.\nonumber
 \end{align}
 Using \eqref{W14} in \eqref{W13}, we get
 \begin{align}
&(1-2\epsilon \sigma^2)\int_\Omega \chi^2 |\nabla u|^2dx \le  (4\epsilon)^{-1}\|(-\Delta +W\cdot \nabla +V)u\|_{L^p(\Omega)}^2+|\mathbf{J}|\label{W15}
\\
&\hskip 3cm +2\epsilon \sigma^2\|u \nabla \chi \|_{L^2(\Omega)}^2 +2\int_\Omega \chi |u||\nabla \chi \cdot \nabla u|dx\nonumber
\\
&\hskip 4cm+\int_\Omega \chi^2 |u||W_\tau^1\cdot \nabla u|dx +\int_\Omega \chi^2 |V_t^1||u|^2dx. \nonumber
\end{align}

By combining \eqref{W15} and the following inequality
\begin{align*}
2\int_\Omega \chi |u||\nabla \chi \cdot \nabla u|dx &\le 2\int_\Omega [ |u||\nabla \chi|][ \chi |\nabla u|]dx
\\
&\le \epsilon \|\chi \nabla u\|_{L^2(\Omega)}^2+\epsilon^{-1}\|u \nabla \chi \|_{L^2(\Omega)}^2,
\end{align*}
we obtain
 \begin{align*}
&(1-\epsilon (2\sigma^2+1))\int_\Omega \chi^2 |\nabla u|^2dx \le  (4\epsilon)^{-1}\|(-\Delta +W\cdot \nabla +V)u\|_{L^p(\Omega)}^2+|\mathbf{J}|
\\
&\hskip 3cm +(2\epsilon \sigma^2+\epsilon^{-1})\|u \nabla \chi \|_{L^2(\Omega)}^2 
\\
&\hskip 4cm+\int_\Omega \chi^2 |u||W_\tau^1\cdot \nabla u|dx +\int_\Omega \chi^2 |V_t^1||u|^2dx. 
\end{align*}
Taking in this inequality $\epsilon=(4\sigma^2+2)^{-1}$, we find
 \begin{align}
&\int_\Omega \chi^2 |\nabla u|^2dx \le  \sigma_0\|(-\Delta +W\cdot \nabla +V)u\|_{L^p(\Omega)}^2+2|\mathbf{J}|\label{W16}
\\
&\hskip 1.5cm +\sigma_1\|u \nabla \chi \|_{L^2(\Omega)}^2 +2\int_\Omega \chi^2 |u||W_\tau^1\cdot \nabla u|dx +2\int_\Omega \chi^2 |V_t^1||u|^2dx, \nonumber
\end{align}
where $\sigma_0:=2\sigma^2+1$ and $\sigma_1:=2\sigma^2(2\sigma^2+1)^{-1}+8\sigma^2+4$.

We recall that we already demonstrated the following inequality
\[
2\int_\Omega \chi^2 |V_t^1||u|^2dx\le 4\sigma^2t^{-(2s/n-1)}\kappa_V^{2s/n}\left(\|u \nabla \chi \|_{L^2(\Omega)}^2+\|\chi \nabla u \|_{L^2(\Omega)}^2\right).
\]
In the sequel, we fix with $t=(4\sigma)^{2n/(2s-n)}\kappa_V^{2s/(2s-n)}$. In that case, we have
\begin{equation}\label{W17}
2\int_\Omega \chi^2 |V_t^1||u|^2dx\le 4^{-1}\left(\|u \nabla \chi \|_{L^2(\Omega)}^2+\|\chi \nabla u \|_{L^2(\Omega)}^2\right).
\end{equation}

On the other hand, we have 
\begin{align*}
&2\int_\Omega \chi^2 |u||W_\tau^1\cdot \nabla u|dx\le 2\|\chi u\|_{L^{p'}(\Omega)}\|\chi W_\tau^1\cdot \nabla u\|_{L^p(\Omega)} 
\\
&\hskip 1.5cm\le 2\sigma \|\nabla (\chi  u)\|_{L^2(\Omega)}\|W_\tau ^1\|_{L^n(\Omega)}\|\chi \nabla u\|_{L^2(\Omega)} 
\\
&\hskip 1.5cm\le \sigma^2 \|W_\tau^1\|_{L^n(\Omega)} \|\nabla (\chi u)\|_{L^2(\Omega)}^2+\|W_\tau^1\|_{L^n(\Omega)}\|\chi \nabla u\|_{L^2(\Omega)}^2 .
\end{align*}
As
\[
\|\nabla (\chi u)\|_{L^2(\Omega)}^2\le 2\|u \nabla \chi \|_{L^2(\Omega)}^2+2\|\chi \nabla u\|_{L^2(\Omega)}^2,
\]
we find
\begin{align*}
&2\int_\Omega \chi^2 |u||W_\tau^1\cdot \nabla u|dx
\\
&\hskip 1.5cm \le (2\sigma^2+1) \|W_\tau^1\|_{L^n(\Omega)}\|\chi \nabla u\|_{L^2(\Omega)}^2+2\sigma^2 \|W_\tau^1\|_{L^n(\Omega)}\|u \nabla \chi \|_{L^2(\Omega)}^2
\end{align*}
and then
\begin{align}
&2\int_\Omega \chi^2 |u||W_\tau^1\cdot \nabla u|dx\label{W18}
\\
&\hskip 2cm \le (2\sigma^2+1) \tau^{-(m/n-1)}\kappa_W^{m/n}\|\chi \nabla u\|_{L^2(\Omega)}^2\nonumber
\\
&\hskip 4cm+2\sigma^2 \tau^{-(m/n-1)}\kappa_W^{m/n}\|u \nabla \chi \|_{L^2(\Omega)}^2.\nonumber
\end{align}
In the following, we fix $\tau=[4(2\sigma^2+1)]^{n/(m-n)}\kappa_W^{m/(m-n)}$. In this case \eqref{W18} yields
\begin{equation}\label{W19}
2\int_\Omega \chi^2 |u||W_\tau^1\cdot \nabla u|dx\le 4^{-1}\|\chi \nabla u\|_{L^2(\Omega)}^2+2^{-1}\sigma^2(2\sigma^2+1)^{-1}\|u \nabla \chi \|_{L^2(\Omega)}^2.
\end{equation}
Using \eqref{W17} and \eqref{W19} in \eqref{W16}, we obtain
 \begin{equation}\label{W20}
\int_\Omega \chi^2 |\nabla u|^2dx \le  2\sigma_0\|(-\Delta +W\cdot \nabla +V)u\|_{L^p(\Omega)}^2+2|\mathbf{J}|
 +\sigma_2 \|u \nabla \chi \|_{L^2(\Omega)}^2 , 
\end{equation}
where $\sigma_2:=2\sigma_1+1/2+\sigma^2(2\sigma^2+1)^{-1}$.

Next, we have 
\begin{align*}
\int_\Omega \chi^2 |u||W_\tau^2\cdot \nabla u|dx&\le \|\chi u\|_{L^2(\Omega)}\|W_\tau^2\|_{L^\infty (\Omega)}\|\chi \nabla u\|_{L^2(\Omega)}
\\
&\le 4^{-1}\|\chi \nabla u\|_{L^2(\Omega)}^2+\|W_\tau^2\|_{L^\infty (D)}^2\|\chi u\|_{L^2(\Omega)}^2
\\
&\le 4^{-1}\|\chi \nabla u\|_{L^2(\Omega)}^2+\tau^2\|\chi u\|_{L^2(\Omega)}^2
\\
&\le 4^{-1}\|\chi \nabla u\|_{L^2(\Omega)}^2+[4(2\sigma^2+1)]^{2n/(m-n)}\kappa_W^{2m/(m-n)}]\|\chi u\|_{L^2(\Omega)}^2
\end{align*}
and
\[
\int_\Omega |V_t^2|\chi ^2|u|^2\le t\|\chi u\|_{L^2(\Omega)}^2=(4\sigma)^{2n/(2s-n)}\kappa_V^{2s/(2s-n)}\|\chi u\|_{L^2(\Omega)}^2.
\]
Whence 
\begin{equation}\label{W21}
2|\mathbf{J}|\le 2^{-1}\|\chi \nabla u\|_{L^2(\Omega)}^2+\mathfrak{c}\left(\kappa_W^{2m/(m-n)}+\kappa_V^{2s/(2s-n)}\right)\|\chi u\|_{L^2(\Omega)}^2.
\end{equation}
Here and henceforth, $\mathfrak{c}=\mathfrak{c}(n,\Omega,m,s)>0$ denotes a generic constant.

Putting together \eqref{W20} and \eqref{W21}, we obtain
\begin{align*}
&\mathfrak{c} \|\chi \nabla u\|_{L^2(\Omega)}^2\le \|(-\Delta +W\cdot \nabla +V)u\|_{L^p(\Omega)}^2
\\
&\hskip 3cm+\left(1+\kappa_W^{2m/(m-n)}+\kappa_V^{2s/(2s-n)}\right)\|\chi u\|_{L^2(\Omega)}^2,
\end{align*}
from which the expected inequality follows when $(m,s)\in (n,\infty)\times (n/2,\infty)$. The other cases are proved similarly.
\end{proof}

In the following, $\mathbf{c}=\mathbf{c}(n,m,s)\ge 1$ and $\mathbf{c}_1=\mathbf{c}_1(n,m,s,r_0)>0$ will denote generic constants and,  for $V\in L^s(\Omega)$ and $W\in L^m(\Omega,\mathbb{C}^n)$ with $(m,s)\in (n,\infty]\times (n/2,\infty]$, $\varphi_{m,s}(V,W)$ will denote a generic constant of the form
\[
\varphi_{m,s}(V,W):=e^{\mathbf{c}_1(\kappa_W^{\ell_m}+\kappa_V^{k_s})},
\]
where $\kappa_V:=\|V\|_{L^s(\Omega)}$ and $\kappa_W:=\|W\|_{L^m(\Omega)}$, and $\ell_m$ and $k_s$ are as in Theorem \ref{thmW1}.

In light of Proposition \ref{proW2} and Theorem \ref{thmW1}, we can repeat the proof of Theorem \ref{mthm1} to obtain the following three-ball inequality

\begin{theorem}\label{thmW2}
Let $(m,s)\in \mathscr{N}$ and $r_0>0$ chosen so that $\Omega^{r_0}$ is nonempty and $0<r<r_0/3$. For all $x_0\in \Omega^{3r}$, $W\in L^m(\Omega,\mathbb{C}^n)$, $V\in L^s(\Omega)$ and $u\in W^{2,p}(\Omega)$ satisfying $(-\Delta+W\cdot \nabla +V)u=0$ we have
\begin{equation}\label{W22}
\|u\|_{L^2(B(x_0,2r))}\le 
\mathbf{c}\varphi_{s,m}(V,W)\|u\|_{L^2(B(x_0,r))}^\alpha \|u\|_{L^2(B(x_0,3r))}^{1-\alpha}.
\end{equation}
\end{theorem}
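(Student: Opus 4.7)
The plan is to adapt the proof of Theorem \ref{mthm1}, substituting the $L^2$-Carleman inequality of Theorem \ref{thmW1} for Lemma \ref{lemci} and Proposition \ref{proW2} for Proposition \ref{proqu1}. First I would rescale to the unit ball by setting $w(y):=u(x_0+3ry)$, $\mathcal{V}(y):=(3r)^2 V(x_0+3ry)$ and $\mathcal{W}(y):=3r\,W(x_0+3ry)$, so that $(-\Delta+\mathcal{W}\cdot\nabla+\mathcal{V})w=0$ on $B:=B_1$. The scalings yield $\|\mathcal{V}\|_{L^s(B)}\le (3r)^{2-n/s}\kappa_V$ and $\|\mathcal{W}\|_{L^m(B)}\le (3r)^{1-n/m}\kappa_W$, both bounded uniformly in $r\in(0,r_0/3]$ since $r_0\le 1$. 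Applying Theorem \ref{thmW1} with $D=B$, $\omega_0=B_{1/8}$, $\omega=B_{1/7}$ and weight $\varphi(x)=1-|x|^2$ then yields an $L^2$-weighted Carleman estimate whose prefactor is of the form $\bar{c}\,\varphi_{m,s}(\mathcal{V},\mathcal{W})$.

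Next, I would introduce a cutoff $\chi\in C_0^\infty(B)$ supported in $[\![r_2,r_7]\!]$, equal to $1$ on $[\![r_3,r_6]\!]$, with $|\nabla\chi|\le \mathbf{k}d^{-1}$ and $|\Delta\chi|\le \mathbf{k}d^{-2}$ on the transition annuli. Applying the Carleman inequality to $\chi w\in W^{2,p}_{B_{1/7}}(B)$ and using that $(-\Delta+\mathcal{W}\cdot\nabla+\mathcal{V})w=0$ gives
\[
\|\chi w\|_{L^2_\tau(B)}\le \bar{c}\,\varphi_{m,s}\bigl\|-2\nabla\chi\cdot\nabla w-(\Delta\chi)w+(\mathcal{W}\cdot\nabla\chi)w\bigr\|_{L^2_\tau(B)}.
\]
The first two commutator terms are handled as in the proof of Theorem \ref{mthm1}: the pointwise bounds on $\nabla\chi$ and $\Delta\chi$ reduce them to $\|\nabla w\|_{L^2}$ and $\|w\|_{L^2}$ on the supporting annuli, and Proposition \ref{proW2} applied to concentric annuli converts the $\|\nabla w\|_{L^2}$-contribution into an $L^2$-norm of $w$ on a slightly larger annulus, at the cost of a multiplicative $\aleph(V,W)$.

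The principal new obstacle is the drift commutator $(\mathcal{W}\cdot\nabla\chi)w$. By H\"older's inequality (using $m>n$),
\[
\|(\mathcal{W}\cdot\nabla\chi)w\|_{L^2(A)}\le \|\nabla\chi\|_{L^\infty}\|\mathcal{W}\|_{L^n(A)}\|w\|_{L^{p'}(A)},
\]
where $\|\mathcal{W}\|_{L^n(A)}\le |A|^{1/n-1/m}\|\mathcal{W}\|_{L^m(A)}$ is tame. The $L^{p'}(A)$-norm of $w$ is then controlled by $\|w\|_{H^1(A')}$ on a slightly larger annulus via the continuous Sobolev embedding $H^1\hookrightarrow L^{p'}$, and a further application of Proposition \ref{proW2} bounds that $H^1$-norm by the $L^2$-norm of $w$ on an even larger annulus, picking up one more $\aleph(V,W)$ factor. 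The accumulated polynomial dependence on $\kappa_V$ and $\kappa_W$ is absorbed into the exponential $\varphi_{m,s}(V,W)$.

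Once every commutator contribution is bounded by $\|w\|_{L^2}$ on inner and outer annuli (up to exponential weight factors and $\varphi_{m,s}$), the argument concludes as in the proof of Theorem \ref{mthm1}: with the concrete choice of radii $r_1,\dots,r_8$ from there, optimizing in the Carleman parameter $\tau$ produces a three-ball inequality for $w$ on $(B_{1/3},B_{2/3},B)$, which becomes \eqref{W22} after undoing the rescaling $y\mapsto(y-x_0)/(3r)$. The main technical challenge is the drift term: bounding it in $L^2$ requires chaining together H\"older, Sobolev and Caccioppoli while ensuring all accumulated factors in $\kappa_V$ and $\kappa_W$ fit inside $\varphi_{m,s}(V,W)$.
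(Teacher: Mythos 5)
Your proposal is correct and follows exactly the route the paper intends: the paper only says to ``repeat the proof of Theorem~\ref{mthm1}'' with Theorem~\ref{thmW1} and Proposition~\ref{proW2} in place of Lemma~\ref{lemci} and Proposition~\ref{proqu1}, and you have correctly identified and supplied the one genuinely new step, namely controlling the drift commutator $(\mathcal{W}\cdot\nabla\chi)w$ in $L^2_\tau$ via the H\"older--Sobolev--Caccioppoli chain with all polynomial $\kappa_V,\kappa_W$ factors absorbed into $\varphi_{m,s}(V,W)$. One small remark: an alternative that avoids the extra Sobolev/Caccioppoli layer for the drift commutator is to return to the two-slot Carleman estimate of Proposition~\ref{proW1} and place $(\mathcal{W}\cdot\nabla\chi)w$ in the $L^p_\tau$ slot, where H\"older gives $\|\mathcal{W}\cdot\nabla\chi\, w\|_{L^p}\le\|\nabla\chi\|_{L^\infty}\|\mathcal{W}\|_{L^n}\|w\|_{L^2}$ directly, but your route through Theorem~\ref{thmW1} is equally valid and is the one the paper cites.
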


Under the assumptions and the  notations of Section \ref{S5}, for $V\in L^s(\Omega)$ and $W\in L^m(\Omega,\mathbb{C}^n)$ with $(m,s)\in (n,\infty]\times (n/2,\infty]$, we get the following theorem by modifying slightly the proof of Theorem \ref{thma2}.

\begin{theorem}\label{thmW3}
Let $(m,s)\in \mathscr{N}$, $0<\mathfrak{t}<1/2$ and $\omega\Subset \Omega$. There exist $r_\ast=r_\ast(\Omega,\omega,\mathfrak{r})\le \mathfrak{r}/4$ such that for all $W\in L^m(\Omega,\mathbb{C}^n)$, $V\in L^s(\Omega)$, $0<r<r_\ast$ and $u\in W^{2,p}(\Omega)$ satisfying $(-\Delta +W\cdot \nabla +V)u=0$ we have
\begin{equation}\label{W23}
\|u\|_{L^2(\Omega)}\le \mathbf{c}\varphi_{m,s}(V,W)\left(e^{\bar{\mathbf{c}}\varrho(r)}\|u\|_{L^2(\omega)}+r^\mathfrak{t} \|u\|_{H^1(\Omega)}\right),
\end{equation}
where $\bar{\mathbf{c}}=\bar{\mathbf{c}}(n,\Omega,\mathfrak{t})>0$ is a constant.
\end{theorem}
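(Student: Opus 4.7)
The plan is to imitate the proof of Theorem \ref{thma2} (and its underlying Theorem \ref{mthm2}) step by step, replacing the three-ball inequality of Theorem \ref{mthm1} by its analogue, Theorem \ref{thmW2}, for the operator $-\Delta+W\cdot\nabla+V$. Since Theorem \ref{thmW2} has exactly the same structure as Theorem \ref{mthm1}, with the constant $\varphi_s(V)$ replaced by $\varphi_{m,s}(V,W)=e^{\mathbf{c}_1(\kappa_W^{\ell_m}+\kappa_V^{k_s})}$, and with the same Hölder exponent $\alpha$, the chaining mechanism carries over verbatim.

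\medskip

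First I would establish a two-point interior propagation estimate: if $0<r<\mathfrak{r}/4$ and $x,y\in\Omega^{4r}$, then for every $u\in W^{2,p}(\Omega)$ with $(-\Delta+W\cdot\nabla+V)u=0$,
\begin{equation}\label{tbi-prop}
\|u\|_{L^2(B(y,r))}\le \mathbf{c}\,\varphi_{m,s}(V,W)\,\|u\|_{L^2(B(x,r))}^{\eta(r)}\|u\|_{L^2(\Omega)}^{1-\eta(r)},
\end{equation}
where $\eta$ is the function defined in \eqref{eta}. To prove \eqref{tbi-prop} I would follow the proof of Theorem \ref{mthm2} line by line: cover the set $\Omega^{3r}$ by a finite connected union of sub-cubes of side comparable to $r/\sqrt{n}$, invoke Lemma \ref{lemGeo} to produce a path $\psi$ from $x$ to $y$ inside this cover of length $\le r\,m_r$ with $m_r\le ([\mathbf{D}\sqrt{n}/r]+1)^n$, then iterate the three-ball inequality \eqref{W22} along the points $x_j=\psi(t_j)$. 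After $p_r\le m_r$ iterations one gets
\[
\|u\|_{L^2(B(y,r))}\le [\mathbf{c}\varphi_{m,s}(V,W)]^{(1-\alpha)^{-1}}\|u\|_{L^2(B(x,r))}^{\alpha^{p_r+1}}\|u\|_{L^2(\Omega)}^{1-\alpha^{p_r+1}},
\]
and the elementary bound $\alpha^{p_r+1}\ge \eta(r)$ from the proof of Theorem \ref{mthm2} yields \eqref{tbi-prop}.

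\medskip

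Next I would upgrade \eqref{tbi-prop} to the global estimate. Pick $x\in\omega$ and $d_0>0$ with $B(x,d_0)\subset\omega$ and $B(x,4d_0)\subset\Omega$, set $r_\ast=\min(d_0,\mathfrak{r}/4)$, and apply \eqref{tbi-prop} at every $y\in\Omega^{4r}$. Covering $\Omega^{4r}$ by $n_r\le\hat c\,r^{-n}$ balls (Lemma \ref{lemA1}) gives
\[
\|u\|_{L^2(\Omega^{4r})}\le\hat c\,\mathbf{c}\,\varphi_{m,s}(V,W)\,r^{-n}\|u\|_{L^2(\omega)}^{\eta(r)}\|u\|_{L^2(\Omega)}^{1-\eta(r)}.
\]
Applying Young's inequality with parameter $\epsilon$, then choosing $\epsilon=(c_\ast r^{\mathfrak{t}})^{1-\eta(r)}$ so that the $\|u\|_{L^2(\Omega)}$-term is absorbed into the $\|u\|_{H^1(\Omega)}$-term after invoking Hardy's inequality \eqref{ap3} on the boundary strip $\Omega_{4r}$, reproduces exactly the computation leading to \eqref{ap5}. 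The algebraic manipulation of the factor $[\hat c\, c_\ast^{-1}r^{-(n+\mathfrak{t})}]^{1/\eta(r)}\le e^{\bar{\mathbf{c}}\varrho(r)}$ is identical, yielding \eqref{W23}.

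\medskip

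Nothing in this strategy is substantially harder than the $W=0$ case treated earlier, because the three-ball inequality, the Caccioppoli inequality (Proposition \ref{proW2}), and the Carleman inequality (Theorem \ref{thmW1}) have all been tuned to produce a constant of the same exponential shape $\varphi_{m,s}(V,W)$. The only point requiring mild care is the bookkeeping of the constant: one must verify that the multiplicative constants arising from the iteration (powers of $\mathbf{c}\varphi_{m,s}(V,W)$) are absorbed into a single generic constant of the same form, which is straightforward because $\sum_{j\ge 0}\alpha^j=(1-\alpha)^{-1}$ and the exponent $\mathbf{c}_1(\kappa_W^{\ell_m}+\kappa_V^{k_s})$ is preserved, up to enlarging $\mathbf{c}_1$, under this geometric summation. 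The case $m=\infty$ or $s=\infty$ is handled identically with the convention on $\ell_m,k_s$ already introduced before Theorem \ref{thmW1}.
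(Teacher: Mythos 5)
Your proposal is correct and takes essentially the same approach as the paper: the paper gives no explicit proof of Theorem \ref{thmW3}, instead stating only that it follows "by modifying slightly the proof of Theorem \ref{thma2}," and your step-by-step execution of that modification (chaining the three-ball inequality of Theorem \ref{thmW2} through a cube cover via Lemma \ref{lemGeo}, covering $\Omega^{4r}$ by $O(r^{-n})$ balls, applying Young's inequality, and absorbing the boundary strip with Hardy's inequality) is exactly the intended argument. Your observations that the geometric sum $\sum\alpha^j=(1-\alpha)^{-1}$ only enlarges the constant $\mathbf{c}_1$ inside $\varphi_{m,s}$, and that the cases $m=\infty$ or $s=\infty$ are covered by the conventions on $\ell_m,k_s$, are the only nontrivial pieces of bookkeeping and you handle them correctly.
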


Moreover, if we further assume that $\Omega$ is of class $C^{1,1}$, then we prove similarly to Theorem \ref{thmaa1} the following result.

\begin{theorem}\label{thmW4}
Assume that $\Omega$ is of class $C^{1,1}$. Let $\omega\Subset \Omega$, $(m,s)\in \mathscr{N}$, $0<\mathfrak{t} <1/2$. Let $\zeta:=(n,\Omega,\omega, \mathfrak{r},\mathfrak{t})$. There exist three constants $\upsilon =\upsilon(\zeta)>0$, $\varsigma=\varsigma(\zeta)>0$ and $\tilde{\mathbf{c}}=\tilde{\mathbf{c}}(\zeta,m,s)>0$ such that for all $W\in L^m(\Omega,\mathbb{C}^n)$, $V\in L^s(\Omega)$, $u\in W^{2,p}(\Omega)$ satisfying $(-\Delta +W\cdot \nabla+V)u=0$ and $0<r<1$ we have
\begin{equation}\label{W24}
\|u\|_{L^2(\Omega)}\le \tilde{\mathbf{c}}\varphi_{m,s}(V,W)\left(e^{\upsilon r^{-\varsigma}}\|u\|_{L^2(\omega)}+r^\mathfrak{t} \|u\|_{H^1(\Omega)}\right).
\end{equation}
\end{theorem}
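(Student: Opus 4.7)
The plan is to mimic the proof of Theorem \ref{thmaa1} verbatim, replacing the three-ball inequality of Theorem \ref{mthm1} by its first-order analogue Theorem \ref{thmW2} and the interior stability estimate of Theorem \ref{thma2} by Theorem \ref{thmW3}. All the geometric ingredients are independent of the PDE and can be reused: Proposition \ref{propositiona2} gives the signed-distance projection $\mathfrak{p}$ on $\Gamma$ for $x\in \Omega_{\dot{\varrho}}$, and Lemma \ref{lemmaa1} produces, for any $x\in \Omega_{\bar{\varrho}}$, an open cone $\mathscr{C}(\mathfrak{p}(x))\subset \Omega$ with uniform aperture $\theta$ and uniform height $\bar{\varrho}$ pointing along $-\nu(\mathfrak{p}(x))$.

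The first step is the near-boundary propagation. Fix $0<r<\bar{\varrho}/3$ and $x\in \Omega^{r}\setminus \Omega^{\bar{\varrho}/3}$; set $\tilde{x}:=\mathfrak{p}(x)$, $\xi:=-\nu(\tilde{x})$ and choose $x_0:=\tilde{x}+d_0\xi$ with $2\bar{\varrho}/3<d_0<\bar{\varrho}$ so that $B(x_0,\bar{\varrho}/3)\subset \Omega^{\bar{\varrho}/3}$. Define inductively the chain of balls $B(x_k,3\rho_k)\subset \mathscr{C}(\tilde{x})$ exactly as in the proof of Theorem \ref{thmaa1}, with $x_{k+1}=x_k-(1-\mu)d_k\xi$, $\rho_k=\varpi d_k$, $d_k=\mu^k d_0$, $\varpi=(\sin\theta)/3$, $\mu=(3-2\sin\theta)/(3-\sin\theta)$, so that $B(x_{k+1},\rho_{k+1})\subset B(x_k,2\rho_k)\subset B(x_k,3\rho_k)\subset \Omega$. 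Applying Theorem \ref{thmW2} to the solution $u$ of $(-\Delta+W\cdot\nabla+V)u=0$ at each $x_k$ and chaining the estimate as in \eqref{qu9} gives, for every integer $k\ge 1$ and every $\epsilon>0$,
\begin{equation*}
[\mathbf{c}\varphi_{m,s}(V,W)]^{-1}\|u\|_{L^2(B(x_k,\rho_k))}\le \epsilon^{-1/\alpha^k}\|u\|_{L^2(B(x_0,\rho_0))}+\epsilon^{1/(1-\alpha^k)}\|u\|_{L^2(\Omega)},
\end{equation*}
after Young's inequality (this is the exact analogue of \eqref{a10}, the only change being the constant in front).

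The second step is to bound $\|u\|_{L^2(B(x_0,\bar{\varrho}/3))}$ by interior data in $\omega$. Since $B(x_0,\bar{\varrho}/3)\subset \Omega^{\bar{\varrho}/3}$ and $\bar{\varrho}/3<r_\ast$ (after shrinking $\bar{\varrho}$ if necessary), Theorem \ref{thmW3} applied with the interior estimate $\|u\|_{L^2(B(x_0,\bar{\varrho}/3))}\le \mathbf{c}\varphi_{m,s}(V,W)\|u\|_{L^2(\omega)}^{\mathfrak{s}}\|u\|_{L^2(\Omega)}^{1-\mathfrak{s}}$ (which itself follows from Theorem \ref{thmW2} iterated exactly as in the derivation of \eqref{ap1}, with $\mathfrak{s}=\eta(\bar{\varrho}/3)$) together with Young's inequality yields the analogue of \eqref{a13.2}. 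Combining with the chain estimate and choosing $k=k_x$ as the largest integer for which $x\in B(x_{k_x},\rho_{k_x})$, the bounds $k_x\le h(r):=|\ln\mu|^{-1}\ln(\bar{b}/r)$ and $k_x\ge k_+:=\max(1,|\ln\mu|^{-1}\ln b -1)$ from \eqref{a10.1}--\eqref{a10.1.1} persist unchanged because they depend only on the geometry. Choosing $\epsilon$ so that $\epsilon^{\alpha^{k_x}/(1-\alpha^{k_x})}=r^\tau$ and using the bounds $1/\alpha^{k_x}\le \upsilon r^{-\varsigma}$ and $\epsilon^{-1}\le e^{\upsilon r^{-(\varsigma+\tau)}}$ reproduces \eqref{a13.5} with constant $\mathbf{c}\varphi_{m,s}(V,W)$ in place of $\mathbf{c}\varphi_s(V)$.

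The third step is to cover $\Omega^r\setminus \Omega^{\bar{\varrho}/3}$ by at most $\hat{c}r^{-n}$ balls $B(x,2\mu\varpi r)\subset B(x_{k_x},\rho_{k_x})$ via Lemma \ref{lemA1}, sum the resulting estimates, and combine with the purely interior bound in $\Omega^{\bar{\varrho}/3}$ obtained from Theorem \ref{thmW3} applied with $r:=\bar{\varrho}/12$ (analogue of \eqref{ap2.0}). Choosing $\tau$ so that $-n+\tau\mathfrak{b}=\mathfrak{t}$ with $\mathfrak{b}=\min(\mathfrak{a}\mathfrak{s}/(1-\mathfrak{s}),1)$, $\mathfrak{a}=(1/\alpha)^{k_+}-1$, and defining $\varsigma:=\varsigma+(n+\mathfrak{t})\mathfrak{b}^{-1}$ gives the analogue of \eqref{ap2.1}:
\begin{equation*}
[\tilde{\mathbf{c}}\varphi_{m,s}(V,W)]^{-1}\|u\|_{L^2(\Omega^r)}\le e^{\upsilon r^{-\varsigma}}\|u\|_{L^2(\omega)}+r^\mathfrak{t}\|u\|_{L^2(\Omega)}.
\end{equation*}
Adding the Hardy inequality \eqref{a13.8} to handle $\|u\|_{L^2(\Omega_r)}$ and rescaling $r\to 3r\bar{\varrho}$ absorbs the constants into the generic $\tilde{\mathbf{c}}$ and $\upsilon$, yielding \eqref{W24}.

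The only delicate point, which I view as the main obstacle, is verifying that the constant $\varphi_{m,s}(V,W)$ propagates cleanly through the chain: each application of Theorem \ref{thmW2} contributes a factor $\mathbf{c}\varphi_{m,s}(V,W)$, and the $k_x$-fold iteration accumulates as $[\mathbf{c}\varphi_{m,s}(V,W)]^{(1-\alpha^{k_x+1})/(1-\alpha)}\le \mathbf{c}\varphi_{m,s}(V,W)$ using $\mathbf{c}\varphi_{m,s}(V,W)\ge 1$. This is why the exponential form of $\varphi_{m,s}$ is essential: the product of arbitrarily many such constants is still of the form $e^{\mathbf{c}_1(\kappa_W^{\ell_m}+\kappa_V^{k_s})}$ (absorbed into the generic constant $\mathbf{c}_1$), preserving the structure of the final estimate.
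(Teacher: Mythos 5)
Your proposal is correct and follows exactly the route the paper intends: the paper gives no separate proof of Theorem \ref{thmW4}, stating only that it is proved ``similarly to Theorem \ref{thmaa1}'', and your argument is precisely that adaptation, with Theorem \ref{thmW2} replacing the three-ball inequality, the analogue of \eqref{ap1}--\eqref{ap2} supplying the interior bounds, and $\varphi_{m,s}(V,W)$ propagating through the chain exactly as $\varphi_s(V)$ does. The only blemishes are cosmetic: the self-referential definition ``$\varsigma:=\varsigma+(n+\mathfrak{t})\mathfrak{b}^{-1}$'' should introduce a new symbol as in \eqref{a13.7}, and the interior estimates in steps two and three come from the intermediate inequalities in the proofs of Theorems \ref{mthm2} and \ref{thma2} (transposed to the first-order setting) rather than from the statement of Theorem \ref{thmW3} itself.
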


In the remainder of this section, we show that the condition $\min(m,2s)>3n/2-1$ can be suppressed by using, instead of \cite[Theorem 1.1]{DET}, directly the interpolation inequality \cite[Theorem 1.1]{CET}. We emphasize that this interpolation inequality is obtained from Carleman's inequalities of \cite[Theorem 1.1]{DET}, combined with a rather technical argument based on a lemma due to Wolff \cite{Wo}. We only show how we prove the three-ball inequality in this case. The other results must be modified accordingly.

We use hereafter the following convention $h(\infty)=\lim_{\rho \rightarrow \infty}h(\rho)$. Consider the notations
\[
\gamma_s=\gamma(n,s):=
\left\{
\begin{array}{ll}
4s/[3(2s-n)+2],  &s\in [n,\infty],
\\
4ns/[(3n+2)(2s-n)],\quad &s\in (n/2,n],
\end{array}
\right.
\]
and 
\[
\delta_m=\delta_m(n,m):=2m/(m-n),\quad m\in (n,\infty].
\]

\begin{theorem}\label{W5}
Let $(m,s)\in (n,\infty]\times (n/2,\infty]$. For all $x_0\in \Omega^{3r}$, $0<r<r_0/3$, $V\in L^s(\Omega)$, $W\in L^m(\Omega ,\mathbb{C}^n)$ and $u\in W^{2,p}(\Omega)$ satisfying $(-\Delta+W\cdot \nabla +V)u=0$ we have
\[
\|u\|_{L^2(B(x_0,2r))}\le 
\mathbf{c}e^{\mathbf{c}_1(\kappa_W^{\delta_m}+\kappa_V^{\gamma_s})}\|u\|_{L^2(B(x_0,r))}^\alpha \|u\|_{L^2(B(x_0,3r))}^{1-\alpha}.
\]
\end{theorem}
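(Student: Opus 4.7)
The strategy mirrors the proof of Theorem \ref{mthm1} (and its first-order extension Theorem \ref{thmW2}), except that the elementary Carleman inequality of Lemma \ref{lemci} is replaced by the interpolation inequality of \cite[Theorem 1.1]{CET}. That inequality, obtained from the Carleman estimates of \cite{DET} together with Wolff's absorption lemma, provides for any $\zeta$ as in Proposition \ref{prodd1}, any $\tau>0$, $u\in W^{2,p}_\omega(D)$, $V\in L^s(D)$ and $W\in L^m(D,\mathbb{C}^n)$ with $(m,s)\in(n,\infty]\times(n/2,\infty]$ the bound
\[
\|u\|_{L^2_\tau(D)}\le \bar c\, e^{\bar c(\varkappa_W^{\delta_m}+\varkappa_V^{\gamma_s})}\,\|(-\Delta+W\cdot\nabla+V)u\|_{L^p_\tau(D)},
\]
\emph{with no coupling condition} between $m$ and $s$. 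This is the substitute for the pair (\eqref{dd6}, Theorem \ref{thmW1}) that allowed the proof of Theorem \ref{thmW2} to go through only under $\min(m,2s)>3n/2-1$.

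First, I would specialize this inequality to $D=B_1$, $\omega_0=B_{1/8}$, $\omega=B_{1/7}$, $\varphi(x)=1-|x|^2$, exactly as in the proof of Theorem \ref{mthm1}, so that the constants depend only on $(n,m,s)$. Then, given a solution $w\in W^{2,p}(B_1)$ of $(-\Delta+\mathcal{W}\cdot\nabla+\mathcal{V})w=0$ and the cut-off $\chi\in C_0^\infty(B_1)$ supported in $[\![r_2,r_7]\!]$ and equal to $1$ in $[\![r_3,r_6]\!]$ with the quantitative derivative bounds on $[\![r_2,r_3]\!]$ and $[\![r_6,r_7]\!]$, I would apply the displayed inequality to $\chi w$. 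Since $(-\Delta+\mathcal{W}\cdot\nabla+\mathcal{V})(\chi w)=-(\Delta\chi)w-2\nabla\chi\cdot\nabla w+w\mathcal{W}\cdot\nabla\chi$ is supported in the two annuli, the right-hand side becomes annular $L^p$ norms of $w$ and $\nabla w$ (the $\mathcal{W}\cdot\nabla\chi$ term being harmless after a H\"older estimate). The $\nabla w$ contributions are absorbed via the Caccioppoli-type inequality of Proposition \ref{proW2} at the cost of an additional $\aleph(V,W)$ factor, which is dominated by $e^{\bar c(\varkappa_W^{\delta_m}+\varkappa_V^{\gamma_s})}$.

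After these reductions I would pick the same numerical sequence $r_1=1/6,\ldots,r_8=5/6$, optimize $\tau$ by taking $\tau=(a+b)^{-1}\ln(\|w\|_{L^2(B)}/\|w\|_{L^2(B_{1/3})})$ with $a=r_5^2-r_2^2$, $b=r_6^2-r_5^2$, and obtain a three-ball inequality on $B_1$ with constant $\mathbf{c}\,e^{\mathbf{c}(\varkappa_W^{\delta_m}+\varkappa_V^{\gamma_s})}$. A final rescaling $w(y)=u(x_0+3ry)$, $\mathcal{V}(y)=(3r)^2V(x_0+3ry)$, $\mathcal{W}(y)=(3r)W(x_0+3ry)$ yields $\varkappa_{\mathcal{V}}\le r_0^{2-n/s}\kappa_V$ and $\varkappa_{\mathcal{W}}\le r_0^{1-n/m}\kappa_W$, producing the stated inequality with $\mathbf{c}_1=\mathbf{c}_1(n,m,s,r_0)$. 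The only genuinely delicate point, and the one I expect to be the main obstacle, is to verify that the exponents delivered by \cite[Theorem 1.1]{CET} are exactly $\delta_m=2m/(m-n)$ and $\gamma_s$ with the piecewise formula given in the statement (in particular, the transition at $s=n$ reflects the fact that for $s\ge n$ the potential $V$ embeds into $L^n$, which is the natural scale of $W$, whereas for $s\in(n/2,n]$ it reverts to the previous $L^{n/2}$-based exponent of Section \ref{S3}); granted that black box, the rest is a direct transcription of the proof of Theorem \ref{mthm1}.
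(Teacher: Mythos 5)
Your plan rests on a misquotation of the key black box. In this paper, \cite[Theorem 1.1]{CET} is \emph{not} a weighted Carleman estimate of the form $\|u\|_{L^2_\tau(D)}\le \bar c\,e^{\bar c(\varkappa_W^{\delta_m}+\varkappa_V^{\gamma_s})}\|(-\Delta+W\cdot\nabla+V)u\|_{L^p_\tau(D)}$ valid for all $\tau>0$; it is already an \emph{interpolation (three-ball) inequality in $H^1$}: for $w\in W^{2,p}(B_3)$ solving $(-\Delta+\mathcal W\cdot\nabla+\mathcal V)w=0$ one has
\[
\|w\|_{H^1(B_2)}\le c_0e^{c_0(\varkappa_{\mathcal W}^{\delta_m}+\varkappa_{\mathcal V}^{\gamma_s})}\|w\|_{H^1(B_{1/2})}^{\alpha}\|w\|_{H^1(B_{5/2})}^{1-\alpha}.
\]
This distinction is not cosmetic: the whole point of Section \ref{S10} is that the direct Carleman route for the full operator \emph{forces} the coupling condition $\min(m,2s)>3n/2-1$ (that is exactly how Theorem \ref{thmW1}, and hence Theorem \ref{thmW2}, is obtained), and the condition is removed only because \cite{CET} bypasses a single Carleman estimate via Wolff's lemma and delivers an interpolation inequality directly. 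So the Carleman inequality you posit as your starting point is not available without the coupling condition, and the cutoff-plus-optimization-in-$\tau$ machinery you build on it has nothing to stand on.

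Once the input is stated correctly, the argument is much shorter than what you describe and needs none of that machinery: apply the $H^1$ three-ball inequality on $B_3$; use the Caccioppoli inequality of Proposition \ref{proW2} to dominate the $H^1$ norms on $B_{1/2}$ and $B_{5/2}$ by $L^2$ norms on $B_1$ and $B_3$ at the cost of a factor $\aleph(V,W)$; absorb $\aleph(V,W)$ into the exponential $e^{\mathbf c(\varkappa_{\mathcal W}^{\delta_m}+\varkappa_{\mathcal V}^{\gamma_s})}$; and then rescale by $w(y)=u(x_0+ry)$, $\mathcal V(y)=r^2V(x_0+ry)$, $\mathcal W(y)=rW(x_0+ry)$ on $B_3$ (not by $3r$ on $B_1$, since the reference balls here are $B_{1/2},B_2,B_{5/2}\subset B_3$). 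Your closing remark correctly flags that the exponents $\delta_m$ and $\gamma_s$ must be read off from \cite{CET}, but the main obstacle is upstream of that: the form of the cited inequality itself.
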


\begin{proof}
We use in this proof the notation $B_\rho:=B(0,\rho)\subset \mathbb{R}^n$, $\rho >0$. 
For $(m,s)\in (n,\infty]\times (n/2,\infty]$, Let  $\mathcal{V}\in L^s(B_3)$, $\mathcal{W}\in L^m(B_3,\mathbb{C}^n)$ and $w\in W^{2,p}(B_3)$ satisfying $(-\Delta + \mathcal{W}\cdot \nabla + \mathcal{V})w=0$. From \cite[Theorem 1.1]{CET}, there exist $c_0=c_0(n)>0$ and $0<\alpha=\alpha(n)<1$ such that
\begin{equation}\label{W25}
\|w\|_{H^1(B_2)}\le c_0e^{c_0(\varkappa_{\mathcal{W}}^{\delta_m}+\varkappa_\mathcal{V}^{\gamma_s} )}\|w\|_{H^1(B_{1/2})}^\alpha \|w\|_{H^1(B_{5/2})}^{1-\alpha},
\end{equation}
where $\varkappa_\mathcal{W}:=\|\mathcal{V}\|_{L^m(B_3)}$ and $\varkappa_\mathcal{V}:=\|\mathcal{V}\|_{L^s(B_3)}$.

Combined with Caccioppoli's inequality of Proposition \ref{proW2}, \eqref{W25} yields
\[
\|w\|_{L^2(B_2)}\le \mathbf{c}\aleph(V,W)e^{c_0(\varkappa_{\mathcal{W}}^{\delta_m}+\varkappa_\mathcal{V}^{\gamma_s}) }\|w\|_{L^2(B_1)}^\alpha \|w\|_{L^2(B_3)}^{1-\alpha}
\]
and then
\begin{equation}\label{W26}
\|w\|_{L^2(B_2)}\le \mathbf{c}e^{\mathbf{c}(\varkappa_{\mathcal{W}}^{\delta_m}+\varkappa_\mathcal{V}^{\gamma_s}) }\|w\|_{L^2(B_1)}^\alpha \|w\|_{L^2(B_3)}^{1-\alpha}.
\end{equation}

Next, let $x_0\in \Omega^{3r}$, $V\in L^s(\Omega)$, $W\in L^m(\Omega)$ and $u\in W^{2,p}(\Omega)$ satisfying $(-\Delta +W\cdot \nabla+V)u=0$, and set
\[
w(y):=u(x_0+ry),\quad  \mathcal{V}(y):=r^2V(x_0+ry),\quad \mathcal{W}:=rW(x_0+ry)\quad y\in B_3.
\]
Then $(-\Delta +\mathcal{W}\cdot \nabla + \mathcal{V})w=0$ and
\begin{align*}
&\varkappa_\mathcal{V}=r^{2-n/s}\|V\|_{L^s(B(x_0,3r))}\le r^{2-n/s}\kappa_V,
\\
&\varkappa_\mathcal{W}=r^{1-n/m}\|W\|_{L^m(B(x_0,3r))}\le r^{1-m/s}\kappa_W,
\end{align*}
with the convention that $2-n/s=2$ if $s=\infty$ and $1-n/m=1$ if $m=\infty$. Here, we recall that $\kappa_V:=\|V\|_{L^s(\Omega)}$ and $\kappa_W:=\|W\|_{L^m(\Omega)}$. Let $k:=\min(\gamma_s(2-n/s),\gamma_m(1-m/s))$ Applying \eqref{W26}, we obtain
\[
\|u\|_{L^2(B(x_0,2r))}\le 
\mathbf{c}e^{\mathbf{c}r_0^k(\kappa_W^{\delta_m}+\kappa_V^{\gamma_s})}\|u\|_{L^2(B(x_0,r))}^\alpha \|u\|_{L^2(B(x_0,3r)}^{1-\alpha}.
\]
The expected inequality then follows.
\end{proof}

\section{Quantitative uniqueness of continuation from Cauchy data}\label{S11}

In this section $\Gamma:=\partial \Omega$. Let $\psi \in C^2(\bar{\Omega})$ chosen so that there exists $\varrho>0$ such that
\[
\psi \ge \varrho, \quad |\nabla \psi |\ge \varrho .
\]
Define $\phi_\lambda:=e^{\lambda \psi}$, $\lambda >0$ and $\zeta:=\left(\varrho ,\|\psi\|_{C^2(\bar{\Omega})}\right)$. It follows from \cite[Theorem 2.8]{ChLN} that there exist $\lambda=\lambda(\zeta)>0$, $\tau_0=\tau_0(\zeta)\ge 1$ and $\bar{c}=\bar{c}(\zeta)>0$ so that for all $\tau \ge \tau_0$ and $u\in H^2(\Omega)$ we have
\begin{align}
&\tau^{3/2}\|u\|_{L_\tau^2(\Omega)}+\tau^{1/2}\|\nabla u\|_{L_\tau^2(\Omega)}\label{11.1}
\\
&\hskip 2cm\le \bar{c}\left(\|\Delta u\|_{L_\tau^2(\Omega)}+\tau^{3/2}\|u\|_{L_\tau^2(\Gamma)}+\tau^{1/2}\|\nabla u\|_{L_\tau^2(\Gamma)}\right).\nonumber
\end{align}
Here $L_\tau^2(\Omega):=L^2(D, \Phi_\tau (x)dx)$ and $L_\tau^2(\Gamma):=L^2(\Gamma, \Phi_\tau (x)ds(x))$  with $\Phi_\tau:=e^{2\tau \phi_\lambda }$.

Let $s\in [n,\infty]$, $V\in L^s(\Omega)$, $W\in L^\infty(\Omega,\mathbb{C}^n)$ and $w\in H^1(\Omega)$, and set $\kappa_V:=\|V\|_{L^s(\Omega)}$ and $\kappa_W:=\|W\|_{L^\infty(\Omega)}$. Let $s':=2s/(s-2)$ with the convention that $s'=2$ when $s=\infty$. In the following $\mathfrak{c}=\mathfrak{c}(n,\Omega,s)>0$ is a generic constant. Since $L^{p'}(\Omega)$ is continuously embedded in $L^{s'}(\Omega)$ and $H^1(\Omega)$ is continuously embedded in $L^{p'}(\Omega)$, we get
\begin{align*}
\|Vw\|_{L^2(\Omega)}&\le \|V\|_{L^s(\Omega)}\|w\|_{L^{s'}(\Omega)}
\\
&\le \mathfrak{c}\|V\|_{L^s(\Omega)}\|w\|_{L^{p'}(\Omega)}
\\
&\le \mathfrak{c}\kappa_V\left(\|w\|_{L^2(\Omega)}+\|\nabla w\|_{L^2(\Omega)}\right)
\end{align*}
Taking in this inequality $w=\Phi_\tau u $ and using 
\[
\nabla (\Phi_\tau u)=2\tau \Phi_\tau u \nabla \phi_\lambda +\Phi_\tau\nabla u,
\]
we obtain from  \eqref{11.1}  
\begin{align}
&(\tau^{3/2}-\bar{c}\tau\kappa_V)\|u\|_{L_\tau^2(\Omega)}+(\tau^{1/2}-\bar{c}(\kappa_V+\kappa_W))\|\nabla u\|_{L_\tau^2(\Omega)}\label{11.2}
\\
&\hskip .2cm\le \bar{c}\left(\|(-\Delta+W\cdot \nabla +V) u\|_{L_\tau^2(\Omega)}+\tau^{3/2}\|u\|_{L_\tau^2(\Gamma)}+\tau^{1/2}\|\nabla u\|_{L_\tau^2(\Gamma)}\right).\nonumber
\end{align}
Here and henceforth $\bar{c}=\bar{c}(\zeta,n,\Omega,s)>0$ is a generic constant. 

As we have done before, we obtain from \eqref{11.2} the following inequality which is valid for all $\tau >0$

\begin{align}
&\|u\|_{L_\tau^2(\Omega)}\le \bar{c}e^{\bar{c}(\kappa_V^2+\kappa_W^2)}\|(-\Delta+W\cdot \nabla +V) u\|_{L_\tau^2(\Omega)}\label{11.3}
\\
&\hskip 4cm +\bar{c}e^{\bar{c}(\kappa_V^2+\kappa_W^2)}\left(\|u\|_{L_\tau^2(\Gamma)}+\|\nabla u\|_{L_\tau^2(\Gamma)}\right). \nonumber
\end{align}
In the case $s=\infty$ and $W=0$, we get in similar manner the following inequality which is valid for $\tau >0$

\begin{align}
&\tau^{3/2}\|u\|_{L_\tau^2(\Omega)}\label{11.4}
\\
&\hskip 1cm\le \bar{c}e^{\bar{c}\kappa_V^{2/3}}\left(\|(-\Delta+V) u\|_{L_\tau^2(\Omega)}+\|u\|_{L_\tau^2(\Gamma)}+\|\nabla u\|_{L_\tau^2(\Gamma)}\right).\nonumber
\end{align}

Next, let $S$ be a nonempty subset of $\Gamma$ and fix $\tilde{x}\in S$. Then let $r_0>0$ be such that $B(\tilde{x},r_0)\cap \Gamma\subset S$. As $\Omega$ is on one side of its boundary, we find $x_0\in \mathbb{R}^n\setminus \bar{\Omega}$ sufficiently close to $\tilde{x}$ in such a way that $\rho=\mathrm{dist}(x_0,\bar{\Omega})=\mathrm{dist}(x_0,\bar{B}(\tilde{x},r_0)\cap\Gamma)$. We fix $r>0$ such that $B(x_0,r+\rho)\cap \Gamma :=S_0\subset S$.

Let $D=\Omega \cap B(x_0,\rho+r)$ and define $\psi(x)=\ln [(2\rho+r)^2/|x-x_0|^2]$. We verify
\begin{align*}
&\psi (x)\ge \ln [(2\rho+r)^2/(\rho+r)^2]\; (>0),\quad x\in \bar{D},
\\
&|\nabla \psi(x)|\ge 2|x-x_0|^{-1}\ge 2(\rho+r)^{-1}, \quad x\in \bar{D}.
\end{align*}

Let $\chi\in C_0^\infty (D)$ satisfying $0\le \chi \le 1$, $\chi=1$ in a neighborhood of $\Omega\cap B(x_0,\rho+r/2)$. Pick $V\in L^s(\Omega)$ with $s\in [n,\infty]$, $W\in L^\infty(\Omega,\mathbb{C}^n)\setminus \{0\}$ and $u\in H^2(\Omega)$ satisfying $(-\Delta +W\cdot \nabla +V)u=0$. Using
\[
(-\Delta +W\cdot \nabla +V)(\chi u)=-u\Delta \chi -2\nabla \chi \cdot \nabla u+uW\cdot \nabla \chi,
\]
we get
\begin{equation}\label{11.5}
|(-\Delta +W\cdot \nabla +V)(\chi u)|^2\le \hat{\mathbf{c}}(1+\kappa_W)^2(|u|^2+|\nabla u|^2).
\end{equation}
Here $\hat{\mathbf{c}}=\hat{\mathbf{c}}(S)>0$ is a constant.

In the following, $\hat{\mathbf{c}}=\hat{\mathbf{c}}(n,\Omega,s, S)>0$ will denote a generic constant and
\begin{align*}
&D_0:=B(x_0,\rho+r/4)\cap \Omega, 
\\
&D_1=[B(x_0,\rho+r)\setminus \bar{B}(x_0,\rho+r/2)]\cap \Omega.
\end{align*}

Applying \eqref{11.3} with $u$ replaced by $\chi u$, \eqref{11.5} and using the fact that 
\[
\mathrm{supp}((-\Delta +W\cdot \nabla +V)(\chi u))\subset D_1,
\]
we obtain
\begin{align}
&\|u\|_{L_\tau^2(D_0)}\le\hat{\mathbf{c}}e^{\hat{\mathbf{c}}(\kappa_V^2+\kappa_W^2)}\left(\|u\|_{L_\tau^2(D_1)}+\|\nabla u\|_{L_\tau^2(D_1)}\right)\label{11.6}
\\
&\hskip 3cm+\hat{\mathbf{c}}e^{\hat{\mathbf{c}}(\kappa_V^2+\kappa_W^2)}\left(\|u\|_{L_\tau^2(S_0)}+\|\nabla u\|_{L_\tau^2(S_0)}\right).\nonumber
\end{align}
Define
\begin{align*}
&a:=\frac{(2\rho+r)^{2\lambda}}{(\rho+r/4)^{2\lambda}}-\frac{(2\rho+r)^{2\lambda}}{(\rho+r/2)^{2\lambda}},
\\
&b:=\frac{(2\rho+r)^{2\lambda}}{\rho^{2\lambda}}-\frac{(2\rho+r)^{2\lambda}}{(\rho+r/4)^{2\lambda}}.
\end{align*}
Using \eqref{11.6}, we obtain
\begin{align}
&\|u\|_{L^2(D_0)}\le\hat{\mathbf{c}}e^{\hat{\mathbf{c}}(\kappa_V^2+\kappa_W^2)}e^{-a \tau}\left(\|u\|_{L^2(D_1)}+\|\nabla u\|_{L^2(D_1)}\right)\label{11.7}
\\
&\hskip 3cm+\hat{\mathbf{c}}e^{\hat{\mathbf{c}}(\kappa_V^2+\kappa_W^2)}e^{b \tau}\left(\|u\|_{L^2(S_0)}+\|\nabla u\|_{L^2(S_0)}\right).\nonumber
\end{align}

Let $\bar{\omega}\Subset D_0$ be fixed arbitrarily. Then we obtain from \eqref{11.7}
\begin{equation}\label{11.8}
\|u\|_{L^2(\bar{\omega})}\le \hat{\mathbf{c}}e^{\hat{\mathbf{c}}(\kappa_V^2+\kappa_W^2)}\left(e^{-a \tau}\|u\|_{H^1(\Omega)}+e^{b \tau}\left(\|u\|_{L^2(S_0)}+\|\nabla u\|_{L^2(S_0)}\right)\right).
\end{equation}

We proceed similarly in the case $s=\infty$ and $W=0$ to derive the following inequality
\begin{equation}\label{11.8.1}
\|u\|_{L^2(\bar{\omega})}\le \hat{\mathbf{c}}e^{\hat{\mathbf{c}}\kappa_V^{2/3}}\left(e^{-a \tau}\|u\|_{H^1(\Omega)}+e^{b \tau}\left(\|u\|_{L^2(S_0)}+\|\nabla u\|_{L^2(S_0)}\right)\right).
\end{equation}
Here $\hat{\mathbf{c}}=\hat{\mathbf{c}}(n,\Omega,S)$.

In order to put together \eqref{11.8} and \eqref{11.8.1} we consider the notation
\[
\Upsilon (V,W):=
\left\{
\begin{array}{ll}
e^{\hat{\mathbf{c}}(\kappa_V^2+\kappa_W^2)},\quad &W\ne 0,
\\
e^{\hat{\mathbf{c}}\kappa_V^{2/3}}, &s=\infty ,\; W=0.
\end{array}
\right.
\]
With this notation, we have
\begin{equation}\label{11.8.2}
\|u\|_{L^2(\bar{\omega})}\le \hat{\mathbf{c}}\Upsilon (V,W)\left(e^{-a \tau}\|u\|_{H^1(\Omega)}+e^{b \tau}\left(\|u\|_{L^2(S_0)}+\|\nabla u\|_{L^2(S_0)}\right)\right).
\end{equation}

By taking $\epsilon=e^{-a \tau}$ in \eqref{11.8.2}, we can state the following result.

\begin{theorem}\label{thm11.1}
Let $S$ be a nonempty open subset of $\Gamma$, $s\in [n,\infty]$ and set $\zeta:=(n,\Omega, S)$. Then there exist two constants $c=c (\zeta)>0$, $\hat{\mathbf{c}}=\hat{\mathbf{c}}(\zeta,s)>0$ and $\bar{\omega}=\bar{\omega} (\Omega, S) \Subset \Omega$ such that for all $V\in L^s(\Omega)$, $W\in L^\infty(\Omega,\mathbb{C}^n)$ and $u\in H^2(\Omega)$ satisfying $(-\Delta+W\cdot \nabla +V)u=0$ and $0<\epsilon <1$ we have
\begin{equation}\label{11.9}
\|u\|_{L^2(\bar{\omega})}\le \hat{\mathbf{c}}\Upsilon (V,W)\left(\epsilon\|u\|_{H^1(\Omega)}+\epsilon^{-c}\left(\|u\|_{L^2(S)}+\|\nabla u\|_{L^2(S)}\right)\right).
\end{equation}
\end{theorem}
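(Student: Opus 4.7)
The plan is to derive the stated inequality by optimizing the exponential estimate \eqref{11.8.2} over the Carleman parameter $\tau$. The heavy lifting has already been done in the preceding paragraphs; the theorem is really a corollary of \eqref{11.8.2}, obtained by the choice $\epsilon = e^{-a\tau}$. Still, let me describe the whole argument in order.

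First, I would start from the interior Carleman inequality \eqref{11.1} of Imanuvilov–Puel type taken from \cite[Theorem 2.8]{ChLN}. The weight $\phi_\lambda = e^{\lambda\psi}$ built from any $\psi \in C^2(\bar\Omega)$ with $\psi, |\nabla\psi|\ge \varrho$ provides, for $\tau \ge \tau_0$, control of $\tau^{3/2}\|u\|_{L^2_\tau(\Omega)}+\tau^{1/2}\|\nabla u\|_{L^2_\tau(\Omega)}$ in terms of $\|\Delta u\|_{L^2_\tau(\Omega)}$ and the boundary contributions $\tau^{3/2}\|u\|_{L^2_\tau(\Gamma)}+\tau^{1/2}\|\nabla u\|_{L^2_\tau(\Gamma)}$. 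To absorb $W\cdot \nabla u + Vu$ on the left, I use the embedding chain $H^1(\Omega)\hookrightarrow L^{p'}(\Omega)\hookrightarrow L^{s'}(\Omega)$ with $s'=2s/(s-2)$, which gives $\|Vw\|_{L^2}\le \mathfrak{c}\kappa_V(\|w\|_{L^2}+\|\nabla w\|_{L^2})$. Applied with $w=\Phi_\tau u$ and combined with $\nabla(\Phi_\tau u)=2\tau \Phi_\tau u\nabla \phi_\lambda + \Phi_\tau\nabla u$, this lets me absorb the perturbation provided $\tau^{3/2}\gtrsim \tau\kappa_V$ and $\tau^{1/2}\gtrsim \kappa_V+\kappa_W$, producing \eqref{11.3} (or the sharper \eqref{11.4} when $s=\infty$ and $W=0$, since then only $\tau^{3/2}\gtrsim \kappa_V$ is needed). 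The two regimes are encoded by the prefactor $\Upsilon(V,W)$.

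Second, I would localize near the boundary piece $S$. Using that $\Omega$ is Lipschitz and lies on one side of $\Gamma$, pick $\tilde{x}\in S$, an exterior point $x_0\notin \bar\Omega$ close to $\tilde x$ with $\rho=\mathrm{dist}(x_0,\bar\Omega)=\mathrm{dist}(x_0,\bar B(\tilde x,r_0)\cap \Gamma)$, and choose $r$ so that $S_0:=B(x_0,\rho+r)\cap\Gamma\subset S$. On $D=\Omega\cap B(x_0,\rho+r)$ the weight $\psi(x)=\ln[(2\rho+r)^2/|x-x_0|^2]$ satisfies the hypotheses of \eqref{11.1}. A cutoff $\chi$ equal to $1$ on $\Omega\cap B(x_0,\rho+r/2)$ ensures that $(-\Delta+W\cdot\nabla+V)(\chi u)$ is supported in the annular region $D_1$ and is bounded pointwise by $\hat{\mathbf{c}}(1+\kappa_W)(|u|+|\nabla u|)$. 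Applying \eqref{11.3} to $\chi u$ and replacing $\Phi_\tau$ by its sharp values on the three regions $D_0 = B(x_0,\rho+r/4)\cap\Omega$, $D_1$, and $S_0$ (the separation quantified by the constants $a$ and $b$ above) produces the interpolation inequality \eqref{11.8.2}, in which $\bar\omega\Subset D_0$ is fixed once and for all depending only on $\Omega$ and $S$.

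Finally, to reach \eqref{11.9} I set $\epsilon=e^{-a\tau}$, which for $\tau\ge \tau_0$ corresponds to $0<\epsilon\le e^{-a\tau_0}=:\epsilon_0<1$. Then $e^{b\tau}=\epsilon^{-b/a}$, so the exponent $c:=b/a$ depends only on $\zeta=(n,\Omega,S)$ through the geometry of $\rho,r,\lambda$. Since $S_0\subset S$, the boundary norms on $S_0$ are dominated by those on $S$, yielding \eqref{11.9} in the range $0<\epsilon\le \epsilon_0$. The complementary range $\epsilon_0<\epsilon<1$ is handled trivially: one has $\|u\|_{L^2(\bar\omega)}\le \|u\|_{H^1(\Omega)}\le \epsilon_0^{-1}\epsilon\|u\|_{H^1(\Omega)}$, which is absorbed into the generic constant $\hat{\mathbf{c}}$. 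The main delicate point, and the one requiring care in the absorption step, is to keep the exponents in $\tau$ versus $\kappa_V,\kappa_W$ compatible so that the gain in $\tau$ eventually dominates; this is what forces the two different behaviors $e^{\hat{\mathbf{c}}(\kappa_V^2+\kappa_W^2)}$ and $e^{\hat{\mathbf{c}}\kappa_V^{2/3}}$ captured by $\Upsilon(V,W)$.
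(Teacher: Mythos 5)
Your proposal is correct and follows essentially the same route as the paper: derive the boundary Carleman estimate \eqref{11.1}, absorb the lower-order terms to get \eqref{11.3}--\eqref{11.4}, localize near $S$ with the logarithmic weight and cutoff to reach \eqref{11.8.2}, and then substitute $\epsilon=e^{-a\tau}$ with $c=b/a$. Your explicit treatment of the range $\epsilon_0<\epsilon<1$ is a detail the paper leaves implicit, but it is the standard completion of the argument.
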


Under the assumptions of the preceding theorem, we apply Theorem \ref{thmW3} with $\omega=\bar{\omega}$ to get for $0<r<r_\ast\, (=r_\ast(\Omega,\bar{\omega},\mathfrak{r}))$
\begin{equation}\label{11.10}
\|u\|_{L^2(\Omega)}\le \mathbf{c}\varphi_{\infty,s}(V,W)\left(e^{\bar{\mathbf{c}}\varrho(r)}\|u\|_{L^2(\bar{\omega})}+r^\mathfrak{t} \|u\|_{H^1(\Omega)}\right).
\end{equation}
The constants are those appearing in \eqref{W23} with $\omega=\bar{\omega}$ and $m=\infty$.

In the following, we use the notation
\begin{equation}\label{11.11}
\mathfrak{F}(V,W):=\Upsilon(V,W)\varphi_{\infty,s}(V,W).
\end{equation}
Let $s$, $V$, $W$ and $u$ be as in Theorem \ref{thm11.1}. Then a combination of \eqref{11.9} and \eqref{11.10} gives for $0<\epsilon <1$ and $0<r<r_\ast$
\begin{align*}
&\|u\|_{L^2(\Omega)}\le \hat{\mathbf{c}}\mathfrak{F}(V,W)\left( (\epsilon e^{\bar{\mathbf{c}}e^{\bar{\mathfrak{h}}r^{-n}}} +r^\mathfrak{t})\|u\|_{H^1(\Omega)}\right.
\\
&\hskip 4cm \left. +e^{\bar{\mathbf{c}}e^{\bar{\mathfrak{h}}r^{-n}}} \epsilon^{-c}\left(\|u\|_{L^2(S)}+\|\nabla u\|_{L^2(S)}\right)\right).
\end{align*}

Reducing $r_\ast$ if necessary, we assume that $0<r^\mathfrak{t}e^{-\bar{\mathbf{c}}e^{\bar{\mathfrak{h}}r^{-n}}}<1$ for all $0<r<r_\ast$. In the last inequality, we take $\epsilon=r^\mathfrak{t}e^{-\bar{\mathbf{c}}e^{\bar{\mathfrak{h}}r^{-n}}}$. Proceeding in this way, we obtain the following result.

\begin{theorem}\label{thm11.2}
Let $S$ be a nonempty open subset of $\Gamma$, $0<\mathfrak{t}<1/2$, $s\in [n,\infty]$ and set $\zeta:=(n,\Omega,\mathfrak{t},S)$. Then there exist two constants $\bar{\mathbf{c}}=\bar{\mathbf{c}}(\zeta)>0$ and $\hat{\mathbf{c}}=\hat{\mathbf{c}}(\zeta,s)>0$ such that for all $V\in L^s(\Omega)$, $W\in L^\infty(\Omega,\mathbb{C}^n)$ and $u\in H^2(\Omega)$ satisfying $(-\Delta+W\cdot \nabla +V)u=0$ and $0<r <r_\ast$ we have
\begin{equation}\label{11.12}
\|u\|_{L^2(\Omega)}\le \hat{\mathbf{c}}\mathfrak{F}(V,W)\left(r^\mathfrak{t}\|u\|_{H^1(\Omega)}+e^{\bar{\mathbf{c}} e^{\hat{\mathfrak{h}}r^{-n}}} \left(\|u\|_{L^2(S)}+\|\nabla u\|_{L^2(S)}\right)\right),
\end{equation}
where $\mathfrak{F}$ is given by \eqref{11.11} and $\hat{\mathfrak{h}}:=\bar{\mathfrak{h}}+1$.
\end{theorem}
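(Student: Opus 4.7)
The plan is to combine the two quantitative uniqueness estimates that have already been established, namely Theorem~\ref{thm11.1} (propagation of smallness from Cauchy data on $S$ to an interior set $\bar{\omega}\Subset \Omega$) and Theorem~\ref{thmW3} (propagation of smallness from an interior set to all of $\Omega$), with an appropriate calibration of the free parameter $\epsilon$ appearing in Theorem~\ref{thm11.1}.

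First, I apply Theorem~\ref{thm11.1} to the open set $S\subset \Gamma$ and obtain $\bar{\omega}=\bar{\omega}(\Omega,S)\Subset \Omega$ together with a constant $c=c(\zeta)>0$ so that, for any $0<\epsilon<1$,
\[
\|u\|_{L^2(\bar{\omega})}\le \hat{\mathbf{c}}\Upsilon (V,W)\left(\epsilon\|u\|_{H^1(\Omega)}+\epsilon^{-c}\bigl(\|u\|_{L^2(S)}+\|\nabla u\|_{L^2(S)}\bigr)\right).
\]
Next, with this particular $\bar{\omega}$, I invoke Theorem~\ref{thmW3} (with $m=\infty$ and the same $\mathfrak{t}$), which produces $r_\ast=r_\ast(\Omega,\bar{\omega},\mathfrak{r})$ such that, for every $0<r<r_\ast$,
\[
\|u\|_{L^2(\Omega)}\le \mathbf{c}\varphi_{\infty,s}(V,W)\left(e^{\bar{\mathbf{c}}\varrho(r)}\|u\|_{L^2(\bar{\omega})}+r^\mathfrak{t} \|u\|_{H^1(\Omega)}\right),
\]
where $\varrho(r)=e^{\bar{\mathfrak{h}}r^{-n}}$. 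Substituting the first estimate into the second and using $\mathfrak{F}(V,W)=\Upsilon(V,W)\varphi_{\infty,s}(V,W)$ gives
\begin{align*}
\|u\|_{L^2(\Omega)}\le \hat{\mathbf{c}}\mathfrak{F}(V,W)&\Bigl(\bigl(\epsilon\, e^{\bar{\mathbf{c}}e^{\bar{\mathfrak{h}}r^{-n}}}+r^\mathfrak{t}\bigr)\|u\|_{H^1(\Omega)} \\
&\quad +\epsilon^{-c}e^{\bar{\mathbf{c}}e^{\bar{\mathfrak{h}}r^{-n}}}\bigl(\|u\|_{L^2(S)}+\|\nabla u\|_{L^2(S)}\bigr)\Bigr).
\end{align*}

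It remains to choose $\epsilon$ so as to absorb the large factor $e^{\bar{\mathbf{c}}e^{\bar{\mathfrak{h}}r^{-n}}}$ multiplying $\|u\|_{H^1(\Omega)}$ into the $r^\mathfrak{t}$ term. I take $\epsilon:=r^{\mathfrak{t}}e^{-\bar{\mathbf{c}}e^{\bar{\mathfrak{h}}r^{-n}}}$, after shrinking $r_\ast$ if necessary to ensure $\epsilon\in (0,1)$ for all $0<r<r_\ast$. With this choice the first term becomes $2r^\mathfrak{t}\|u\|_{H^1(\Omega)}$ up to a constant, while $\epsilon^{-c}e^{\bar{\mathbf{c}}e^{\bar{\mathfrak{h}}r^{-n}}}\le e^{\bar{\mathbf{c}}'e^{\bar{\mathfrak{h}}r^{-n}}}r^{-c\mathfrak{t}}$, and elementary algebra bounds this last quantity by $e^{\bar{\mathbf{c}}e^{\hat{\mathfrak{h}}r^{-n}}}$ with $\hat{\mathfrak{h}}:=\bar{\mathfrak{h}}+1$, exactly as in the analogous passage following Theorem~\ref{thma2}. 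This yields the announced inequality \eqref{11.12}.

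The one slightly delicate point is the final bound $\epsilon^{-c}e^{\bar{\mathbf{c}}e^{\bar{\mathfrak{h}}r^{-n}}}\le e^{\bar{\mathbf{c}}e^{\hat{\mathfrak{h}}r^{-n}}}$: the polynomial factor $r^{-c\mathfrak{t}}$ must be absorbed into a double-exponential, which is why $\hat{\mathfrak{h}}$ is taken strictly larger than $\bar{\mathfrak{h}}$. This is the same arithmetic trick that appears after \eqref{ap4}, and it passes through here without modification because $c$ depends only on $\zeta$.
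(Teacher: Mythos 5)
Your proof is correct and follows essentially the same route as the paper: it combines Theorem~\ref{thm11.1} with Theorem~\ref{thmW3} (taking $\omega=\bar{\omega}$ and $m=\infty$), chooses $\epsilon=r^{\mathfrak{t}}e^{-\bar{\mathbf{c}}e^{\bar{\mathfrak{h}}r^{-n}}}$, and absorbs the resulting polynomial factor into the double exponential by passing from $\bar{\mathfrak{h}}$ to $\hat{\mathfrak{h}}=\bar{\mathfrak{h}}+1$. The only difference is that you spell out the absorption of $\epsilon^{-c}e^{\bar{\mathbf{c}}e^{\bar{\mathfrak{h}}r^{-n}}}$ a bit more explicitly than the paper, which merely says ``proceeding in this way.''
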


By proceeding as in the proof of Theorem \ref{thm11.2} and using Theorem \ref{thmW4} instead of Theorem \ref{thmW3}, we prove the following result.

\begin{theorem}\label{thm11.3}
Assume that $\Omega$ is of class $C^{1,1}$. Let $S$ be a nonempty open subset of $\partial \Omega$, $0<\mathfrak{t}<1/2$, $s\in [n,\infty]$ and set $\zeta:=(n,\Omega,\mathfrak{r}, \mathfrak{t},S)$. Then there exist three constants $\upsilon=\upsilon(\zeta)>0$, $\varsigma=\varsigma(\zeta)>0$ and $\tilde{\mathbf{c}}=\tilde{\mathbf{c}}(\zeta,s)>0$ such that for all $V\in L^s(\Omega)$, $W\in L^\infty(\Omega,\mathbb{C}^n)$ and $u\in H^2(\Omega)$ satisfying $(-\Delta+W\cdot \nabla +V)u=0$ and $0<r <1$ we have
\begin{equation}\label{11.14}
\|u\|_{L^2(\Omega)}\le \tilde{\mathbf{c}}\mathfrak{F}(V,W)\left(r^\mathfrak{t}\|u\|_{H^1(\Omega)}+e^{\upsilon r^{-\varsigma}} \left(\|u\|_{L^2(S)}+\|\nabla u\|_{L^2(S)}\right)\right).
\end{equation}
\end{theorem}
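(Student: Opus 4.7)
The plan is to follow the proof of Theorem \ref{thm11.2} verbatim, with the single substitution of the $C^{1,1}$-version Theorem \ref{thmW4} in place of the Lipschitz-version Theorem \ref{thmW3}. The proof of Theorem \ref{thm11.1} is local near the boundary portion $S$ and relies only on the fact that $\Omega$ lies on one side of its boundary, so it applies unchanged and supplies the interior set $\bar{\omega}=\bar{\omega}(\Omega,S)\Subset\Omega$ together with the constants $c$ and $\hat{\mathbf{c}}$ appearing in \eqref{11.9}.

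First I would invoke Theorem \ref{thm11.1}: for every $0<\epsilon<1$,
\[
\|u\|_{L^2(\bar{\omega})}\le\hat{\mathbf{c}}\,\Upsilon(V,W)\Bigl(\epsilon\|u\|_{H^1(\Omega)}+\epsilon^{-c}\bigl(\|u\|_{L^2(S)}+\|\nabla u\|_{L^2(S)}\bigr)\Bigr).
\]
Next I would apply Theorem \ref{thmW4} with $\omega:=\bar{\omega}$ (this is the only place the $C^{1,1}$ regularity is used) to transfer the interior estimate to all of $\Omega$:
\[
\|u\|_{L^2(\Omega)}\le\tilde{\mathbf{c}}\,\varphi_{\infty,s}(V,W)\Bigl(e^{\upsilon r^{-\varsigma}}\|u\|_{L^2(\bar{\omega})}+r^{\mathfrak{t}}\|u\|_{H^1(\Omega)}\Bigr),\quad 0<r<1.
\]
Substituting the first into the second and recalling $\mathfrak{F}(V,W)=\Upsilon(V,W)\varphi_{\infty,s}(V,W)$ gives, for every $0<r<1$ and $0<\epsilon<1$,
\[
\|u\|_{L^2(\Omega)}\le\tilde{\mathbf{c}}\,\mathfrak{F}(V,W)\Bigl(\bigl(\epsilon e^{\upsilon r^{-\varsigma}}+r^{\mathfrak{t}}\bigr)\|u\|_{H^1(\Omega)}+\epsilon^{-c}e^{\upsilon r^{-\varsigma}}\bigl(\|u\|_{L^2(S)}+\|\nabla u\|_{L^2(S)}\bigr)\Bigr).
\]

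The free parameter $\epsilon$ is then balanced so that $\epsilon e^{\upsilon r^{-\varsigma}}=r^{\mathfrak{t}}$, namely $\epsilon:=r^{\mathfrak{t}}e^{-\upsilon r^{-\varsigma}}$; after possibly enlarging $\upsilon$ depending only on $\zeta$, this $\epsilon$ lies in $(0,1)$ uniformly on $r\in(0,1)$. The coefficient of $\|u\|_{H^1(\Omega)}$ then collapses to $2r^{\mathfrak{t}}$, while the Cauchy-data coefficient becomes $r^{-c\mathfrak{t}}e^{(1+c)\upsilon r^{-\varsigma}}$, which is dominated by $e^{\upsilon' r^{-\varsigma}}$ for a new $\upsilon'=\upsilon'(\zeta)$ (using $r^{-c\mathfrak{t}}\le e^{c\mathfrak{t}r^{-\varsigma}}$ on $(0,1)$). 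Renaming $\upsilon'$ back to $\upsilon$ produces \eqref{11.14}.

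There is no genuine obstacle once Theorem \ref{thmW4} is in hand: the argument is a two-inequality concatenation followed by an algebraic choice of $\epsilon$. The only delicate point is bookkeeping, namely verifying that the negative power $\epsilon^{-c}$ introduced by Theorem \ref{thm11.1} merges cleanly with the exponential weight of Theorem \ref{thmW4} into a single factor of the form $e^{\upsilon r^{-\varsigma}}$ with updated constants depending only on $\zeta$ and $s$, and that the corresponding $\epsilon$ remains admissible (i.e. in $(0,1)$) throughout the range $0<r<1$.
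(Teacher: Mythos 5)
Your proof is correct and is essentially the paper's own argument: the authors explicitly state that Theorem 11.3 is obtained by running the proof of Theorem 11.2 with Theorem \ref{thmW4} in place of Theorem \ref{thmW3}, which is precisely what you do. (One small simplification: with $\epsilon=r^{\mathfrak{t}}e^{-\upsilon r^{-\varsigma}}$ you automatically have $0<\epsilon<1$ for all $0<r<1$, since both factors are already below $1$, so no enlargement of $\upsilon$ is needed for admissibility.)
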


\subsection*{Acknowledgement}
We would like to thank Antonius Frederik Maria ter Elst for the fruitful discussion we had on the fact that $\Omega^r$ is connected for small $r>0$ when $\Omega$ is Lipschitz, and for pointing us to his work \cite{ER} where this result is proven.
 
This work was supported by JSPS KAKENHI Grant Number JP25K17280 and JP23KK0049.

\appendix

\section{Covering lemma}\label{appA}

\begin{lemma}\label{lemA1}
Let $K$ be a compact subset of $\mathbb{R}^n$ satisfying $\mathrm{diam}(K)\le \mathbf{d}$, for some $\mathbf{d}>0$, and $\epsilon_0>0$. There exists a constant $\hat{c}=\hat{c}(n,\mathbf{d},\epsilon_0)>0$ such that for all $0<\epsilon \le \epsilon_0$, $K$ can be covered by $\ell$ balls of radius $\epsilon$ with $\ell \le \hat{c} \epsilon^{-n}$
\end{lemma}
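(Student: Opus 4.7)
The plan is to reduce the problem to a grid covering of an axis-aligned cube. First I would observe that, since $\mathrm{diam}(K)\le \mathbf{d}$, one can enclose $K$ inside a closed cube $Q$ with side length $\mathbf{d}$ (e.g.\ pick any $x_0\in K$ and take the cube centered at $x_0$ with side $2\mathbf{d}$, or the smaller cube circumscribing $K$; the exact choice only changes the final constant). Hence it suffices to cover $Q$ by balls of radius $\epsilon$ and bound the number of these balls.

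Next, for a given $0<\epsilon\le \epsilon_0$, I would partition $Q$ into a uniform grid of closed sub-cubes of side length $\epsilon/\sqrt{n}$ (allowing the sub-cubes along the boundary of $Q$ to be smaller, or equivalently enlarging $Q$ slightly). Each such sub-cube has diameter at most $\epsilon$ and is therefore contained in the ball of radius $\epsilon$ centered at its center. The number of sub-cubes needed along each coordinate axis is at most $\lfloor \mathbf{d}\sqrt{n}/\epsilon\rfloor+1$, so the total number $\ell$ of balls is bounded by
\[
\ell \le \left(\left\lfloor \frac{\mathbf{d}\sqrt{n}}{\epsilon}\right\rfloor+1\right)^n \le \left(\frac{\mathbf{d}\sqrt{n}+\epsilon}{\epsilon}\right)^n = \epsilon^{-n}\bigl(\mathbf{d}\sqrt{n}+\epsilon\bigr)^n.
\]
Using the restriction $\epsilon\le \epsilon_0$, we obtain $\ell \le \hat{c}\,\epsilon^{-n}$ with $\hat{c}:=\bigl(\mathbf{d}\sqrt{n}+\epsilon_0\bigr)^n$, which depends only on $(n,\mathbf{d},\epsilon_0)$ as required.

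This argument is essentially a routine volume/grid count; there is no real obstacle. The only point that deserves a line of care is the boundary sub-cubes (one must be sure that every point of $Q$ lies in at least one sub-cube of the grid, even when $\mathbf{d}\sqrt{n}/\epsilon$ is not an integer), which is handled by the $+1$ in the estimate above. The dependence on $\epsilon_0$ enters only to replace the $\epsilon$ in $(\mathbf{d}\sqrt{n}+\epsilon)^n$ by a constant, giving the stated form of $\hat{c}$.
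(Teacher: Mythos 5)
Your proof is correct, but it takes a genuinely different route from the paper. You use a direct grid covering: enclose $K$ in a cube of side $\mathbf{d}$, partition it into sub-cubes of side $\epsilon/\sqrt{n}$, note each has diameter $\le \epsilon$, and count; the $\epsilon_0$ dependence enters only in the harmless replacement of $(\mathbf{d}\sqrt{n}+\epsilon)^n$ by $(\mathbf{d}\sqrt{n}+\epsilon_0)^n$. The paper instead runs a maximal $\epsilon$-separated packing argument: it picks points of $K$ greedily so that any two are at distance $\ge\epsilon$, uses compactness to stop after finitely many steps (say $\ell$), observes that the closed balls of radius $\epsilon$ around these centers cover $K$, and bounds $\ell$ by a volume comparison, since the disjoint balls of radius $\epsilon/2$ around the centers all lie in the $\epsilon_0/2$-neighborhood of $K$, which has volume $\le (\mathbf{d}+\epsilon_0)^n$. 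Both arguments are standard and yield constants of the same quality. Yours is slightly more elementary and constructive, only uses boundedness of $K$ (not compactness), and avoids the implicit volume-of-unit-ball normalization that the paper elides in the inequality $\ell(\epsilon/2)^n\le|\tilde K|$; the paper's packing argument is the one that generalizes naturally to metric spaces carrying a doubling measure. No gap in either.
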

\begin{proof}
Let $x_1\in K$ be arbitrarily chosen. If $K\setminus\bar{B}(x_1,\epsilon)\ne \emptyset$, we fix $x_2\in K\setminus\bar{B}(x_1,\epsilon)$. If $K\setminus[\bar{B}(x_1,\epsilon)\cup \bar{B}(x_2,\epsilon)]\ne \emptyset$, we fix $x_3\in K\setminus[\bar{B}(x_1,\epsilon)\cup \bar{B}(x_2,\epsilon)]\ne \emptyset$ and so on. By induction in $j$, we construct a sequence $(x_j)$ in $K$ satisfying $|x_j-x_k|\ge \epsilon$ if $j\ne k$. By compactness, the sequence $(x_j)$ is finite because otherwise it would not have a convergent subsequence. Let $\ell$ be the number of element of $(x_j)$ so that $K\subset \bigcup_{j=1}^\ell \bar{B}(x_j,\epsilon)$. By construction,
\[
\bar{B}(x_j,\epsilon/2)\cap \bar{B}(x_k,\epsilon/2)=\emptyset \quad j\ne k.
\]
This and the fact that
\[
\bigcup_{j=1}^\ell \bar{B}(x_j,\epsilon/2)\subset \tilde{K}:=\{x\in \mathbb{R}^n;\; \mathrm{dist}(x,K)\le \epsilon_0/2\}
\]
yield $\ell (\epsilon/2)^n\le |\tilde{K}|$. The expected result follows by using that $\tilde{K}$ is contained in a cube $Q$ with $|Q|\le (\mathbf{d}+\epsilon_0)^n$.
\end{proof}

\section{Geometric Lemma}\label{appB}

\begin{lemma}\label{lemGeo}
Let $Q=\bigcup_{j \in J} Q_j$ be the connected union of $\ell \, (=|J|)$ closed cubes with edges of length $r/\sqrt{n}$, for some $r>0$. All $x,y\in Q$ can be connected by a broken line of length less than or equal to $\ell r$.
\end{lemma}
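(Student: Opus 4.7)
The plan rests on two elementary observations: each cube $Q_j$ has diameter exactly $r$ (since its side length is $r/\sqrt{n}$, its space diagonal has length $\sqrt{n}\cdot r/\sqrt{n} = r$), and each $Q_j$ is convex. The broken line will be constructed by hopping from one cube to an adjacent one through a common point, so each individual segment lies inside one cube and therefore has length $\le r$. The whole argument reduces to counting how many such hops are needed.

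I would introduce the \emph{adjacency graph} $G=(J,E)$ whose vertices are the indices $j\in J$ and in which $\{i,j\}\in E$ iff $Q_i\cap Q_j\ne\emptyset$. The first key step is to verify that $G$ is connected. Suppose, for contradiction, that $J=J_1\sqcup J_2$ is a non-trivial partition with no edge across: then $K_\iota:=\bigcup_{j\in J_\iota}Q_j$ ($\iota=1,2$) are two non-empty closed subsets of $\mathbb{R}^n$ with $K_1\cap K_2=\emptyset$ and $K_1\cup K_2 = Q$, contradicting the connectedness of $Q$. Hence $G$ is connected.

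Given $x,y\in Q$, pick $j_0,j_m\in J$ with $x\in Q_{j_0}$ and $y\in Q_{j_m}$, and, using connectedness of $G$, choose a \emph{simple} (non self-intersecting) path $j_0,j_1,\ldots,j_m$ in $G$. Since the indices along a simple path are pairwise distinct, $m+1\le|J|=\ell$. For each $i=1,\ldots,m$, pick $z_i\in Q_{j_{i-1}}\cap Q_{j_i}$, and set $z_0:=x$, $z_{m+1}:=y$. Consider the broken line $z_0\to z_1\to\cdots\to z_{m+1}$: for every $i=1,\ldots,m+1$, both endpoints of the $i$-th segment $[z_{i-1},z_i]$ lie in $Q_{j_{i-1}}$, so by convexity the segment is contained in $Q_{j_{i-1}}$. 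Hence its length is at most $\mathrm{diam}(Q_{j_{i-1}})=r$. Summing over the $m+1$ segments yields a total length bounded by $(m+1)r\le\ell r$.

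The only non-cosmetic step is the connectedness of $G$, handled by the partition argument above; every remaining ingredient is either the diameter computation or the convexity of a cube. I expect no other obstacle.
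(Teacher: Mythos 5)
Your proof is correct. You take a somewhat different (and more self-contained) route than the paper. The paper picks a vertex $x_v$ of the cube containing $x$ and a vertex $y_v$ of the cube containing $y$, asserts the existence in $Q$ of a broken line from $x_v$ to $y_v$ passing through at most $\ell-1$ cube vertices (so at most $\ell-2$ segments), and then adds the two end segments $[x,x_v]$ and $[y_v,y]$ to get at most $\ell$ segments each of diameter-bounded length $r$. The existence of that vertex-to-vertex polygonal path and the bound $m\le\ell-2$ are stated without justification, and the vertex-based construction tacitly relies on adjacent cubes sharing a vertex --- true for the grid subdivision the paper actually uses, but not part of the lemma's hypotheses. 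Your version avoids both issues: you build the \emph{adjacency graph} on the index set $J$, deduce its connectedness directly from the topological connectedness of $Q$ via the partition argument, take a simple path $j_0,\ldots,j_m$, and place the intermediate break-points $z_i$ at arbitrary points of $Q_{j_{i-1}}\cap Q_{j_i}$ rather than at cube vertices. Convexity of each cube then keeps every segment inside a single cube, giving the same per-segment bound $r$, and the simple-path count $m+1\le\ell$ yields the claimed total length $\le\ell r$. In short, you supply the graph-connectivity lemma the paper leaves implicit, and you replace the vertex bookkeeping by picking points in the pairwise intersections, which makes the argument both more rigorous and valid under the lemma's stated hypotheses alone.
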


\begin{proof}
Let $V$ denotes the union of the vertices of $\{Q_j;\; j\in J\}$. Let $x,y\in Q$ be with $x\in Q_j$ and $y\in Q_k$, for some $j,k\in J$. Let $x_v\in V\cap Q_j$ and $y_v\in V\cap Q_k$ be fixed arbitrarily. There exists in $Q$ a broken line connecting $x_v$ to $y_v$ consisting of $m$ segments $[v_j,v_{j+1}]$ such that $m\le \ell-2$, $v_0=x_v$ and $v_m=y_v$ (note that $]v_j,v_{j+1}[$ is contained inside a cube, on a face or an edge). Since $|[x,x_v]|\le r$, $|[y,y_v]|\le r$ and $|[v_j,v_{j+1}]|\le r$, $j=0,\ldots ,m-1$, we conclude that any two points of $Q$ can be connected by a broken line of length less than or equal to $\ell r$.
\end{proof}

\end{document}